\documentclass{article}

\usepackage[a4paper,bindingoffset=0.2in,left=1in,right=1in,top=1in,bottom=1in,footskip=.25in,
]{geometry}  \usepackage[utf8]{inputenc}
\usepackage[T1]{fontenc}
\usepackage{amsmath,amssymb}
\usepackage{amsthm}\usepackage{tikz}
\usepackage{subcaption}
\usepackage{mathtools}
\usepackage{booktabs}
\usepackage{microtype}
\usepackage{bm}
\usepackage{pifont} \usepackage[section]{placeins} 
\usepackage{relsize}  
\usepackage{vruler}
\usepackage{etoolbox} 
\usepackage{authblk} \usepackage[numbers,sort&compress]{natbib}

\usepackage{xr-hyper}
\usepackage[colorlinks,citecolor=blue]{hyperref}
\usepackage{doi}

\usetikzlibrary{shapes,positioning,fit,matrix,arrows,arrows.meta}
\usetikzlibrary{decorations.pathmorphing,decorations.pathreplacing,decorations.markings}
\usetikzlibrary{calc}

\DeclarePairedDelimiter{\ceil}{\lceil}{\rceil}

\newcommand{\mystrut}{\rule[-0.55\baselineskip]{0pt}{\baselineskip}}

\DeclareMathSymbol{\shortminus}{\mathbin}{AMSa}{"39}

\newcommand{\xmark}{\text{\ding{55}}}

\tikzset{
dot/.style={draw,fill,circle,inner sep = 0pt,minimum size = 3pt},
bigdot/.style={dot,minimum size = 4pt},
terminal/.style={draw,circle, inner sep=2.5pt},
vcolour/.style={draw,inner sep=1.5pt,font=\scriptsize,label distance=2pt},
22box/.style={draw,minimum width=2cm,minimum height=1.5cm,font=\LARGE,node contents=\( \Downarrow \)},
22boxsmall/.style={22box,minimum width=1.5cm, minimum height=1cm},
Vset/.style={draw=black!30,ellipse,minimum width=1.5cm,minimum height=2.5cm},
petal/.style={
    decoration = {markings, mark=at position 1.0 with {
\coordinate (centre) at (0,0);
\path (centre)+(120:2) node(v)[bigdot]{};
\path (centre)+(240:2) node(u)[bigdot]{};
\path (centre)+(0:2) node(w)[bigdot]{};
\draw (v)--(u)--(w);\draw (v) arc (120:-120:2);
\path (v)--(u) node(v1)[pos=0.4][bigdot]{} node(u-v)[pos=0.6][bigdot]{};
\path (u)--(w) node(u-w)[pos=0.4][bigdot]{} node(w-u)[pos=0.6][bigdot]{};
\draw (w)--(v) node(w-v)[pos=0.4][bigdot]{} node(x1)[pos=0.6][bigdot]{};
\draw (u-w)--(u-v)--node[pos=0.3][bigdot]{} (w-v)--(w-u)--node[pos=0.3][bigdot]{} (v1)--(x1)--node[pos=0.3][bigdot]{} (u-w);
}},
    postaction = {decorate}
    },
}

\newtheorem{theorem}{Theorem}
\newtheorem{corollary}{Corollary}
\newtheorem{lemma}{Lemma}
\newtheorem{observation}{Observation}
\newtheorem{conjecture}{Conjecture}
\newtheorem{problem}{Problem}

\newtheoremstyle{freethm}{3pt}{3pt}{}{}{\bfseries}{.\\}{.5em}{\thmname{#1}\thmnumber{ #2}\thmnote{ (#3)}}

\theoremstyle{freethm}
\newtheorem{construct}{Construction}

\newtheoremstyle{thmpart}{3pt}{3pt}{}{}{\bfseries}{:}{.5em}{\thmname{#1}\thmnumber{ #2}\thmnote{ (#3)}}

\theoremstyle{thmpart}
\newtheorem{claim}{Claim}\theoremstyle{plain}

\theoremstyle{freethm}

\newcommand{\pagetarget}[2]{\phantomsection \label{#1}\hypertarget{#1}{#2}}

\providetoggle{forThesis}  \togglefalse{forThesis}

\title{Hardness Transitions of Star Colouring\\ and Restricted Star Colouring\footnote{A preliminary version of Section~\ref{sec:star colouring} appeared in CALDAM 2022 \cite{shalu_cyriac4}}}
\author[1]{Shalu M.A.}
\author[2]{Cyriac Antony}
\affil[1]{Indian Institute of Information Technology, Design \& Manufacturing\\ (IIITDM)~Kancheepuram, Chennai, India; Email: \href{mailto:shalu@iiitdm.ac.in}{\texttt{shalu@iiitdm.ac.in}}}
\affil[2]{IIT Madras, Chennai, India; Email: \href{mailto:ma23r004@smail.iitm.ac.in}{\texttt{ma23r004@smail.iitm.ac.in}}}
\date{}

\begin{document}

\maketitle

\providetoggle{forThesis}

\providetoggle{extended}

\providetoggle{iiitdmV} 

\begin{abstract}
We study how the complexity of the graph colouring problems star colouring and restricted star colouring vary with the maximum degree of the graph. 
Restricted star colouring (in short, rs colouring) is a variant of star colouring, as the name implies. 
For \( k\in \mathbb{N} \), a \( k\)-colouring of a graph \( G \) is a function \( f\colon V(G)\to \mathbb{Z}_k \) such that \( f(u)\neq f(v) \) for every edge \( uv \) of \( G \). 
A \( k \)-colouring of \( G \) is called a \( k \)-star colouring of \( G \) if there is no path \( u,v,w,x \) in \( G \) with \( f(u)=f(w) \) and \( f(v)=f(x) \). 
A \( k \)-colouring of \( G \) is called a \( k \)-rs colouring of \( G \) if there is no path \( u,v,w \) in \( G \) with \( f(v)>f(u)=f(w) \). 
For \( k\in \mathbb{N} \), the problem \textsc{\( k \)-Star Colourability} takes a graph \( G \) as input and asks whether \( G \) admits a \( k \)-star colouring. 
The problem \textsc{\( k \)-RS Colourability} is defined similarly. 
Recently, Brause et al.~(Electron.\ J.\ Comb., 2022) investigated the complexity of 3-star colouring with respect to the graph diameter. 
We study the complexity of \( k \)-star colouring and \( k \)-rs colouring with respect to the maximum degree for all \( k\geq 3 \). 
For \( k\geq 3 \), let us denote the least integer \( d \) such that \textsc{\( k \)-Star Colourability} (resp.\ \textsc{\( k \)-RS Colourability}) is NP-complete for graphs of maximum degree \( d \) by \( L_s^{(k)} \) (resp.\  \( L_{rs}^{(k)} \)). 

We prove that for \( k=5 \) and \( k\geq 7 \), \textsc{\( k \)-Star Colourability} is NP-complete for graphs of maximum degree \( k-1 \) (i.e., \( L_s^{(k)}\leq k-1 \)). 
We also show that \textsc{\( 4 \)-RS Colourability} is NP-complete for planar 3-regular graphs of girth 5 and \textsc{\( k \)-RS Colourability} is NP-complete for triangle-free graphs of maximum degree \( k-1 \) for \( k\geq 5 \) (i.e., \( L_{rs}^{(k)}\leq k-1 \)). 
Using these results, we prove the following: 
(i) for \( k\geq 4 \) and \( d\leq k-1 \), \textsc{\( k \)\nobreakdash-Star Colourability} is NP-complete for \( d \)-regular graphs if and only if \( d\geq L_s^{(k)} \); and (ii) for \( k\geq 4 \), \textsc{\( k \)\nobreakdash-RS Colourability} is NP-complete for \( d \)-regular graphs if and only if \( L_{rs}^{(k)}\leq d\leq k-1 \). 
It is not known whether result~(ii) has a star colouring analogue. 
\end{abstract}

\section{Introduction and Definitions}\label{sec:intro}
Star colouring is a colouring variant introduced by Gr\"{u}nbaum~\cite{grunbaum}, which is used in the estimation of sparse Hessian matrices~\cite{gebremedhin2005}. 
Restricted star colouring (abbreviated rs colouring) is a variant of star colouring first introduced specifically for this application~\cite{gebremedhin2007}. 
It was later introduced independently under the names independent set star partition~\cite{shalu_sandhya}, and with the order of colours reversed, unique-superior colouring~\cite{karpas} and 2-ranking~\cite{almeter,bose}. 
The complexity of star colouring is studied in various graph classes \cite{coleman_more,albertson,gebremedhin2007,lyons,linhares-sales,yue,harshita,lei,omoomi,bokPreprint,shalu_cyriac2,shalu_cyriac3,shalu_cyriac4,brause,bhyravarapu_reddy}. Although rs colouring is not as widely known, there are already five papers that focus on rs colouring~\cite{karpas,shalu_sandhya,almeter,shalu_cyriac2,bose}. 
Interestingly, rs colouring can be defined in terms of locally constrained graph homomorphisms (see Section~\ref{sec:rs in terms of OIH} for details).

Brause et al.~\cite{brause} investigated the complexity of 3-star colouring with respect to the graph diameter. 
For \( k\geq 3 \), we study the complexity of \( k \)-star colouring and \( k \)-rs colouring with respect to the maximum degree focusing on graphs of maximum degree \( d \) and \( d \)-regular graphs. 
Our interest is in the values of \( d \) for which the complexity of \( k \)-star colouring in graphs of maximum degree \( d \) (resp.\  \( d \)-regular graphs) differ drastically from that in graphs of maximum degree \( d-1 \) (resp.\  \( (d-1) \)-regular graphs); we call such values of \( d \) as \emph{hardness transitions} of \( k \)-star colouring with respect to maximum degree for the class of graphs of maximum degree \( d \) (resp.\  \( d \)-regular graphs); see Section~\ref{sec:intro hardness transitions} for details. 
Hardness transitions of rs colouring are defined similarly.

This paper is organised as follows. 
Subsections~\ref{sec:def}, \ref{sec:intro hardness transitions} and \ref{sec:our results} present basic definitions, definitions related to hardness transitions, and an overview of our results, respectively. 
This is followed by Section~\ref{sec:star colouring}, devoted to star colouring and Section~\ref{sec:rs colouring}, devoted to rs colouring. 

Section~\ref{sec:star colouring} is subdivided into subsections~\ref{sec:star intro}, \ref{sec:star colouring hardness transitions} and \ref{sec:star colouring points of hardness transition}, which present (i)~an introduction and literature survey on star colouring, (ii)~details of our results on hardness transitions of star colouring and (iii)~consequences of our results on the values of \( L_s^{(k)} \) and two similar parameters, respectively. 

Similarly, Section~\ref{sec:rs colouring} is subdivided into subsections~\ref{sec:rs intro}, \ref{sec:rs in terms of OIH}, \ref{sec:rs colouring hardness transitions} and \ref{sec:rs colouring points of hardness transition}, which present (i)~an introduction and literature survey on rs colouring, (ii)~characterisation of rs colouring in terms of graph homomorphisms, (iii)~details of our results on hardness transitions of rs colouring and (iv)~consequences of our results on the value of \( L_{rs}^{(k)} \), respectively. 
We conclude with Section~\ref{sec:conclusion}.

\iftoggle{forThesis}
{
}
{
\subsection{Basic Definitions}\label{sec:def}
All graphs considered in this paper are finite, simple and undirected unless otherwise specificed. 
We follow West~\cite{west} for graph theory terminology and notation. 
When the graph is clear from the context, we denote the number of edges of the graph by \( m \) and the number of vertices by \( n \). 
For a graph \( G \), we denote the maximum degree of \( G \) by \( \Delta(G) \). 
For a subset \( S \) of the vertex set of \( G \), the \emph{subgraph of \( G \) induced by \( S \)} is denoted by \( G[S] \). 
An \emph{orientation} \( \vec{G} \) of a graph \( G \) is the directed graph obtained by assigning a direction on each edge of \( G \); that is, if \( uv \) is an edge in \( G \), then either \( (u,v) \) or \( (v,u) \) is an arc in \( \vec{G} \). 
An orientation is also called an \emph{oriented graph}. 
If \( (u,v) \) is an arc in an oriented graph~\( \vec{G} \), then \( u \) is an \emph{in-neighbour} of \( v \) in \( \vec{G} \). We denote the neighbourhood of a vertex \( v \) in a graph \( G \) by \( N_G(v) \), and the in-neighbourhood of a vertex \( v \) in an oriented graph \( \vec{G} \) by \( N_{\vec{G}}^-(v) \). 
The \emph{girth} of a graph with a cycle is the length of its shortest cycle. 
The treewidth of \( G \) is denoted as \( \text{tw}(G) \). 
A 3-regular graph is also called a \emph{cubic graph}, and a graph of maximum degree~3 is called a \emph{subcubic graph}. 
A graph \( G \) is \emph{\( 2 \)-degenerate} if there exists a left-to-right ordering of its vertices such that every vertex has at most two neighbours to its left.

If \( G \) is a graph and \( u,v \) are non-adjacent vertices in \( G \), the operation of \emph{identifying} vertices \( u \) and \( v \) in \( G \) involves (i)~introducing a new vertex \( w^* \), (ii)~joining \( w^* \) to each vertex in \( N_G(u)\cup N_G(v) \), and (iii)~removing vertices \( u \) and \( v \) \cite{west}. 
That is, in the resultant graph, say \( G^* \), the neighbourhood of \( w^* \) is \( N_{G^*}(w^*)=N_G(u)\cup N_G(v) \). 
Since vertices \( u \) and \( v \) are removed, the new vertex \( w^* \) in \( G^* \) can be unambiguously called as~\( u \) (or \( v \) for that matter). 
Observe that by definition, if vertices \( u \) and \( v \) have a common neighbour \( x \) in \( G \), then there is only one edge from \( w^* \) to \( x \) in \( G^* \) (i.e., no parallel edges in~\( G^* \)). 

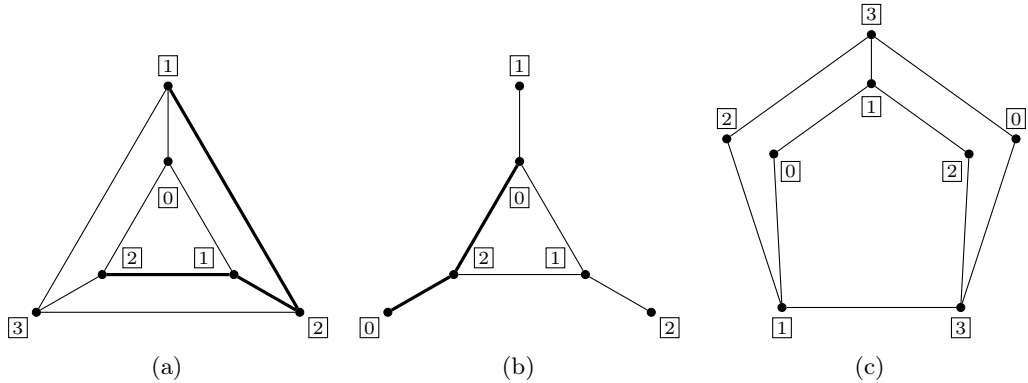
\begin{figure}[hbt]
\centering
\begin{subfigure}[b]{0.3\textwidth}
\centering
\begin{tikzpicture}\path
(  90:1) node (u0)[dot][label={[vcolour,label distance=8pt]-90:0}]{}
( -30:1) node (u1)[dot][label={[vcolour,label distance=7pt,yshift=-2pt]150:1}]{}
(-150:1) node (u2)[dot][label={[vcolour,label distance=7pt,yshift=-2pt]30:2}]{}
(  90:2) node (v0)[dot][label={[vcolour]90:1}]{}
( -30:2) node (v1)[dot][label={[vcolour]-30:2}]{}
(-150:2) node (v2)[dot][label={[vcolour]-150:3}]{};
\draw
(u0)--(v0)
(u1)--(v1)
(u2)--(v2);
\draw
(u0)--(u1)--(u2)--(u0)
(v0)--(v1)--(v2)--(v0);

\draw [very thick]
(u2)--(u1)--(v1)--(v0);
\end{tikzpicture}
\caption{}
\end{subfigure}\begin{subfigure}[b]{0.3\textwidth}
\centering
\begin{tikzpicture}\path
(  90:1) node (u0)[dot][label={[vcolour,label distance=8pt]-90:0}]{}
( -30:1) node (u1)[dot][label={[vcolour,label distance=7pt,yshift=-2pt]150:1}]{}
(-150:1) node (u2)[dot][label={[vcolour,label distance=7pt,yshift=-2pt]30:2}]{}
(  90:2) node (v0)[dot][label={[vcolour]90:1}]{}
( -30:2) node (v1)[dot][label={[vcolour]-30:2}]{}
(-150:2) node (v2)[dot][label={[vcolour]-150:0}]{};
\draw
(u0)--(v0)
(u1)--(v1)
(u2)--(v2);
\draw
(u0)--(u1)--(u2)--(u0);

\draw [very thick] 
(u0)--(u2)--(v2)
;
\end{tikzpicture}
\caption{}
\end{subfigure}\begin{subfigure}[b]{0.3\textwidth}
\centering
\begin{tikzpicture}
\coordinate (centre){};

\path (centre) +(18:2) coordinate[dot,label={[vcolour]\( 0 \)}](u5)
      (centre) +(90:2) coordinate[dot,label={[vcolour]above:\( 3 \)}](u4)
      (centre) +(162:2) coordinate[dot,label={[vcolour]\( 2 \)}](u3)
      (centre) +(-54:2) coordinate[dot,label={[vcolour]below:\( 3 \)}](u1)
      (centre) +(-126:2) coordinate[dot,label={[vcolour]below:\( 1 \)}](u2);
\draw (u1)--(u2)--(u3)--(u4)--(u5)--(u1);

\path (centre) +(18:1.35) coordinate[dot,label={[vcolour]below left:\( 2 \)}](u8)
      (centre) +(90:1.35) coordinate[dot,label={[vcolour,label distance=3pt]below:\( 1 \)}](u7)
      (centre) +(162:1.35) coordinate[dot,label={[vcolour]below right:\( 0 \)}](u6);
\draw (u2)--(u6)--(u7)--(u8)--(u1);

\draw (u4)--(u7);

\end{tikzpicture}
\caption{}
\end{subfigure}\caption{(a)~a 4-colouring of a graph, which is not a 4-star colouring (bicoloured \( P_4 \) highlighted), (b)~a 3-star colouring of a graph, which is not a 3-rs colouring (bicoloured \( P_3 \) with higher colour on middle highlighted), and (c)~a 4-rs colouring of a graph.}
\label{fig:star rs col eg}
\end{figure}

A \( k \)-colouring of a graph \( G \) is a function \( f \) from the vertex set of \( G \) to a set of \( k \) colours, say \( \{0,1,\dots,k-1\} \), such that \( f \) maps every pair of adjacent vertices to different colours. 
A \( k \)-star colouring of \( G \) is a \( k \)-colouring \( f \) of \( G \) such that there is no bicoloured \( P_4 \) in \( G \) (i.e., there is no path \( u,v,w,x \) in \( G \) with \( f(u)=f(w) \) and \( f(v)=f(x) \)). 
A \( k \)-rs colouring of \( G \) is a \( k \)-colouring \( f \) of \( G \) such that there is no bicoloured \( P_3 \) in \( G \) with higher colour on its middle vertex (i.e., there is no path \( u,v,w \) in \( G \) with \( f(v)>f(u)=f(w) \)). 
See Figure~\ref{fig:star rs col eg} for examples.

The \emph{star chromatic number} \( \chi_s(G) \) of a graph \( G \) is the least integer \( k \) such that \( G \) is \( k \)-star colourable. 
The \emph{rs chromatic number} \( \chi_{rs}(G) \) is defined similarly. 
The star chromatic number of line graph of \( G \) is called the \emph{star chromatic index} of \( G \). 
The problem \textsc{Star Colourability} takes a graph \( G \) and a positive integer \( k \) as input and asks whether \( G \) is \( k \)-star colourable. 
For \( k\in \mathbb{N} \), the decision problem \textsc{\( k \)-Colourability} takes a graph \( G \) as input and asks whether \( G \) is \( k \)-colourable. 
The problems \textsc{\( k \)\nobreakdash-Star Colourability} and \textsc{\( k \)\nobreakdash-RS Colourability} are defined analogously. 
To denote the restriction of a decision problem, we write the conditions in parenthesis. 
For instance, \textsc{\( 4 \)-Star Colourability}\( (\text{bipartite}, \Delta=5) \) denotes the problem \textsc{\( 4 \)-Star Colourability} restricted to the class of bipartite graphs \( G \) with \( \Delta(G)=5 \).

A \emph{homomorphism} from a graph \( G \) to a graph \( H \) is a function \( \psi \colon V(G)\to V(H) \) such that \( \psi(u)\psi(v) \) is an edge in \( H \) whenever \( uv \) is an edge in \( G \). 
A \emph{homomorphism} from an oriented graph \( \vec{G} \) to an oriented graph \( \vec{H} \) is a function \( \psi\colon V(\vec{G})\to V(\vec{H}) \) such that \( (\psi(u),\psi(v)) \) is an arc in \( \vec{H} \) whenever \( (u,v) \) is an arc in \( \vec{G} \). 
A homomorphism \( \psi \) from an oriented graph \( \vec{G} \) to an oriented graph \( \vec{H} \) is \emph{in-neighbourhood injective} if for every vertex \( v \) of \( \vec{G} \), the restriction of \( \psi \) to the in-neighbourhood \( N_{\vec{G}}^-(v) \) is an injective function from \( N_{\vec{G}}^-(v) \) to \( N_{\vec{H}}^-(\psi(v)) \) (i.e., \( \psi \) maphs distinct in-neighbours of \( v \) to distinct in-neighbours of \( \psi(v) \)). 
An \emph{automorphism} of a graph \( G \) is a bijective function \( \psi \colon V(G)\to V(G) \) such that \( \psi(u)\psi(v) \) is an edge in \( G \) if and only if \( uv \) is an edge in \( G \). 

}

\subsection{Hardness Transitions}\label{sec:intro hardness transitions}
Analysing the boundary between easy (i.e., polynomial-time solvable) and hard (e.g., NP-complete) problems is a common theme in complexity theory \cite{garey_johnson}. 
Studying the change in the complexity of a problem in response to a change in a single parameter falls in this category. 
Brause et al.~\cite{brause} studied the complexity of \textsc{3-Star Colourability} with the diameter of the graph as the parameter. 
For \( k\geq 3 \), we study the complexity of \( k \)-star colouring and \( k \)-rs colouring with the maximum degree of the graph as the parameter. 
Recall that we write the conditions in parenthesis to denote the restriction of a decision problem; 
e.g.: \textsc{\( 4 \)-Star Colourability}\( (\text{bipartite}, \Delta=5) \) denotes the problem \textsc{\( 4 \)-Star Colourability} restricted to the class of bipartite graphs \( G \) with \( \Delta(G)=5 \).
We assume \mbox{P \( \neq \) NP} throughout this paper; thus, NP is partitioned into three classes: P, NPC and NPI~\cite{ladner}.
We emphasise that our interest is in the classification of NP-problems with respect to the P vs.\ NPC vs.\ NPI trichotomy: that is, the complexity classes dealt with in this paper are only P, NPC and NPI.

A decision problem \( \Pi \) in NP has a \emph{hardness transition} with respect to a discrete parameter \( d \) at a point \( d=x \) if \( \Pi(d=x) \) and \( \Pi(d=x-1) \) belong to different complexity classes among P, NPC and NPI (e.g.: \( \Pi(d=x)\in \)\,NPC whereas \( \Pi(d=x-1)\in \)\,P; see \cite{mikero} for a discussion). 
For example, \textsc{3-Colourability} of a graph of maximum degree \( d \) is polynomial-time solvable for \( d=3 \) (due to Brook's theorem) and NP-complete for \( d=4 \) \cite{garey_johnson}. 
That is, \textsc{3-Colourability}\( (\Delta=3)\in \) P and \textsc{3-Colourability}\( (\Delta=4)\in \) NPC. 
Hence, \textsc{3-Colourability} has a hardness transition with respect to the maximum degree \( d \) at the point \( d=4 \). 
Note that each hardness transition presumably deals with the P vs.\ NPC boundary since no `natural' problem is known to be NP-intermediate~\cite{arora_barak}.

The number of hardness transitions depends on the problem as well as the parameter under consideration. 
Interestingly, a decision problem can have infinitely many hardness transitions. 
Cseh and Kavitha~\cite{cseh_kavitha} proved that the popular matching problem on complete graph \( K_n \) is in P for odd \( n \) whereas it is NP-complete for even \( n \). 
Therefore, the popular matching problem on complete graph with respect to the number of vertices \( n \) has infinitely many hardness transitions.

Let us consider the complexity of \( k \)-colouring in bounded degree graphs for fixed \( k\geq 3 \). 
Emden-Weinert et al.~\cite{emden-weinert} proved that \textsc{\( k \)-Colourability} is NP-complete for graphs of maximum degree \( k-1+\raisebox{1.5pt}{\big\lceil}\sqrt{k}\raisebox{1.5pt}{\big\rceil} \). 
Observe that if \textsc{\( k \)\nobreakdash-Colourability} is NP-complete for graphs of maximum degree \( d \), then it is NP-complete for graphs of maximum degree \( d+1 \) (to produce a reduction, it suffices to add a disjoint copy of \( K_{1,d+1} \)). 
This suggests the following problem. 
\begin{problem}\label{prob:what is Lk}
For \( k\geq 3 \), what is the least integer \( d \) such that \textsc{\( k \)-Colourability} is NP-complete for graphs of maximum degree \( d \)?
\end{problem}
\noindent Observe that Problem~\ref{prob:what is Lk} deals with locating a point of hardness transition. 
By the same argument, if \textsc{\( k \)\nobreakdash-Star Colourability} is NP-complete for graphs of maximum degree~\( d \), then \textsc{\( k \)-Star Colourability} is NP-complete for graphs of maximum degree \( d+1 \). 
The same is true of rs colouring. 
For \( k\geq 3 \), \textsc{\( k \)\nobreakdash-Star Colourability} and \textsc{\( k \)\nobreakdash-RS Colourability} are NP-complete for graphs of maximum degree~\( k \)~\cite{shalu_cyriac2,shalu_cyriac3}. 
Therefore, for each \( k\geq 3 \), there exists a unique integer \( d^* \) such that \textsc{\( k \)\nobreakdash-Colourability} (resp.\ \textsc{\( k \)\nobreakdash-Star Colourability} or \textsc{\( k \)\nobreakdash-RS Colourability}) in graphs of maximum degree \( d \) is NP-complete if and only if \( d\geq d^* \). 
Thus, one can ask the counterpart of Problem~\ref{prob:what is Lk} for star colouring and rs colouring. 
Let \( L^{(k)} \), \( L_s^{(k)} \) and \( L_{rs}^{(k)} \) denote the answers to Problem~\ref{prob:what is Lk} and its counterparts for star colouring and rs colouring; that is, \( L^{(k)} \) (resp.\  \( L_a^{(k)} \) or \( L_{rs}^{(k)} \)) is the least integer \( d \) such that \textsc{\( k \)\nobreakdash-Colourability} (resp.\ \textsc{\( k \)\nobreakdash-Star Colourability} or \textsc{\( k \)\nobreakdash-RS Colourability}) is NP-complete for graphs of maximum degree \( d \).

Due to Brook's theorem, \textsc{\( k \)-Colourability} is polynomial-time solvable for graphs of maximum degree \( k \), and thus \( L^{(k)}\geq k+1 \). 
For \( k\geq 3 \), \textsc{\( k \)\nobreakdash-Colourability} is NP-complete for graphs of maximum degree \( k-1+\raisebox{1.5pt}{\big\lceil}\sqrt{k}\raisebox{1.5pt}{\big\rceil} \) \cite{emden-weinert}, and thus \( k+1\leq L^{(k)}\leq k-1+\raisebox{1.5pt}{\big\lceil}\sqrt{k}\raisebox{1.5pt}{\big\rceil} \). 
Hence, \( L^{(3)}=4 \), \( L^{(4)}=5 \), \( 6\leq L^{(5)}\leq 7 \), and so on. 
For sufficiently large \( k \) and \( d<k-1+\raisebox{1.5pt}{\big\lceil}\sqrt{k}\raisebox{1.5pt}{\big\rceil} \), the problem \textsc{\( k \)\nobreakdash-Colourability} is in P for graphs of maximum degree \( d \) \cite[Theorem~43]{molloy_reed}. 
Therefore, \( L^{(k)}=k-1+\raisebox{1.5pt}{\big\lceil}\sqrt{k}\raisebox{1.5pt}{\big\rceil} \) for sufficiently large \( k \). 
Yet, the exact value of \( L^{(k)} \) is unknown for small values of \( k \) such as \( k=5 \) even though we know that \( L^{(5)}\in \{6,7\} \) (the complexity of \textsc{5\nobreakdash-Colourability} in graphs of maximum degree~6 is open \cite{paulusma}).

\subsection{Our Results}\label{sec:our results}
For \( k\geq 3 \), \textsc{\( k \)\nobreakdash-Star Colourability} and \textsc{\( k \)\nobreakdash-RS Colourability} are NP-complete for graphs of maximum degree~\( k \)~\cite{shalu_cyriac2,shalu_cyriac3}. 
For \( k\geq 4 \), we improve the maximum degree in these NP-completeness results to \( k-1 \), except for \textsc{\( k \)\nobreakdash-Star Colourability} with \( k\in \{4,6\} \).

We show that \textsc{\( 4 \)-RS Colourability} is NP-complete for planar 3-regular graphs of girth 5, and \textsc{\( k \)-RS Colourability} is NP-complete for triangle-free graphs of maximum degree \( k-1 \) for \( k\geq 4 \). 
We also prove that for \( k=5 \) and \( k\geq 7 \), \textsc{\( k \)\nobreakdash-Star Colourability} is NP-complete for graphs of maximum degree~\( k-1 \). 
In contrast, \textsc{\( k \)-Star Colourability} (resp.\ \textsc{\( k \)\nobreakdash-RS Colourability}) is polynomial-time solvable for graphs of maximum degree at most \( 0.33\, k^{\,2/3} \) (resp.\ \( \sqrt{k} \)). 
Hence, for \( k\geq 4 \), we have \( 0.33\, k^{\,2/3}<L_s^{(k)}\leq k \) and \( \sqrt{k}<L_{rs}^{(k)}\leq k-1 \).

The slight improvement of the maximum degree in the NP-completeness results of ~\cite{shalu_cyriac2,shalu_cyriac3} (see the first paragraph in this subsection) allows us to prove the following. 
\begin{itemize}
\item For \( k\geq 4 \) and \( d\leq k-1 \), \textsc{\( k \)\nobreakdash-Star Colourability} is NP-complete for \( d \)-regular graphs\\ if and only if \( d\geq L_s^{(k)} \). 
\item For \( k\geq 4 \), \textsc{\( k \)\nobreakdash-RS Colourability} is NP-complete for \( d \)-regular graphs\\ if and only if \( L_{rs}^{(k)}\leq d\leq k-1 \). 
\end{itemize}
It is not known whether the preceding result has a star colouring analogue 
(i.e. a result of the following form: for \( k\geq 4 \), there exist integers \( \ell_k \) and \( h_k \) such that \textsc{\( k \)\nobreakdash-Star Colourability} is NP-complete for \( d \)-regular graphs if and only if \( \ell_k\leq d\leq h_k \)).

\section{Star Colouring}\label{sec:star colouring}

\iftoggle{forThesis}
{\section{Hardness Transitions}\label{sec:star colouring hardness transitions}
In this section, we discuss the hardness transitions of star colouring with respect to the maximum degree. 
To be specific, we deal with the values \( \widetilde{L}_s^{(k)} \), \( L_s^{(k)} \) and \( H_s^{(k)} \). 
We show that \( \widetilde{L}_s^{(3)}=3 \) whereas \( L_s^{(3)} \) and \( H_s^{(3)} \) are undefined. 
In Section~\ref{sec:star colouring points of hardness transition}, we explain that (i)~\( \widetilde{L}_s^{(k)}\leq k \) for all \( k\geq 3 \) by Theorem~\ref{thm:3-star planar bip girth 8} and Theorem~\ref{thm:k-star max degree k}, (ii)~\( H_s^{(k)}\leq 2k-4 \) for all \( k\in \{4,5,7,8,\dots \} \) by Theorem~\ref{thm:lb chi_s}, and (iii)~\( H_s^{(4)}=4 \) by Corollary~\ref{cor:4-star colouring 4-regular NPC}. 
We prove that \( \widetilde{L}_s^{(k)}\leq k-1 \) and \( L_s^{(k)}=\widetilde{L}_s^{(k)} \) for \( k=5 \) and \( k\geq 7 \). 
To this end, we show that for \( k=5 \) and \( k\geq 7 \), (i)~\textsc{\( k \)-Star Colourability} is NP-complete for graphs of maximum degree \( k-1 \), and (ii)~for each \( d<k \), \textsc{\( k \)-Star Colourability} in graphs of maximum degree \( d \) is NP-complete if and only if \textsc{\( k \)-Star Colourability} in \( d \)-regular graphs is NP-complete. 
Let us discuss results (i) and (ii). 
The consequences of these results on the values of \( \widetilde{L}_s^{(k)} \), \( L_s^{(k)} \) and \( H_s^{(k)} \) are discussed later in Section~\ref{sec:star colouring points of hardness transition}.  
}{\subsection{Introduction and Literature Survey}\label{sec:star intro} Star colouring is studied in various graph classes such as planar graphs, bipartite graphs, regular graphs, sparse graphs and line graphs. 
For surveys on star colouring of planar graphs and line graphs, see \cite[Section~14]{borodin2013} and \cite{lei_shi} respectively.

Albertson et al.~\cite{albertson} proved that the maximum among star chromatic numbers of planar graphs is between 10 and 20. 
Kierstead et al.~\cite{timmons2009} proved that the maximum among star chromatic numbers of planar bipartite graphs is between 8 and 14. 
Fertin et al.~\cite{fertin2004} proved that \( \chi_s(G)\leq \binom{\text{tw}(G)+2}{2} \). 
They also proved that \( \chi_s(G)=O(d^{\frac{3}{2}}) \) where \( d=\Delta(G) \); and that there exist graphs \( G \) with \( \chi_s(G)=\Omega(d^\frac{3}{2}/(\log d)^\frac{1}{2}) \). The star chromatic number of the \( d \)-dimensional hypercube is at most \( d+1 \)~\cite{fertin2004}. 
Ne\v{s}et\v{r}il and Mendez~\cite{nesetril_mendez2003} related the star chromatic number of a graph to the chromatic numbers of its minors. 
Albertson et al.~\cite{albertson} and independently Ne\v{s}et\v{r}il and Mendez~\cite{nesetril_mendez2003} found that star colourings of a graph \( G \) are associated with orientations of \( G \). 
For every 3-regular graph \( G \), we have \( 4\leq \chi_s(G)\leq 6 \) \cite{xie,chen2013}.

Lyons~\cite{lyons} proved that \( \chi_s(G)=\text{tw}(G)+1 \) for every cograph \( G \). 
Linhares-Sales et al.~\cite{linhares-sales} designed a linear-time algorithm to compute the star chromatic number for two superclasses of cographs called \( P_4 \)-tidy graphs and \( (q,q-4) \)-graphs (for each fixed \( q \)). 
Omoomi et al.~\cite{omoomi} designed a polynomial-time algorithm to compute the star chromatic number of line graph of trees.

Coleman and More~\cite{coleman_more} proved that for \( k\geq 3 \), \textsc{\( k \)-Star Colourability} is NP-complete for (2\nobreakdash-degenerate) bipartite graphs. 
Albertson et al.~\cite{albertson} proved that \textsc{\( 3 \)-Star Colourability}is NP-complete for planar bipartite graphs. 
Lei et al.~\cite{lei} proved that \textsc{\( 3 \)-Star Colourability} is NP-complete for line graphs of subcubic graphs. 
Bok et al.~\cite{bokPreprint} provided complexity dichotomy results on \textsc{Star Colourability} and \textsc{\( k \)-Star Colourability} in \( H \)-free graphs except for one open case, namely \textsc{Star Colourability} in \( 2K_2 \)-free graphs. 
Bok et al.~\cite{bok} and independently Shalu and Antony~\cite{shalu_cyriac2} proved that \textsc{Star Colourability} is NP-complete for co-bipartite graphs. 
Brause et al.~\cite{brause} proved that \textsc{\( 3 \)-Star Colourability} in graphs of diameter at most \( d \) is polynomial-time solvable for \( d\leq 3 \), but NP-complete for \( d\geq 8 \). 
Bok et al.~\cite{bokPreprint} and independently Shalu and Antony~\cite{shalu_cyriac3} proved that \textsc{\( 3 \)-Star Colourability} is NP-complete for planar (bipartite) graphs of maximum degree 3 and arbitrarily large girth.

Gebremedhin et al.~\cite{gebremedhin2007} proved that for all \( \epsilon>0 \), it is NP-hard to approximate the star chromatic number of a (2-degenerate) bipartite graph within \( n^{\frac{1}{3}-\epsilon} \). 
In contrast, every 2-degenerate graph admits a star colouring with \( n^{\frac{1}{2}} \) colours \cite[Theorem~6.2]{karpas} (recall that every unique superior colouring is an rs colouring with colours reversed, and thus a star colouring). 
Hence, the star chromatic number of a 2-degenerate graph is approximable within \( n^{\frac{1}{2}} \).

For \( k\in \mathbb{N} \), \textsc{\( k \)-Star Colourability} can be expressed in Monadic Second-Order logic without edge set quantification (i.e., MSO\( _1 \)) \cite{shalu_cyriac3}, and thus admits FPT algorithms with parameter either treewidth or cliquewidth by Courcelle's theorem \cite{borie,courcelle}. 
It is easy to observe that the transformation from \textsc{\( k \)-Colourability} to \textsc{\( k \)-Star Colourability} in \cite{coleman_more} is a Polynomial Parameter Transformation (PPT)~\cite{fomin2019} when both problems are parameterized by treewidth (e.g.\ see \cite{shalu_cyriac3}). 
As a result, for \( k\geq 3 \), \textsc{\( k \)-Star Colourability} parameterized by treewidth does not admit a polynomial kernel unless \mbox{NP \( \subseteq \) coNP/poly}. 
Bhyravarapu and Reddy~\cite{bhyravarapu_reddy} proved that \textsc{Star Colourability} is fixed-parameter tractable when parameterized by (i)~neighbourhood diversity, (ii)~twin-cover, and (iii)~the combined parameters cliquewidth and the number of colours. 

\subsection{Hardness Transitions}\label{sec:star colouring hardness transitions}

}

We show that for \( k=5 \) and \( k\geq 7 \), \textsc{\( k \)-Star Colourability} is NP-complete for graphs of maximum degree \( k-1 \). 
First, we deal with \( k\geq 7 \) (smaller values of \( k \) are discussed later). 
We employ Construction~\ref{make:k-star colouring deg k-1} below to prove that for every \( k\geq 7 \), \textsc{\( k \)-Star Colourability} is NP-complete for graphs of maximum degree \( k-1 \). 
Fix an integer \( k\geq 7 \).

\begin{figure}[hbt]
\centering
\begin{subfigure}[b]{0.5\textwidth}
\centering
\begin{tikzpicture}
\path (0,0) node(w1) [dot][label=above left:\( w_1 \)]{} --++(1,0) node(w2) [dot][label=above left:\( w_2 \)]{} --++(1,0) node(w3) [dot][label=above left:\( w_3 \)]{} --++(1.5,0) node(wk-4) [dot]{} --++(1,0) node(wk-3) [dot][label=above:\( w_{k\,\text{-}\,3} \)]{} --++(1,0) node(wk-2) [dot][label=above:\( w_{k\,\text{-}\,2} \)]{};
\path (wk-4) node[above left=1pt and -3.75pt]{\( w_{k\,\text{-}\,4} \)};
\path (w3) -- node{\dots} (wk-4);
\path (w1) --++(0,-2) node(x1) [dot][label=below left:\( x_1 \)]{} --++(1,0) node(x2) [dot][label=below left:\( x_2 \)]{} --++(1,0) node(x3) [dot][label=below left:\( x_3 \)]{} --++(1.5,0) node(xk-4) [dot]{} --++(1,0) node(xk-3) [dot][label=below:\( x_{k\,\text{-}\,3} \)]{} --++(1,0) node(xk-2) [dot][label=below:\( x_{k\,\text{-}\,2} \)]{};
\path (xk-4) node[below left=1pt and -3.75pt]{\( x_{k\,\text{-}\,4} \)};
\path (x3) -- node{\dots} (xk-4);

\draw
(w1)--+(0,1) node(u1) [dot][label=\( u_1 \)]{} 
(w2)--+(0,1) node(u2) [dot][label=\( u_2 \)]{} 
(w3)--+(0,1) node(u3) [dot][label=\( u_3 \)]{} 
(wk-4)--+(0,1) node(uk-4) [dot][label=\( u_{k\,\text{-}\,4} \)]{};
\draw
(x1)--+(0,-1) node(y1) [dot][label=below:\( y_1 \)]{} 
(x2)--+(0,-1) node(y2) [dot][label=below:\( y_2 \)]{} 
(x3)--+(0,-1) node(y3) [dot][label=below:\( y_3 \)]{} 
(xk-4)--+(0,-1) node(yk-4) [dot][label=below:\( y_{k\,\text{-}\,4} \)]{}; 

\draw (wk-3)--(wk-2);
\draw (xk-3)--(xk-2);

\draw (w1)--(x1) (w1)--(x2) (w1)--(x3) (w1)--(xk-4) (w1)--(xk-3) (w1)--(xk-2);
\draw (w2)--(x1) (w2)--(x2) (w2)--(x3) (w2)--(xk-4) (w2)--(xk-3) (w2)--(xk-2);
\draw (w3)--(x1) (w3)--(x2) (w3)--(x3) (w3)--(xk-4) (w3)--(xk-3) (w3)--(xk-2);
\draw (wk-4)--(x1) (wk-4)--(x2) (wk-4)--(x3) (wk-4)--(xk-4) (wk-4)--(xk-3) (wk-4)--(xk-2);
\draw (wk-3)--(x1) (wk-3)--(x2) (wk-3)--(x3) (wk-3)--(xk-4) (wk-3)--(xk-3) (wk-3)--(xk-2);
\draw (wk-2)--(x1) (wk-2)--(x2) (wk-2)--(x3) (wk-2)--(xk-4) (wk-2)--(xk-3) (wk-2)--(xk-2);

\draw (y1)--(y2);
\end{tikzpicture}
\caption{}
\label{fig:gadget component k-star colouring deg k-1}
\end{subfigure}\begin{subfigure}[b]{0.5\textwidth}
\centering
\begin{tikzpicture}
\path (0,0) node(w1) [dot][label={[vcolour]above left:1}]{} --++(1,0) node(w2) [dot][label={[vcolour]above left:1}]{} --++(1,0) node(w3) [dot][label={[vcolour]above left:1}]{} --++(1.5,0) node(wk-4) [dot][label={[vcolour]above left:1}]{} --++(1,0) node(wk-3) [dot][label={[vcolour]above:0}]{} --++(1,0) node(wk-2) [dot][label={[vcolour]above:1}]{};
\path (w3) -- node{\dots} (wk-4);
\path (w1) --++(0,-2) node(x1) [dot][label={[vcolour]below left:2}]{} --++(1,0) node(x2) [dot][label={[vcolour]below left:3}]{} --++(1,0) node(x3) [dot][label={[vcolour]below left:4}]{} --++(1.5,0) node(xk-4) [dot][label={[vcolour,yshift=-1pt]below left:\( k\,\text{-}\,3 \)}]{} --++(1,0) node(xk-3) [dot][label={[vcolour]below:\( k\,\text{-}\,2 \)}]{} --++(1,0) node(xk-2) [dot][label={[vcolour]below:\( k\,\text{-}\,1 \)}]{};
\path (x3) -- node{\dots} (xk-4);

\draw
(w1)--+(0,1) node(u1) [dot][label={[vcolour]0}]{} 
(w2)--+(0,1) node(u2) [dot][label={[vcolour]0}]{} 
(w3)--+(0,1) node(u3) [dot][label={[vcolour]0}]{} 
(wk-4)--+(0,1) node(uk-4) [dot][label={[vcolour]0}]{};
\draw
(x1)--+(0,-1) node(y1) [dot][label={[vcolour]below:0}]{} 
(x2)--+(0,-1) node(y2) [dot][label={[vcolour]below:1}]{} 
(x3)--+(0,-1) node(y3) [dot][label={[vcolour]below:1}]{} 
(xk-4)--+(0,-1) node(yk-4) [dot][label={[vcolour]below:1}]{}; 

\draw (wk-3)--(wk-2);
\draw (xk-3)--(xk-2);

\draw (w1)--(x1) (w1)--(x2) (w1)--(x3) (w1)--(xk-4) (w1)--(xk-3) (w1)--(xk-2);
\draw (w2)--(x1) (w2)--(x2) (w2)--(x3) (w2)--(xk-4) (w2)--(xk-3) (w2)--(xk-2);
\draw (w3)--(x1) (w3)--(x2) (w3)--(x3) (w3)--(xk-4) (w3)--(xk-3) (w3)--(xk-2);
\draw (wk-4)--(x1) (wk-4)--(x2) (wk-4)--(x3) (wk-4)--(xk-4) (wk-4)--(xk-3) (wk-4)--(xk-2);
\draw (wk-3)--(x1) (wk-3)--(x2) (wk-3)--(x3) (wk-3)--(xk-4) (wk-3)--(xk-3) (wk-3)--(xk-2);
\draw (wk-2)--(x1) (wk-2)--(x2) (wk-2)--(x3) (wk-2)--(xk-4) (wk-2)--(xk-3) (wk-2)--(xk-2);

\draw (y1)--(y2);
\end{tikzpicture}
\caption{}
\label{fig:k-star colouring gadget component deg k-1}
\end{subfigure}\caption{(a) The gadget component, and (b) a \( k \)-star colouring of it.}
\end{figure}
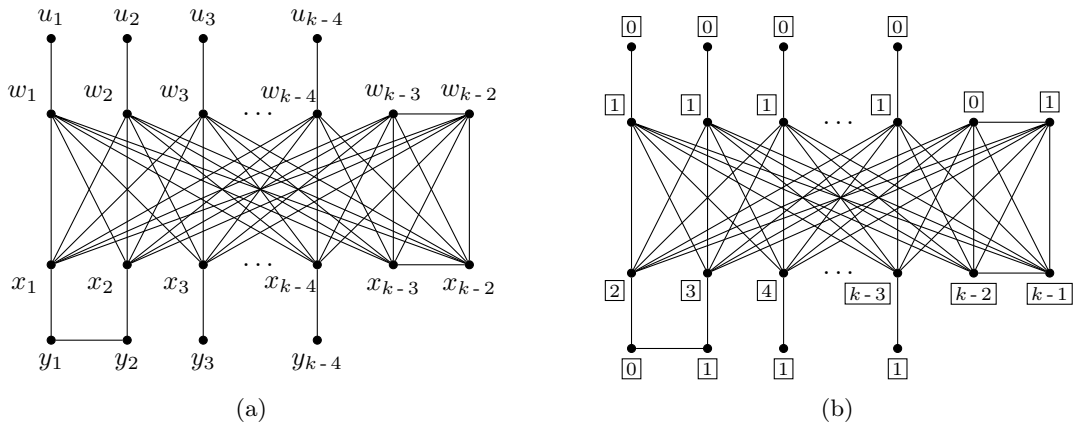

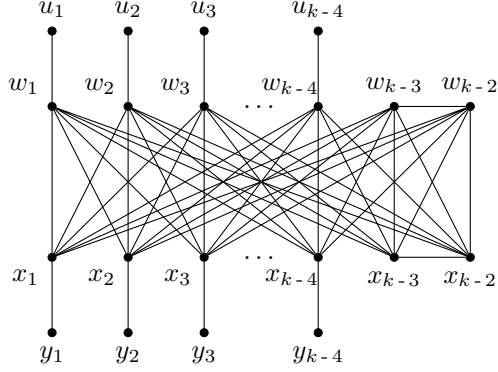
\begin{figure}[hbt]
\centering
\begin{tikzpicture}
\path (0,0) node(w1) [dot][label=above left:\( w_1 \)]{} --++(1,0) node(w2) [dot][label=above left:\( w_2 \)]{} --++(1,0) node(w3) [dot][label=above left:\( w_3 \)]{} --++(1.5,0) node(wk-4) [dot]{} --++(1,0) node(wk-3) [dot][label=above:\( w_{k\,\text{-}\,3} \)]{} --++(1,0) node(wk-2) [dot][label=above:\( w_{k\,\text{-}\,2} \)]{};
\path (wk-4) node[above left=1pt and -3.75pt]{\( w_{k\,\text{-}\,4} \)};
\path (w3) -- node{\dots} (wk-4);
\path (w1) --++(0,-2) node(x1) [dot][label=below left:\( x_1 \)]{} --++(1,0) node(x2) [dot][label=below left:\( x_2 \)]{} --++(1,0) node(x3) [dot][label=below left:\( x_3 \)]{} --++(1.5,0) node(xk-4) [dot]{} --++(1,0) node(xk-3) [dot][label=below:\( x_{k\,\text{-}\,3} \)]{} --++(1,0) node(xk-2) [dot][label=below:\( x_{k\,\text{-}\,2} \)]{};
\path (xk-4) node[below left=1pt and -3.75pt]{\( x_{k\,\text{-}\,4} \)};
\path (x3) -- node{\dots} (xk-4);

\draw
(w1)--+(0,1) node(u1) [dot][label=\( u_1 \)]{} 
(w2)--+(0,1) node(u2) [dot][label=\( u_2 \)]{} 
(w3)--+(0,1) node(u3) [dot][label=\( u_3 \)]{} 
(wk-4)--+(0,1) node(uk-4) [dot][label=\( u_{k\,\text{-}\,4} \)]{};
\draw
(x1)--+(0,-1) node(y1) [dot][label=below:\( y_1 \)]{} 
(x2)--+(0,-1) node(y2) [dot][label=below:\( y_2 \)]{} 
(x3)--+(0,-1) node(y3) [dot][label=below:\( y_3 \)]{} 
(xk-4)--+(0,-1) node(yk-4) [dot][label=below:\( y_{k\,\text{-}\,4} \)]{}; 

\draw (wk-3)--(wk-2);
\draw (xk-3)--(xk-2);

\draw (w1)--(x1) (w1)--(x2) (w1)--(x3) (w1)--(xk-4) (w1)--(xk-3) (w1)--(xk-2);
\draw (w2)--(x1) (w2)--(x2) (w2)--(x3) (w2)--(xk-4) (w2)--(xk-3) (w2)--(xk-2);
\draw (w3)--(x1) (w3)--(x2) (w3)--(x3) (w3)--(xk-4) (w3)--(xk-3) (w3)--(xk-2);
\draw (wk-4)--(x1) (wk-4)--(x2) (wk-4)--(x3) (wk-4)--(xk-4) (wk-4)--(xk-3) (wk-4)--(xk-2);
\draw (wk-3)--(x1) (wk-3)--(x2) (wk-3)--(x3) (wk-3)--(xk-4) (wk-3)--(xk-3) (wk-3)--(xk-2);
\draw (wk-2)--(x1) (wk-2)--(x2) (wk-2)--(x3) (wk-2)--(xk-4) (wk-2)--(xk-3) (wk-2)--(xk-2);

\end{tikzpicture}
\caption[A subgraph of the gadget component.]{A subgraph of the gadget component (only the edge \( y_1y_2 \) is missing).}
\label{fig:for gadget component k-star colouring deg k-1}
\end{figure}

The graph in Figure~\ref{fig:gadget component k-star colouring deg k-1}, called the gadget component, is used to build the main gadget in Construction~\ref{make:k-star colouring deg k-1}. 
In the gadget component, \( w_i \) is adjacent to \( x_j \) for all \( i,j\in\{1,2,\dots,k-2\} \). 
Consider the \( k \)-colouring of the gadget component exhibited in Figure~\ref{fig:k-star colouring gadget component deg k-1}. 
Under this colouring, (i)~every vertex with a binary colour (i.e., colour~0 or~1) has at most one neighbour with a binary colour (thus ruling out the possibility of a 4-vertex path coloured using only colours~0 and 1), and (ii)~for each non-binary colour~\( c \), exactly one vertex in the gadget has colour~\( c \) (thus ruling out the possibility of colour~\( c \) appearing in a bicoloured~\( P_4 \)). 
Hence, the \( k \)-colouring exhibited is a \( k \)-star colouring of the gadget component. 
Clearly, the graph in Figure~\ref{fig:for gadget component k-star colouring deg k-1} is a subgraph of the gadget component. 
Since the gadget component is \( k \)-star colourable, so is the graph in Figure~\ref{fig:for gadget component k-star colouring deg k-1}. 
Let \( U=\{u_1,u_2,\dots,u_{k-4}\} \), \( W=\{w_1,w_2,\dots,w_{k-2}\} \), \( X=\{x_1,x_2,\dots,x_{k-2}\} \), and \( Y=\{y_1,y_2,\dots,y_{k-4}\} \). 
The next lemma shows that \( U\cup W \) or \( X\cup Y \) is bicoloured by each \( k \)-star colouring of the graph in Figure~\ref{fig:for gadget component k-star colouring deg k-1}. 
This is later used to prove that \( U\cup W \) is bicoloured by each \( k \)-star colouring of the gadget component.

\begin{lemma}\label{lem:for gadget component k-star colouring deg k-1}
At least \( k \) colours \( (k\geq 7) \) are required to star colour the graph in Figure~\ref{fig:for gadget component k-star colouring deg k-1}. 
Moreover, for every \( k \)-star colouring \( f \) of the graph in Figure~\ref{fig:for gadget component k-star colouring deg k-1}, there exist distinct colours \( c_1 \) and \( c_2 \) such that 
either \( f(U\cup W)\subseteq \{c_1,c_2\} \) or \( f(X\cup Y)\subseteq \{c_1,c_2\} \). 
\end{lemma}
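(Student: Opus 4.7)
The plan is to exploit the fact that $W\cup X$ spans a copy of $K_{k-2,k-2}$ in the graph, together with a standard observation: in any star colouring of a complete bipartite graph, at most one side of the bipartition can contain a repeated colour, for otherwise one obtains a bicoloured $P_4$ using two vertices of the same colour on each side. Hence, for any $k$-star colouring $f$, at least one of $f|_W$ and $f|_X$ is injective.

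Suppose first that $f|_X$ is injective, so $|f(X)|=k-2$. Since every vertex of $W$ is adjacent to every vertex of $X$, we have $f(W)\cap f(X)=\emptyset$, and thus $f(W)$ is contained in a set of at most $k-(k-2)=2$ colours. The edge $w_{k-3}w_{k-2}$ forces $f(W)$ to contain two colours; this already delivers the lower bound $\chi_s \geq k$, for with only $k-1$ colours on hand, $f(W)$ would be restricted to a single colour and the edge $w_{k-3}w_{k-2}$ would be monochromatic.

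Write $\{c_1,c_2\}$ for the two colours absent from $X$. The key step, and the one I expect to be the main obstacle, is to argue that $f(U)\subseteq\{c_1,c_2\}$ as well. Assume for contradiction that $f(u_i)=f(x_j)$ for some $i\in\{1,\dots,k-4\}$ and some $j\in\{1,\dots,k-2\}$. For every $j'\in\{1,\dots,k-2\}\setminus\{i\}$, the four vertices $u_i,w_i,x_j,w_{j'}$ form a path in the graph, and the prohibition of bicoloured $P_4$s forces $f(w_{j'})\neq f(w_i)$. Consequently $w_i$ is the unique vertex of $W$ coloured $f(w_i)$, so every other vertex of $W$ shares the other colour in $\{c_1,c_2\}$. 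Since $u_i$ exists only for $i\leq k-4$, both $w_{k-3}$ and $w_{k-2}$ lie in $W\setminus\{w_i\}$ and are therefore monochromatic, contradicting the edge $w_{k-3}w_{k-2}$. Hence $f(u_i)\in\{c_1,c_2\}$ for every $i$, and $f(U\cup W)\subseteq\{c_1,c_2\}$.

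Finally, the graph in Figure~\ref{fig:for gadget component k-star colouring deg k-1} is symmetric under the exchange $W\leftrightarrow X$, $U\leftrightarrow Y$; the edge $y_1y_2$ that would break this symmetry is precisely the one omitted from the full gadget component. Thus the remaining case $f|_W$ injective is handled by the identical argument applied on the other side and yields $f(X\cup Y)\subseteq\{c_1',c_2'\}$ for some pair of colours, completing the proof.
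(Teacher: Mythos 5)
Your proof is correct and follows essentially the same route as the paper's: both first observe that a repeated colour on each side of the complete bipartite graph on \( W\cup X \) would create a bicoloured \( P_4 \), both use the edge \( w_{k-3}w_{k-2} \) to get the lower bound of \( k \) colours, and both rule out \( f(u_i)\in f(X) \) via the path \( u_i,w_i,x_j,w_{j'} \) before closing by symmetry. The only (immaterial) difference is in the endgame: you deduce that \( w_i \) would have to be uniquely coloured in \( W \) and contradict this with \( w_{k-3},w_{k-2} \), whereas the paper directly selects whichever of \( w_{k-3},w_{k-2} \) shares the colour of \( w_i \) as the fourth vertex of the bicoloured path.
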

\begin{proof}
Let \( f \) be a star colouring of the graph in Figure~\ref{fig:for gadget component k-star colouring deg k-1}. 
Note that repetition of colours can occur in at most one of the sets \( W \) and \( X \) (if \( f(w_i)=f(w_j) \) for distinct \( w_i,w_j\in W \) and \( f(x_p)=f(x_q) \) for distinct \( x_p,x_q\in X \), then \( w_i,x_p,w_j,x_q \) is a bicoloured \( P_4 \)). 
Hence, vertices in \( W \) have pairwise distinct colours, or vertices in \( X \) have pairwise distinct colours. 
If vertices in \( X \) have pairwise distinct colours, then \( k-2 \) colours are required for vertices in \( X \) and two new colours are required for \( w_{k-3} \) and \(w_{k-2} \) since \( w_{k-3}w_{k-2} \) is an edge. 
Similarly, if vertices in \( W \) have pairwise distinct colours, then at least \( k \) colours are required to star colour the graph in Figure~\ref{fig:for gadget component k-star colouring deg k-1}. 
Hence, at least \( k \) colours are required to star colour the graph in Figure~\ref{fig:for gadget component k-star colouring deg k-1}.

Suppose that \( f \) is a \( k \)-star colouring of the graph in Figure~\ref{fig:for gadget component k-star colouring deg k-1}. 
We know that vertices in \( W \) have pairwise distinct colours, or vertices in \( X \) have pairwise distinct colours. 
Note that both cannot occur at the same time (if vertices in \( W \) have pairwise distinct colours (i.e, \( |f(W)|=k-2 \)), then only two colours are available for each vertex in \( X \) under~\( f \), and thus at least two vertices in \( X \) have the same colour by pigeonhole principle). 
Thus, either vertices in \( W \) have pairwise distinct colours, or vertices in \( X \) have pairwise distinct colours. 
We prove that (i)~if vertices in \( X \) have pairwise distinct colours, then \( U\cup W \) is bicoloured by \( f \), and (ii)~if vertices in \( W \) have pairwise distinct colours, then \( X\cup Y \) is bicoloured by \( f \). 
By symmetry, it suffices to prove~(i). 

Suppose that the vertices in \( X \) have pairwise distinct colours. 
That is, \( k-2 \) colours are used in set \( X \), and thus only two colours, say colour~0 and colour~1, are not used in \( X \). 
Without loss of generality, assume that \( f(x_j)=j+1 \) for all \( x_j\in X \). 
For every vertex \( w_i\in W \), only two colours are available, namely colour~0 and colour~1 (i.e., \( f(w_i)\in\{0,1\} \)).

Since \( w_{k-3}w_{k-2} \) is an edge and \( f(w_{k-3}),f(w_{k-2})\in\{0,1\} \), we can assume without loss of generality that \( f(w_{k-3})=0 \) and \( f(w_{k-2})=1 \).  
Consider the colour at vertices \( w_1 \) and \( u_1 \). 
If \( f(w_1)=0 \), then \( f(u_1)\neq f(x_j) \) for any \( x_j\in X \) (otherwise, \( u_1,w_1,x_j,w_{k-3} \) is a bicoloured \( P_4 \)). 
Similarly, if \( f(w_1)=1 \), then \( f(u_1)\neq f(x_j) \) for any \( x_j\in X \) (otherwise, \( u_1,w_1,x_j,w_{k-2} \) is a bicoloured \( P_4 \)). 
Thus, in both cases, \( f(u_1)\notin\{f(x_1),f(x_2),\dots,f(x_{k-2})\}=\{2,3,\dots,k-1\} \). 
That is, \( f(u_1)\in \{0,1\} \). 
Applying the same argument to vertices \( w_j \) and \( u_j \) reveals that \( f(u_j)\in\{0,1\} \) for \( 1\leq j\leq k-4 \). 
We know that \( f(w_i)\in\{0,1\} \) for \( 1\leq i\leq k-2 \), and thus \( f(U\cup W)\subseteq \{0,1\} \). 
Since the colours~0 and 1 are chosen arbitrarily, there exist distinct colours \( c_1 \) and \( c_2 \) such that \( f(U\cup W)\subseteq \{c_1,c_2\} \). 
\end{proof}

Thanks to Figure~\ref{fig:k-star colouring gadget component deg k-1}, the gadget component admits a \( k \)-star colouring. 
Let \( f \) be a \( k \)-star colouring of the gadget component. 
Then, \( f \) is a \( k \)-star colouring of its subgraph displayed in Figure~\ref{fig:for gadget component k-star colouring deg k-1} as well, and thus either \( U\cup W \) or \( X\cup Y \) is bicoloured by \( f \) by Lemma~\ref{lem:for gadget component k-star colouring deg k-1}. 
If \( X\cup Y \) is bicoloured by \( f \), then \( x_1,y_1,y_2,x_2 \) is a path in the gadget component bicoloured by \( f \), a contradiction. 
Hence, \( U\cup W \) is bicoloured by \( f \). 
Thus, we have the following lemma. 
\begin{lemma}\label{lem:gadget component k-star colouring deg k-1}
For every \( k \)-star colouring \( f \) of the gadget component (with \( k\geq 7 \)), there exist distinct colours \( c_1 \) and \( c_2 \) such that \( f(U\cup W)\subseteq \{c_1,c_2\} \). 
\qed 
\end{lemma}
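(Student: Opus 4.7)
The statement is essentially an immediate corollary of Lemma~\ref{lem:for gadget component k-star colouring deg k-1} together with the extra edge \( y_1y_2 \) that distinguishes the gadget component (Figure~\ref{fig:gadget component k-star colouring deg k-1}) from its subgraph in Figure~\ref{fig:for gadget component k-star colouring deg k-1}. The plan is to run a short case analysis: let \( f \) be an arbitrary \( k \)-star colouring of the gadget component, restrict it to the subgraph in Figure~\ref{fig:for gadget component k-star colouring deg k-1}, apply Lemma~\ref{lem:for gadget component k-star colouring deg k-1} to obtain two colours \( c_1,c_2 \) such that either \( f(U\cup W)\subseteq\{c_1,c_2\} \) or \( f(X\cup Y)\subseteq\{c_1,c_2\} \), and rule out the second case by producing an explicit bicoloured \( P_4 \).

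More concretely, suppose for contradiction that \( f(X\cup Y)\subseteq\{c_1,c_2\} \). In the gadget component the vertices \( x_1,y_1,y_2,x_2 \) form a path (\( x_1y_1 \) and \( x_2y_2 \) are the pendant-style edges attached to \( x_1 \) and \( x_2 \), and \( y_1y_2 \) is the distinguishing edge of the gadget component). Since \( f \) is a proper colouring, adjacent vertices along this path receive different colours; as only two colours are available, the colours on \( x_1,y_1,y_2,x_2 \) must alternate between \( c_1 \) and \( c_2 \). This yields a bicoloured \( P_4 \) in the gadget component, contradicting the assumption that \( f \) is a star colouring. Hence the first alternative of Lemma~\ref{lem:for gadget component k-star colouring deg k-1} must hold, i.e.\ \( f(U\cup W)\subseteq\{c_1,c_2\} \), which is exactly the statement to be proved.

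There is no real obstacle here: once Lemma~\ref{lem:for gadget component k-star colouring deg k-1} is in hand, the only new ingredient needed is the observation that the edge \( y_1y_2 \), together with the edges \( x_1y_1 \) and \( x_2y_2 \), forces the \( X\cup Y \) alternative to be inconsistent with the star-colouring condition. The whole argument is a two-line contradiction, so the proof is just a clean write-up of this observation and does not require any further structural lemmas.
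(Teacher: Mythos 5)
Your proposal is correct and matches the paper's own argument: the paper likewise restricts $f$ to the subgraph of Figure~\ref{fig:for gadget component k-star colouring deg k-1}, invokes Lemma~\ref{lem:for gadget component k-star colouring deg k-1}, and rules out the $X\cup Y$ alternative by observing that $x_1,y_1,y_2,x_2$ would then be a bicoloured $P_4$ in the gadget component.
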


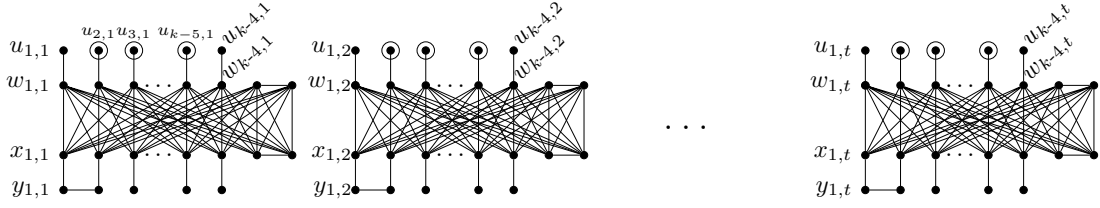
\begin{figure}[hbt]
\scalebox{0.925}{\begin{tikzpicture}[scale=0.5]
\path (0,0) node(w1) [dot][label=left:\( w_{1,1} \)]{} --++(1,0) node(w2) [dot]{} --++(1,0) node(w3) [dot]{} --++(1.5,0) node(wk-5)[dot]{} --++(1,0) node(wk-4) [dot][label={[rotate=45,yshift=-7pt,xshift=-2pt]above right:\( w_{k\text{-}4,1} \)}]{} node(wk-4 1) [dot]{} --++(1,0) node(wk-3) [dot]{} --++(1,0) node(wk-2) [dot]{};
\path (w3) -- node{\dots} (wk-5);
\path (w1) --++(0,-2) node(x1) [dot][label=left:\( x_{1,1} \)]{} --++(1,0) node(x2) [dot]{} --++(1,0) node(x3) [dot]{} --++(1.5,0) node(xk-5) [dot]{} --++(1,0) node(xk-4) [dot]{} --++(1,0) node(xk-3) [dot]{} --++(1,0) node(xk-2) [dot]{};
\path (x3) -- node{\dots} (xk-5);

\draw
(w1)--+(0,1) node(u1) [dot][label=left:\( u_{1,1} \)]{} 
(w2)--+(0,1) node(u2) [dot]{} node[terminal][label={[font=\scriptsize,label distance=-3pt]\( u_{2,1} \)}]{}
(w3)--+(0,1) node(u3) [dot]{} node[terminal][label={[font=\scriptsize,label distance=-3pt]\( u_{3,1} \)}]{} 
(wk-5)--+(0,1) node(uk-5) [dot]{} node[terminal][label={[font=\scriptsize,label distance=-3pt]\( u_{k-5,1} \)}]{} 
(wk-4)--+(0,1) node(uk-4) [dot][label={[rotate=45,yshift=-7pt,xshift=-2pt]above right:\( u_{k\text{-}4,1} \)}]{};
\draw
(x1)--+(0,-1) node(y1) [dot][label=left:\( y_{1,1} \)]{} 
(x2)--+(0,-1) node(y2) [dot]{} 
(x3)--+(0,-1) node(y3) [dot]{} 
(xk-5)--+(0,-1) node(yk-5) [dot]{}
(xk-4)--+(0,-1) node(yk-4) [dot]{}; 

\draw (wk-3)--(wk-2);
\draw (xk-3)--(xk-2);

\draw (w1)--(x1) (w1)--(x2) (w1)--(x3) (w1)--(xk-5) (w1)--(xk-4) (w1)--(xk-3) (w1)--(xk-2);
\draw (w2)--(x1) (w2)--(x2) (w2)--(x3) (w2)--(xk-5) (w2)--(xk-4) (w2)--(xk-3) (w2)--(xk-2);
\draw (w3)--(x1) (w3)--(x2) (w3)--(x3) (w3)--(xk-5) (w3)--(xk-4) (w3)--(xk-3) (w3)--(xk-2);
\draw (wk-5)--(x1) (wk-5)--(x2) (wk-5)--(x3) (wk-5)--(xk-5) (wk-5)--(xk-4) (wk-5)--(xk-3) (wk-5)--(xk-2);
\draw (wk-4)--(x1) (wk-4)--(x2) (wk-4)--(x3) (wk-4)--(xk-5) (wk-4)--(xk-4) (wk-4)--(xk-3) (wk-4)--(xk-2);
\draw (wk-3)--(x1) (wk-3)--(x2) (wk-3)--(x3) (wk-3)--(xk-5) (wk-3)--(xk-4) (wk-3)--(xk-3) (wk-3)--(xk-2);
\draw (wk-2)--(x1) (wk-2)--(x2) (wk-2)--(x3) (wk-2)--(xk-5) (wk-2)--(xk-4) (wk-2)--(xk-3) (wk-2)--(xk-2);

\draw (y1)--(y2);

\path (wk-2) --++(1.80,0) node(w1) [dot]{} node(w1 2) [dot][label={[label distance=-4pt]left:\( w_{1,2} \)}]{} --++(1,0) node(w2) [dot]{} --++(1,0) node(w3) [dot]{} --++(1.5,0) node(wk-5)[dot]{} --++(1,0) node(wk-4) [dot][label={[rotate=45,yshift=-7pt,xshift=-2pt]above right:\( w_{k\text{-}4,2} \)}]{} node(wk-4 2) [dot]{} --++(1,0) node(wk-3) [dot]{} --++(1,0) node(wk-2) [dot]{};
\path (w3) -- node{\dots} (wk-5);
\path (w1) --++(0,-2) node(x1) [dot][label={[label distance=-4pt]left:\( x_{1,2} \)}]{} --++(1,0) node(x2) [dot]{} --++(1,0) node(x3) [dot]{} --++(1.5,0) node(xk-5) [dot]{} --++(1,0) node(xk-4) [dot]{} --++(1,0) node(xk-3) [dot]{} --++(1,0) node(xk-2) [dot]{};
\path (x3) -- node{\dots} (xk-5);

\draw
(w1)--+(0,1) node(u1) [dot][label={[label distance=-4pt]left:\( u_{1,2} \)}]{} 
(w2)--+(0,1) node(u2) [dot]{} node[terminal]{}
(w3)--+(0,1) node(u3) [dot]{} node[terminal]{}
(wk-5)--+(0,1) node(uk-5) [dot]{} node[terminal]{} 
(wk-4)--+(0,1) node(uk-4) [dot][label={[rotate=45,yshift=-7pt,xshift=-2pt]above right:\( u_{k\text{-}4,2} \)}]{};
\draw
(x1)--+(0,-1) node(y1) [dot][label={[label distance=-4pt]left:\( y_{1,2} \)}]{} 
(x2)--+(0,-1) node(y2) [dot]{} 
(x3)--+(0,-1) node(y3) [dot]{} 
(xk-5)--+(0,-1) node(yk-5) [dot]{}
(xk-4)--+(0,-1) node(yk-4) [dot]{}; 

\draw (wk-3)--(wk-2);
\draw (xk-3)--(xk-2);

\draw (w1)--(x1) (w1)--(x2) (w1)--(x3) (w1)--(xk-5) (w1)--(xk-4) (w1)--(xk-3) (w1)--(xk-2);
\draw (w2)--(x1) (w2)--(x2) (w2)--(x3) (w2)--(xk-5) (w2)--(xk-4) (w2)--(xk-3) (w2)--(xk-2);
\draw (w3)--(x1) (w3)--(x2) (w3)--(x3) (w3)--(xk-5) (w3)--(xk-4) (w3)--(xk-3) (w3)--(xk-2);
\draw (wk-5)--(x1) (wk-5)--(x2) (wk-5)--(x3) (wk-5)--(xk-5) (wk-5)--(xk-4) (wk-5)--(xk-3) (wk-5)--(xk-2);
\draw (wk-4)--(x1) (wk-4)--(x2) (wk-4)--(x3) (wk-4)--(xk-5) (wk-4)--(xk-4) (wk-4)--(xk-3) (wk-4)--(xk-2);
\draw (wk-3)--(x1) (wk-3)--(x2) (wk-3)--(x3) (wk-3)--(xk-5) (wk-3)--(xk-4) (wk-3)--(xk-3) (wk-3)--(xk-2);
\draw (wk-2)--(x1) (wk-2)--(x2) (wk-2)--(x3) (wk-2)--(xk-5) (wk-2)--(xk-4) (wk-2)--(xk-3) (wk-2)--(xk-2);

\draw (y1)--(y2);

\path (wk-2) --++(2,0) coordinate(w1 3);

\path (wk-2) --++(8.00,0) node(w1) [dot][label=left:\( w_{1,t} \)]{} node(w1 t) [dot]{} --++(1,0) node(w2) [dot]{} --++(1,0) node(w3) [dot]{} --++(1.5,0) node(wk-5)[dot]{} --++(1,0) node(wk-4) [dot][label={[rotate=45,yshift=-7pt,xshift=-2pt]above right:\( w_{k\text{-}4,t} \)}]{} --++(1,0) node(wk-3) [dot]{} --++(1,0) node(wk-2) [dot]{};
\path (w3) -- node{\dots} (wk-5);
\path (w1) --++(0,-2) node(x1) [dot][label=left:\( x_{1,t} \)]{} --++(1,0) node(x2) [dot]{} --++(1,0) node(x3) [dot]{} --++(1.5,0) node(xk-5) [dot]{} --++(1,0) node(xk-4) [dot]{} --++(1,0) node(xk-3) [dot]{} --++(1,0) node(xk-2) [dot]{};
\path (x3) -- node{\dots} (xk-5);

\draw
(w1)--+(0,1) node(u1) [dot][label=left:\( u_{1,t} \)]{} 
(w2)--+(0,1) node(u2) [dot]{} node[terminal]{}
(w3)--+(0,1) node(u3) [dot]{} node[terminal]{}
(wk-5)--+(0,1) node(uk-5) [dot]{} node[terminal]{} 
(wk-4)--+(0,1) node(uk-4) [dot][label={[rotate=45,yshift=-7pt,xshift=-2pt]above right:\( u_{k\text{-}4,t} \)}]{};
\draw
(x1)--+(0,-1) node(y1) [dot][label=left:\( y_{1,t} \)]{} 
(x2)--+(0,-1) node(y2) [dot]{} 
(x3)--+(0,-1) node(y3) [dot]{} 
(xk-5)--+(0,-1) node(yk-5) [dot]{}
(xk-4)--+(0,-1) node(yk-4) [dot]{}; 

\draw (wk-3)--(wk-2);
\draw (xk-3)--(xk-2);

\draw (w1)--(x1) (w1)--(x2) (w1)--(x3) (w1)--(xk-5) (w1)--(xk-4) (w1)--(xk-3) (w1)--(xk-2);
\draw (w2)--(x1) (w2)--(x2) (w2)--(x3) (w2)--(xk-5) (w2)--(xk-4) (w2)--(xk-3) (w2)--(xk-2);
\draw (w3)--(x1) (w3)--(x2) (w3)--(x3) (w3)--(xk-5) (w3)--(xk-4) (w3)--(xk-3) (w3)--(xk-2);
\draw (wk-5)--(x1) (wk-5)--(x2) (wk-5)--(x3) (wk-5)--(xk-5) (wk-5)--(xk-4) (wk-5)--(xk-3) (wk-5)--(xk-2);
\draw (wk-4)--(x1) (wk-4)--(x2) (wk-4)--(x3) (wk-4)--(xk-5) (wk-4)--(xk-4) (wk-4)--(xk-3) (wk-4)--(xk-2);
\draw (wk-3)--(x1) (wk-3)--(x2) (wk-3)--(x3) (wk-3)--(xk-5) (wk-3)--(xk-4) (wk-3)--(xk-3) (wk-3)--(xk-2);
\draw (wk-2)--(x1) (wk-2)--(x2) (wk-2)--(x3) (wk-2)--(xk-5) (wk-2)--(xk-4) (wk-2)--(xk-3) (wk-2)--(xk-2);

\draw (y1)--(y2);

\path (w1) --++(-4,0) coordinate(u1 t);

\path (w1 3) -- node[yshift=-0.6cm,font=\LARGE]{\dots} (u1 t);
\end{tikzpicture}
}
\caption{\( t \) copies of the gadget component, where \( k\geq 7 \).}
\label{fig:prep chain gadget k-star colouring deg k-1}
\end{figure}

\begin{figure}[hbt]
\scalebox{0.925}{\begin{tikzpicture}[scale=0.5]
\path (0,0) node(w1) [dot][label=left:\( w_{1,1} \)]{} --++(1,0) node(w2) [dot]{} --++(1,0) node(w3) [dot]{} --++(1.5,0) node(wk-5)[dot]{} --++(1,0) node(wk-4) [dot]{} node(wk-4 1) [dot]{} --++(1,0) node(wk-3) [dot]{} --++(1,0) node(wk-2) [dot]{};
\path (w3) -- node{\dots} (wk-5);
\path (w1) --++(0,-2) node(x1) [dot][label=left:\( x_{1,1} \)]{} --++(1,0) node(x2) [dot]{} --++(1,0) node(x3) [dot]{} --++(1.5,0) node(xk-5) [dot]{} --++(1,0) node(xk-4) [dot]{} --++(1,0) node(xk-3) [dot]{} --++(1,0) node(xk-2) [dot]{};
\path (x3) -- node{\dots} (xk-5);

\draw
(w1)--+(0,1) node(u1) [dot][label=left:\( u_{1,1} \)]{} 
(w2)--+(0,1) node(u2) [dot]{} node[terminal][label={[font=\scriptsize,label distance=-3pt]\( u_{2,1} \)}]{}
(w3)--+(0,1) node(u3) [dot]{} node[terminal][label={[font=\scriptsize,label distance=-3pt]\( u_{3,1} \)}]{} 
(wk-5)--+(0,1) node(uk-5) [dot]{} node[terminal][label={[font=\scriptsize,label distance=-3pt]\( u_{k-5,1} \)}]{};
\draw
(x1)--+(0,-1) node(y1) [dot][label=left:\( y_{1,1} \)]{} 
(x2)--+(0,-1) node(y2) [dot]{} 
(x3)--+(0,-1) node(y3) [dot]{} 
(xk-5)--+(0,-1) node(yk-5) [dot]{}
(xk-4)--+(0,-1) node(yk-4) [dot]{}; 

\draw (wk-3)--(wk-2);
\draw (xk-3)--(xk-2);

\draw (w1)--(x1) (w1)--(x2) (w1)--(x3) (w1)--(xk-5) (w1)--(xk-4) (w1)--(xk-3) (w1)--(xk-2);
\draw (w2)--(x1) (w2)--(x2) (w2)--(x3) (w2)--(xk-5) (w2)--(xk-4) (w2)--(xk-3) (w2)--(xk-2);
\draw (w3)--(x1) (w3)--(x2) (w3)--(x3) (w3)--(xk-5) (w3)--(xk-4) (w3)--(xk-3) (w3)--(xk-2);
\draw (wk-5)--(x1) (wk-5)--(x2) (wk-5)--(x3) (wk-5)--(xk-5) (wk-5)--(xk-4) (wk-5)--(xk-3) (wk-5)--(xk-2);
\draw (wk-4)--(x1) (wk-4)--(x2) (wk-4)--(x3) (wk-4)--(xk-5) (wk-4)--(xk-4) (wk-4)--(xk-3) (wk-4)--(xk-2);
\draw (wk-3)--(x1) (wk-3)--(x2) (wk-3)--(x3) (wk-3)--(xk-5) (wk-3)--(xk-4) (wk-3)--(xk-3) (wk-3)--(xk-2);
\draw (wk-2)--(x1) (wk-2)--(x2) (wk-2)--(x3) (wk-2)--(xk-5) (wk-2)--(xk-4) (wk-2)--(xk-3) (wk-2)--(xk-2);

\draw (y1)--(y2);

\path (wk-2) --++(1.80,0) node(w1) [dot]{} node(w1 2) [dot]{} --++(1,0) node(w2) [dot]{} --++(1,0) node(w3) [dot]{} --++(1.5,0) node(wk-5)[dot]{} --++(1,0) node(wk-4) [dot]{} node(wk-4 2) [dot]{} --++(1,0) node(wk-3) [dot]{} --++(1,0) node(wk-2) [dot]{};
\path (w3) -- node{\dots} (wk-5);
\path (w1) --++(0,-2) node(x1) [dot][label={[label distance=-4pt]left:\( x_{1,2} \)}]{} --++(1,0) node(x2) [dot]{} --++(1,0) node(x3) [dot]{} --++(1.5,0) node(xk-5) [dot]{} --++(1,0) node(xk-4) [dot]{} --++(1,0) node(xk-3) [dot]{} --++(1,0) node(xk-2) [dot]{};
\path (x3) -- node{\dots} (xk-5);

\draw
(w2)--+(0,1) node(u2) [dot]{} node[terminal]{}
(w3)--+(0,1) node(u3) [dot]{} node[terminal]{}
(wk-5)--+(0,1) node(uk-5) [dot]{} node[terminal]{}; 
\draw
(x1)--+(0,-1) node(y1) [dot][label={[label distance=-4pt]left:\( y_{1,2} \)}]{} 
(x2)--+(0,-1) node(y2) [dot]{} 
(x3)--+(0,-1) node(y3) [dot]{} 
(xk-5)--+(0,-1) node(yk-5) [dot]{}
(xk-4)--+(0,-1) node(yk-4) [dot]{}; 

\draw (wk-3)--(wk-2);
\draw (xk-3)--(xk-2);

\draw (w1)--(x1) (w1)--(x2) (w1)--(x3) (w1)--(xk-5) (w1)--(xk-4) (w1)--(xk-3) (w1)--(xk-2);
\draw (w2)--(x1) (w2)--(x2) (w2)--(x3) (w2)--(xk-5) (w2)--(xk-4) (w2)--(xk-3) (w2)--(xk-2);
\draw (w3)--(x1) (w3)--(x2) (w3)--(x3) (w3)--(xk-5) (w3)--(xk-4) (w3)--(xk-3) (w3)--(xk-2);
\draw (wk-5)--(x1) (wk-5)--(x2) (wk-5)--(x3) (wk-5)--(xk-5) (wk-5)--(xk-4) (wk-5)--(xk-3) (wk-5)--(xk-2);
\draw (wk-4)--(x1) (wk-4)--(x2) (wk-4)--(x3) (wk-4)--(xk-5) (wk-4)--(xk-4) (wk-4)--(xk-3) (wk-4)--(xk-2);
\draw (wk-3)--(x1) (wk-3)--(x2) (wk-3)--(x3) (wk-3)--(xk-5) (wk-3)--(xk-4) (wk-3)--(xk-3) (wk-3)--(xk-2);
\draw (wk-2)--(x1) (wk-2)--(x2) (wk-2)--(x3) (wk-2)--(xk-5) (wk-2)--(xk-4) (wk-2)--(xk-3) (wk-2)--(xk-2);

\draw (y1)--(y2);

\path (wk-2) --++(2,0) node(w1 3) [dot]{};

\path (wk-2) --++(8.00,0) node(w1) [dot]{} node(w1 t) [dot]{} --++(1,0) node(w2) [dot]{} --++(1,0) node(w3) [dot]{} --++(1.5,0) node(wk-5)[dot]{} --++(1,0) node(wk-4) [dot][label={[rotate=45,yshift=-7pt,xshift=-2pt]above right:\( w_{k\text{-}4,t} \)}]{} --++(1,0) node(wk-3) [dot]{} --++(1,0) node(wk-2) [dot]{};
\path (w3) -- node{\dots} (wk-5);
\path (w1) --++(0,-2) node(x1) [dot][label=left:\( x_{1,t} \)]{} --++(1,0) node(x2) [dot]{} --++(1,0) node(x3) [dot]{} --++(1.5,0) node(xk-5) [dot]{} --++(1,0) node(xk-4) [dot]{} --++(1,0) node(xk-3) [dot]{} --++(1,0) node(xk-2) [dot]{};
\path (x3) -- node{\dots} (xk-5);

\draw
(w2)--+(0,1) node(u2) [dot]{} node[terminal]{}
(w3)--+(0,1) node(u3) [dot]{} node[terminal]{}
(wk-5)--+(0,1) node(uk-5) [dot]{} node[terminal]{} 
(wk-4)--+(0,1) node(uk-4) [dot][label={[rotate=45,yshift=-7pt,xshift=-2pt]above right:\( u_{k\text{-}4,t} \)}]{};
\draw
(x1)--+(0,-1) node(y1) [dot][label=left:\( y_{1,t} \)]{} 
(x2)--+(0,-1) node(y2) [dot]{} 
(x3)--+(0,-1) node(y3) [dot]{} 
(xk-5)--+(0,-1) node(yk-5) [dot]{}
(xk-4)--+(0,-1) node(yk-4) [dot]{}; 

\draw (wk-3)--(wk-2);
\draw (xk-3)--(xk-2);

\draw (w1)--(x1) (w1)--(x2) (w1)--(x3) (w1)--(xk-5) (w1)--(xk-4) (w1)--(xk-3) (w1)--(xk-2);
\draw (w2)--(x1) (w2)--(x2) (w2)--(x3) (w2)--(xk-5) (w2)--(xk-4) (w2)--(xk-3) (w2)--(xk-2);
\draw (w3)--(x1) (w3)--(x2) (w3)--(x3) (w3)--(xk-5) (w3)--(xk-4) (w3)--(xk-3) (w3)--(xk-2);
\draw (wk-5)--(x1) (wk-5)--(x2) (wk-5)--(x3) (wk-5)--(xk-5) (wk-5)--(xk-4) (wk-5)--(xk-3) (wk-5)--(xk-2);
\draw (wk-4)--(x1) (wk-4)--(x2) (wk-4)--(x3) (wk-4)--(xk-5) (wk-4)--(xk-4) (wk-4)--(xk-3) (wk-4)--(xk-2);
\draw (wk-3)--(x1) (wk-3)--(x2) (wk-3)--(x3) (wk-3)--(xk-5) (wk-3)--(xk-4) (wk-3)--(xk-3) (wk-3)--(xk-2);
\draw (wk-2)--(x1) (wk-2)--(x2) (wk-2)--(x3) (wk-2)--(xk-5) (wk-2)--(xk-4) (wk-2)--(xk-3) (wk-2)--(xk-2);

\draw (y1)--(y2);

\path (w1) --++(-4,0) node(u1 t) [dot]{};

\draw (wk-4 1) to[out=45,in=135] node[pos=0.30,above=20pt,rotate=70,font=\small]{\( {w_{k\text{-}4,1}=u_{1,2}} \)} node[pos=0.70,above=20pt,rotate=-70,font=\small]{\( {w_{1,2}=u_{k\text{-}4,1}} \)} (w1 2);

\draw (wk-4 2) to[out=45,in=135] node[pos=0.30,above=20pt,rotate=70,font=\small]{\( {w_{k\text{-}4,2}=u_{1,3}} \)} node[pos=0.70,above=20pt,rotate=-70,font=\small]{\( {w_{1,3}=u_{k\text{-}4,2}} \)} (w1 3);

\draw (u1 t) to[out=45,in=135] node[pos=0.30,above=20pt,rotate=70,font=\small]{\( {w_{k\text{-}4,t\text{-}1}=u_{1,t}} \)} node[pos=0.70,above=23pt,rotate=-70,font=\small]{\( {w_{1,t}=u_{k\text{-}4,t\text{-}1}} \)} (w1 t);

\path (w1 3) -- node[yshift=-0.6cm,font=\LARGE]{\dots} (u1 t);
\end{tikzpicture}
}
\caption[Chain gadget.]{Chain gadget (with \( t \) copies of the gadget component), where \( k\geq 7 \).}
\label{fig:chain gadget k-star colouring deg k-1}
\end{figure}
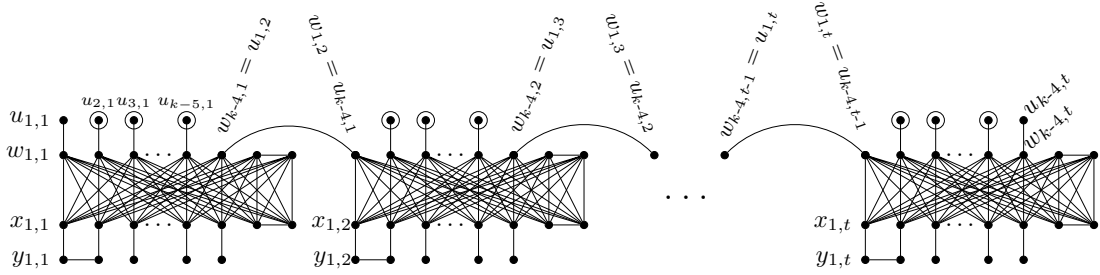

Using the gadget component, we construct a gadget called the chain gadget. 
To construct a chain gadget, first introduce \( t \) copies of the gadget component (for some \( t\in \mathbb{N} \)). 
Let us refer to the vertex \( w_1 \) (resp.\ \( u_1 \)) in the first copy of the gadget component as \( w_{1,1} \) (resp.\ \( u_{1,1} \)), the vertex \( w_1 \) in the second copy of the gadget component as \( w_{1,2} \), and so on (see Figure~\ref{fig:prep chain gadget k-star colouring deg k-1}). 
The vertices \( u_{2,1},\scalebox{0.5}{\dots},u_{k-5,1},\dots, u_{2,t},\scalebox{0.5}{\dots},u_{k-5,t} \) are marked as terminals; thus, we have \( k-6 \) (\( \geq 1 \)) terminals per gadget component. 
Let \( U_1=\{u_{1,1},u_{2,1},\dots,u_{k-4,1}\} \), \( W_1=\{w_{1,1},w_{2,1},\dots,w_{k-2,1}\} \), \( X_1=\{x_{1,1},x_{2,1},\dots,x_{k-2,1}\} \), \( Y_1=\{y_{1,1},y_{2,1},\dots,y_{k-4,1}\} \), and so on. 
Next, we perform a sequence of vertex identification operations (see Section~\ref{sec:def} for definition). 
Identify the vertex \( w_{k-4,1}  \) with \( u_{1,2}  \) and identify the vertex \( u_{k-4,1}  \) with \( w_{1,2} \). 
This operation in this context is the same as deleting vertices \( u_{k-4,1} \) and \( u_{1,2} \), and adding edge \( w_{k-4,1}w_{1,2} \); the small technical difference is that with vertex identification, the vertex \( w_{k-4,1} \) can also be referred to as \( u_{1,2} \) and the vertex \( w_{1,2} \) can also be referred to as \( u_{k-4,1} \). 
This small technical difference is the reason we prefer to present the operation as vertex identification. 
In general, for \( \ell\in\{1,2,\dots,t-1\} \), identify the vertex \( w_{k-4,\ell}  \) with \( u_{1,\ell+1}  \) and identify the vertex \( u_{k-4,\ell}  \) with \( w_{1,\ell+1} \) (compare Figure~\ref{fig:prep chain gadget k-star colouring deg k-1} with Figure~\ref{fig:chain gadget k-star colouring deg k-1}). 
Observe that in the chain gadget (see Figure~\ref{fig:chain gadget k-star colouring deg k-1}), the set \( U_1\cup W_1\cup X_1\cup Y_1 \) induces a copy of the gadget component, which we shall call as the first copy of the gadget component in the chain gadget. 
The fact that the vertex \( w_{k-4,1} (=u_{1,2}) \) also belongs to \( U_2 \) and the vertex \( u_{k-4,1} (=w_{1,2}) \) also belongs to \( W_2 \) does not cause us trouble. 
Similarly, for \( 1\leq \ell\leq t \), the subgraph of the chain gadget induced by \( U_\ell\cup W_\ell\cup X_\ell\cup Y_\ell \) is the \( \ell \)th copy of the gadget component in the chain gadget.

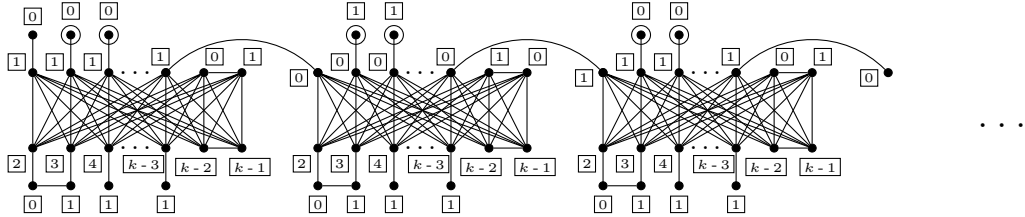
\begin{figure}[hbt]
\centering
\begin{tikzpicture}[scale=0.5]\path (0,0) node(w1) [dot][label={[vcolour,font=\tiny,xshift=-6pt,yshift=-3pt]1}]{} --++(1,0) node(w2) [dot][label={[vcolour,font=\tiny,xshift=-6pt,yshift=-3pt]1}]{} --++(1,0) node(w3) [dot][label={[vcolour,font=\tiny,xshift=-6pt,yshift=-3pt]1}]{} --++(1.5,0) node(wk-4) [dot][label={[vcolour,font=\tiny,xshift=-2pt,yshift=-1pt]1}]{} node(wk-4 1) [dot]{} --++(1,0) node(wk-3) [dot][label={[vcolour,font=\tiny,yshift=-1pt,xshift=4pt]0}]{} --++(1,0) node(wk-2) [dot][label={[vcolour,font=\tiny,yshift=-1pt,xshift=4pt]1}]{};
\path (w3) -- node{\dots} (wk-4);
\path (w1) --++(0,-2) node(x1) [dot][label={[vcolour,font=\tiny]below left:2}]{} --++(1,0) node(x2) [dot][label={[vcolour,font=\tiny]below left:3}]{} --++(1,0) node(x3) [dot][label={[vcolour,font=\tiny]below left:4}]{} --++(1.5,0) node(xk-4) [dot][label={[vcolour,font=\tiny,xshift=2pt]below left:\( k\,\text{-}\,3 \)}]{} --++(1,0) node(xk-3) [dot][label={[vcolour,font=\tiny,xshift=-3pt]below:\( k\,\text{-}\,2 \)}]{} --++(1,0) node(xk-2) [dot][label={[vcolour,font=\tiny,xshift=3pt]below:\( k\,\text{-}\,1 \)}]{};
\path (x3) -- node{\dots} (xk-4);

\draw
(w1)--+(0,1) node(u1) [dot][label={[vcolour,font=\tiny]0}]{} 
(w2)--+(0,1) node(u2) [dot]{} node[terminal][label={[vcolour,font=\tiny]0}]{}
(w3)--+(0,1) node(u3) [dot]{} node[terminal][label={[vcolour,font=\tiny]0}]{}; 
\draw
(x1)--+(0,-1) node(y1) [dot][label={[vcolour,font=\tiny]below:0}]{} 
(x2)--+(0,-1) node(y2) [dot][label={[vcolour,font=\tiny]below:1}]{} 
(x3)--+(0,-1) node(y3) [dot][label={[vcolour,font=\tiny]below:1}]{} 
(xk-4)--+(0,-1) node(yk-4) [dot][label={[vcolour,font=\tiny]below:1}]{}; 

\draw (wk-3)--(wk-2);
\draw (xk-3)--(xk-2);

\draw (w1)--(x1) (w1)--(x2) (w1)--(x3) (w1)--(xk-4) (w1)--(xk-3) (w1)--(xk-2);
\draw (w2)--(x1) (w2)--(x2) (w2)--(x3) (w2)--(xk-4) (w2)--(xk-3) (w2)--(xk-2);
\draw (w3)--(x1) (w3)--(x2) (w3)--(x3) (w3)--(xk-4) (w3)--(xk-3) (w3)--(xk-2);
\draw (wk-4)--(x1) (wk-4)--(x2) (wk-4)--(x3) (wk-4)--(xk-4) (wk-4)--(xk-3) (wk-4)--(xk-2);
\draw (wk-3)--(x1) (wk-3)--(x2) (wk-3)--(x3) (wk-3)--(xk-4) (wk-3)--(xk-3) (wk-3)--(xk-2);
\draw (wk-2)--(x1) (wk-2)--(x2) (wk-2)--(x3) (wk-2)--(xk-4) (wk-2)--(xk-3) (wk-2)--(xk-2);

\draw (y1)--(y2);

\path (wk-2) --++(2,0) node(w1) [dot][label={[vcolour,font=\tiny,yshift=-2pt]left:0}]{} node(w1 2) [dot]{} --++(1,0) node(w2) [dot][label={[vcolour,font=\tiny,xshift=-6pt,yshift=-3pt]0}]{} --++(1,0) node(w3) [dot][label={[vcolour,font=\tiny,xshift=-6pt,yshift=-3pt]0}]{} --++(1.5,0) node(wk-4) [dot][label={[vcolour,font=\tiny,xshift=-2pt,yshift=-1pt]0}]{} node(wk-4 2) [dot]{} --++(1,0) node(wk-3) [dot][label={[vcolour,font=\tiny,yshift=-1pt,xshift=4pt]1}]{} --++(1,0) node(wk-2) [dot][label={[vcolour,font=\tiny,yshift=-1pt,xshift=4pt]0}]{};
\path (w3) -- node{\dots} (wk-4);
\path (w1) --++(0,-2) node(x1) [dot][label={[vcolour,font=\tiny]below left:2}]{} --++(1,0) node(x2) [dot][label={[vcolour,font=\tiny]below left:3}]{} --++(1,0) node(x3) [dot][label={[vcolour,font=\tiny]below left:4}]{} --++(1.5,0) node(xk-4) [dot][label={[vcolour,font=\tiny,xshift=2pt]below left:\( k\,\text{-}\,3 \)}]{} --++(1,0) node(xk-3) [dot][label={[vcolour,font=\tiny,xshift=-3pt]below:\( k\,\text{-}\,2 \)}]{} --++(1,0) node(xk-2) [dot][label={[vcolour,font=\tiny,xshift=3pt]below:\( k\,\text{-}\,1 \)}]{};
\path (x3) -- node{\dots} (xk-4);

\draw
(w2)--+(0,1) node(u2) [dot]{} node[terminal][label={[vcolour,font=\tiny]1}]{}
(w3)--+(0,1) node(u3) [dot]{} node[terminal][label={[vcolour,font=\tiny]1}]{}; 
\draw
(x1)--+(0,-1) node(y1) [dot][label={[vcolour,font=\tiny]below:0}]{} 
(x2)--+(0,-1) node(y2) [dot][label={[vcolour,font=\tiny]below:1}]{} 
(x3)--+(0,-1) node(y3) [dot][label={[vcolour,font=\tiny]below:1}]{} 
(xk-4)--+(0,-1) node(yk-4) [dot][label={[vcolour,font=\tiny]below:1}]{}; 

\draw (wk-3)--(wk-2);
\draw (xk-3)--(xk-2);

\draw (w1)--(x1) (w1)--(x2) (w1)--(x3) (w1)--(xk-4) (w1)--(xk-3) (w1)--(xk-2);
\draw (w2)--(x1) (w2)--(x2) (w2)--(x3) (w2)--(xk-4) (w2)--(xk-3) (w2)--(xk-2);
\draw (w3)--(x1) (w3)--(x2) (w3)--(x3) (w3)--(xk-4) (w3)--(xk-3) (w3)--(xk-2);
\draw (wk-4)--(x1) (wk-4)--(x2) (wk-4)--(x3) (wk-4)--(xk-4) (wk-4)--(xk-3) (wk-4)--(xk-2);
\draw (wk-3)--(x1) (wk-3)--(x2) (wk-3)--(x3) (wk-3)--(xk-4) (wk-3)--(xk-3) (wk-3)--(xk-2);
\draw (wk-2)--(x1) (wk-2)--(x2) (wk-2)--(x3) (wk-2)--(xk-4) (wk-2)--(xk-3) (wk-2)--(xk-2);

\draw (y1)--(y2);

\path (wk-2) --++(2,0) node(w1) [dot][label={[vcolour,font=\tiny,yshift=-2pt]left:1}]{} node(w1 3) [dot]{} --++(1,0) node(w2) [dot][label={[vcolour,font=\tiny,xshift=-6pt,yshift=-3pt]1}]{} --++(1,0) node(w3) [dot][label={[vcolour,font=\tiny,xshift=-6pt,yshift=-3pt]1}]{} --++(1.5,0) node(wk-4) [dot][label={[vcolour,font=\tiny,xshift=-2pt,yshift=-1pt]1}]{} node(wk-4 3) [dot]{} --++(1,0) node(wk-3) [dot][label={[vcolour,font=\tiny,yshift=-1pt,xshift=4pt]0}]{} --++(1,0) node(wk-2) [dot][label={[vcolour,font=\tiny,yshift=-1pt,xshift=4pt]1}]{};
\path (w3) -- node{\dots} (wk-4);
\path (w1) --++(0,-2) node(x1) [dot][label={[vcolour,font=\tiny]below left:2}]{} --++(1,0) node(x2) [dot][label={[vcolour,font=\tiny]below left:3}]{} --++(1,0) node(x3) [dot][label={[vcolour,font=\tiny]below left:4}]{} --++(1.5,0) node(xk-4) [dot][label={[vcolour,font=\tiny,xshift=2pt]below left:\( k\,\text{-}\,3 \)}]{} --++(1,0) node(xk-3) [dot][label={[vcolour,font=\tiny,xshift=-3pt]below:\( k\,\text{-}\,2 \)}]{} --++(1,0) node(xk-2) [dot][label={[vcolour,font=\tiny,xshift=3pt]below:\( k\,\text{-}\,1 \)}]{};
\path (x3) -- node{\dots} (xk-4);

\draw
(w2)--+(0,1) node(u2) [dot]{} node[terminal][label={[vcolour,font=\tiny]0}]{}
(w3)--+(0,1) node(u3) [dot]{} node[terminal][label={[vcolour,font=\tiny]0}]{};
\draw
(x1)--+(0,-1) node(y1) [dot][label={[vcolour,font=\tiny]below:0}]{} 
(x2)--+(0,-1) node(y2) [dot][label={[vcolour,font=\tiny]below:1}]{} 
(x3)--+(0,-1) node(y3) [dot][label={[vcolour,font=\tiny]below:1}]{} 
(xk-4)--+(0,-1) node(yk-4) [dot][label={[vcolour,font=\tiny]below:1}]{}; 

\draw (wk-3)--(wk-2);
\draw (xk-3)--(xk-2);

\draw (w1)--(x1) (w1)--(x2) (w1)--(x3) (w1)--(xk-4) (w1)--(xk-3) (w1)--(xk-2);
\draw (w2)--(x1) (w2)--(x2) (w2)--(x3) (w2)--(xk-4) (w2)--(xk-3) (w2)--(xk-2);
\draw (w3)--(x1) (w3)--(x2) (w3)--(x3) (w3)--(xk-4) (w3)--(xk-3) (w3)--(xk-2);
\draw (wk-4)--(x1) (wk-4)--(x2) (wk-4)--(x3) (wk-4)--(xk-4) (wk-4)--(xk-3) (wk-4)--(xk-2);
\draw (wk-3)--(x1) (wk-3)--(x2) (wk-3)--(x3) (wk-3)--(xk-4) (wk-3)--(xk-3) (wk-3)--(xk-2);
\draw (wk-2)--(x1) (wk-2)--(x2) (wk-2)--(x3) (wk-2)--(xk-4) (wk-2)--(xk-3) (wk-2)--(xk-2);

\draw (y1)--(y2);

\path (wk-2) --++(2,0) node(u1 3) [dot][label={[vcolour,font=\tiny,yshift=-2pt]left:0}]{};

\draw (wk-4 1) to[out=45,in=135] (w1 2);

\draw (wk-4 2) to[out=45,in=135] (w1 3);

\draw (wk-4 3) to[out=45,in=135] (u1 3);

\node [below right=0.5cm and 1cm of u1 3,font=\LARGE]{\dots};
\end{tikzpicture}
\caption{A \( k \)-star colouring of the chain gadget, where \( k\geq 7 \).}
\label{fig:k-star colouring chain gadget deg k-1}
\end{figure}

A \( k \)-star colouring of the chain gadget is exhibited in Figure~\ref{fig:k-star colouring chain gadget deg k-1}.

\begin{lemma}\label{lem:chain gadget k-star colouring deg k-1}
For every \( k \)-star colouring of the chain gadget (with \( k\geq 7 \)), there exist distinct colours \( c_1 \) and \( c_2 \) such that the terminals of the gadget and their neighbours within the gadget are coloured either \( c_1 \) or \( c_2 \). 
\end{lemma}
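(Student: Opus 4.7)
Fix a $k$-star colouring $f$ of the chain gadget. The plan is to apply Lemma~\ref{lem:gadget component k-star colouring deg k-1} to every copy of the gadget component and then show that the two-colour palette propagates unchanged along the chain.

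First, for each $\ell \in \{1,\dots,t\}$, the subgraph induced by $U_\ell \cup W_\ell \cup X_\ell \cup Y_\ell$ is a copy of the gadget component, so Lemma~\ref{lem:gadget component k-star colouring deg k-1} gives distinct colours $c_1^{(\ell)}, c_2^{(\ell)}$ such that $f(U_\ell \cup W_\ell) \subseteq \{c_1^{(\ell)}, c_2^{(\ell)}\}$. Next I will show that $\{c_1^{(\ell)}, c_2^{(\ell)}\} = \{c_1^{(\ell+1)}, c_2^{(\ell+1)}\}$ for every $\ell < t$. The identifications $w_{k-4,\ell} = u_{1,\ell+1}$ and $u_{k-4,\ell} = w_{1,\ell+1}$ place both shared vertices simultaneously in $U_\ell \cup W_\ell$ and in $U_{\ell+1} \cup W_{\ell+1}$. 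In the $(\ell{+}1)$th copy of the gadget component the edge $u_1 w_1$ becomes the edge between $u_{1,\ell+1} (= w_{k-4,\ell})$ and $w_{1,\ell+1} (= u_{k-4,\ell})$, so $f$ assigns them two distinct colours. These two colours must lie in $\{c_1^{(\ell)}, c_2^{(\ell)}\}$, and hence equal $\{c_1^{(\ell)}, c_2^{(\ell)}\}$; the same pair lies in $\{c_1^{(\ell+1)}, c_2^{(\ell+1)}\}$, forcing the two pairs to coincide. A straightforward induction on $\ell$ then yields common colours $c_1, c_2$ with $f(U_\ell \cup W_\ell) \subseteq \{c_1, c_2\}$ for every $\ell$.

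Finally, every terminal is of the form $u_{j,\ell}$ with $2 \le j \le k-5$, and in the chain gadget its unique neighbour is $w_{j,\ell}$ (the terminals are not involved in any identification). Thus each terminal and its neighbour lie in $U_\ell \cup W_\ell \subseteq \{c_1, c_2\}$, which is exactly the conclusion of the lemma.

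The only substantive step is the palette-propagation argument of the second paragraph; everything else is a direct application of Lemma~\ref{lem:gadget component k-star colouring deg k-1} and the definition of the chain gadget. The subtlety to get right is that the adjacency between the two identified vertices is provided by the $u_1 w_1$ edge present in \emph{every} copy of the gadget component (in particular, in copy $\ell+1$), which is what forces both colours $c_1^{(\ell)}$ and $c_2^{(\ell)}$ to actually appear on the pair of vertices shared between copies $\ell$ and $\ell+1$.
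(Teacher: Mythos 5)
Your proposal is correct and follows essentially the same route as the paper: apply Lemma~\ref{lem:gadget component k-star colouring deg k-1} to each induced copy of the gadget component, then use the edge between the two identified vertices $w_{k-4,\ell}=u_{1,\ell+1}$ and $u_{k-4,\ell}=w_{1,\ell+1}$ (which lies in both consecutive copies) to force both colours of the pair onto those two vertices and thereby propagate the same two-colour palette along the chain by induction. The paper phrases this as a forward propagation from the first copy rather than equating palettes of adjacent copies, but the key step is identical.
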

\begin{proof}
Let \( f \) be a \( k \)-star colouring of the chain gadget. 
Applying Lemma~\ref{lem:gadget component k-star colouring deg k-1} to the first (resp.\ second) copy of the gadget component in the chain gadget reveals that \( U_1\cup W_1 \) (resp.\ \( U_2\cup W_2 \)) is bicoloured by \( f \). 
Suppose that \( U_1\cup W_1 \) is bicoloured by \( f \) using two colours \( c_1 \) and \( c_2 \) (i.e., \( f(U_1\cup W_1)\subseteq \{c_1,c_2\} \)). 
Since \( u_{k-4,1}w_{k-4,1} \) is an edge in the chain gadget, \( \{f(w_{k-4,1}),f(u_{k-4,1})\}=\{c_1,c_2\} \). 
Since \( w_{k-4,1}=u_{1,2} \) and \( u_{k-4,1}=w_{1,2} \), we have \( \{f(w_{1,2}),f(u_{1,2})\}=\{c_1,c_2\} \). 
Hence, \( U_2\cup W_2 \) is bicoloured by \( f \) using colours \( c_1 \) and \( c_2 \). 
By repeating the same argument, we can show that \( U_\ell\cup W_\ell \) is bicoloured by \( f \) using colours \( c_1 \) and \( c_2 \) for \( 1\leq \ell\leq t \). 
Since every terminal of the chain gadget is a vertex of the form \( u_{i,\ell}\in U_\ell \) and the neighbour of a terminal is of the form \( w_{i,\ell}\in W_\ell \), the lemma is proved. 
\end{proof}

The next construction is employed to show that \textsc{\( k \)-Star Colourability} is NP-complete for graphs of maximum degree \( k-1 \) (where \( k\geq 7 \)).

\begin{construct}\label{make:k-star colouring deg k-1}
\emph{Parameter:} An integer \( k\geq 7 \).\\
\emph{Input:} A \( (k-2) \)-regular graph \( G \).\\
\emph{Output:} A graph \( G' \) of maximum degree \( k-1 \).\\
\emph{Guarantee:} \( G \) is \( (k-2) \)-edge colourable if and only if \( G' \) is \( k \)-star colourable.\\
\emph{Steps:}\\
Let \( v_1,v_2,\dots,v_n \) be the vertices and \( e_1,e_2,\dots,e_m \) be the edges in \( G \). 
Introduce a chain gadget \( H \) (see Figure~\ref{fig:chain gadget k-star colouring deg k-1}) with \( q \) copies of the gadget component (i.e., use \( t=q \)), where \( q=\ceil{\frac{3n}{k-6}} \). 
We know that there are exactly \( k-6 \) terminals in each gadget component of the chain gadget. 
Thus, the choice of \( q \) ensures that the chain gadget has at least \( 3n \) terminals. 
For each vertex \( v_i \) of \( G \), choose three terminals of the chain gadget \( H \) which are not already chosen, and label them \( v_{i,1} \), \( v_{i,2} \) and \( v_{i,3} \), respectively.
For each edge \( e_\ell=v_iv_j \) of \( G \), introduce a new vertex \( e_\ell \) in \( G' \) and join it to the vertices \( v_{i,1},v_{i,2},v_{i,3},v_{j,1},v_{j,2} \) and \( v_{j,3} \). 
To clarify, \( V(G')=V(H)\cup E(G) \) and \( E(G')=E(H)\cup \{v_i e_\ell \colon v_i\in V(G),\allowbreak e_\ell\in E(G) \text{, and } v_i \text{ is incident on } e_\ell \text{ in } G \} \). 
Moreover, the subgraph of \( G' \) induced by \( \{v_{i,j}\colon 1\leq i\leq n,\, 1\leq j\leq 3\}\bigcup\,\{e_\ell\colon 1\leq \ell\leq m\}  \) is a bipartite graph with degree \( \deg_G(v_i) \) for each vertex \( v_{i,j} \) and degree 6 for each vertex \( e_\ell \). 
\end{construct}
\begin{proof}[Proof of guarantee]
Suppose that \( G \) admits a \( (k-2) \)-edge colouring \( f\colon E(G)\to\{2,3,\dots,k-1\} \). 
Note that colours~0 and 1 are not used by \( f \). 
We use \( f \) to obtain a \( k \)-colouring \( f' \) of \( G' \). 
Consider the function \( f'\colon V(G')\to \{0,1,\dots,k-1\} \) obtained by employing the colouring scheme in Figure~\ref{fig:k-star colouring chain gadget deg k-1} on the chain gadget \( H \), and by assigning \( f'(e_\ell)=f(e_\ell) \) for each \( e_\ell\in E(G) \). 
We know that \( f'(e_\ell)=f(e_\ell)\geq 2 \) for \( 1\leq \ell\leq m \), whereas each terminal in \( G' \) is coloured~0 or~1 by \( f' \) (see Figure~\ref{fig:k-star colouring chain gadget deg k-1}). 
Since vertices of the form \( e_\ell \) are adjacent only to terminals in \( G' \), \( f' \) is a \( k \)-colouring of \( G' \). 
\setcounter{claim}{0}
\begin{claim}\label{clm:k-2 edge colouring to k-star colourring}
\( f' \) is a \( k \)-star colouring of \( G' \). 
\end{claim}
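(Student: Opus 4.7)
My plan is to verify that $f'$ is a proper $k$-colouring of $G'$ and then argue that it admits no bicoloured $P_4$. Properness should be immediate once I observe that the restriction of $f'$ to the chain gadget $H$ agrees with the explicit $k$-star colouring in Figure~\ref{fig:k-star colouring chain gadget deg k-1}, and that every other edge of $G'$ joins a vertex $e_\ell$, whose colour $f(e_\ell)$ lies in $\{2,\dots,k-1\}$, to a terminal $v_{i,j}$, whose colour lies in $\{0,1\}$.

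For the star property I will argue by contradiction, assuming the existence of a bicoloured path $P=a_1a_2a_3a_4$ with $f'(a_1)=f'(a_3)$ and $f'(a_2)=f'(a_4)$. Since $f'$ restricted to $H$ is already a $k$-star colouring, $P$ must contain at least one vertex of the form $e_\ell$. The entire analysis will rest on four structural observations I will record first: (i)~the $e$-vertices form an independent set in $G'$ and their neighbourhoods consist entirely of terminals; (ii)~in $H$ every terminal has a unique neighbour, namely a $w$-vertex, and every $w$-vertex is coloured from $\{0,1\}$ by $f'$; (iii)~every terminal is coloured $0$ or $1$ while every $e$-vertex is coloured in $\{2,\dots,k-1\}$; and (iv)~if two distinct $e$-vertices $e_\ell,e_{\ell'}$ share a common terminal neighbour $v_{i,j}$, then both corresponding edges of $G$ meet $v_i$, and so $f(e_\ell)\neq f(e_{\ell'})$ by the properness of the edge colouring $f$.

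The remainder will be a short case analysis over the possible sets of positions of $e$-vertices along $P$. By (i) this set must be an independent subset of $\{1,2,3,4\}$ in the path, so up to reversing $P$ it is one of $\{1\}$, $\{2\}$, $\{1,3\}$ or $\{1,4\}$. In patterns $\{1\}$, $\{2\}$ and $\{1,4\}$ the bicoloured condition will force an $e$-vertex to share its colour with either a $w$-vertex or a terminal, which contradicts (ii) and (iii). In pattern $\{1,3\}$ the two $e$-vertices on $P$ share the terminal $a_2$ as a common neighbour, and the bicoloured condition forces them to receive the same colour, contradicting (iv). The main obstacle is just the orderly bookkeeping for these four sub-cases; the underlying mechanism is simply the strict separation of the palettes used on terminals, on $w$-vertices and on $e$-vertices, combined with the properness of $f$.
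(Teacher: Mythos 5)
Your proposal is correct and follows essentially the same route as the paper: the paper also argues by contradiction, notes that a bicoloured $P_4$ must contain an edge $v_{i,j}e_\ell$, and splits into the case $e_s,v_{i,j},e_t,v_{p,q}$ (killed by properness of the edge colouring $f$, your pattern $\{1,3\}$) and the case where the path enters the chain gadget (killed by the tricoloured segment $w,v_{i,j},e_\ell$, which is exactly your palette-separation argument for patterns $\{1\}$, $\{2\}$, $\{1,4\}$). Your enumeration by positions of $e$-vertices is merely a finer bookkeeping of the paper's two cases, not a different idea.
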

\noindent 
Contrary to the claim, assume that there is a 4-vertex path \( Q \) in \( G' \) bicoloured by \( f' \). 
Observe that \( f' \) employs a \( k \)-star colouring scheme on the chain gadget \( H \) (see Figure~\ref{fig:k-star colouring chain gadget deg k-1}). 
Moreover, the restriction of \( f' \) to \( E(G) \) is a star colouring of \( G'[E(G)] \) since \( E(G) \) is an independent set in \( G' \). 
Hence, the bicoloured 4-vertex path \( Q \) contains an edge of the form \( v_{i,j}e_\ell \), where \( v_{i,j}\in V(H) \) and \( e_\ell\in E(G) \). 
We have two cases: either (i)~\( Q \) does not contain any edge from the chain gadget \( H \) (i.e., \( E(Q)\cap E(H)=\emptyset \)), or (ii)~\( Q \) contains an edge from \( H \). 
In Case~(i), \( Q \) is of the form \( e_s,v_{i,j},e_t,v_{p,q} \) where \( f'(e_s)=f'(e_t) \) and \( f'(v_{i,j})=f'(v_{p,q}) \). 
By the definition of \( G' \), \( e_s \) and \( e_t \) are edges of \( G \) incident on the vertex \( v_i \) of \( G \), and thus \( f(e_s)\neq f(e_t) \) (because \( f \) is an edge colouring of \( G \)). 
Since \( f(e_s)=f'(e_s)=f'(e_t)=f(e_t) \), we have a contradiction. 
This rules out Case~(i). 
Consider Case~(ii); that is, \( Q \) contains an edge from the chain gadget~\( H \). 
Since the path \( Q \) contains an edge from \( H \) as well as an edge  of the form \( v_{i,j}e_\ell \), the path \( Q \) contains a 3-vertex path segment of the from \( w,v_{i,j},e_\ell \), where \( w \) is the neighbour of the terminal \( v_{i,j} \) within the chain gadget \( H \). 
By the colouring scheme employed on the chain gadget (namely Figure~\ref{fig:k-star colouring chain gadget deg k-1}), \( f'(v_{i,j}),f'(w)\in \{0,1\} \). 
Since \( v_{i,j}w \) is an edge in~\( G' \), there is a binary colour \( b\in \{0,1\} \) such that \( f'(v_{i,j})=b \) and \( f'(w)=1-b \). 
Since \( f'(e_\ell)=f(e_\ell)\geq 2 \), the segment \( w,v_{i,j},e_\ell \) of the path \( Q \) is tricoloured by~\( f' \). 
Hence, \( f' \) uses at least three colours on the path \( Q \), a contradiction. 
This rules out Case~(ii). 
Since both Case~(i) and Case~(ii) are ruled out, there is no 4-vertex path in \( G' \) bicoloured by \( f' \). 
That is, \( f' \) is indeed a \( k \)-star colouring of \( G' \). 
This proves Claim~\ref{clm:k-2 edge colouring to k-star colourring}.\\ 

Conversely, suppose that \( G' \) admits a \( k \)-star colouring \( f':V(G\bm{'})\to\{0,1,\dots,k-1\} \). 
By Lemma~\ref{lem:chain gadget k-star colouring deg k-1}, there exist distinct colours \( c_1 \) and \( c_2 \) such that the terminals of the chain gadget and their neighbours within the chain gadget are coloured either \( c_1 \) or \( c_2 \). 
Without loss of generality, assume that \( c_1=0 \) and \( c_2=1 \). 
Thus, we have the following claim.
\begin{claim}\label{clm:k-star colouring binary terminals}
All terminals of the chain gadget and their neighbours within the gadget have binary colours (i.e., colour~0 or colour~1). 
\end{claim}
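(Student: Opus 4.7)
The plan is to deduce Claim~\ref{clm:k-star colouring binary terminals} directly from Lemma~\ref{lem:chain gadget k-star colouring deg k-1} combined with the freedom to rename colours; no genuinely new combinatorial input is needed beyond what Lemmas~\ref{lem:gadget component k-star colouring deg k-1} and~\ref{lem:chain gadget k-star colouring deg k-1} already supply. First I would observe that the restriction of $f'$ to $V(H)$, where $H$ is the chain gadget embedded in $G'$, is itself a $k$-star colouring of $H$: every $4$-vertex path of $H$ is a $4$-vertex path of $G'$, so if $f'$ avoided bicoloured $P_4$s in $G'$ then it does so inside $H$. This legitimises the application of Lemma~\ref{lem:chain gadget k-star colouring deg k-1} to $f'|_{V(H)}$.

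Applying that lemma yields two distinct colours $c_1,c_2\in\{0,1,\dots,k-1\}$ such that every terminal of the chain gadget and every vertex of $H$ that is a neighbour (in $H$) of some terminal lies in $(f')^{-1}(\{c_1,c_2\})$. The remaining step is the routine observation that whether a map $V(G')\to\{0,1,\dots,k-1\}$ is a $k$-star colouring depends only on the partition of $V(G')$ into colour classes, not on the labels borne by those classes: a $P_4$ is bicoloured precisely when its four vertices use exactly two distinct colours, and this condition is preserved by any bijection of the colour set. Hence composing $f'$ with the transposition $\pi$ that swaps $c_1\leftrightarrow 0$ and $c_2\leftrightarrow 1$ while fixing every other colour produces another $k$-star colouring $\pi\circ f'$ of $G'$, under which the terminals of $H$ and their in-gadget neighbours receive only colours from $\{0,1\}$. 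Renaming $\pi\circ f'$ as $f'$ — which is exactly the \emph{``without loss of generality''} step — then gives the claim.

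There is no real obstacle here: the substantive work was carried out in Lemma~\ref{lem:chain gadget k-star colouring deg k-1}, and the only thing one must be careful about is verifying that the relabelling preserves the $k$-star property globally on $G'$ and not merely on $H$; but this is immediate from the definition. The value of recording the claim in this explicit form is that it fixes a concrete pair of colours $\{0,1\}$ for the terminals and their chain-gadget neighbours, setting up the subsequent step of the argument in which the colours $f'(e_\ell)\in\{2,3,\dots,k-1\}$ assigned to the edge-vertices will be shown to define a $(k-2)$-edge colouring of $G$.
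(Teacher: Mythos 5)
Your proposal is correct and matches the paper's argument: the paper likewise obtains the claim by applying Lemma~\ref{lem:chain gadget k-star colouring deg k-1} to the copy of the chain gadget inside \( G' \) to get two distinct colours \( c_1,c_2 \) covering the terminals and their in-gadget neighbours, and then assumes \( c_1=0 \), \( c_2=1 \) without loss of generality. Your additional remarks — that \( f' \) restricted to \( V(H) \) is a \( k \)-star colouring of \( H \) because \( H \) is an induced subgraph, and that the colour transposition preserves the star property globally on \( G' \) — simply make explicit the justification the paper leaves implicit in its ``without loss of generality'' step.
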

\begin{claim}\label{clm:k-star colouring non-binary el}
For each \( e_\ell\in E(G) \), the vertex \( e_\ell \) of \( G' \) has a non-binary colour under \( f' \) (i.e., \( f'(e_\ell)\geq 2 \)). 
\end{claim}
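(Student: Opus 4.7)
The plan is to argue by contradiction, so suppose that $f'(e_\ell) \in \{0,1\}$ for some edge $e_\ell = v_iv_j$ of $G$; without loss of generality, $f'(e_\ell) = 0$. By Construction~\ref{make:k-star colouring deg k-1}, the vertex $e_\ell$ is adjacent in $G'$ precisely to the six terminals $v_{i,1}, v_{i,2}, v_{i,3}, v_{j,1}, v_{j,2}, v_{j,3}$ of the chain gadget $H$. By Claim~\ref{clm:k-star colouring binary terminals} each of these six vertices has a binary colour under $f'$, and since each is adjacent to $e_\ell$ (colour $0$), each of them must carry colour $1$.

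Next I would pick any two of these terminal neighbours on the same side, say $v_{i,1}$ and $v_{i,2}$, and let $w$ denote the unique in-gadget neighbour of $v_{i,1}$ (i.e., the $w$-type vertex of the gadget component that contains $v_{i,1}$). By Claim~\ref{clm:k-star colouring binary terminals}, $w$ has binary colour, and since it is adjacent to $v_{i,1}$ of colour $1$, it must have colour $0$. Then $w, v_{i,1}, e_\ell, v_{i,2}$ is a path in $G'$ whose colour pattern under $f'$ is $0,1,0,1$, a bicoloured $P_4$; this contradicts the assumption that $f'$ is a star colouring, and the claim follows.

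The only delicate point is to verify that $w, v_{i,1}, e_\ell, v_{i,2}$ really are four distinct vertices forming a genuine $P_4$. The terminals $v_{i,1}$ and $v_{i,2}$ are distinct by choice of the labelling. The vertex $e_\ell$ lies in $V(G') \setminus V(H)$, hence differs from $w$ and from both terminals. Finally, the in-gadget neighbour $w$ of the terminal $v_{i,1}$ is itself never a terminal of the chain gadget: the only vertex identifications used to build $H$ involve indices $1$ and $k-4$, whereas the terminals $u_{i,\ell}$ of each gadget component have indices in $\{2,\ldots,k-5\}$, so their in-gadget neighbours $w_{i,\ell}$ are never fused with any $u$-vertex and, in particular, $w \neq v_{i,2}$. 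I do not expect any serious obstacle here: once Claim~\ref{clm:k-star colouring binary terminals} pins all terminals and their in-gadget neighbours to $\{0,1\}$, the high degree of $e_\ell$ into the terminal set forces a forbidden bicoloured $P_4$ almost immediately, so the main work is simply the careful bookkeeping of distinct vertices above.
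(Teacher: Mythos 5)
Your proof is correct and follows essentially the same route as the paper: assume a binary colour at \( e_\ell \), use Claim~\ref{clm:k-star colouring binary terminals} to force the opposite binary colour on all terminal neighbours of \( e_\ell \) and the original colour on their in-gadget neighbours, and exhibit the bicoloured \( P_4 \) of the form \( w, v_{i,1}, e_\ell, v_{i,2} \). The extra bookkeeping you supply about the distinctness of the four vertices is a harmless (and correct) elaboration of what the paper leaves implicit.
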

\noindent On the contrary, assume that \( f'(e_\ell)=b \) for some \( b\in \{0,1\} \), where \( e_\ell\in E(G) \). 
Let \( v_i \) be a vertex incident on the edge \( e_\ell \) in \( G \). 
In \( G' \), the vertex \( e_\ell \) is adjacent to \( v_{i,1} \), \( v_{i,2} \) and \( v_{i,3} \), and thus \( f'(v_{i,1})=f'(v_{i,2})=f'(v_{i,3})=1-b \) (because \( f'(v_{i,j})\in\{0,1\} \) by Claim~\ref{clm:k-star colouring binary terminals}). 
For \( 1\leq j\leq 3 \), the neighbour of the terminal \( v_{i,j} \) in the chain gadget has a binary colour by Claim~\ref{clm:k-star colouring binary terminals}. 
As shown in Figure~\ref{fig:k-star colouring non-binary el}, this signals a 4-vertex path in \( G' \) bicoloured by \( f' \).
This contradiction proves Claim~\ref{clm:k-star colouring non-binary el}. 

\begin{figure}[hbt]
\centering
\begin{tikzpicture}[label distance=-2pt]
\node (chain) [draw,densely dotted,ellipse,minimum width=2.5cm,minimum height=3.5cm][label={[align=center]left:chain\\gadget}]{};
\path (chain.center) --++(0.5,0) node(vi2)[dot]{} node[terminal][label={[yshift=-3pt]\( v_{i,2} \)}][label={[vcolour,label distance=-2pt,xshift=-3pt]below left: 1-\( b \)}]{};
\path (vi2) --+(0,0.85)  node(vi1)[dot]{} node[terminal][label={[yshift=-3pt]\( v_{i,1} \)}][label={[vcolour,label distance=-2pt,xshift=-3pt]below left: 1-\( b \)}]{};
\path (vi2) --+(0,-0.85) node(vi3)[dot]{} node[terminal][label={[yshift=-3pt]\( v_{i,3} \)}][label={[vcolour,label distance=-2pt,xshift=-3pt]below left: 1-\( b \)}]{};
\draw (vi1) --+(-1.0,0) node(l1)[dot][label={[vcolour]left:\( b \)}]{};
\draw (vi2) --+(-1.0,0) node(l2)[dot][label={[vcolour]left:\( b \)}]{};
\draw (vi3) --+(-1.0,0) node(l3)[dot][label={[vcolour]left:\( b \)}]{};
\node (el) [right =1.5cm of chain][dot][label=right:\( e_\ell \)][label={[vcolour]\( b \)}]{};
\draw (el)--(vi1)  (el)--(vi2)  (el)--(vi3);
\draw [very thick] (l1)--(vi1)--(el)--(vi2);
\end{tikzpicture}
\caption{A binary colour at \( e_\ell \) implies a bicoloured \( P_4 \).}
\label{fig:k-star colouring non-binary el}
\end{figure}
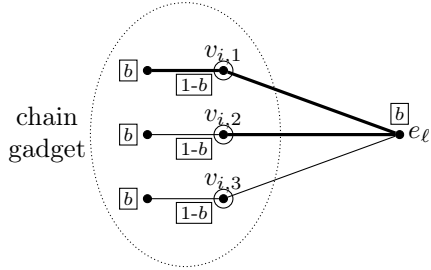

Let \( f \) be the restriction of \( f' \) to \( E(G) \). 
Due to Claim~\ref{clm:k-star colouring non-binary el}, \( f \) uses only colours \( 2,3,\dots,k-1 \). 
Hence, let us view \( f \) as a function from \( E(G) \) to \( \{2,3,\dots,k-1\} \). 

\begin{claim}\label{clm:k-star colourring to k-2 edge colouring}
\( f \) is a \( (k-2) \)-edge colouring of \( G \). 
\end{claim}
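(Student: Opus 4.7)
The plan is to verify the two requirements of a $(k-2)$-edge colouring: that $f$ takes values in a $(k-2)$-element set, and that $f$ assigns distinct colours to any two edges of $G$ sharing an endpoint. The first requirement is immediate from Claim~3, which guarantees $f(e_\ell)=f'(e_\ell)\in\{2,3,\dots,k-1\}$ for every $e_\ell\in E(G)$; this range has exactly $k-2$ colours. So the real work is in establishing properness.

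To establish properness, I will argue by contradiction. Suppose $e_s$ and $e_t$ are two distinct edges of $G$ that share a vertex $v_i$ and satisfy $f(e_s)=f(e_t)$. By Claim~2, all three terminals $v_{i,1},v_{i,2},v_{i,3}$ of the chain gadget carry binary colours, i.e.\ colours from $\{0,1\}$. The pigeonhole principle then yields two distinct indices $j,j'\in\{1,2,3\}$ with $f'(v_{i,j})=f'(v_{i,j'})$.

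Next I will exhibit a bicoloured $P_4$ in $G'$. By the construction, each of $v_{i,1},v_{i,2},v_{i,3}$ is adjacent in $G'$ to both $e_s$ and $e_t$; moreover, $E(G)$ is an independent set in $G'$, so $e_s,v_{i,j},e_t,v_{i,j'}$ is a path on four distinct vertices. Under $f'$, consecutive vertices of this path alternate between the colour $f'(e_s)=f'(e_t)\geq 2$ and the binary colour $f'(v_{i,j})=f'(v_{i,j'})$, so the path is bicoloured. This contradicts the fact that $f'$ is a $k$-star colouring of $G'$. Hence no two edges of $G$ incident on a common vertex share an $f$-value, and $f$ is a proper $(k-2)$-edge colouring of $G$.

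I do not foresee a serious obstacle; the gadget design is engineered precisely so that this step is clean. The one feature worth highlighting is that each vertex $v_i$ of $G$ contributes \emph{three} terminals $v_{i,1},v_{i,2},v_{i,3}$ to $G'$, each joined to every edge-vertex $e_\ell$ incident on $v_i$. Two terminals would not suffice, since pigeonhole on the two binary colours $\{0,1\}$ could then fail; three terminals force a matching colour and thereby deliver the bicoloured $P_4$ needed to conclude.
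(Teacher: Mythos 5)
Your proof is correct and follows essentially the same route as the paper: both invoke Claim~\ref{clm:k-star colouring binary terminals} to place the three terminals \( v_{i,1},v_{i,2},v_{i,3} \) in \( \{0,1\} \), apply the pigeonhole principle to find two of them with equal colour, and exhibit a bicoloured \( P_4 \) alternating between the two equal-coloured edge-vertices and the two equal-coloured terminals (the paper writes it as \( v_{i,1},e_s,v_{i,2},e_t \), you as \( e_s,v_{i,j},e_t,v_{i,j'} \) — the same contradiction). Your closing remark about why three terminals are needed matches the design rationale implicit in the construction.
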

\noindent On the contrary, assume that \( f(e_s)=f(e_t) \) for two edges \( e_s \) and \( e_t \) of \( G \) incident on a common vertex \( v_i \) in \( G \). 
By the definition of \( G' \), both vertices \( e_s \) and \( e_t \) of \( G' \) are adjacent to vertices \( v_{i,1} \), \( v_{i,2} \) and \( v_{i,3} \) in \( G' \). 
Recall that \( f'(v_{i,1}),f'(v_{i,2}),f'(v_{i,3})\in \{0,1\} \) by Claim~\ref{clm:k-star colouring binary terminals}. 
Hence, by pigeonhole principle, at least two of these vertices have the same colour, say \( f'(v_{i,1})=f'(v_{i,2}) \). 
As a result, \( v_{i,1},e_s,v_{i,2},e_t \) is a 4-vertex path in \( G' \) bicoloured by \( f' \); a contradiction. 
Therefore, \( f \) is indeed a \( (k-2) \)-edge colouring of \( G \). 
This proves Claim~\ref{clm:k-star colourring to k-2 edge colouring}. 
\end{proof}

Note that the chain gadget has \( (4k-14)q+2 \) vertices and \( (k^2-2k-2)q+1 \) edges. 
Hence, \( G' \) has \( (4k-14)q+2+m=O(m+n) \) vertices and \( (k^2-2k-2)q+1+6m=O(m+n) \) edges, where \( m=|E(G)| \) and \( n=|V(G)| \) (because \( q=O(n) \)).
Thus, Construction~\ref{make:k-star colouring deg k-1} requires only time polynomial in \( m+n \). 
Leven and Galil~\cite{leven_galil} proved that for all \( k\geq 3 \), \textsc{Edge \( k \)-Colourability} is NP-complete for \( k \)-regular graphs. 
Thus, we have the following theorem by Construction~\ref{make:k-star colouring deg k-1}. 
\begin{theorem}\label{thm:k-star colouring deg k-1 k>=7}
For \( k\geq 7 \), \textsc{\( k \)-Star Colourability} is NP-complete for graphs of maximum degree \( k-1 \). 
\qed 
\end{theorem}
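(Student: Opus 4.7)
The plan is to establish NP-completeness by polynomial-time reduction from \textsc{Edge $(k-2)$-Colourability} on $(k-2)$-regular graphs, which is NP-complete by the cited result of Leven and Galil, using Construction~\ref{make:k-star colouring deg k-1} as the reduction. Membership of \textsc{$k$-Star Colourability} in NP is immediate, since a candidate colouring is verified in polynomial time by checking the constraint on every 4-vertex path.

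The correctness of the reduction is precisely the guarantee proved for Construction~\ref{make:k-star colouring deg k-1}: $G$ is $(k-2)$-edge colourable if and only if $G'$ is $k$-star colourable. The size of $G'$ is $O(m+n)$ because the chain gadget uses only $q=\lceil 3n/(k-6)\rceil = O(n)$ copies of the constant-size gadget component, and only $m$ extra vertices $e_\ell$ are added; so the reduction runs in polynomial time. Thus the only work left is to confirm that $\Delta(G')\leq k-1$, which I expect to be the main (and essentially only) remaining technical step.

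I would verify $\Delta(G')=k-1$ by a short case analysis on the vertex classes of $G'$. Each vertex $e_\ell$ has degree exactly $6$, which is at most $k-1$ precisely because $k\geq 7$; this is exactly where the hypothesis of the theorem enters. Each terminal $v_{i,j}$ has degree $1$ inside the chain gadget and receives $\deg_G(v_i)=k-2$ additional edges to vertices of the form $e_\ell$, for a total of $k-1$. Of the remaining vertices of the chain gadget, the interior $w_i$, $x_j$, $y_j$, and non-merged $u_i$ within a single gadget component all have degree at most $k-1$ directly from Figure~\ref{fig:gadget component k-star colouring deg k-1}. The delicate case is the boundary vertices produced by the identification step, namely $w_{k-4,\ell}=u_{1,\ell+1}$ and $u_{k-4,\ell}=w_{1,\ell+1}$: here the two original edges $u_{k-4,\ell}w_{k-4,\ell}$ and $u_{1,\ell+1}w_{1,\ell+1}$ collapse to a single edge between the two merged vertices (no parallel edges are allowed by the identification operation), so each merged vertex has exactly $k-2$ neighbours among the $x_{j,\cdot}$ on its own side plus one edge to the other merged vertex, for total degree $k-1$. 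Combined with the guarantee of the construction, this completes the theorem.
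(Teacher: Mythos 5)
Your proposal is correct and follows the same route as the paper: NP-completeness via Construction~\ref{make:k-star colouring deg k-1} as a reduction from \textsc{Edge \( (k-2) \)-Colourability} on \( (k-2) \)-regular graphs (Leven--Galil), with the guarantee and the \( O(m+n) \) size bound doing all the work. Your explicit degree check --- including the observation that the two edges \( u_{k-4,\ell}w_{k-4,\ell} \) and \( u_{1,\ell+1}w_{1,\ell+1} \) collapse to a single edge under identification, and that \( \deg(e_\ell)=6\leq k-1 \) is exactly where \( k\geq 7 \) is used --- is accurate and merely makes explicit what the paper asserts in the construction's output line.
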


Next, let us deal with smaller values of \( k \). 
For \( k\leq 3 \), \textsc{\( k \)-Star Colourability} in graphs of maximum degree \( k-1 \) is polynomial-time solvable. 
The status is open for \( k=4 \). Using Construction~\ref{make:4-star girth 5} below, we show that \textsc{4-Star Colourability} is NP-complete for graphs of maximum degree~4. 
Interestingly, the graph used as the gadget component in Construction~\ref{make:4-star girth 5}, which is Petersen graph minus one vertex, has maximum degree~3. 
We suspect that \textsc{4-Star Colourability} is NP-complete for graphs of maximum degree~3, and Petersen graph minus one vertex might be useful in producing an NP-completeness reduction.

We use Petersen graph minus one vertex as the gadget component to build gadgets in Construction~\ref{make:4-star girth 5}. 
See Figure~\ref{fig:cases inner C5} for a diagram of the gadget component. 
Clearly, the gadget component has girth five. 
The following lemma explains why it is interesting for 4-star colouring.
\begin{lemma}\label{lem:petersen-1}
Every 4-star colouring of Petersen graph minus one vertex must assign the same colour on all three degree-2 vertices of the graph \( ( \)namely, \( w_1,w_4 \) and \( v_5 \)\( ) \).
\end{lemma}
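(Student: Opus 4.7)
The plan is to exploit the structural properties of Petersen graph minus one vertex together with the standard characterisation of star colourings by colour pairs. Let $P$ denote the gadget component (Petersen graph minus the removed vertex, call it $x$), and let $a=w_1$, $b=w_4$, $c=v_5$ be the three degree-2 vertices, namely the neighbours of $x$ in the original Petersen graph. Two facts I intend to use throughout are: (i) any two non-adjacent vertices of the Petersen graph have exactly one common neighbour, so $a,b,c$ are pairwise non-adjacent in $P$ and have no common neighbour in $P$ (their only common neighbour was $x$); (ii) $P$ has girth 5.

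I would use the well-known characterisation: a proper colouring $f$ is a star colouring if and only if, for every pair of colours $\{i,j\}$, the subgraph of $P$ induced by $f^{-1}(i)\cup f^{-1}(j)$ is a disjoint union of stars (equivalently, contains no $P_4$). Now I would fix an induced 5-cycle $C$ of $P$ that contains two of the three degree-2 vertices; this is the cycle whose colourings are enumerated in the figure referenced as ``cases inner $C_5$''. Say $C$ contains $a$ and $b$. Since $|C|=5$ is odd and $f$ uses only four colours, the number of colours appearing on $C$ is either 3 or 4. For each such pattern (up to colour relabelling and the cyclic/reflective symmetry of $C$), I would track the colours forced on the three off-cycle neighbours of the degree-3 vertices of $C$ as well as the two off-cycle neighbours of $a$ and $b$ that are not $x$. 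In each pattern where $f(a)\neq f(b)$, I expect to locate a bichromatic $P_4$ using one edge of $C$ together with an off-cycle edge incident to $C$, producing the required contradiction; the patterns that survive force $f(a)=f(b)$.

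Once this is established for the cycle through $\{a,b\}$, I would invoke the fact that the stabiliser of $x$ in $\mathrm{Aut}(\text{Petersen})$ acts as the full symmetric group on $\{a,b,c\}$, so $P$ admits automorphisms cyclically permuting the three degree-2 vertices. Applying the same argument to an induced 5-cycle through $\{b,c\}$ then yields $f(b)=f(c)$, and hence $f(a)=f(b)=f(c)$, as required. I would also use the same line of reasoning to rule out the possibility that fewer than 4 colours suffice on such a 5-cycle, giving the first part of the statement (that a 4-star colouring uses all four colours).

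The main obstacle is the finite but genuinely case-splitting analysis of the colour patterns on the chosen induced 5-cycle together with its three off-cycle attachments: one has to show in each surviving case that the ``star forest'' condition on two-colour classes forces the two degree-2 vertices of $C$ to agree. This is precisely what the figure ``cases inner $C_5$'' is presumably designed to organise, and I would present the argument pattern-by-pattern rather than trying to find a uniform identity, since the girth-5 structure of $P$ gives very tight local constraints that make each case short but make a uniform argument unlikely.
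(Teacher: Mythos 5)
There is a genuine gap: the cycle on which your entire case analysis rests does not exist. The three degree-2 vertices of \( P \) (Petersen minus \( x \)) are the three neighbours of \( x \), so any two of them, say \( a=w_1 \) and \( b=w_4 \), are non-adjacent with \( x \) as their \emph{unique} common neighbour in the Petersen graph. Two non-adjacent vertices lying on a common 5-cycle are at distance two along that cycle, hence have a common neighbour \emph{on} that cycle; for \( w_1 \) and \( w_4 \) that forces the cycle through \( x \). Consequently no 5-cycle of \( P \) contains two of the three degree-2 vertices (the shortest cycle of \( P \) through \( w_1 \) and \( w_4 \) is the induced 6-cycle \( w_1,w_2,w_3,w_4,v_4,v_1 \)), and the figure you cite enumerates colourings of the inner \( C_5 = (v_1,v_3,v_5,v_2,v_4) \), which contains only \( v_5 \) among the degree-2 vertices. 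So the step ``in each pattern where \( f(a)\neq f(b) \) locate a bichromatic \( P_4 \) using one edge of \( C \)'' cannot even be set up, because \( a \) and \( b \) never sit on the same enumerated 5-cycle.

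The paper's proof instead enumerates the 4-star colourings of the inner \( C_5 \) (four colours with exactly one colour repeating, reduced to three cases up to symmetry and colour swaps) and then does the real work of \emph{propagating} those constraints outward along the spokes and the outer path \( w_1,w_2,w_3,w_4 \), forcing the colours of \( w_1 \) and \( w_4 \) one vertex at a time via bicoloured-\( P_4 \) arguments; one of the three cases is eliminated outright and the other two yield \( f(w_1)=f(w_4)=f(v_5) \). Your automorphism observation (the stabiliser of \( x \) acts as \( S_3 \) on \( \{w_1,w_4,v_5\} \)) is correct and could in principle shorten such a propagation argument, but it cannot substitute for it. A smaller point: the lemma as stated does not assert that four colours are needed (that claim about \( C_5 \) is used inside the proof, not proved as part of this lemma), so the ``first part of the statement'' you plan to derive is not actually part of this statement.
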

\begin{proof}
\begin{figure}[hbt]
\centering
\begin{subfigure}[b]{0.33\textwidth}
\centering
\begin{tikzpicture}[scale=0.8]
\coordinate (base);
\path (base)--+(-162:1) node[dot](v1)[label=below:\( v_1 \)][label={[vcolour]above:2}]{}
      (base)--+(126:1) node[dot](v2)[label=left:\( v_2 \)][label={[vcolour]above:1}]{}
      (base)--+(54:1) node[dot](v3)[label=right:\( v_3 \)][label={[vcolour]above:1}]{}
      (base)--+(-18:1) node[dot](v4)[label=below:\( v_4 \)][label={[vcolour]above:3}]{}
      (base)--+(-90:1) node[dot](v5)[label=below:\( v_5 \)][label={[vcolour]right:0}]{};
\draw (v1)--(v3)--(v5)--(v2)--(v4)--(v1);
\draw (v1)--+(-162:1) node[dot](w1)[label=below:\( w_1 \)]{}
      (v2)--+(126:1) node[dot](w2)[label=above left:\( w_2 \)]{}
      (v3)--+(54:1) node[dot](w3)[label=above right:\( w_3 \)]{}
      (v4)--+(-18:1) node[dot](w4)[label=below:\( w_4 \)]{};
\draw (w1)--(w2)--(w3)--(w4);
\end{tikzpicture}
\caption{Case 1}
\end{subfigure}\begin{subfigure}[b]{0.33\textwidth}
\centering
\begin{tikzpicture}[scale=0.8]
\coordinate (base);
\path (base)--+(-162:1) node[dot](v1)[label=below:\( v_1 \)][label={[vcolour]above:0}]{}
      (base)--+(126:1) node[dot](v2)[label=left:\( v_2 \)][label={[vcolour]above:2}]{}
      (base)--+(54:1) node[dot](v3)[label=right:\( v_3 \)][label={[vcolour]above:1}]{}
      (base)--+(-18:1) node[dot](v4)[label=below:\( v_4 \)][label={[vcolour]above:1}]{}
      (base)--+(-90:1) node[dot](v5)[label=below:\( v_5 \)][label={[vcolour]right:3}]{};
\draw (v1)--(v3)--(v5)--(v2)--(v4)--(v1);
\draw (v1)--+(-162:1) node[dot](w1)[label=below:\( w_1 \)]{}
      (v2)--+(126:1) node[dot](w2)[label=above left:\( w_2 \)]{}
      (v3)--+(54:1) node[dot](w3)[label=above right:\( w_3 \)]{}
      (v4)--+(-18:1) node[dot](w4)[label=below:\( w_4 \)]{};
\draw (w1)--(w2)--(w3)--(w4);
\end{tikzpicture}
\caption{Case 2}
\end{subfigure}\begin{subfigure}[b]{0.33\textwidth}
\centering
\begin{tikzpicture}[scale=0.8]
\coordinate (base);
\path (base)--+(-162:1) node[dot](v1)[label=below:\( v_1 \)][label={[vcolour]above:1}]{}
      (base)--+(126:1) node[dot](v2)[label=left:\( v_2 \)][label={[vcolour]above:2}]{}
      (base)--+(54:1) node[dot](v3)[label=right:\( v_3 \)][label={[vcolour]above:0}]{}
      (base)--+(-18:1) node[dot](v4)[label=below:\( v_4 \)][label={[vcolour]above:3}]{}
      (base)--+(-90:1) node[dot](v5)[label=below:\( v_5 \)][label={[vcolour]right:1}]{};
\draw (v1)--(v3)--(v5)--(v2)--(v4)--(v1);
\draw (v1)--+(-162:1) node[dot](w1)[label=below:\( w_1 \)]{}
      (v2)--+(126:1) node[dot](w2)[label=above left:\( w_2 \)]{}
      (v3)--+(54:1) node[dot](w3)[label=above right:\( w_3 \)]{}
      (v4)--+(-18:1) node[dot](w4)[label=below:\( w_4 \)]{};
\draw (w1)--(w2)--(w3)--(w4);
\end{tikzpicture}
\caption{Case 3}
\end{subfigure}\caption{Possible ways of 4-star colouring inner \( C_5 \).}
\label{fig:cases inner C5}
\end{figure}
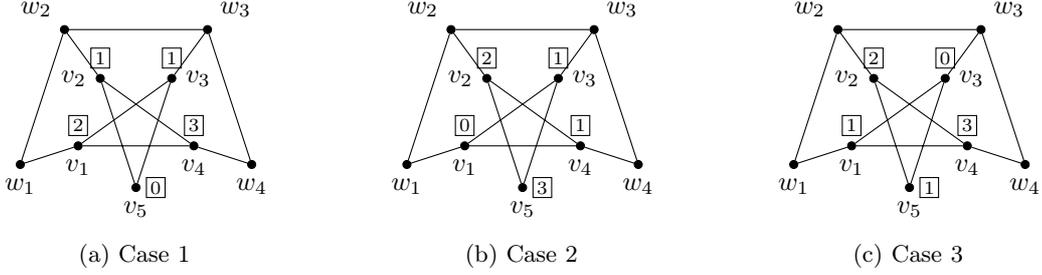

We fix a drawing of the Petersen graph, and assume that the vertex removed is from the outer \( C_5 \) (see Figure~\ref{fig:cases inner C5}). 
To star colour \( C_5 \), four colours are needed. 
Moreover, in every 4-star colouring of \( C_5 \), exactly one colour should repeat. 
Hence, without loss of generality, we assume that the inner \( C_5 \) is coloured in the pattern 1,0,1,2,3. 
So, exactly one of the following holds: (i)~\( f(v_1)=0 \), (ii)~\( f(v_2)=0 \), (iii)~\( f(v_3)=0 \), (iv)~\( f(v_4)=0 \), or (v)~\( f(v_5)=0 \). 
Up to symmetry and swapping of colours~2 and 3, we have only the three cases displayed in Figure~\ref{fig:cases inner C5} (note that \( f(v_4)=0 \) is symmetric to Case~2, and \( f(v_2)=0 \) is symmetric to Case~3). 
Let \( f \) be a 4-star colouring of Petersen graph minus one vertex. 
We need to prove that \( f(w_1)=f(w_4)=f(v_5) \).

\noindent Case~1:\\
If \( f(w_1)=1 \), then the bicoloured path \( w_1,w_2,v_2 \) will be part of a bicoloured~\( P_4 \) irrespective of the colour at \( w_2 \) (see Figure~\ref{fig:case 1 contradiction}). 
Hence, \( f(w_1)\neq 1 \). 
Similarly, \( f(w_4)\neq 1 \). 
We show that \( f(w_1)=0 \). 
On the contrary, assume that \( f(w_1)\neq 0 \). 
Note that \( f(w_2)\neq 0 \) (if not, path \( w_2,v_2,v_5,v_3 \) is a bicoloured \( P_4 \)). 
So, \( f(w_1),f(w_2)\in \{2,3\} \) (because colours~0 and 1 are ruled out). 
Therefore, path \( w_2,w_1,v_1,v_4 \) is a \( P_4 \) coloured with only two colours 2 and 3, a contradiction. 
This proves that \( f(w_1)=0 \). 
By symmetry, \( f(w_4)=0 \) as well. 
So, \( f(w_1)=f(w_4)=f(v_5)=0 \) in Case~1.\\

\noindent Case~2:\\
If \( f(w_3)=0 \), then \( w_3,v_3,v_1,v_4 \) is a bicoloured \( P_4 \). 
So, \( f(w_3)\in\{2,3\} \). 
We show that \( f(w_3)=2 \). 
On the contrary, assume that \( f(w_3)=3 \). 
Then, \( f(w_2)\in\{0,1\} \). 
If \( f(w_2)=1 \), then \( w_2,w_3,v_3,v_5 \) is a bicoloured \( P_4 \). 
So, \( f(w_2)=0 \). 
This leads to a contradiction as the path \( w_2,w_1,v_1 \) will be part of a bicoloured \( P_4 \) irrespective of the colour at \( w_1 \) (see Figure~\ref{fig:case 2 contradiction}). 
Thus, by contradiction, \( f(w_3)=2 \). 
So, \( f(w_4)\in\{0,3\} \). 
If \( f(w_4)=0 \), then \( w_4,v_4,v_1,v_3 \) is a bicoloured \( P_4 \). 
Hence, \( f(w_4)=3 \) . 
As a result, \( f(w_2)\neq\{1,3\} \) (if not, either path \( w_3,w_2,v_2,v_4 \) or path \( w_4,w_3,w_2,v_2 \) is a bicoloured \( P_4 \)). 
So, \( f(w_2)=0 \). 
This in turn forces \( f(w_1)=3 \) (if \( f(w_1)\in\{1,2\} \), then either \( w_2,w_1,v_1,v_4 \) or \( w_3,w_2,w_1,v_1 \) a bicoloured \( P_4 \)). 
So, \( f(w_1)=f(w_4)=f(v_5)=3 \) in Case~2.

\begin{figure}[hbt]
\centering
\begin{minipage}[b]{0.45\textwidth}
\centering
\begin{tikzpicture}
\coordinate (base);
\path (base)--+(-162:1) node[dot](v1)[label=below:\( v_1 \)][label={[vcolour]above:2}]{}
      (base)--+(126:1) node[dot](v2)[label=left:\( v_2 \)][label={[vcolour]above:1}]{}
      (base)--+(54:1) node[dot](v3)[label=right:\( v_3 \)][label={[vcolour]above:1}]{}
      (base)--+(-18:1) node[dot](v4)[label=below:\( v_4 \)][label={[vcolour]above:3}]{}
      (base)--+(-90:1) node[dot](v5)[label=below:\( v_5 \)][label={[vcolour]right:0}]{};
\draw (v1)--(v3)--(v5)--(v2)--(v4)--(v1);
\draw (v1)--+(-162:1) node[dot](w1)[label=below:\( w_1 \)][label={[vcolour]left:1}]{}
      (v2)--+(126:1) node[dot](w2)[label=above left:\( w_2 \)][label={[vcolour]above right:\textbf{?}}]{}
      (v3)--+(54:1) node[dot](w3)[label=above right:\( w_3 \)]{}
      (v4)--+(-18:1) node[dot](w4)[label=below:\( w_4 \)]{};
\draw (w1)--(w2)--(w3)--(w4);
\end{tikzpicture}
\caption{In Case~1, \( f(w_1)=1 \) leads to a contradiction.}
\label{fig:case 1 contradiction}
\end{minipage}\hfill
\begin{minipage}[b]{0.45\textwidth}
\centering
\begin{tikzpicture}
\coordinate (base);
\path (base)--+(-162:1) node[dot](v1)[label=below:\( v_1 \)][label={[vcolour]above:0}]{}
      (base)--+(126:1) node[dot](v2)[label=left:\( v_2 \)][label={[vcolour]above:2}]{}
      (base)--+(54:1) node[dot](v3)[label=right:\( v_3 \)][label={[vcolour]above:1}]{}
      (base)--+(-18:1) node[dot](v4)[label=below:\( v_4 \)][label={[vcolour]above:1}]{}
      (base)--+(-90:1) node[dot](v5)[label=below:\( v_5 \)][label={[vcolour]right:3}]{};
\draw (v1)--(v3)--(v5)--(v2)--(v4)--(v1);
\draw (v1)--+(-162:1) node[dot](w1)[label=below:\( w_1 \)][label={[vcolour]left:\textbf{?}}]{}
      (v2)--+(126:1) node[dot](w2)[label=above left:\( w_2 \)][label={[vcolour]above right:0}]{}
      (v3)--+(54:1) node[dot](w3)[label=above right:\( w_3 \)][label={[vcolour]above left:3}]{}
      (v4)--+(-18:1) node[dot](w4)[label=below:\( w_4 \)]{};
\draw (w1)--(w2)--(w3)--(w4);
\end{tikzpicture}
\caption{In Case~2, \( f(w_3)=3 \) leads to a contradiction.}
\label{fig:case 2 contradiction}
\end{minipage}\end{figure}

\noindent Case~3:\\
If \( f(w_3)=2 \), then the path \( w_3,w_2,v_2 \) will be part of a bicoloured \( P_4 \) irrespective of the colour at \( w_2 \) (see Figure~\ref{fig:case 3 contradiction1}). 
If \( f(w_3)=3 \), then the path \( w_3,w_4,v_4 \) will be part of a bicoloured \( P_4 \) irrespective of the colour at \( w_4 \) (see Figure~\ref{fig:case 3 contradiction2}). 
So, \( f(w_3)=1 \). 
Clearly, \( f(w_2)\in\{0,3\} \) and \( f(w_4)\in\{0,2\} \). 
Observe that \( f(w_2)\neq 0 \) and \( f(w_4)\neq 0 \) (if not, either \( v_1,v_3,w_3,w_2 \) or \( v_1,v_3,w_3,w_4 \) is a bicoloured \( P_4 \)). 
So, \( f(w_2)=3 \) and \( f(w_4)=2 \). 
But, then path \( w_2,v_2,v_4,w_4 \) is a bicoloured \( P_4 \). 
This contradiction rules out Case~3.

\begin{figure}[hbt]
\centering
\begin{subfigure}[b]{0.45\textwidth}
\centering
\begin{tikzpicture}
\coordinate (base);
\path (base)--+(-162:1) node[dot](v1)[label=below:\( v_1 \)][label={[vcolour]above:1}]{}
      (base)--+(126:1) node[dot](v2)[label=left:\( v_2 \)][label={[vcolour]above:2}]{}
      (base)--+(54:1) node[dot](v3)[label=right:\( v_3 \)][label={[vcolour]above:0}]{}
      (base)--+(-18:1) node[dot](v4)[label=below:\( v_4 \)][label={[vcolour]above:3}]{}
      (base)--+(-90:1) node[dot](v5)[label=below:\( v_5 \)][label={[vcolour]right:1}]{};
\draw (v1)--(v3)--(v5)--(v2)--(v4)--(v1);
\draw (v1)--+(-162:1) node[dot](w1)[label=below:\( w_1 \)]{}
      (v2)--+(126:1) node[dot](w2)[label=above left:\( w_2 \)][label={[vcolour]above right:\textbf{?}}]{}
      (v3)--+(54:1) node[dot](w3)[label=above right:\( w_3 \)][label={[vcolour]above left:2}]{}
      (v4)--+(-18:1) node[dot](w4)[label=below:\( w_4 \)]{};
\draw (w1)--(w2)--(w3)--(w4);
\end{tikzpicture}
\caption{In Case~3, \( f(w_3)=2 \) leads to a contradiction.}
\label{fig:case 3 contradiction1}
\end{subfigure}\hfill
\begin{subfigure}[b]{0.45\textwidth}
\centering
\begin{tikzpicture}
\coordinate (base);
\path (base)--+(-162:1) node[dot](v1)[label=below:\( v_1 \)][label={[vcolour]above:1}]{}
      (base)--+(126:1) node[dot](v2)[label=left:\( v_2 \)][label={[vcolour]above:2}]{}
      (base)--+(54:1) node[dot](v3)[label=right:\( v_3 \)][label={[vcolour]above:0}]{}
      (base)--+(-18:1) node[dot](v4)[label=below:\( v_4 \)][label={[vcolour]above:3}]{}
      (base)--+(-90:1) node[dot](v5)[label=below:\( v_5 \)][label={[vcolour]right:1}]{};
\draw (v1)--(v3)--(v5)--(v2)--(v4)--(v1);
\draw (v1)--+(-162:1) node[dot](w1)[label=below:\( w_1 \)]{}
      (v2)--+(126:1) node[dot](w2)[label=above left:\( w_2 \)]{}
      (v3)--+(54:1) node[dot](w3)[label=above right:\( w_3 \)][label={[vcolour]above left:3}]{}
      (v4)--+(-18:1) node[dot](w4)[label=below:\( w_4 \)][label={[vcolour]right:\textbf{?}}]{};
\draw (w1)--(w2)--(w3)--(w4);
\end{tikzpicture}
\caption{In Case~3, \( f(w_3)=3 \) leads to a contradiction.}
\label{fig:case 3 contradiction2}
\end{subfigure}
\caption{\( f(w_3)=1 \) in Case 3.}
\end{figure}
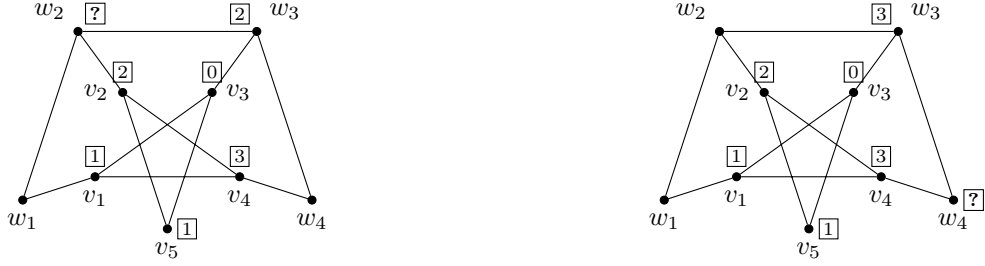

Since Case~3 is ruled out by contradiction, we have \( f(w_1)=f(w_4)=f(v_5) \) by Cases~1 and 2.
This completes the proof. 
\end{proof}

\begin{construct}\label{make:4-star girth 5}
\emph{Input:} A 4-regular graph \( G \).\\
\emph{Output:} A graph \( G' \) of maximum degree four and girth five.\\
\emph{Guarantee:} \( G \) is 3-colourable if and only if \( G' \) is 4-star colourable.\\
\emph{Steps:}\\
Let \( v_1,v_2,\dots,v_n \) be the vertices in \( G \). 
First, replace each vertex of \( G \) by a vertex gadget as shown in Figure~\ref{fig:repeat 4-star vertex replacement}. 
The vertex gadget for \( v_i \) has five terminals, and the terminals \( v_{i,1},v_{i,2},v_{i,3},v_{i,4} \) accommodate the four edges incident on \( v_i \) in \( G \) in a one-to-one fashion (order does not matter). 
So, corresponding to each edge \( v_iv_j \) in \( G \), there is an edge \( v_{i,k}v_{j,\ell} \) in \( G' \) for some \( k,\ell\in\{1,2,3,4\} \). 
Finally, introduce the chain gadget displayed in Figure~\ref{fig:repeat 4-star girth 5 chain gadget}, and join \( v_{i,0} \) to \( v_i^* \) for \( i=1,2,\dots,n \).

\begin{figure}[hbt]
\centering
\begin{tikzpicture}[scale=0.6]
\node[bigdot] (vi)[label=above:\( v_i \)]{};
\draw[dashed] (vi)--+(-36:1) coordinate (rightNbr)
              (vi)--+(-72:1)
              (vi)--+(-108:1)
              (vi)--+(-144:1) coordinate (leftNbr);
\draw [decorate,decoration={brace,amplitude=6pt,raise=10pt,mirror}] (leftNbr)++(-0.2,0)--node[yshift=-25pt]{\( 4 \) edges} ($(rightNbr)+(0.2,0)$);

\path (vi) ++(2,0) coordinate (from) --++(0.75,0) coordinate(to);
\draw [-stealth,draw=black,line width=3pt] (from)--(to);

\path (to) ++(3,0) coordinate (vi0base);
\path (vi0base)--+(-162:1) node[dot](v1){}
      (vi0base)--+(126:1) node[dot](v2){}
      (vi0base)--+(54:1) node[dot](v3){}
      (vi0base)--+(-18:1) node[dot](v4){}
      (vi0base)--+(-90:1) node[dot](v5){} node[terminal](vi1)[label=right:\( v_{i,1} \)]{};
\draw (v1)--(v3)--(v5)--(v2)--(v4)--(v1);
\draw (v1)--+(-162:1) node[dot](w1){} node[terminal][label=below:\( v_{i,0} \)]{}
      (v2)--+(126:1) node[dot](w2){}
      (v3)--+(54:1) node[dot](w3){}
      (v4)--+(-18:1) node[dot](w4){};
\draw (w1)--(w2)--(w3)--(w4);
\draw[dashed] (vi1)--+(0,-1);

\path (vi0base)--+(3.8,0) coordinate (vi1base);
\path (vi1base)--+(-162:1) node[dot](v1){}
      (vi1base)--+(126:1) node[dot](v2){}
      (vi1base)--+(54:1) node[dot](v3){}
      (vi1base)--+(-18:1) node[dot](v4){}
      (vi1base)--+(-90:1) node[dot](v5){} node[terminal](vi2)[label=right:\( v_{i,2} \)]{};
\draw (v1)--(v3)--(v5)--(v2)--(v4)--(v1);
\draw (v1)--+(-162:1) node[dot](w1){}
      (v2)--+(126:1) node[dot](w2){}
      (v3)--+(54:1) node[dot](w3){}
      (v4)--+(-18:1) node[dot](w4){};
\draw (w1)--(w2)--(w3)--(w4);
\draw[dashed] (vi2)--+(0,-1);

\path (vi1base)--+(3.8,0) coordinate (vi2base);
\path (vi2base)--+(-162:1) node[dot](v1){}
      (vi2base)--+(126:1) node[dot](v2){}
      (vi2base)--+(54:1) node[dot](v3){}
      (vi2base)--+(-18:1) node[dot](v4){}
      (vi2base)--+(-90:1) node[dot](v5){} node[terminal](vi3)[label=right:\( v_{i,3} \)]{};
\draw (v1)--(v3)--(v5)--(v2)--(v4)--(v1);
\draw (v1)--+(-162:1) node[dot](w1){}
      (v2)--+(126:1) node[dot](w2){}
      (v3)--+(54:1) node[dot](w3){}
      (v4)--+(-18:1) node[dot](w4){};
\draw (w1)--(w2)--(w3)--(w4);
\draw[dashed] (vi3)--+(0,-1);

\path (vi2base)--+(3.8,0) coordinate (vi3base);
\path (vi3base)--+(-162:1) node[dot](v1){}
      (vi3base)--+(126:1) node[dot](v2){}
      (vi3base)--+(54:1) node[dot](v3){}
      (vi3base)--+(-18:1) node[dot](v4){}
      (vi3base)--+(-90:1) node[dot](v5){} node[terminal](vi4)[label=right:\( v_{i,4} \)]{};
\draw (v1)--(v3)--(v5)--(v2)--(v4)--(v1);
\draw (v1)--+(-162:1) node[dot](w1){}
      (v2)--+(126:1) node[dot](w2){}
      (v3)--+(54:1) node[dot](w3){}
      (v4)--+(-18:1) node[dot](w4){};
\draw (w1)--(w2)--(w3)--(w4);
\draw[dashed] (vi4)--+(0,-1);
\end{tikzpicture}
\caption{Replacement of vertex by vertex gadget.}
\label{fig:repeat 4-star vertex replacement}
\end{figure}
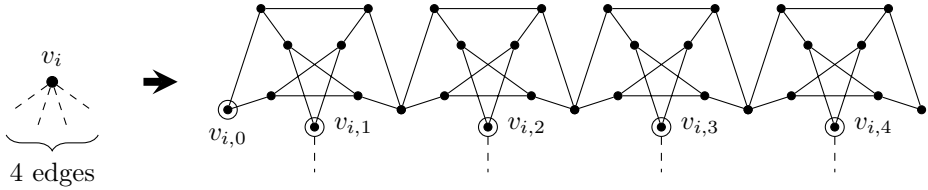
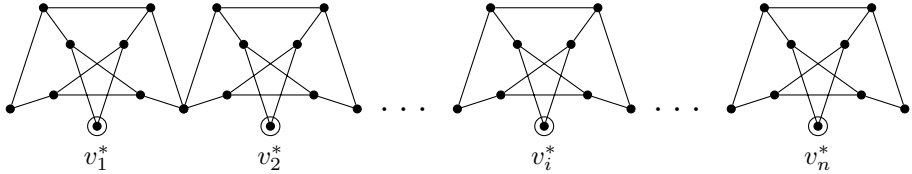
\begin{figure}[hbt]
\centering
\begin{tikzpicture}[scale=0.6]
\coordinate (vi0base);
\path (vi0base)--+(-162:1) node[dot](v1){}
      (vi0base)--+(126:1) node[dot](v2){}
      (vi0base)--+(54:1) node[dot](v3){}
      (vi0base)--+(-18:1) node[dot](v4){}
      (vi0base)--+(-90:1) node[dot](v5){} node[terminal][label=below:\( v^*_1 \)]{};
\draw (v1)--(v3)--(v5)--(v2)--(v4)--(v1);
\draw (v1)--+(-162:1) node[dot](w1){}
      (v2)--+(126:1) node[dot](w2){}
      (v3)--+(54:1) node[dot](w3){}
      (v4)--+(-18:1) node[dot](w4){};
\draw (w1)--(w2)--(w3)--(w4);

\path (vi0base)--+(3.8,0) coordinate (vi1base);
\path (vi1base)--+(-162:1) node[dot](v1){}
      (vi1base)--+(126:1) node[dot](v2){}
      (vi1base)--+(54:1) node[dot](v3){}
      (vi1base)--+(-18:1) node[dot](v4){}
      (vi1base)--+(-90:1) node[dot](v5){} node[terminal][label=below:\( v^*_2 \)]{};
\draw (v1)--(v3)--(v5)--(v2)--(v4)--(v1);
\draw (v1)--+(-162:1) node[dot](w1){}
      (v2)--+(126:1) node[dot](w2){}
      (v3)--+(54:1) node[dot](w3){}
      (v4)--+(-18:1) node[dot](w4){};
\draw (w1)--(w2)--(w3)--(w4);

\path (vi1base)--+(6,0) coordinate (vi2base);
\path (vi2base)--+(-162:1) node[dot](v1){}
      (vi2base)--+(126:1) node[dot](v2){}
      (vi2base)--+(54:1) node[dot](v3){}
      (vi2base)--+(-18:1) node[dot](v4){}
      (vi2base)--+(-90:1) node[dot](v5){} node[terminal][label=below:\( v^*_i \)]{};
\draw (v1)--(v3)--(v5)--(v2)--(v4)--(v1);
\draw (v1)--+(-162:1) node[dot](w1){};
\path (w4)--node[font=\LARGE]{\dots} (w1); \draw (v2)--+(126:1) node[dot](w2){}
      (v3)--+(54:1) node[dot](w3){}
      (v4)--+(-18:1) node[dot](w4){};
\draw (w1)--(w2)--(w3)--(w4);

\path (vi2base)--+(6,0) coordinate (vi3base);
\path (vi3base)--+(-162:1) node[dot](v1){}
      (vi3base)--+(126:1) node[dot](v2){}
      (vi3base)--+(54:1) node[dot](v3){}
      (vi3base)--+(-18:1) node[dot](v4){}
      (vi3base)--+(-90:1) node[dot](v5){} node[terminal][label=below:\( v^*_n \)]{};
\draw (v1)--(v3)--(v5)--(v2)--(v4)--(v1);
\draw (v1)--+(-162:1) node[dot](w1){};
\path (w4)--node[font=\LARGE]{\dots} (w1); \draw (v2)--+(126:1) node[dot](w2){}
      (v3)--+(54:1) node[dot](w3){}
      (v4)--+(-18:1) node[dot](w4){};
\draw (w1)--(w2)--(w3)--(w4);
\end{tikzpicture}
\caption{Chain gadget in Construction~\ref{make:4-star girth 5}.}
\label{fig:repeat 4-star girth 5 chain gadget}
\end{figure}
\end{construct}

\begin{proof}[Proof of Guarantee]
Suppose that \( G \) admits a 3-colouring \( f\colon V(G)\to\{1,2,3\} \). 
A 4-star colouring \( f' \) of \( G' \) is constructed as follows. 
Assign \( f'(v_{i,j})=f(v_i) \) for \( 1\leq i\leq n \) and \( 0\leq j\leq 4 \). 
This partial colouring can be extended into a 4-star colouring of each vertex gadget by the scheme in Figure~\ref{fig:4-star colouring vertex gadget} (if terminals of the gadget are coloured~\mbox{\( c\in\{2,3\} \)}, swap colour~1 with colour~\( c \)). 
Also, assign \( f'(v_i^*)=0 \) for \( 1\leq i\leq n \). 
This can be extended into a 4-star colouring of the chain gadget; for instance, use a scheme similar to the one in Figure~\ref{fig:4-star colouring vertex gadget} (it does not matter which 4-star colouring extension is used). 

\begin{figure}[hbt]
\centering
\begin{tikzpicture}[scale=0.75]
\coordinate (vi0base);
\path (vi0base)--+(-162:1) node[dot](v1)[label={[vcolour]below:2}]{}
      (vi0base)--+(126:1) node[dot](v2)[label={[vcolour]left:0}]{}
      (vi0base)--+(54:1) node[dot](v3)[label={[vcolour]right:0}]{}
      (vi0base)--+(-18:1) node[dot](v4)[label={[vcolour]below:3}]{}
      (vi0base)--+(-90:1) node[dot](v5){} node[terminal][label=below:\( v_{i,1} \)](vi1)[label={[vcolour]right:1}]{};
\draw (v1)--(v3)--(v5)--(v2)--(v4)--(v1);
\draw (v1)--+(-162:1) node[dot](w1){} node[terminal][label=below:\( v_{i,0} \)][label={[vcolour]left:1}]{}
      (v2)--+(126:1) node[dot](w2)[label={[vcolour]above:2}]{}
      (v3)--+(54:1) node[dot](w3)[label={[vcolour]above:3}]{}
      (v4)--+(-18:1) node[dot](w4)[label={[vcolour]below:1}]{};
\draw (w1)--(w2)--(w3)--(w4);

\path (vi0base)--+(3.8,0) coordinate (vi1base);
\path (vi1base)--+(-162:1) node[dot](v1)[label={[vcolour]below:2}]{}
      (vi1base)--+(126:1) node[dot](v2)[label={[vcolour]left:0}]{}
      (vi1base)--+(54:1) node[dot](v3)[label={[vcolour]right:0}]{}
      (vi1base)--+(-18:1) node[dot](v4)[label={[vcolour]below:3}]{}
      (vi1base)--+(-90:1) node[dot](v5){} node[terminal][label=below:\( v_{i,2} \)](vi2)[label={[vcolour]right:1}]{};
\draw (v1)--(v3)--(v5)--(v2)--(v4)--(v1);
\draw (v1)--+(-162:1) node[dot](w1){}
      (v2)--+(126:1) node[dot](w2)[label={[vcolour]above:2}]{}
      (v3)--+(54:1) node[dot](w3)[label={[vcolour]above:3}]{}
      (v4)--+(-18:1) node[dot](w4)[label={[vcolour]below:1}]{};
\draw (w1)--(w2)--(w3)--(w4);

\path (vi1base)--+(3.8,0) coordinate (vi2base);
\path (vi2base)--+(-162:1) node[dot](v1)[label={[vcolour]below:2}]{}
      (vi2base)--+(126:1) node[dot](v2)[label={[vcolour]left:0}]{}
      (vi2base)--+(54:1) node[dot](v3)[label={[vcolour]right:0}]{}
      (vi2base)--+(-18:1) node[dot](v4)[label={[vcolour]below:3}]{}
      (vi2base)--+(-90:1) node[dot](v5){} node[terminal][label=below:\( v_{i,3} \)](vi3)[label={[vcolour]right:1}]{};
\draw (v1)--(v3)--(v5)--(v2)--(v4)--(v1);
\draw (v1)--+(-162:1) node[dot](w1){}
      (v2)--+(126:1) node[dot](w2)[label={[vcolour]above:2}]{}
      (v3)--+(54:1) node[dot](w3)[label={[vcolour]above:3}]{}
      (v4)--+(-18:1) node[dot](w4)[label={[vcolour]below:1}]{};
\draw (w1)--(w2)--(w3)--(w4);

\path (vi2base)--+(3.8,0) coordinate (vi3base);
\path (vi3base)--+(-162:1) node[dot](v1)[label={[vcolour]below:2}]{}
      (vi3base)--+(126:1) node[dot](v2)[label={[vcolour]left:0}]{}
      (vi3base)--+(54:1) node[dot](v3)[label={[vcolour]right:0}]{}
      (vi3base)--+(-18:1) node[dot](v4)[label={[vcolour]below:3}]{}
      (vi3base)--+(-90:1) node[dot](v5){} node[terminal][label=below:\( v_{i,4} \)](vi4)[label={[vcolour]right:1}]{};
\draw (v1)--(v3)--(v5)--(v2)--(v4)--(v1);
\draw (v1)--+(-162:1) node[dot](w1){}
      (v2)--+(126:1) node[dot](w2)[label={[vcolour]above:2}]{}
      (v3)--+(54:1) node[dot](w3)[label={[vcolour]above:3}]{}
      (v4)--+(-18:1) node[dot](w4)[label={[vcolour]right:1}]{};
\draw (w1)--(w2)--(w3)--(w4);

\end{tikzpicture}
\caption{A 4-star colouring of the vertex gadget with colour 1 at terminals.}
\label{fig:4-star colouring vertex gadget}
\end{figure}
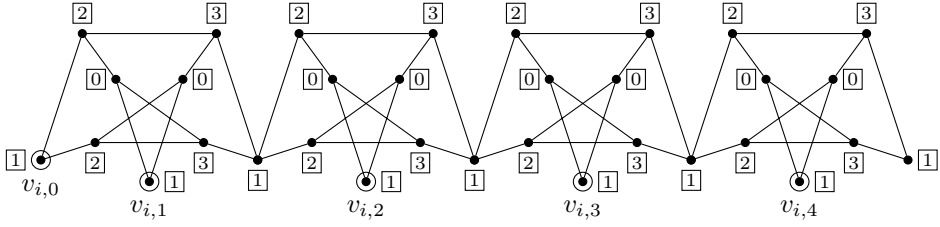

Note that for each 3-vertex path \( Q \) in a vertex/chain gadget with a terminal of the gadget as an endpoint, \( Q \) is not bicoloured by \( f' \). 
Hence, there is no bicoloured \( P_4 \) in \( G' \) with three vertices from one gadget and one vertex from another. 
To prove that \( f' \) is a 4-star colouring, it suffices to show that there is no bicoloured \( P_4 \) in \( G' \) with two vertices from one gadget and two vertices from another gadget. 
Observe that for \( 1\leq i\leq n \) and \( 1\leq k\leq 4 \), neighbours of \( v_{i,k} \) within the vertex gadget for \( v_i \) are coloured~0. 
So, there is no bicoloured \( P_4 \) in \( G' \) containing an edge of the form \( v_{i,k}v_{j,\ell} \) as its middle vertex. 
Moreover, for \( 1\leq i\leq n \), neighbours of \( v_{i,0} \) within the vertex gadget are not coloured 0. 
Hence, there is no bicoloured \( P_4 \) in \( G' \) containing an edge of the form \( v_{i,0}v_i^* \) as its middle vertex (recall that \( f'(v_i^*)=0 \)). 
Therefore, there is no bicoloured \( P_4 \) in \( G' \) containing two vertices from one gadget and two vertices from another gadget. 
This proves that \( f' \) is a 4-star colouring of \( G' \).

Conversely, suppose that \( G' \) admits a 4-star colouring \( f' \). 
Thanks to Lemma~\ref{lem:petersen-1}, terminals of a vertex/chain gadget should get the same colour. 
That is, \( f'(v_{i,0})=f'(v_{i,1})=f'(v_{i,2})=f'(v_{i,3})=f'(v_{i,4}) \) for all \( v_i\in V(G) \), and \( f'(v^*_1)=f'(v^*_2)=\dots=f'(v^*_n) \). 
Without loss of generality, assume that \( f'(v_i^*)=0 \) for \( i=1,2,\dots,n \). 
Since \( v_{i,0}v_i^* \) is an edge for \( 1\leq i\leq n \), the chain gadget forbids colour 0 at terminals \( v_{i,j} \) for \( 1\leq i\leq n \) and \( 0\leq j\leq 4 \). 
Consider the function \( f\colon V(G)\to\{1,2,3\} \) defined as \( f(v_i)=f'(v_{i,0}) \) for \( 1\leq i\leq n \). 
For each edge \( v_iv_j \) of \( G \), there exists an edge in \( G' \) between terminals \( v_{i,k} \) and \( v_{j,\ell} \) for some \( k,\ell\in\{1,2,3,4\} \), and thus \( f'(v_{i,k})\neq f'(v_{j,\ell}) \). So, \( f(v_i)\neq f(v_j) \) for each edge \( v_iv_j \) of \( G \) (due to Lemma~\ref{lem:petersen-1}, \( f(v_i)=f'(v_{i,0})=f'(v_{i,k}) \) and \( f(v_j)=f'(v_{j,0})=f'(v_{j,\ell}) \)). 
Therefore, \( f \) is a 3-colouring of \( G \). 
This proves the converse part. 
\end{proof}

Construction~\ref{make:4-star girth 5} establishes a reduction from \textsc{3-Colourability}(4-regular) to \textsc{4-Star Colourability}(\( \Delta=4 \), \( \text{girth}=5 \)). 
Note that Construction~\ref{make:4-star girth 5} requires only time polynomial in \( m+n \) because \( |E(G')|=61n+m \) and \( |V(G')|=41n+1 \) (where \( m=|E(G)| \) and \( n=|V(G)| \)). 
Thus, we have the following theorem.

\begin{theorem}\label{thm:4-star colouring npc max-deg 4}
\textsc{4-Star Colourability} is NP-complete for graphs of maximum degree four and girth five.
\qed
\end{theorem}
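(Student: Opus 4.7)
The plan is to establish the theorem by a polynomial-time many-one reduction from \textsc{3-Colourability} restricted to 4-regular graphs, which is a classical NP-complete problem (Dailey). Membership in NP is immediate: given a candidate colouring, one verifies in polynomial time that it is proper and that no 4-vertex path is bicoloured by checking each pair of adjacent edges. So the task reduces entirely to exhibiting and verifying a hardness reduction.

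For the hardness, I would simply invoke Construction~\ref{make:4-star girth 5} together with its guarantee (already proved above). That is, given an instance \( G \) of \textsc{3-Colourability}(4-regular), I build \( G' \) via the construction; the guarantee gives \( G \) is 3-colourable iff \( G' \) is 4-star colourable. The construction runs in time polynomial in \( m+n \) since \( |V(G')|=41n+1 \) and \( |E(G')|=61n+m \), so the reduction is polynomial.

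The remaining work is to verify that \( G' \) satisfies the two structural constraints claimed in the theorem, namely \( \Delta(G')\leq 4 \) and girth 5. For the degree bound, I would check each vertex of \( G' \) case by case: inside any copy of Petersen minus one vertex every non-terminal has degree 3 and every terminal has internal degree~2, and each terminal receives at most two external edges (the terminals \( v_{i,k} \) for \( 1\leq k\leq 4 \) receive exactly one edge \( v_{i,k}v_{j,\ell} \) coming from an edge of \( G \), while \( v_{i,0} \) receives the edge to \( v_i^* \); the chain-gadget terminals \( v_i^* \) also receive exactly one external edge), so no vertex exceeds degree~4. For the girth bound, each gadget copy of Petersen minus one vertex has girth 5; I would then argue that any cycle in \( G' \) not contained in a single copy must use at least two external edges (which go between distinct copies), and that between any two terminals in distinct copies the shortest path within either copy, combined with the external edge(s), already forces the length of such a cycle to be at least 5.

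The cleanest step is the reduction correctness, since the guarantee has just been proved using Lemma~\ref{lem:petersen-1}; the bookkeeping checks above are routine. I expect the most delicate point to be the girth verification: even though each gadget copy has girth 5 internally, one must confirm that the vertex-gadget/chain-gadget gluings together with the edges \( v_{i,k}v_{j,\ell} \) encoding \( E(G) \) do not create any 3-cycle or 4-cycle. This is the only place where a careless arrangement could silently violate the claimed property, and it is essentially the reason the author inserts the extra Petersen-minus-one ``buffer'' copies between the terminals rather than connecting them directly.
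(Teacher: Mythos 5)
Your proposal is correct and follows essentially the same route as the paper: the theorem is obtained directly by invoking Construction~\ref{make:4-star girth 5} with its already-proved guarantee as a polynomial-time reduction from \textsc{3-Colourability}(4-regular), using the same size bounds \( |V(G')|=41n+1 \) and \( |E(G')|=61n+m \). The extra bookkeeping you flag (checking \( \Delta(G')\leq 4 \) and girth~5 after the gluings) is exactly what the paper asserts in the construction's output specification without further elaboration, so your write-up is, if anything, slightly more careful on those routine points.
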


Next, we show that \textsc{5-Star Colourability} is NP-complete for graphs of maximum degree~4.
Construction~\ref{make:5-star max-deg 4} below is employed to establish a reduction from \textsc{3\nobreakdash-Colourability}(4\nobreakdash-regular) to \textsc{5\nobreakdash-Star Colourability}(triangle-free, 4\nobreakdash-regular). 
Construction~\ref{make:5-star max-deg 4} is similar to Construction~\ref{make:4-star girth 5}, albeit a bit more complicated. 
For instance, we will need two chain gadgets this time because two colours should be forbidden. 
The gadgets used in the construction are made of two gadgets called 2-in-2-out gadget and not-equal gadget. 
These are in turn made of one fixed graph, namely Gr\"{o}tzsch graph minus one vertex; we call it the gadget component (in Construction~\ref{make:5-star max-deg 4}) for obvious reason. 
The gadget component is displayed in Figure~\ref{fig:5-star gadget compnent}. 
The following lemma explains why it is interesting for 5-star colouring. 
\begin{lemma}\label{lem:miecelski-1}
Under every 5-star colouring of the gadget component, the degree-2 vertices of the graph should get pairwise distinct colours. 
Moreover, every 5-star colouring of the gadget component must be of the form displayed in Figure~\ref{fig:5-star colouring of gadget component-a} or Figure~\ref{fig:5-star colouring of gadget component-b} up to colour swaps.
\end{lemma}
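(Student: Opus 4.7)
The plan is to adapt the case-analysis strategy used in the proof of Lemma~\ref{lem:petersen-1}. Let $G$ denote the gadget component (the Gr\"otzsch graph with one vertex removed) and let $f$ be an arbitrary 5-star colouring of $G$. First I would pin down the colour pattern on a distinguished 5-cycle $C$ of $G$: since the star chromatic number of $C_5$ equals 4, the restriction of $f$ to $C$ is either an injection (all five colours used) or uses four colours with exactly one repetition on a pair of non-adjacent vertices of $C$. After relabelling colours and exploiting the rotational automorphisms of the Gr\"otzsch graph to rotate $C$, the pattern on $C$ belongs to a short list of canonical sub-cases, from which the rest of the analysis branches.

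Within each canonical sub-case, I would propagate the constraints to the remaining vertices one vertex at a time, using (i)~the proper-colouring condition on each edge, (ii)~the prohibition of bicoloured $P_4$s, and (iii)~the tight local structure around each degree-2 vertex (whose two neighbours lie in the already-coloured portion of $G$, and therefore pin $f$ at that vertex to a small palette). For every candidate extension I would check whether it creates a bicoloured $P_4$; if it does, I discard the branch, exactly as in the diagnostic diagrams used in Lemma~\ref{lem:petersen-1} (cf.\ Figures~\ref{fig:case 1 contradiction}--\ref{fig:case 3 contradiction2}). The surviving branches, after undoing the colour permutations, should coincide precisely with the two colourings displayed in Figures~\ref{fig:5-star colouring of gadget component-a} and~\ref{fig:5-star colouring of gadget component-b}. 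The pairwise distinctness of the colours at the degree-2 vertices then reduces to direct inspection of those two pictures, so both halves of the lemma are established by the same case tree.

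The main obstacle I anticipate is controlling the size of that case tree. Compared with Lemma~\ref{lem:petersen-1}, there is one extra colour in the palette and more degree-2 vertices to resolve, so a na\"ive branching is appreciably wider. I would keep the analysis manageable by fixing, in addition to the canonical pattern on $C$, the colour at one carefully chosen additional vertex (for instance, a neighbour of the removed vertex) so as to break the residual rotational symmetry of the Gr\"otzsch graph, and by re-using the same forbidden-$P_4$ templates across branches (short paths alternating between $C$ and the degree-2 vertices) so that the bookkeeping in the discarded branches remains uniform and mechanical.
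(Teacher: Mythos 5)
Your plan follows essentially the same route as the paper's proof: a case analysis on how the inner $C_5$ is coloured, followed by propagation of the no-bicoloured-$P_4$ constraint to the degree-2 vertices $w_1,\dots,w_5$, with the pairwise-distinctness claim read off from the two surviving colourings. The paper's main labour-saving step --- which addresses exactly the case-tree blow-up you worry about --- is to prove up front (its Claim~1) that every 5-star colouring must use all five colours on the inner $C_5$, so your ``four colours with one repetition'' branch is eliminated immediately, and after normalising $f(v_i)=i-1$ and showing $f(w_i)\neq f(v_i)$ only the two cases $f(w_1)\in\{2,3\}$ remain, each forcing one of the two displayed colourings.
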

\begin{proof}
Let \( f \) be a 5-star colouring of the gadget component that uses colours 0,1,2,3 and~4.\\[5pt]
\noindent \hypertarget{lnk:all 5 colours on C5}{\textbf{Claim~1:}} \( f \) must use all five colours on the 5-vertex cycle \( (v_1,v_2,v_3,v_4,v_5) \).\\[5pt]
On the contrary, assume that two vertices of the 5-vertex cycle are assigned the same colour by \( f \). 
Without loss of generality, assume that \( f(v_1)=f(v_3)=1 \) and \mbox{\(  f(v_2)=0 \)}. 
Since \( v_1,v_2,v_3,v_4 \) and \( v_5,v_1,v_2,v_3 \) are \( P_4 \)'s, \( f(v_4)\neq 0 \) and \( f(v_5)\neq 0 \); hence, new colours must be assigned at \( v_4 \) and \( v_5 \). 
Without loss of generality, assume that \( f(v_4)=2 \) and \( f(v_5)=3 \). 
If \( f(w_2)\in\{0,2,3\} \), then one of the three paths (i)~\( v_1,w_2,v_3,v_2 \), (ii)~\( v_1,w_2,v_3,v_4 \), or (iii)~\( v_5,v_1,w_2,v_3 \) is a bicoloured~\( P_4 \). 
So, \( f(w_2)=4 \). 
If \( f(w_4)=0 \) or \( f(w_4)=4 \), then either \( v_1,v_2,v_3,w_4 \) or \( v_1,w_2,v_3,w_4 \) is a bicoloured~\( P_4 \). 
Hence, \( f(w_4)\notin\{0,4\} \) and thus \( f(w_4)=2 \). 
Similarly, \( f(w_5)\notin\{0,4\} \) and thus \( f(w_5)=3 \) (if \( f(w_5)\in\{0,4\} \), then either \( w_5,v_1,v_2,v_3 \) or \( w_5,v_1,w_2,v_3 \) is a bicoloured \( P_4 \)). 
But, then path \( w_4,v_5,v_4,w_5 \) is a bicoloured \( P_4 \). 
This contradiction proves Claim~1.\\

Thanks to Claim~1, we assume without loss of generality that \( f(v_i)=i-1 \) for \( 1\leq i\leq 5 \).\\[5pt] 
\noindent \textbf{Claim~2:} For \( 1\leq i\leq 5 \), \( f(w_i)\neq f(v_i) \).\\[5pt]
Assume the contrary, say for \( i=1 \), i.e., \( f(w_1)=f(v_1)=0 \). 
If \( f(w_2)=1 \) or 4, then either \( w_1,v_2,v_1,w_2 \) or \( w_1,v_5,v_1,w_2 \) is a bicoloured~\( P_4 \). 
So, \( f(w_2)=3 \). 
If \( f(w_5)=1 \) or 4, then either \( w_1,v_2,v_1,w_5 \) or \( w_1,v_5,v_1,w_5 \) is a bicoloured~\( P_4 \). 
So, \( f(w_5)=2 \). 
But, then path \( w_2,v_3,v_4,w_5 \) is a bicoloured \( P_4 \). 
This contradiction proves Claim~2.\\

Due to Claim~2, \( f(w_1)\in\{2,3\} \). 
Similarly, \( f(w_2)\in\{3,4\} \), \( f(w_3)\in\{4,0\} \), \( f(w_4)\in\{0,1\} \) and \( f(w_5)\in\{1,2\} \).\\

\noindent Case 1: \( f(w_1)=2 \).\\
This forces colour~0 at \( w_4 \) (if \( f(w_4)=1 \), then \( w_1,v_2,v_3,w_4 \) is a bicoloured~\( P_4 \)). 
This in turn forces colour~3 at \( w_2 \) similarly. 
By repeating this argument, we can show that \( f \) must be of the form displayed in Figure~\ref{fig:5-star colouring of gadget component-a} up to colour swaps.\\

\noindent Case 2: \( f(w_1)=3 \).\\
This forces colour~0 at \( w_3 \) (if \( f(w_3)=4 \), then \( w_1,v_5,v_4,w_3 \) is a bicoloured~\( P_4 \)). 
This in turn forces colour 2 at \( w_5 \) similarly. 
By repeating this argument, we can show that \( f \) must be of the form displayed in Figure~\ref{fig:5-star colouring of gadget component-b} up to colour swaps.
\end{proof}

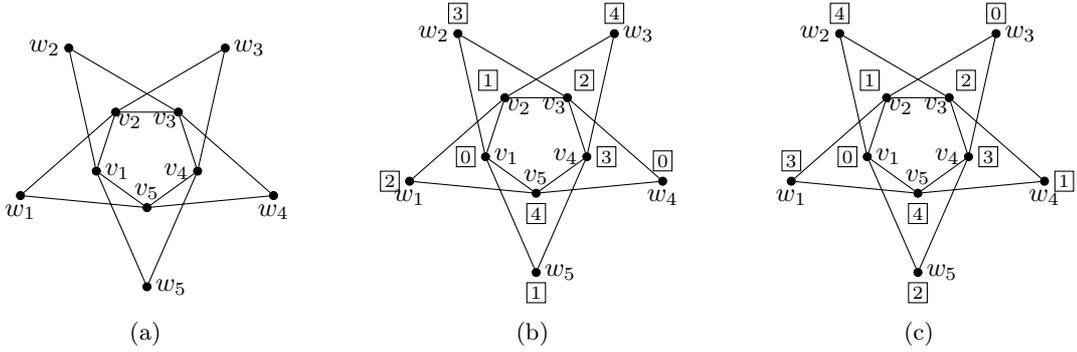
\begin{figure}[hbt]
\centering
\begin{subfigure}[b]{0.33\textwidth}
\centering
\begin{tikzpicture}[scale=0.70]
\tikzstyle every label=[label distance=-2pt]
\coordinate (base);
\path (base)--+(-162:1) node[dot](v1)[label=right:\( v_1 \)]{}
      (base)--+(126:1) node[dot](v2)[label={[label distance=-6pt]below right:\( v_2 \)}]{}
      (base)--+(54:1) node[dot](v3)[label={[label distance=-6.5pt]below left:\( v_3 \)}]{}
      (base)--+(-18:1) node[dot](v4)[label=left:\( v_4 \)]{}
      (base)--+(-90:1) node[dot](v5)[label=above:\( v_5 \)]{};
\draw (v1)--(v2)--(v3)--(v4)--(v5)--(v1);

\path (v1)--+(-162:1.5) node[dot](w1)[label=below:\( w_1 \)]{}
      (v2)--+(126:1.5) node[dot](w2)[label=left:\( w_2 \)]{}
      (v3)--+(54:1.5) node[dot](w3)[label=right:\( w_3 \)]{}
      (v4)--+(-18:1.5) node[dot](w4)[label=below:\( w_4 \)]{}
      (v5)--+(-90:1.5) node[dot](w5)[label=right:\( w_5 \)]{};
\draw (v1)--(w2)--(v3)
      (v2)--(w3)--(v4)
      (v3)--(w4)--(v5)
      (v4)--(w5)--(v1)
      (v5)--(w1)--(v2);
\end{tikzpicture}
\caption{}\label{fig:5-star gadget compnent}
\end{subfigure}\begin{subfigure}[b]{0.33\textwidth}
\centering
\begin{tikzpicture}[scale=0.70]
\tikzstyle every label=[label distance=-2pt]
\coordinate (base);
\path (base)--+(-162:1) node[dot](v1)[label=right:\( v_1 \)][label={[vcolour]left:0}]{}
      (base)--+(126:1) node[dot](v2)[label={[label distance=-6pt]below right:\( v_2 \)}][label={[vcolour]above left:1}]{}
      (base)--+(54:1) node[dot](v3)[label={[label distance=-6.5pt]below left:\( v_3 \)}][label={[vcolour]above right:2}]{}
      (base)--+(-18:1) node[dot](v4)[label=left:\( v_4 \)][label={[vcolour]right:3}]{}
      (base)--+(-90:1) node[dot](v5)[label=above:\( v_5 \)][label={[vcolour]below:4}]{};
\draw (v1)--(v2)--(v3)--(v4)--(v5)--(v1);

\path (v1)--+(-162:1.5) node[dot](w1)[label=below:\( w_1 \)][label={[vcolour]left:2}]{}
      (v2)--+(126:1.5) node[dot](w2)[label={[vcolour]above:3}][label=left:\( w_2 \)]{}
      (v3)--+(54:1.5) node[dot](w3)[label={[vcolour]above:4}][label=right:\( w_3 \)]{}
      (v4)--+(-18:1.5) node[dot](w4)[label={[vcolour]above:0}][label=below:\( w_4 \)]{}
      (v5)--+(-90:1.5) node[dot](w5)[label={[vcolour]below:1}][label=right:\( w_5 \)]{};
\draw (v1)--(w2)--(v3)
      (v2)--(w3)--(v4)
      (v3)--(w4)--(v5)
      (v4)--(w5)--(v1)
      (v5)--(w1)--(v2);
\end{tikzpicture}
\caption{}
\label{fig:5-star colouring of gadget component-a}
\end{subfigure}\begin{subfigure}[b]{0.33\textwidth}
\centering
\begin{tikzpicture}[scale=0.70]
\tikzstyle every label=[label distance=-2pt]
\coordinate (base);
\path (base)--+(-162:1) node[dot](v1)[label=right:\( v_1 \)][label={[vcolour]left:0}]{}
      (base)--+(126:1) node[dot](v2)[label={[label distance=-6pt]below right:\( v_2 \)}][label={[vcolour]above left:1}]{}
      (base)--+(54:1) node[dot](v3)[label={[label distance=-6.5pt]below left:\( v_3 \)}][label={[vcolour]above right:2}]{}
      (base)--+(-18:1) node[dot](v4)[label=left:\( v_4 \)][label={[vcolour]right:3}]{}
      (base)--+(-90:1) node[dot](v5)[label=above:\( v_5 \)][label={[vcolour]below:4}]{};
\draw (v1)--(v2)--(v3)--(v4)--(v5)--(v1);

\path (v1)--+(-162:1.5) node[dot](w1)[label=below:\( w_1 \)][label={[vcolour]above:3}]{}
      (v2)--+(126:1.5) node[dot](w2)[label={[vcolour]above:4}][label=left:\( w_2 \)]{}
      (v3)--+(54:1.5) node[dot](w3)[label={[vcolour]above:0}][label=right:\( w_3 \)]{}
      (v4)--+(-18:1.5) node[dot](w4)[label={[vcolour]right:1}][label=below:\( w_4 \)]{}
      (v5)--+(-90:1.5) node[dot](w5)[label={[vcolour]below:2}][label=right:\( w_5 \)]{};
\draw (v1)--(w2)--(v3)
      (v2)--(w3)--(v4)
      (v3)--(w4)--(v5)
      (v4)--(w5)--(v1)
      (v5)--(w1)--(v2);
\end{tikzpicture}
\caption{}
\label{fig:5-star colouring of gadget component-b}
\end{subfigure}\caption[Gadget component and general form of 5-star colouring of it.]{(a) Gadget component, (b,c) General form of 5-star colouring of it.}
\label{fig:miecelski-1}
\end{figure}

The 2-in-2-out gadget is displayed in Figure~\ref{fig:2-in-2-out gadget}. Observe that two copies of the gadget component are part of this gadget. The following lemma shows why 5-star colouring of this gadget is interesting.

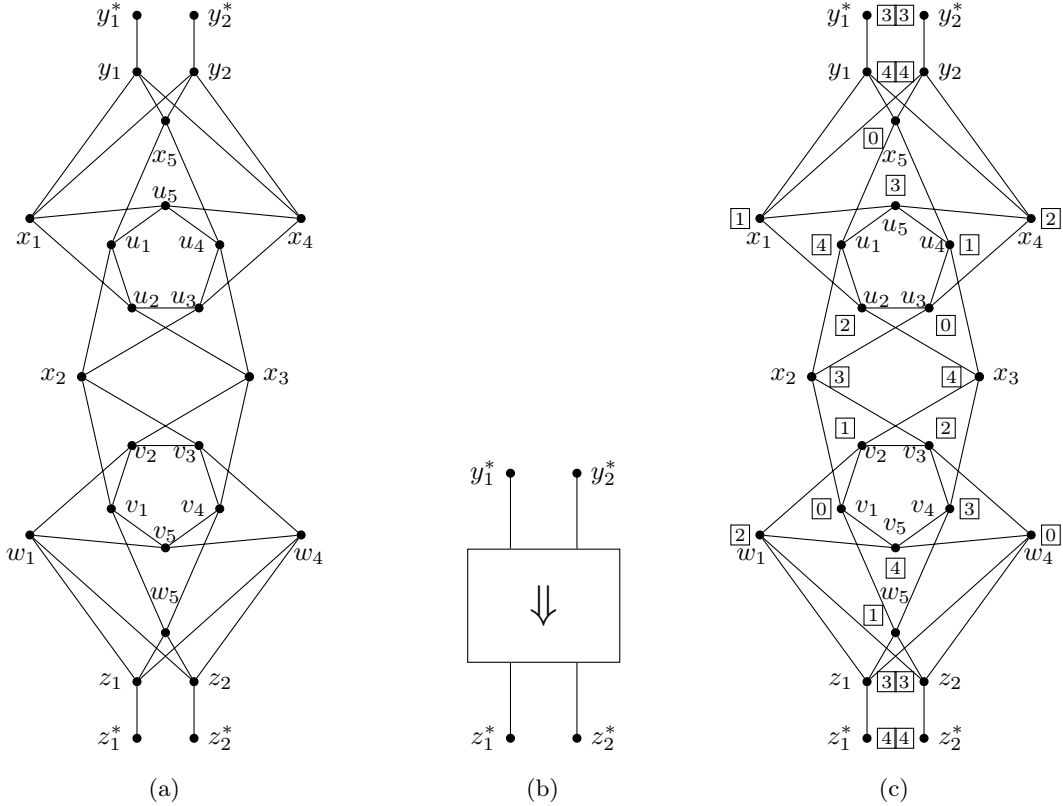
\begin{figure}[hbt]
\begin{subfigure}[b]{0.3\textwidth}
\centering
\begin{tikzpicture}[scale=0.75]
\coordinate (base1);
\path (base1)--+(162:1) node[dot](u1)[label=right:\( u_1 \)]{}
      (base1)--+(-126:1) node[dot](u2)[label={[label distance=-6pt]above right:\( u_2 \)}]{}
      (base1)--+(-54:1) node[dot](u3)[label={[label distance=-6pt]above left:\( u_3 \)}]{}
      (base1)--+(18:1) node[dot](u4)[label=left:\( u_4 \)]{}
      (base1)--+(90:1) node[dot](u5)[label={[label distance=-4pt]above:\( u_5 \)}]{};
\draw (u1)--(u2)--(u3)--(u4)--(u5)--(u1);

\path (u1)--+(162:1.5) node[dot](x1)[label=below:\( x_1 \)]{}
      (u2)--+(-126:1.5) node[dot](x2)[label=left:\( x_2 \)]{}
      (u3)--+(-54:1.5) node[dot](x3)[label=right:\( x_3 \)]{}
      (u4)--+(18:1.5) node[dot](x4)[label=below:\( x_4 \)]{}
      (u5)--+(90:1.5) node[dot](x5)[label={[label distance=6pt]below:\( x_5 \)}]{};
\draw (u1)--(x2)--(u3)
      (u2)--(x3)--(u4)
      (u3)--(x4)--(u5)
      (u4)--(x5)--(u1)
      (u5)--(x1)--(u2);

\path (base1)++(0,-4.05) coordinate (base2);
\path (base2)--+(-162:1) node[dot](v1)[label=right:\( v_1 \)]{}
      (base2)--+(126:1) node[dot](v2)[label={[label distance=-6pt]below right:\( v_2 \)}]{}
      (base2)--+(54:1) node[dot](v3)[label={[label distance=-6pt]below left:\( v_3 \)}]{}
      (base2)--+(-18:1) node[dot](v4)[label=left:\( v_4 \)]{}
      (base2)--+(-90:1) node[dot](v5)[label={[label distance=-3pt]above:\( v_5 \)}]{};
\draw (v1)--(v2)--(v3)--(v4)--(v5)--(v1);

\path (v1)--+(-162:1.5) node[dot](w1)[label={[xshift=-3pt]below:\( w_1 \)}]{}
      (v2)--+(126:1.5) node[dot](w2){}
      (v3)--+(54:1.5) node[dot](w3){}
      (v4)--+(-18:1.5) node[dot](w4)[label={[xshift=3pt]below:\( w_4 \)}]{}
      (v5)--+(-90:1.5) node[dot](w5)[label={[label distance=6pt]above:\( w_5 \)}]{};
\draw (v1)--(w2)--(v3)
      (v2)--(w3)--(v4)
      (v3)--(w4)--(v5)
      (v4)--(w5)--(v1)
      (v5)--(w1)--(v2);

\draw (x5)--+(120:1) node[dot] (y1)[label=left:\( y_1 \)]{}
      (x5)--+(60:1) node[dot] (y2)[label=right:\( y_2 \)]{};
\draw (y1)--(x1)
      (y1)--(x4);
\draw (y2)--(x4)
      (y2)--(x1);
\draw (y1)--+(0,1) node[dot] (y1*)[label=left:\( y_1^* \)]{};
\draw (y2)--+(0,1) node[dot] (y2*)[label=right:\( y_2^* \)]{};

\draw (w5)--+(-120:1) node[dot] (z1)[label=left:\( z_1 \)]{}
      (w5)--+(-60:1) node[dot] (z2)[label=right:\( z_2 \)]{};
\draw (z1)--(w1)
      (z1)--(w4);
\draw (z2)--(w4)
      (z2)--(w1);
\draw (z1)--+(0,-1) node[dot] (z1*)[label=left:\( z_1^* \)]{};
\draw (z2)--+(0,-1) node[dot] (z2*)[label=right:\( z_2^* \)]{};
\end{tikzpicture}
\caption{}
\end{subfigure}\hspace*{10pt}\begin{subfigure}[b]{0.3\textwidth}
\centering
\begin{tikzpicture}
\node (box) [22box]{};
\draw (box.120) --+(0,1) node[dot] (y1*)[label=left:\( y_1^* \)]{}
      (box.60) --+(0,1) node[dot] (y2*)[label=right:\( y_2^* \)]{}
      (box.-120) --+(0,-1) node[dot] (z1*)[label=left:\( z_1^* \)]{}
      (box.-60) --+(0,-1) node[dot] (z2*)[label=right:\( z_2^* \)]{};
\end{tikzpicture}
\caption{}
\label{fig:2-in-2-out gadget symbol}
\end{subfigure}\begin{subfigure}[b]{0.3\textwidth}
\centering
\begin{tikzpicture}[scale=0.75]
\coordinate (base1);
\path (base1)--+(162:1) node[dot](u1)[label=right:\( u_1 \)][label={[vcolour]left:4}]{}
      (base1)--+(-126:1) node[dot](u2)[label={[label distance=-6pt]above right:\( u_2 \)}][label={[vcolour]below left:2}]{}
      (base1)--+(-54:1) node[dot](u3)[label={[label distance=-6pt]above left:\( u_3 \)}][label={[vcolour]below right:0}]{}
      (base1)--+(18:1) node[dot](u4)[label={[label distance=-4pt]left:\( u_4 \)}][label={[vcolour]right:1}]{}
      (base1)--+(90:1) node[dot](u5)[label=below:\( u_5 \)][label={[vcolour]above:3}]{};
\draw (u1)--(u2)--(u3)--(u4)--(u5)--(u1);

\path (u1)--+(162:1.5) node[dot](x1)[label=below:\( x_1 \)][label={[vcolour]left:1}]{}
      (u2)--+(-126:1.5) node[dot](x2)[label=left:\( x_2 \)]{}
      (u3)--+(-54:1.5) node[dot](x3)[label=right:\( x_3 \)]{}
      (u4)--+(18:1.5) node[dot](x4)[label=below:\( x_4 \)][label={[vcolour]right:2}]{}
      (u5)--+(90:1.5) node[dot](x5)[label={[label distance=6pt]below:\( x_5 \)}][label={[vcolour,xshift=-2pt]below left:0}]{};
\draw (u1)--(x2)--(u3)
      (u2)--(x3)--(u4)
      (u3)--(x4)--(u5)
      (u4)--(x5)--(u1)
      (u5)--(x1)--(u2);

\path (base1)++(0,-4.05) coordinate (base2);
\path (base2)--+(-162:1) node[dot](v1)[label=right:\( v_1 \)][label={[vcolour]left:0}]{}
      (base2)--+(126:1) node[dot](v2)[label={[label distance=-6pt]below right:\( v_2 \)}][label={[vcolour]above left:1}]{}
      (base2)--+(54:1) node[dot](v3)[label={[label distance=-6pt]below left:\( v_3 \)}][label={[vcolour]above right:2}]{}
      (base2)--+(-18:1) node[dot](v4)[label=left:\( v_4 \)][label={[vcolour]right:3}]{}
      (base2)--+(-90:1) node[dot](v5)[label=above:\( v_5 \)][label={[vcolour]below:4}]{};
\draw (v1)--(v2)--(v3)--(v4)--(v5)--(v1);

\path (v1)--+(-162:1.5) node[dot](w1)[label={[xshift=-3pt]below:\( w_1 \)}][label={[vcolour]left:2}]{}
      (v2)--+(126:1.5) node[dot](w2)[label={[vcolour,label distance=5pt]right:3}]{}
      (v3)--+(54:1.5) node[dot](w3)[label={[vcolour,label distance=5pt]left:4}]{}
      (v4)--+(-18:1.5) node[dot](w4)[label={[xshift=3pt]below:\( w_4 \)}][label={[vcolour]right:0}]{}
      (v5)--+(-90:1.5) node[dot](w5)[label={[label distance=6pt]above:\( w_5 \)}][label={[vcolour,xshift=-2pt]above left:1}]{};
\draw (v1)--(w2)--(v3)
      (v2)--(w3)--(v4)
      (v3)--(w4)--(v5)
      (v4)--(w5)--(v1)
      (v5)--(w1)--(v2);

\draw (x5)--+(120:1) node[dot] (y1)[label=left:\( y_1 \)][label={[vcolour]right:4}]{}
      (x5)--+(60:1) node[dot] (y2)[label=right:\( y_2 \)][label={[vcolour]left:4}]{};
\draw (y1)--(x1)
      (y1)--(x4);
\draw (y2)--(x4)
      (y2)--(x1);
\draw (y1)--+(0,1) node[dot] (y1*)[label=left:\( y_1^* \)][label={[vcolour,solid]right:3}]{};
\draw (y2)--+(0,1) node[dot] (y2*)[label=right:\( y_2^* \)][label={[vcolour,solid]left:3}]{};

\draw (w5)--+(-120:1) node[dot] (z1)[label=left:\( z_1 \)][label={[vcolour]right:3}]{}
      (w5)--+(-60:1) node[dot] (z2)[label=right:\( z_2 \)][label={[vcolour]left:3}]{};
\draw (z1)--(w1)
      (z1)--(w4);
\draw (z2)--(w4)
      (z2)--(w1);
\draw (z1)--+(0,-1) node[dot] (z1*)[label=left:\( z_1^* \)][label={[vcolour,solid]right:4}]{};
\draw (z2)--+(0,-1) node[dot] (z2*)[label=right:\( z_2^* \)][label={[vcolour,solid]left:4}]{};
\end{tikzpicture}
\caption{}
\label{fig:5-star colouring 2-in-2-out gadget}
\end{subfigure}\caption{(a)~The 2-in-2-out gadget, (b)~its symbolic representation, and (c)~a 5-star colouring of the gadget.}
\label{fig:2-in-2-out gadget}
\end{figure}

\begin{lemma}\label{lem:2-in-2-out gadget}
For every 5-star colouring \( f \) of the 2-in-2-out gadget, there exist two distinct colours \( c_1 \) and \( c_2 \) such that \( f(y_1)=f(y_2)=f(z_1^*)=f(z_2^*)=c_1 \) and \( f(y_1^*)=f(y_2^*)=f(z_1)=f(z_2)=c_2 \). 
Moreover, every 3-vertex path containing one of the pendant edges of the gadget is tricoloured by \( f \).
\end{lemma}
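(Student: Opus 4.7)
The plan is to apply the gadget-component characterisation (Lemma~\ref{lem:miecelski-1}) to each of the two copies of the gadget component inside the 2-in-2-out gadget---the upper copy on $\{u_1,\ldots,u_5,x_1,\ldots,x_5\}$ and the lower copy on $\{v_1,\ldots,v_5,w_1,\ldots,w_5\}$---and then propagate the resulting constraints outward to the attachment vertices $y_1,y_2,y_1^*,y_2^*$ and $z_1,z_2,z_1^*,z_2^*$. Lemma~\ref{lem:miecelski-1} pins $f(x_1),\ldots,f(x_5)$ to a permutation of the five colours in one of the two rotational patterns of Figure~\ref{fig:miecelski-1}; in particular, $f(x_1),f(x_4),f(x_5)$ are three distinct colours, so both $f(y_1)$ and $f(y_2)$ lie in the 2-element set of remaining colours (since $y_1,y_2$ are each adjacent to all three of $x_1,x_4,x_5$).

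I would rule out one of the two candidate colours for $y_j$ by exhibiting a bichromatic $P_4$: in either pattern, taking $f(y_j)=f(u_5)$ yields a bichromatic $P_4$ of the form $y_j,x_?,u_5,u_{?'}$, where the pattern's rotational structure supplies a neighbour $x_?$ of $u_5$ in $\{x_1,x_4\}$ whose colour coincides with that of some pentagon-neighbour $u_{?'}$ of $u_5$. Hence $f(y_1)=f(y_2)=:c_1$, and $c_1\neq f(u_5)$. For the pendant $y_1^*$, the $P_4$ $(y_1^*,y_1,x_i,y_2)$ becomes bichromatic whenever $f(y_1^*)=f(x_i)$ (because $f(y_2)=c_1=f(y_1)$), which together with $f(y_1^*)\neq c_1$ pins $f(y_1^*)$ to the unique remaining colour $f(u_5)$; set $c_2:=f(u_5)$. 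The same reasoning gives $f(y_2^*)=c_2$, and an analogous analysis of the lower half yields $f(z_1)=f(z_2)$ and $f(z_1^*)=f(z_2^*)$, with the forced through-colour in $\{f(v_1),f(v_4)\}$ and the pendant-colour equal to $f(v_5)$.

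The main obstacle is the cross-equalities $c_1=f(z_1^*)$ and $c_2=f(z_1)$---that is, aligning the upper ``through'' colour with the lower ``pendant'' colour and the upper ``pendant'' colour with the lower ``through'' colour. I would address this by tracing how the two copies of the gadget component are glued together in Figure~\ref{fig:2-in-2-out gadget}: the bichromatic-$P_4$ constraints that run across the interface between the two halves should force the two forced colour values on each half to match up in the required way. This alignment step is where the specific design of the 2-in-2-out gadget earns its keep, and it is the only part of the argument that is not a direct application of the upper-half analysis.

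The ``moreover'' conclusion is immediate once the first part is in hand: a 3-vertex path containing a pendant edge, such as $y_1^*,y_1,x_i$ for $i\in\{1,4,5\}$, has colours $(c_2,c_1,f(x_i))$. By the choice of $c_1$ we have $c_1\notin\{f(x_1),f(x_4),f(x_5)\}$, and $c_2=f(u_5)$ is also outside this set because the rotational pattern places $f(u_5)$ at $x_2$ or $x_3$ (never at $x_1,x_4,x_5$). Hence the three colours are pairwise distinct and the $P_3$ is tricoloured; the same conclusion for 3-vertex paths containing a lower-half pendant edge follows by symmetry.
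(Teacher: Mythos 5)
Your treatment of each half in isolation is sound and is essentially what the paper does: Lemma~\ref{lem:miecelski-1} forces the outer vertices of each copy to realise all five colours, so \( f(y_1)=f(y_2) \) is confined to the two colours missing from \( \{f(x_1),f(x_4),f(x_5)\} \); your \( P_4 \) through \( u_5 \) and one of \( x_1,x_4 \) correctly excludes \( f(u_5) \) in both rotational patterns; and the three common neighbours of \( y_1,y_2 \) then pin \( f(y_1^*)=f(y_2^*)=f(u_5) \). The ``moreover'' part also goes through once the first part does.

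The genuine gap is the cross-alignment, which you name but do not prove, and your stated decomposition actually obstructs proving it. You place the two copies on the \emph{disjoint} vertex sets \( \{u_1,\dots,u_5,x_1,\dots,x_5\} \) and \( \{v_1,\dots,v_5,w_1,\dots,w_5\} \), but the 2-in-2-out gadget identifies \( x_2 \) with \( w_2 \) and \( x_3 \) with \( w_3 \): the bottom copy's outer cycle is \( w_1,x_2,x_3,w_4,w_5 \), which is why the paper can use paths such as \( v_5,v_4,x_3,u_2 \). If the halves were vertex-disjoint there would be no edges between them, each half could be recoloured by an independent permutation of the five colours, and the equalities \( f(y_1)=f(z_1^*) \) and \( f(y_1^*)=f(z_1) \) would simply be false; so ``tracing the bichromatic-\( P_4 \) constraints across the interface'' is not a routine detail but the entire content of the cross-equalities, and it lives on the shared vertices. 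Concretely, after normalising the bottom copy (say \( f(v_i)=i-1 \), whence \( f(x_2)=f(w_2)=3 \), \( f(x_3)=f(w_3)=4 \), \( f(z_1)=f(z_2)=3 \), \( f(z_1^*)=f(z_2^*)=4 \)), the paths \( v_5,v_4,x_3,u_2 \) and \( v_5,v_4,x_3,u_4 \) forbid colour \( 3 \) at \( u_2 \) and \( u_4 \), adjacency to \( x_2 \) forbids it at \( u_1 \) and \( u_3 \), and Claim~1 in the proof of Lemma~\ref{lem:miecelski-1} forces \( f(u_5)=3 \); the rigidity of Lemma~\ref{lem:miecelski-1} then selects the rotation with \( f(x_3)=f(u_1)=4 \), so the through-colour of \( y_1,y_2 \) is \( 4=f(z_1^*) \) and the pendant colour is \( 3=f(z_1) \), as required. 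Without some such argument the proof is incomplete.
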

\begin{proof}
The 2-in-2-out gadget is displayed in Figure~\ref{fig:2-in-2-out gadget}. 
Let \( f \) be a 5-star colouring of the 2-in-2-out gadget that uses colours 0,1,2,3 and 4. 
We prove Lemma~\ref{lem:2-in-2-out gadget} for the case when the bottom copy of the gadget component in the 2-in-2-out gadget is coloured by the scheme in Figure~\ref{fig:5-star colouring of gadget component-a}; the proof is similar when the scheme in Figure~\ref{fig:5-star colouring of gadget component-b} is used instead. 
That is, vertices \( v_1,v_2,v_3,v_4,v_5 \) and vertices \( w_4,w_5,w_1,x_2,x_3 \) are coloured \( 0,1,2,3,4 \). Clearly, \( f(z_1),f(z_2)\in\{3,4\} \). 
Note that \( f(z_1)\neq 4 \)  (if not, \( v_1,v_5,w_4,z_1 \) is a bicoloured \( P_4 \)). Hence, \( f(z_1)=3 \). Similarly, \( f(z_2)=3 \). Since \( z_1 \) and \( z_2 \) have common neighbours coloured 0,1 and 2, both \( z_1^* \) and \( z_2^* \) must be coloured 4 by \( f \) (e.g.: if \( f(z_1^*)=0 \), then \( z_1^*,z_1,w_4,z_2 \) is a bicoloured \( P_4 \)).

Observe that \( f(u_2)\neq 3 \) (if not, \( v_5,v_4,x_3,u_2 \) is a bicoloured \( P_4 \)). 
Similarly, \mbox{\( f(u_4)\neq 3 \)}. 
By Claim~\hyperlink{lnk:all 5 colours on C5}{1} in the proof of Lemma~\ref{lem:miecelski-1}, all five colours must be used by \( f \) on the inner \( C_5 \) of a gadget component. 
So, colour~3 must be used on the cycle \( (u_1,u_2,u_3,u_4,u_5) \), and thus \( f(u_5)=3 \) (see Figure~\ref{fig:partial colouring of 2-in-2-out gadget}).

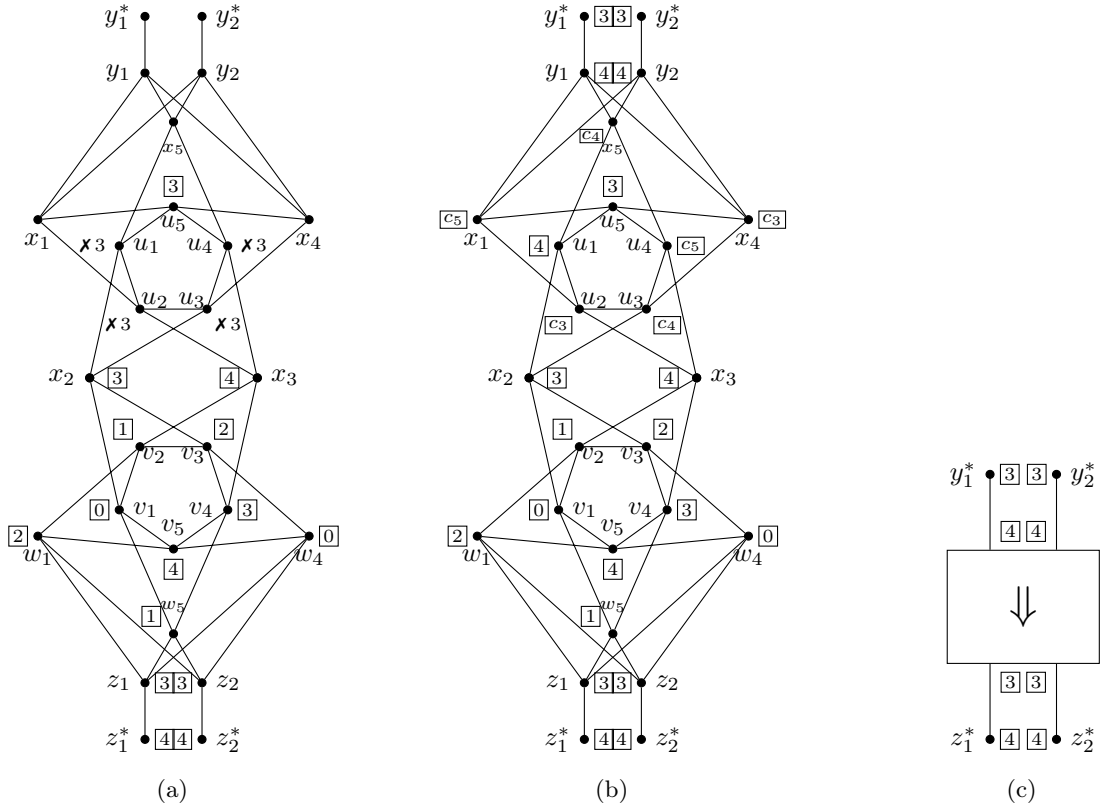
\begin{figure}[hbt]
\centering
\begin{subfigure}[b]{0.3\textwidth}
\centering
\begin{tikzpicture}[scale=0.75]
\coordinate (base1);
\path (base1)--+(162:1) node[dot](u1)[label=right:\( u_1 \)][label={[font=\scriptsize]left:\( \xmark \,3 \)}]{}
      (base1)--+(-126:1) node[dot](u2)[label={[label distance=-6pt]above right:\( u_2 \)}][label={[font=\scriptsize,label distance=-2pt]below left:\( \xmark \,3 \)}]{}
      (base1)--+(-54:1) node[dot](u3)[label={[label distance=-6pt]above left:\( u_3 \)}][label={[font=\scriptsize,label distance=-2pt]below right:\( \xmark \,3 \)}]{}
      (base1)--+(18:1) node[dot](u4)[label=left:\( u_4 \)][label={[font=\scriptsize]right:\( \xmark \,3 \)}]{}
      (base1)--+(90:1) node[dot](u5)[label={[label distance=-2pt]below:\( u_5 \)}][label={[vcolour]above:3}]{};
\draw (u1)--(u2)--(u3)--(u4)--(u5)--(u1);

\path (u1)--+(162:1.5) node[dot](x1)[label=below:\( x_1 \)]{}
      (u2)--+(-126:1.5) node[dot](x2)[label=left:\( x_2 \)]{}
      (u3)--+(-54:1.5) node[dot](x3)[label=right:\( x_3 \)]{}
      (u4)--+(18:1.5) node[dot](x4)[label=below:\( x_4 \)]{}
      (u5)--+(90:1.5) node[dot](x5)[label={[label distance=3pt,font=\scriptsize]below:\( x_5 \)}]{};
\draw (u1)--(x2)--(u3)
      (u2)--(x3)--(u4)
      (u3)--(x4)--(u5)
      (u4)--(x5)--(u1)
      (u5)--(x1)--(u2);

\path (base1)++(0,-4.05) coordinate (base2);
\path (base2)--+(-162:1) node[dot](v1)[label=right:\( v_1 \)][label={[vcolour]left:0}]{}
      (base2)--+(126:1) node[dot](v2)[label={[label distance=-6pt]below right:\( v_2 \)}][label={[vcolour]above left:1}]{}
      (base2)--+(54:1) node[dot](v3)[label={[label distance=-6pt]below left:\( v_3 \)}][label={[vcolour]above right:2}]{}
      (base2)--+(-18:1) node[dot](v4)[label=left:\( v_4 \)][label={[vcolour]right:3}]{}
      (base2)--+(-90:1) node[dot](v5)[label=above:\( v_5 \)][label={[vcolour]below:4}]{};
\draw (v1)--(v2)--(v3)--(v4)--(v5)--(v1);

\path (v1)--+(-162:1.5) node[dot](w1)[label=below:\( w_1 \)][label={[vcolour]left:2}]{}
      (v2)--+(126:1.5) node[dot](w2)[label={[vcolour,label distance=5pt]right:3}]{}
      (v3)--+(54:1.5) node[dot](w3)[label={[vcolour,label distance=5pt]left:4}]{}
      (v4)--+(-18:1.5) node[dot](w4)[label=below:\( w_4 \)][label={[vcolour]right:0}]{}
      (v5)--+(-90:1.5) node[dot](w5)[label={[label distance=3pt,font=\scriptsize]above:\( w_5 \)}][label={[vcolour,xshift=-2pt]above left:1}]{};
\draw (v1)--(w2)--(v3)
      (v2)--(w3)--(v4)
      (v3)--(w4)--(v5)
      (v4)--(w5)--(v1)
      (v5)--(w1)--(v2);

\draw (x5)--+(120:1) node[dot] (y1)[label=left:\( y_1 \)]{}
      (x5)--+(60:1) node[dot] (y2)[label=right:\( y_2 \)]{};
\draw (y1)--(x1)
      (y1)--(x4);
\draw (y2)--(x4)
      (y2)--(x1);
\draw (y1)--+(0,1) node[dot] (y1*)[label=left:\( y_1^* \)]{};
\draw (y2)--+(0,1) node[dot] (y2*)[label=right:\( y_2^* \)]{};

\draw (w5)--+(-120:1) node[dot] (z1)[label=left:\( z_1 \)][label={[vcolour]right:3}]{}
      (w5)--+(-60:1) node[dot] (z2)[label=right:\( z_2 \)][label={[vcolour]left:3}]{};
\draw (z1)--(w1)
      (z1)--(w4);
\draw (z2)--(w4)
      (z2)--(w1);
\draw (z1)--+(0,-1) node[dot] (z1*)[label=left:\( z_1^* \)][label={[vcolour]right:4}]{};
\draw (z2)--+(0,-1) node[dot] (z2*)[label=right:\( z_2^* \)][label={[vcolour]left:4}]{};
\end{tikzpicture}
\caption{}
\label{fig:partial colouring of 2-in-2-out gadget}
\end{subfigure}\hfill
\begin{subfigure}[b]{0.3\textwidth}
\centering
\begin{tikzpicture}[scale=0.75]
\coordinate (base1);
\path (base1)--+(162:1) node[dot](u1)[label=right:\( u_1 \)][label={[vcolour]left:4}]{}
      (base1)--+(-126:1) node[dot](u2)[label={[label distance=-6pt]above right:\( u_2 \)}][label={[vcolour,inner sep=1.25pt]below left:\( c_3 \)}]{}
      (base1)--+(-54:1) node[dot](u3)[label={[label distance=-6pt]above left:\( u_3 \)}][label={[vcolour,inner sep=1.25pt]below right:\( c_4 \)}]{}
      (base1)--+(18:1) node[dot](u4)[label=left:\( u_4 \)][label={[vcolour,inner sep=1.25pt]right:\( c_5 \)}]{}
      (base1)--+(90:1) node[dot](u5)[label={[label distance=-2pt]below:\( u_5 \)}][label={[vcolour]above:3}]{};
\draw (u1)--(u2)--(u3)--(u4)--(u5)--(u1);

\path (u1)--+(162:1.5) node[dot](x1)[label=below:\( x_1 \)][label={[vcolour,inner sep=1.25pt]left:\( c_5 \)}]{}
      (u2)--+(-126:1.5) node[dot](x2)[label=left:\( x_2 \)]{}
      (u3)--+(-54:1.5) node[dot](x3)[label=right:\( x_3 \)]{}
      (u4)--+(18:1.5) node[dot](x4)[label=below:\( x_4 \)][label={[vcolour,inner sep=1.25pt]right:\( c_3 \)}]{}
      (u5)--+(90:1.5) node[dot](x5)[label={[label distance=3pt,font=\scriptsize]below:\( x_5 \)}][label={[vcolour,xshift=-1pt,inner sep=0.5pt]below left:\( c_4 \)}]{};
\draw (u1)--(x2)--(u3)
      (u2)--(x3)--(u4)
      (u3)--(x4)--(u5)
      (u4)--(x5)--(u1)
      (u5)--(x1)--(u2);

\path (base1)++(0,-4.05) coordinate (base2);
\path (base2)--+(-162:1) node[dot](v1)[label=right:\( v_1 \)][label={[vcolour]left:0}]{}
      (base2)--+(126:1) node[dot](v2)[label={[label distance=-6pt]below right:\( v_2 \)}][label={[vcolour]above left:1}]{}
      (base2)--+(54:1) node[dot](v3)[label={[label distance=-6pt]below left:\( v_3 \)}][label={[vcolour]above right:2}]{}
      (base2)--+(-18:1) node[dot](v4)[label=left:\( v_4 \)][label={[vcolour]right:3}]{}
      (base2)--+(-90:1) node[dot](v5)[label=above:\( v_5 \)][label={[vcolour]below:4}]{};
\draw (v1)--(v2)--(v3)--(v4)--(v5)--(v1);

\path (v1)--+(-162:1.5) node[dot](w1)[label=below:\( w_1 \)][label={[vcolour]left:2}]{}
      (v2)--+(126:1.5) node[dot](w2)[label={[vcolour,label distance=5pt]right:3}]{}
      (v3)--+(54:1.5) node[dot](w3)[label={[vcolour,label distance=5pt]left:4}]{}
      (v4)--+(-18:1.5) node[dot](w4)[label=below:\( w_4 \)][label={[vcolour]right:0}]{}
      (v5)--+(-90:1.5) node[dot](w5)[label={[label distance=3pt,font=\scriptsize]above:\( w_5 \)}][label={[vcolour,xshift=-2pt]above left:1}]{};
\draw (v1)--(w2)--(v3)
      (v2)--(w3)--(v4)
      (v3)--(w4)--(v5)
      (v4)--(w5)--(v1)
      (v5)--(w1)--(v2);

\draw (x5)--+(120:1) node[dot] (y1)[label=left:\( y_1 \)][label={[vcolour]right:4}]{}
      (x5)--+(60:1) node[dot] (y2)[label=right:\( y_2 \)][label={[vcolour]left:4}]{};
\draw (y1)--(x1)
      (y1)--(x4);
\draw (y2)--(x4)
      (y2)--(x1);
\draw (y1)--+(0,1) node[dot] (y1*)[label=left:\( y_1^* \)][label={[vcolour,solid]right:3}]{};
\draw (y2)--+(0,1) node[dot] (y2*)[label=right:\( y_2^* \)][label={[vcolour,solid]left:3}]{};

\draw (w5)--+(-120:1) node[dot] (z1)[label=left:\( z_1 \)][label={[vcolour]right:3}]{}
      (w5)--+(-60:1) node[dot] (z2)[label=right:\( z_2 \)][label={[vcolour]left:3}]{};
\draw (z1)--(w1)
      (z1)--(w4);
\draw (z2)--(w4)
      (z2)--(w1);
\draw (z1)--+(0,-1) node[dot] (z1*)[label=left:\( z_1^* \)][label={[vcolour,solid]right:4}]{};
\draw (z2)--+(0,-1) node[dot] (z2*)[label=right:\( z_2^* \)][label={[vcolour,solid]left:4}]{};
\end{tikzpicture}
\caption{}
\label{fig:repeat 5-star colouring 2-in-2-out gadget}
\end{subfigure}\hfill
\begin{subfigure}[b]{0.25\textwidth}
\centering
\begin{tikzpicture}
\node (box) [22box]{};
\draw (box.120) node[label={[vcolour,solid,yshift=-2pt,xshift=-1pt]above right:4}]{}--+(0,1) node[dot] (y1*)[label=left:\( y_1^* \)][label={[vcolour,solid]right:3}]{}
      (box.60) node[label={[vcolour,solid,yshift=-2pt,xshift=1pt]above left:4}]{} --+(0,1) node[dot] (y2*)[label=right:\( y_2^* \)][label={[vcolour,solid]left:3}]{}
      (box.-120) node[label={[vcolour,solid,yshift=2pt,xshift=-1pt]below right:3}]{} --+(0,-1) node[dot] (z1*)[label=left:\( z_1^* \)][label={[vcolour,solid]right:4}]{}
      (box.-60) node[label={[vcolour,solid,yshift=2pt,xshift=1pt]below left:3}]{}--+(0,-1) node[dot] (z2*)[label=right:\( z_2^* \)][label={[vcolour,solid]left:4}]{};
\end{tikzpicture}
\caption{}
\end{subfigure}\caption[Partial 5-star colourings of the 2-in-2-out gadget.]{(a)~A partial 5-star colouring of the 2-in-2-out gadget, (b)~a 5-star colouring of the 2-in-2-out gadget where \( \{c_3,c_4,c_5\} \) is a permutation of \( \{0,1,2\} \), and (c)~the symbolic representation of the colouring in (b).}
\end{figure}

By Lemma~\ref{lem:miecelski-1}, vertices \( x_1 \), \( x_2 \), \( x_3 \), \( x_4 \), \( x_5 \) and vertices \( u_1 \), \( u_2 \), \( u_3 \), \( u_4 \), \( u_5 \) must be coloured by the same cyclic order of colours (see Figure~\ref{fig:miecelski-1}). 
So, \mbox{\( f(u_1)=4 \)}. 
For the same reason, \( f(x_4)=f(u_2) \), \( f(x_5)=f(u_3) \) and \( f(x_1)=f(u_4) \). 
Since all five colours must be used on the cycle \( (u_1,u_2,u_3,u_4,u_5) \), we have \( \{f(x_4),f(x_5),f(x_1)\}\allowbreak =\{f(u_2),f(u_3),f(u_4)\}=\{c_3,c_4,c_5\} \) where \( \{c_3,c_4,c_5\} \) is a permutation of \( \{0,1,2\} \). 
So, \mbox{\( f(y_1),f(y_2)\in\{3,4\} \)}. 
If \( f(y_1)=3 \), then \( u_4,u_5,x_1,y_1 \) is a bicoloured \( P_4 \). 
Hence, \( f(y_1)=4 \). 
Similarly, \( f(y_2)=4 \). 
Since \( y_1 \) and \( y_2 \) have common neighbours coloured 0,1 and 2, both \( y_1^* \) and \( y_2^* \) must be coloured 3 by \( f \). 
That is, the colouring \( f \) is as shown in Figure~\ref{fig:repeat 5-star colouring 2-in-2-out gadget}. 
Clearly, there exist distinct colours \( c_1 \) and \( c_2 \) such that \( f(y_1)=f(y_2)=f(z_1^*)=f(z_2^*)=c_1 \) and \( f(y_1^*)=f(y_2^*)=f(z_1)=f(z_2)=c_2 \) (here, \( c_1=4 \) and \( c_2=3 \)). 
Also, every 3-vertex path containing a pendant edge of the gadget is tricoloured by \( f \). 
This completes the proof. 
\end{proof}

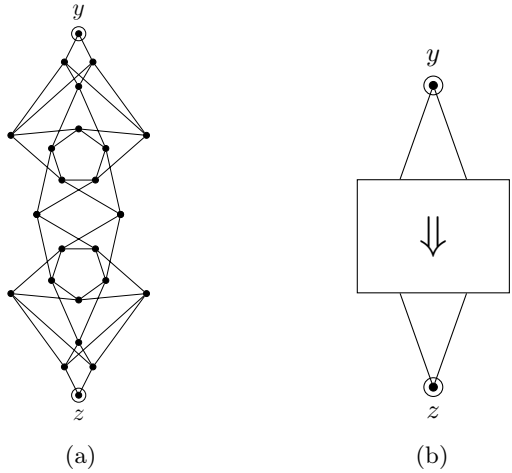
\begin{figure}[hbt]
\centering
\begin{subfigure}[b]{0.3\textwidth}
\centering
\scalebox{0.75}{\begin{tikzpicture}[scale=0.5]
\coordinate (base1);
\path (base1)--+(162:1) node[dot](u1){}
      (base1)--+(-126:1) node[dot](u2){}
      (base1)--+(-54:1) node[dot](u3){}
      (base1)--+(18:1) node[dot](u4){}
      (base1)--+(90:1) node[dot](u5){};
\draw (u1)--(u2)--(u3)--(u4)--(u5)--(u1);

\path (u1)--+(162:1.5) node[dot](x1){}
      (u2)--+(-126:1.5) node[dot](x2){}
      (u3)--+(-54:1.5) node[dot](x3){}
      (u4)--+(18:1.5) node[dot](x4){}
      (u5)--+(90:1.5) node[dot](x5){};
\draw (u1)--(x2)--(u3)
      (u2)--(x3)--(u4)
      (u3)--(x4)--(u5)
      (u4)--(x5)--(u1)
      (u5)--(x1)--(u2);

\path (base1)++(0,-4.05) coordinate (base2);
\path (base2)--+(-162:1) node[dot](v1){}
      (base2)--+(126:1) node[dot](v2){}
      (base2)--+(54:1) node[dot](v3){}
      (base2)--+(-18:1) node[dot](v4){}
      (base2)--+(-90:1) node[dot](v5){};
\draw (v1)--(v2)--(v3)--(v4)--(v5)--(v1);

\path (v1)--+(-162:1.5) node[dot](w1){}
      (v2)--+(126:1.5) node[dot](w2){}
      (v3)--+(54:1.5) node[dot](w3){}
      (v4)--+(-18:1.5) node[dot](w4){}
      (v5)--+(-90:1.5) node[dot](w5){};
\draw (v1)--(w2)--(v3)
      (v2)--(w3)--(v4)
      (v3)--(w4)--(v5)
      (v4)--(w5)--(v1)
      (v5)--(w1)--(v2);

\draw (x5)--+(120:1) node[dot] (y1){}
      (x5)--+(60:1) node[dot] (y2){};
\draw (y1)--(x1)
      (y1)--(x4);
\draw (y2)--(x4)
      (y2)--(x1);
\draw (y1)--+(0.5, 1) node[dot] (y){} node[terminal][label={[font=\large]above:\( y \)}]{};
\draw (y2)--(y);

\draw (w5)--+(-120:1) node[dot] (z1){}
      (w5)--+(-60:1) node[dot] (z2){};
\draw (z1)--(w1)
      (z1)--(w4);
\draw (z2)--(w4)
      (z2)--(w1);
\draw (z1)--+(0.5,-1) node[dot] (z){} node[terminal][label={[font=\large]below:\( z \)}]{};
\draw (z2)--(z);
\end{tikzpicture}
}\caption{}
\end{subfigure}\begin{subfigure}[b]{0.3\textwidth}
\centering
\begin{tikzpicture}
\node (box) [22box]{};
\path (box) --+(0,2) node(u)[dot]{} node[terminal][label=above:\( y \)]{};
\path (box) --+(0,-2) node(v)[dot]{} node[terminal][label=below:\( z \)]{};
\draw (box.120) --(u) --(box.60)
      (box.-120) --(v) --(box.-60);
\end{tikzpicture}
\caption{}
\end{subfigure}\caption[A not-equal gadget between terminals \( y \) and \( z \), and its symbolic representation.]{(a)~A not-equal gadget between terminals \( y \) and \( z \) (it is made of one 2-in-2-out gadget), and (b)~its symbolic representation.}
\label{fig:not-equal gadget}
\end{figure}

The not-equal gadget is the graph displayed in Figure~\ref{fig:not-equal gadget}. 
The not-equal gadget is made from one 2-in-2-out gadget by identifying vertex \( y_1^* \) of the 2-in-2-out gadget with vertex \( y_2^* \) and identifying vertex \( z_1^* \) with vertex \( z_2^* \).
Hence, the next lemma follows from Lemma~\ref{lem:2-in-2-out gadget} (note that \( c_1\neq c_2 \) in Lemma~\ref{lem:2-in-2-out gadget}).

\begin{lemma}\label{lem:not-equal gadget}
The terminals of the not-equal gadget should get different colours under each 5-star colouring \( f \). 
Moreover, every 3-vertex path within the gadget with a terminal as one endpoint is tricoloured by \( f \).
\qed
\end{lemma}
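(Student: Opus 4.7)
The plan is to derive the lemma directly from Lemma~\ref{lem:2-in-2-out gadget} by lifting any 5-star colouring of the not-equal gadget back to the underlying 2-in-2-out gadget. Let $f$ be a 5-star colouring of the not-equal gadget $N$, and let $G$ denote the underlying 2-in-2-out gadget, in which $y_1^*, y_2^*$ are distinct pendant neighbours of $y_1, y_2$ (and likewise $z_1^*, z_2^*$ of $z_1, z_2$). Define a colouring $f'$ of $G$ by $f'(v) = f(v)$ for every vertex $v$ shared between $G$ and $N$, and by $f'(y_1^*) = f'(y_2^*) = f(y)$ and $f'(z_1^*) = f'(z_2^*) = f(z)$.

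The first step is to verify that $f'$ is a 5-star colouring of $G$. It is a proper colouring, since the only neighbour of $y_i^*$ in $G$ is $y_i$, and $f(y) \neq f(y_i)$ holds in $N$ (and symmetrically for $z_i^*$). To rule out bicoloured $P_4$'s, suppose $a,b,c,d$ is such a path in $G$ under $f'$. If the path avoids $\{y_1^*, y_2^*, z_1^*, z_2^*\}$, it is already a bicoloured $P_4$ in $N$ under $f$, a contradiction. Otherwise the path meets this set, and since each of these vertices has degree $1$ in $G$, any such vertex must be an endpoint of the path. If only one endpoint is a pendant vertex, the four vertices of the path remain distinct under the identification map $\pi\colon G \to N$, so the image is a bicoloured $P_4$ in $N$ under $f$, again a contradiction. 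The remaining case is $\{a,d\} = \{y_1^*, y_2^*\}$ or $\{z_1^*, z_2^*\}$, which forces the path to be $y_1^*, y_1, y_2, y_2^*$ (or the $z$-analogue); but $y_1$ and $y_2$ are non-adjacent in $G$ (they share common neighbours $x_1, x_4, x_5$ but are not joined by an edge), so this $P_4$ does not exist.

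With $f'$ established as a 5-star colouring of $G$, Lemma~\ref{lem:2-in-2-out gadget} yields distinct colours $c_1, c_2$ with $f'(y_1^*) = f'(y_2^*) = c_2$ and $f'(z_1^*) = f'(z_2^*) = c_1$. Translating back, $f(y) = c_2$ and $f(z) = c_1$, so $f(y) \neq f(z)$, proving the first part. For the second part, every 3-vertex path in $N$ with terminal $y$ as an endpoint has the form $y, y_i, v$ for some $i \in \{1,2\}$ and some neighbour $v$ of $y_i$ in $N$ other than $y$; this lifts in $G$ to the 3-vertex path $y_i^*, y_i, v$, which contains the pendant edge $y_i y_i^*$ and is therefore tricoloured by $f'$ thanks to Lemma~\ref{lem:2-in-2-out gadget}. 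The three colours are preserved under the identification, so the corresponding path in $N$ is tricoloured by $f$; the argument for terminal $z$ is symmetric. The main obstacle in this proof is the bicoloured $P_4$ analysis in the first step, where the non-adjacency of $y_1, y_2$ (and of $z_1, z_2$) in $G$ is precisely what prevents the identification from creating spurious star-colouring obstructions.
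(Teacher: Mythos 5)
Your proof is correct and follows the same route the paper takes: the paper simply asserts that the lemma ``follows from Lemma~\ref{lem:2-in-2-out gadget}'' because the not-equal gadget is obtained by identifying \( y_1^* \) with \( y_2^* \) and \( z_1^* \) with \( z_2^* \), and your argument is the careful elaboration of exactly that reduction. In fact you supply a detail the paper leaves implicit --- verifying that a 5-star colouring of the quotient pulls back to a 5-star colouring of the 2-in-2-out gadget, where the non-adjacency of \( y_1,y_2 \) (and \( z_1,z_2 \)) is what blocks the only candidate bicoloured \( P_4 \) created by un-identifying the pendant vertices.
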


We are now ready to present the construction.
\begin{construct}\label{make:5-star max-deg 4}
\emph{Input:} A 4-regular graph \( G \).\\
\emph{Output:} A triangle-free graph \( G' \) of maximum degree four.\\
\emph{Guarantee:} \( G \) is 3-colourable if and only if \( G' \) is 5-star colourable.\\
\emph{Steps:}\\
Let \( v_1,v_2,\dots,v_n \) be the vertices in \( G \). 
First, replace each vertex \( v_i \) of \( G \) by a vertex gadget as shown in Figure~\ref{fig:5-star vertex replacement}. 
The vertex gadget for \( v_i \) has six terminals namely \( v_{i,0},v_{i,1},v_{i,2},v_{i,3},v_{i,4} \) and \( v_{i,5} \). 
The terminals \( v_{i,1},v_{i,2},v_{i,3},v_{i,4} \) accommodate the edges incident on \( v_i \) in \( G \). 
The replacement of vertices by vertex gadgets converts each edge \( v_iv_j \) of \( G \) to an edge between terminals \( v_{i,k} \) and \( v_{j,\ell} \) for some \( k,\ell\in\{1,2,3,4\} \).

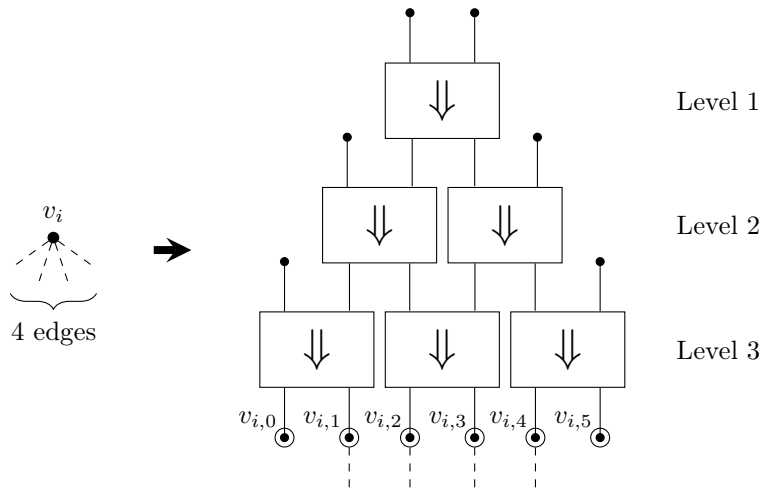
\begin{figure}[hbt]
\centering
\begin{tikzpicture}[scale=0.33]
\node (vi)[bigdot][label=above:\( v_i \)]{};
\draw[dashed] (vi)--+(-36:2) coordinate (rightNbr)
              (vi)--+(-72:2)
              (vi)--+(-108:2)
              (vi)--+(-144:2) coordinate (leftNbr);
\draw [decorate,decoration={brace,amplitude=6pt,raise=10pt,mirror}] (leftNbr)++(-0.1,0)--node[yshift=-25pt]{4 edges} ($(rightNbr)+(0.1,0)$);

\path (vi)++(4,-0.5) coordinate(from) ++(1.5,0) coordinate(to);
\draw [-stealth,draw=black,line width=3pt] (from)--(to);

\path (to)--+(10,6) node(l1)[22boxsmall] {};
\path (l1)--+ (-2.5,-5) node(l21)[22boxsmall] {}
      (l1)--+ (2.5,-5) node(l22)[22boxsmall] {};
\path (l21)--+ (-2.5,-5) node(l31)[22boxsmall] {}
      (l21)--+ (2.5,-5) node(l32)[22boxsmall] {};
\path (l22)--+ (2.5,-5) node(l33)[22boxsmall] {};

\path (l33)--++(6,0) node{Level \( 3 \)}--++(0,5) node{Level \( 2 \)}--++(0,5) node{Level \( 1 \)};

\draw (l1.-50) --+(0,-1.9)
      (l1.-128)--+(0,-1.9);
\draw (l21.-52) --+(0,-1.9) 
      (l21.-128)--+(0,-1.9);
\draw (l22.-52) --+(0,-1.9) 
      (l22.-128)--+(0,-1.9);

\draw ( l1.130)--+(0,2) node[dot]{};
\draw (l21.130)--+(0,2) node[dot]{};
\draw (l31.130)--+(0,2) node[dot]{};
\draw ( l1.50)--+(0,2) node[dot]{};
\draw (l22.50)--+(0,2) node[dot]{};
\draw (l33.50)--+(0,2) node[dot]{};
\draw (l31.-130)--++(0,-2) node[dot]{} node(vi0)[terminal][label={[label distance=-5pt]above left:\( v_{i,0} \)}]{};
\draw (l31. -50)--++(0,-2) node[dot]{} node(vi1)[terminal][label={[label distance=-5pt]above left:\( v_{i,1} \)}]{};
\draw (l32.-130)--++(0,-2) node[dot]{} node(vi2)[terminal][label={[label distance=-5pt]above left:\( v_{i,2} \)}]{};
\draw (l32. -50)--++(0,-2) node[dot]{} node(vi3)[terminal][label={[label distance=-5pt]above left:\( v_{i,3} \)}]{};
\draw (l33.-130)--++(0,-2) node[dot]{} node(vi4)[terminal][label={[label distance=-5pt]above left:\( v_{i,4} \)}]{};
\draw (l33. -50)--++(0,-2) node[dot]{} node(vi5)[terminal][label={[label distance=-5pt]above left:\( v_{i,5} \)}]{};
\draw [dashed]
(vi1)--+(0,-2)
(vi2)--+(0,-2)
(vi3)--+(0,-2)
(vi4)--+(0,-2);
\end{tikzpicture}
\caption{Replacement of vertex by vertex gadget.}
\label{fig:5-star vertex replacement}
\end{figure}

Next, replace each edge \( v_{i,k}\,v_{j,\ell} \) between terminals by a not-equal gadget between \( v_{i,k} \) and \( v_{j,\ell} \) (that is, introduce a not-equal gadget, identify one terminal of the gadget with vertex \( v_{i,k} \) and identify the other terminal with the vertex \( v_{j,\ell} \)). 
Next, introduce two chain gadgets. The chain gadget is displayed in Figure~\ref{fig:5-star chain gadget}. 

\begin{figure}[hbt]
\centering
\begin{tikzpicture}[scale=0.33]
\path (0,0)--+(10,6) node(l1)[22boxsmall] {};
\path (l1)--+ (-2.5,-5) node(l21)[22boxsmall] {}
      (l1)--+ (2.5,-5) node(l22)[22boxsmall] {};
\path (l21)--+ (-2.5,-5) node(l31)[22boxsmall,transparent] {}
      (l21)--+ (2.5,-5) node(l32)[22boxsmall,transparent] {} node[font=\Large]{\vdots};
\path (l22)--+ (2.5,-5) node(l33)[22boxsmall,transparent] {};
\path (l31)--+ (-2.5,-5) node(l41)[22boxsmall] {}
      (l31)--++ (2.5,-5) node(l42)[22boxsmall,transparent] {} node{\dots};
\path (l33)--+ (-2.5,-5) node(l43)[22boxsmall] {}
      (l33)--+ (2.5,-5) node(l44)[22boxsmall] {};

\path (l44)--++(7,0) node{Level \( \lceil\frac{n+1}{2}\rceil \)}--++(0,10) node{Level \( 2 \)}--++(0,5) node{Level \( 1 \)};

\draw (l1.-50) --+(0,-1.9)
      (l1.-128)--+(0,-1.9);
\draw (l21.-52) --+(0,-1.9) 
      (l21.-128)--+(0,-1.9);
\draw (l22.-52) --+(0,-1.9) 
      (l22.-128)--+(0,-1.9);
\draw (l31.-128)--+(0,-1.9);
\draw (l32.-52) --+(0,-1.9);
\draw (l33.-52) --+(0,-1.9) 
      (l33.-128)--+(0,-1.9);

\draw ( l1.130)--+(0,2) node[dot]{};
\draw (l21.130)--+(0,2) node[dot]{};
\draw (l41.130)--+(0,2) node[dot]{};
\draw ( l1.50)--+(0,2) node[dot]{};
\draw (l22.50)--+(0,2) node[dot]{};
\draw (l44.50)--+(0,2) node[dot]{};
\draw (l41.-130)--++(0,-2) node[dot]{} node(vi0)[terminal][label=below:\( v_{1,t}^* \)]{};
\draw (l41. -50)--++(0,-2) node[dot]{} node(vi1)[terminal][label=below:\( v_{2,t}^* \)]{};
\draw (l43.-130)--++(0,-2) node[dot]{} node(vi4)[terminal][label=below:\( v_{n-1,t}^* \)]{};
\draw (l43. -50)--++(0,-2) node[dot]{} node(vi5)[terminal][label=below:\( v_{n,t}^* \)]{};
\draw (l44.-130)--++(0,-2) node[dot]{} node(vi4)[terminal][label=below:\( x_{1,t} \)]{};
\draw (l44. -50)--++(0,-2) node[dot]{} node(vi5)[terminal][label=below:\( x_{2,t} \)]{};
\end{tikzpicture}
\caption[\( t \)-th chain gadget in Construction~\ref{make:5-star max-deg 4}.]{\( t \)-th chain gadget in Construction~\ref{make:5-star max-deg 4} if \( n \) is even, where \( t=1 \text{ or } 2 \). 
If \( n \) is odd, the \( t \)-th chain gadget is the same except that it has only \( n+1 \) terminals \( v_{1,t}^*,v_{2,t}^*,\dots,v_{n,t}^* \) and \( x_{1,t} \). 
A chain gadget is similar to a vertex gadget; the only difference is that it has more levels and terminals.}
\label{fig:5-star chain gadget}
\end{figure}
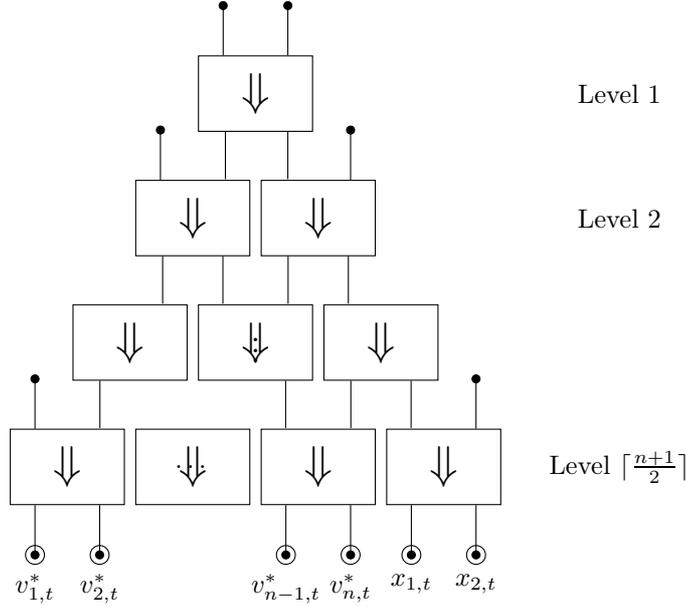

Next, add a not-equal gadget between \( v_{i,0} \) and \( v_{i,1}^* \) for \( 1\leq i\leq n \). 
Similarly, introduce a not-equal gadget between \( v_{i,5} \) and \( v_{i,2}^* \) for \( 1\leq i\leq n \). 
Finally, add a not-equal gadget between \( x_{1,1} \) and \( x_{1,2} \). 
\end{construct}
\begin{proof}[Proof of Guarantee]
For convenience, let us call the edges \( y_1y_1^*, y_2y_2^* \) of a 2-in-2-out gadget (see Figure~\ref{fig:2-in-2-out gadget}) as in-edges of the 2-in-2-out gadget, edges \( z_1z_1^*, z_2z_2^* \) as out-edges of the 2-in-2-out-gadget, vertices \( y_1^*,y_2^* \) as in-vertices of the 2-in-2-out gadget, and vertices \( z_1^*,z_2^* \) as out-vertices of the 2-in-2-out gadget. 

The next claim follows from Lemma~\ref{lem:2-in-2-out gadget}. \setcounter{claim}{0}
\begin{claim}\label{cl:chain of 2-in-2-out gadgets}
If an in-edge of a 2-in-2-out gadget is an out-edge of another 2-in-2-out gadget, the colour of the out-vertices of both gadgets must be the same. \end{claim}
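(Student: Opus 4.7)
The plan is to apply Lemma~\ref{lem:2-in-2-out gadget} to each of the two 2-in-2-out gadgets separately and then exploit the identification of their shared edge to link the two resulting pairs of colours. Let \( A \) and \( B \) denote the two gadgets, where an in-edge of \( A \) coincides with an out-edge of \( B \). Applying Lemma~\ref{lem:2-in-2-out gadget} to \( A \) produces distinct colours \( c_1^{(A)} \) and \( c_2^{(A)} \) such that the out-vertices of \( A \) (the \( z^{*} \)-vertices) are coloured \( c_1^{(A)} \); moreover, the interior endpoint of each in-edge of \( A \) (a \( y \)-vertex) is coloured \( c_1^{(A)} \) and the pendant tip (a \( y^{*} \)-vertex) is coloured \( c_2^{(A)} \). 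Similarly, Lemma~\ref{lem:2-in-2-out gadget} applied to \( B \) gives distinct colours \( c_1^{(B)} \) and \( c_2^{(B)} \) with the out-vertices of \( B \) coloured \( c_1^{(B)} \), the interior endpoint of each out-edge of \( B \) (a \( z \)-vertex) coloured \( c_2^{(B)} \), and the pendant tip (a \( z^{*} \)-vertex) coloured \( c_1^{(B)} \).

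Next I would pin down the precise vertex identification underlying the edge-coincidence. Each pendant tip of a 2-in-2-out gadget has gadget-internal degree one, whereas each interior endpoint of a pendant edge already has gadget-internal degree four (readable directly from Figure~\ref{fig:2-in-2-out gadget}). Hence identifying a pendant tip of \( A \) with a pendant tip of \( B \) (and correspondingly their interior neighbours with each other) would force an interior endpoint to inherit three new neighbours from the other gadget, raising its degree in \( G' \) to seven. Since \( G' \) has maximum degree four, the only compatible identification is the ``flipped'' one: the pendant tip of the shared edge from \( A \)'s perspective is the interior endpoint from \( B \)'s perspective, and vice versa.

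Substituting this identification into the colour pattern from Lemma~\ref{lem:2-in-2-out gadget} immediately links the two pairs of colours. Comparing the in-vertex of \( A \) with the interior endpoint of the out-edge of \( B \) yields \( c_2^{(A)} = c_2^{(B)} \), and comparing the interior endpoint of the in-edge of \( A \) with the out-vertex of \( B \) yields \( c_1^{(A)} = c_1^{(B)} \). In particular the out-vertices of \( A \) and those of \( B \) share the common colour \( c_1^{(A)} = c_1^{(B)} \), which is exactly what the claim asserts.

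The only genuinely subtle step will be justifying the flipped identification in the middle paragraph: one has to verify from Figure~\ref{fig:2-in-2-out gadget} that the interior endpoints of pendant edges already have gadget-internal degree four, so that the alternative pendant-to-pendant pairing is ruled out by the global degree-four constraint on \( G' \). Once this structural observation is in place, the rest is a purely mechanical bookkeeping exercise driven by Lemma~\ref{lem:2-in-2-out gadget}.
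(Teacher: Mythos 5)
Your proof is correct and matches the paper's approach: the paper states this claim as an immediate consequence of Lemma~\ref{lem:2-in-2-out gadget}, and your argument is exactly the working-out of that one-liner, applying the lemma to each gadget and transferring the colour pairs across the shared edge. The one ingredient you add, the degree-four argument forcing the ``flipped'' identification of the shared pendant edge, is sound and fills in a detail the paper leaves implicit in its construction figures.
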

\noindent Next, we point out a property of the vertex gadget and the chain gadget. \begin{claim}\label{cl:5-star gadget terminals}
All terminals of a vertex gadget (resp.\ chain gadget) should get the same colour under a 5-star colouring. \end{claim}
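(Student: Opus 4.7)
The plan is to combine Lemma~\ref{lem:2-in-2-out gadget} with Claim~\ref{cl:chain of 2-in-2-out gadgets}. First I would extract the combinatorial structure of Figure~\ref{fig:5-star vertex replacement}: the vertex gadget is a three-level tree whose nodes are 2-in-2-out gadgets (one root at level~1, two children at level~2, three leaves at level~3), and each parent--child link is realised by identifying an out-edge of the parent with an in-edge of the child, which is precisely the situation addressed by Claim~\ref{cl:chain of 2-in-2-out gadgets}. The six terminals $v_{i,0},\ldots,v_{i,5}$ are exactly the out-pendants of the three level-3 gadgets.

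Next, fix a 5-star colouring $f$ of $G'$. Applying Lemma~\ref{lem:2-in-2-out gadget} to each 2-in-2-out gadget $H$ inside the vertex gadget yields distinct colours $c_1^H,c_2^H$ such that every out-pendant of $H$ receives $c_1^H$ and every in-pendant receives $c_2^H$. Claim~\ref{cl:chain of 2-in-2-out gadgets} then says that across any parent--child link the out-pendant colour of the parent agrees with the out-pendant colour of the child, so a straightforward induction on the depth of the tree gives $c_1^H=c_1^{H'}$ for any two 2-in-2-out gadgets $H,H'$ in the vertex gadget. In particular the three level-3 gadgets share one common out-pendant colour, and therefore the six terminals all receive the same colour.

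The chain gadget is handled identically. By Figure~\ref{fig:5-star chain gadget} it is again a tree of 2-in-2-out gadgets, the only differences being the number of levels ($\lceil (n+1)/2 \rceil$) and the fact that its terminals again sit on the out-pendants of the deepest gadgets. The same Lemma~\ref{lem:2-in-2-out gadget}$+$Claim~\ref{cl:chain of 2-in-2-out gadgets} propagation forces all terminals to share a single colour.

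The main (and rather minor) obstacle I anticipate is purely notational: making precise from the figures that every 2-in-2-out gadget in the vertex/chain gadget is joined to the rest of the tree through a link of the kind to which Claim~\ref{cl:chain of 2-in-2-out gadgets} applies, so that the propagation runs without gaps. The colour-theoretic content itself is entirely absorbed by Lemma~\ref{lem:2-in-2-out gadget} and the one-step propagation of Claim~\ref{cl:chain of 2-in-2-out gadgets}.
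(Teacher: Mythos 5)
Your proposal is correct and matches the paper's own argument, which likewise proves the claim by applying Lemma~\ref{lem:2-in-2-out gadget} inside each 2-in-2-out gadget and then propagating the common out-vertex colour across every link via repeated application of Claim~\ref{cl:chain of 2-in-2-out gadgets} (the paper defers the spelled-out induction to a supplement). One cosmetic remark: the levelled structure in Figures~\ref{fig:5-star vertex replacement} and~\ref{fig:5-star chain gadget} is not literally a tree --- an interior gadget at level \( \ell+1 \) receives in-edges from two gadgets at level \( \ell \) --- but since your induction only needs each gadget to be linked to \emph{some} already-coloured gadget, the propagation goes through unchanged.
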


By Claim~\ref{cl:chain of 2-in-2-out gadgets}, if an in-edge of a 2-in-2-out gadget is an out-edge of another 2-in-2-out gadget, the colour of out-vertices of both gadgets must be the same. 
Repeated application of this idea proves Claim~\ref{cl:5-star gadget terminals} (see supplement for a detailed proof).

We are now ready to prove the guarantee. 
Suppose that \( G \) admits a 3-colouring \( f\colon V(G)\to\{2,3,4\} \). 
A 5-colouring \( f':V(G')\to\{0,1,2,3,4\} \) of \( G' \) is constructed as follows. 
First, assign \( f'(v_{i,j})=f(v_i) \) for \( 1\leq i\leq n \) and \( 0\leq j\leq 5 \). 
Extend this into a 5-star colouring of the vertex gadget by using the scheme in Figure~\ref{fig:5-star colouring 2-in-2-out gadget} on each 2-in-2-out-gadget within the vertex gadget (use the scheme in Figure~\ref{fig:5-star colouring 2-in-2-out gadget} if \( f'(v_{i,j})=4 \); suitably swap  colours in other cases). 
To colour the first chain gadget, colour each 2-in-2-out gadget within this chain gadget using the scheme obtained from Figure~\ref{fig:5-star colouring 2-in-2-out gadget} by swapping colour~\( 4 \) with colour~\( 0 \). 
Similarly, for the second chain gadget, colour each 2-in-2-out gadget within the chain gadget using the scheme obtained from Figure~\ref{fig:5-star colouring 2-in-2-out gadget} by swapping colour~\( 4 \) with colour~\( 1 \). 
To complete the colouring, it suffices to extend the partial colouring to not-equal gadgets. 
For each not-equal gadget between two terminals, say terminal \( y \) and terminal \( z \), colour the 2-in-2-out gadget within the not-equal gadget using the scheme obtained from Figure~\ref{fig:5-star colouring 2-in-2-out gadget} by swapping colour~\( 3 \) with colour~\( f'(y) \) and swapping colour~\( 4 \) with colour~\( f'(z) \).

By Lemma~\ref{lem:2-in-2-out gadget} and Lemma~\ref{lem:not-equal gadget} (see the second statements in both lemmas), every 3-vertex path in any gadget in \( G' \) containing a terminal of the gadget as an endpoint is tricoloured by \( f' \). 
In addition, the construction of the graph \( G' \) is merely glueing together terminals of different gadgets. 
Therefore, there is no \( P_4 \) in \( G' \) bicoloured by \( f' \); that is, \( f' \) is a 5-star colouring of \( G' \).

Conversely, suppose that \( G' \) admits a 5-star colouring \( f'\colon V(G')\to\{0,1,2,3,4\} \). 
By Claim~\ref{cl:5-star gadget terminals}, all terminals of a vertex/chain gadget should have the same colour under \( f' \). 
As there is a not-equal gadget between \( x_{1,1} \) and \( x_{1,2} \), \( f'(x_{1,1})\neq f'(x_{1,2}) \) (by Lemma~\ref{lem:not-equal gadget}). 
Without loss of generality, assume that \( f'(x_{1,1})=0 \) and \( f'(x_{1,2})=1 \). 
By Claim~\ref{cl:5-star gadget terminals}, all terminals of the first chain gadget have colour~0; that is, \( f'(x_{1,1})=f(x_{2,1})=f'(v_{i,1}^*)=0 \) for \( 1\leq i\leq n \). 
Similarly, all terminals of the second chain gadget have colour~1; that is, \( f'(x_{1,2})=f'(x_{2,2})=f'(v_{i,2}^*)=1 \) for \( 1\leq i\leq n \). 
By Claim~\ref{cl:5-star gadget terminals}, all terminals of the vertex gadget for \( v_1 \) have the same colour under \( f' \), say colour~\( c \). 
Since there is a not-equal gadget between \( v_{1,0} \) and \( v_{1,1}^* \), we have \( c=f'(v_{1,0})\neq f'(v_{1,1}^*)=0 \).
Since there is a not-equal gadget between \( v_{1,5} \) and \( v_{1,2}^* \), we have \( c=f'(v_{1,5})\neq f'(v_{1,2}^*)=1 \).
So, \( c\in\{2,3,4\} \). 
Hence, for \( 0\leq j\leq 5 \), \( f'(v_{1,j})\in\{2,3,4\} \). 
Similarly, for \( 1\leq i\leq n \) and \( 0\leq j\leq 5 \), \( f'(v_{i,j})\in\{2,3,4\} \). 
Moreover, whenever \( v_iv_j \) is an edge in \( G \), there is a not-equal gadget between terminals \( v_{i,k} \) and \( v_{j,\ell} \) in \( G' \) for some \( k,\ell\in\{1,2,3,4\} \) and hence \( f'(v_{i,k})\neq f'(v_{j,\ell}) \). 
Therefore, the function \( f\colon V(G)\to\{2,3,4\} \) defined as \( f(v_i)=f'(v_{i,0}) \) is indeed a 3-colouring of \( G \). 
This proves the converse part and thus the guarantee. 
\end{proof}

\begin{theorem}\label{thm:5-star colouring npc max-deg 4}
\textsc{5-Star Colourability} is NP-complete for triangle-free graphs of maximum degree four. 
\end{theorem}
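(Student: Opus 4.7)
My plan is to derive Theorem~\ref{thm:5-star colouring npc max-deg 4} directly from Construction~\ref{make:5-star max-deg 4}, combined with the known NP-completeness of \textsc{3-Colourability} on 4-regular graphs (Dailey, 1980). The guarantee of the construction already delivers the equivalence ``$G$ is 3-colourable iff $G'$ is 5-star colourable'', so the only remaining tasks are to check NP-membership, polynomial running time of the reduction, and the two structural constraints on $G'$, namely triangle-freeness and maximum degree at most $4$.

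NP-membership is immediate: a candidate 5-colouring of $G'$ can be verified in $O(|V(G')|+|E(G')|)$ time by checking properness and scanning for bicoloured 4-vertex paths. The reduction runs in polynomial time because each vertex of $G$ contributes a constant-size vertex gadget, each edge contributes a constant-size not-equal gadget, and the two chain gadgets together add $O(n)$ vertices; hence $|V(G')|,|E(G')|=O(n+m)$.

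The step that I expect to need the most care is verifying $\Delta(G')\leq 4$ and triangle-freeness. For the degree bound, every vertex internal to a 2-in-2-out gadget has degree at most $4$: the gadget component (Gr\"{o}tzsch graph minus one vertex) has maximum degree $4$, and each of the added vertices $y_1,y_2,z_1,z_2$ has three neighbours in the gadget component plus one pendant. The only gadget vertices that participate in identifications are the degree-$1$ pendants $y_1^*,y_2^*,z_1^*,z_2^*$; each identification step merges two such pendants from distinct 2-in-2-out gadgets, producing a vertex of degree~$2$. A terminal of a vertex or chain gadget, being a degree-$1$ pendant inside its host, attains degree $3$ when identified with a terminal of a not-equal gadget (which has degree $2$ inside that not-equal gadget). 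Therefore $\Delta(G')\leq 4$. For triangle-freeness, the Gr\"{o}tzsch graph is triangle-free and hence so is the gadget component; within a 2-in-2-out gadget the vertices $y_1,y_2$ have common neighbours $x_1,x_4,x_5$, but these outer vertices of the gadget component are pairwise non-adjacent and $y_1y_2\notin E$, so no triangle is created (symmetrically for $z_1,z_2$). Finally, every identification in the construction merges two degree-$1$ pendants whose neighbourhoods lie in disjoint subgraphs, so the resulting vertex has an independent neighbourhood and no triangle is formed. Combining these observations with the guarantee of Construction~\ref{make:5-star max-deg 4} completes the proof.
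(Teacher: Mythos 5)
Your proposal is correct and follows essentially the same route as the paper: the paper's own proof of this theorem likewise just invokes Construction~\ref{make:5-star max-deg 4}, counts the gadgets to confirm the reduction is polynomial, and cites the construction's guarantee, leaving the verification of \( \Delta(G')\leq 4 \) and triangle-freeness implicit in the construction's output specification, which you instead check explicitly. One minor caveat: inside the vertex and chain gadgets the paper arranges for an in-edge of one 2-in-2-out gadget to coincide with an out-edge of another (so each identification merges a degree-1 pendant with a degree-4 in/out vertex, yielding degree-4 vertices rather than the degree-2 vertices you describe), but under either reading the bound \( \Delta(G')\leq 4 \) still holds, so your conclusion is unaffected.
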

\begin{proof}
We employ Construction~\ref{make:5-star max-deg 4} to establish a reduction from \textsc{3\nobreakdash-Colourability}\allowbreak(4\nobreakdash-regular) to \textsc{5\nobreakdash-Star Colourability}(triangle-free, \( \Delta=4 \)). 
Let \( G \) be an instance of \textsc{3-Colourability}(4-regular). 
From \( G \), construct an instance \( G' \) of \textsc{5-Star Colourability}(triangle-free, \( \Delta=4 \)) by Construction~\ref{make:5-star max-deg 4}.

Let \( m=|E(G)| \) and \( n=|V(G)| \). 
In \( G' \), there are at most \( 6n+m+2(1+2+\dots+\ceil{(n+1)/2}+n)+1\leq \frac{1}{4}(n^2+46n+12) \) 2-in-2-out gadgets and in addition at most \( 16n+8 \) vertices and \( 32n+12 \) edges. 
So, \( G' \) can be constructed in time polynomial in~\( n \). 
By the guarantee in Construction~\ref{make:5-star max-deg 4}, \( G \) is 3-colourable if and only if \( G' \) is 5-star colourable.
\end{proof}

We have the following theorem by combining Theorem~\ref{thm:k-star colouring deg k-1 k>=7} and Theorem~\ref{thm:5-star colouring npc max-deg 4}. 
\begin{theorem}\label{thm:k-star colouring deg k-1}
For \( k=5 \) and \( k\geq 7 \), \textsc{\( k \)-Star Colourability} is NP-complete for graphs of maximum degree \( k-1 \). 
\qed 
\end{theorem}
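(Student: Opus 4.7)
The final statement simply aggregates two NP-completeness results already established in the excerpt, so the proof plan is almost a one-liner. The plan is to observe that the theorem's two regimes are disjoint and are covered individually by Theorem~\ref{thm:k-star colouring deg k-1 k>=7} and Theorem~\ref{thm:5-star colouring npc max-deg 4}.

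First, for the range \( k\geq 7 \), I would invoke Theorem~\ref{thm:k-star colouring deg k-1 k>=7} directly: that theorem has already been proved via Construction~\ref{make:k-star colouring deg k-1}, which reduces \textsc{Edge \( (k{-}2) \)-Colourability} on \( (k{-}2) \)-regular graphs (NP-complete by Leven--Galil) to \textsc{\( k \)-Star Colourability} on graphs of maximum degree \( k-1 \). No additional argument is needed for these values of \( k \).

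Second, for \( k=5 \), I would invoke Theorem~\ref{thm:5-star colouring npc max-deg 4}, which is even sharper than required: it asserts NP-completeness of \textsc{5-Star Colourability} for triangle-free graphs of maximum degree four, which a fortiori yields NP-completeness for the superclass of graphs of maximum degree \( k-1 = 4 \). That theorem was established via Construction~\ref{make:5-star max-deg 4} by reducing from \textsc{3-Colourability} on 4-regular graphs.

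Since the statement quantifies only over \( k=5 \) and \( k\geq 7 \), and these two cases are handled respectively by the two theorems above, the combined theorem follows immediately; no new construction, gadget, or lemma is required. There is no real obstacle here — the only mild subtlety is bookkeeping the fact that the excluded cases \( k\in\{4,6\} \) are precisely the ones where the NP-completeness for maximum degree \( k-1 \) has not been established in this paper (indeed, \textsc{4-Star Colourability} on graphs of maximum degree~4 was handled via Theorem~\ref{thm:4-star colouring npc max-deg 4}, but not for maximum degree~3). Thus the proof amounts to citing the two prior theorems and observing that their union gives exactly the claimed range.
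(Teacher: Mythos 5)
Your proposal is correct and matches the paper's own proof exactly: the paper derives this theorem by combining Theorem~\ref{thm:k-star colouring deg k-1 k>=7} (for \( k\geq 7 \)) with Theorem~\ref{thm:5-star colouring npc max-deg 4} (for \( k=5 \), where hardness on triangle-free graphs of maximum degree four a fortiori gives hardness on all graphs of maximum degree four). No further comment is needed.
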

The complexity status of \textsc{\( k \)-Star Colourability} in graphs of maximum degree \( k-1 \) is open for \( k=4 \) and \( k=6 \). 

Next, let us shift our attention to regular graphs. 
We prove that for all \( k\geq 3 \) and \( d<k \), the complexity of \textsc{\( k \)-Star Colourability} is the same for graphs of maximum degree \( d \) and \( d \)-regular graphs. That is, for all \( k\geq 3 \) and \( d<k \), \textsc{\( k \)-Star Colourability} restricted to graphs of maximum degree \( d \) is in P (resp.\ NP-complete) if and only if \textsc{\( k \)-Star Colourability} restricted to \( d \)-regular graphs is in P (resp.\ NP-complete). 
First, we show that for all \( k\geq 3 \), the complexity of \textsc{\( k \)-Star Colourability} is the same for graphs of maximum degree \( k-1 \) and \( (k-1) \)-regular graphs.

\begin{construct}\label{make:star colouring degree k-1 vs k-1 regular}
\emph{Parameter:} An integer \( k\geq 3 \).\\ \emph{Input:} A graph \( G \) of maximum degree \( k-1 \).\\ 
\emph{Output:} A \( (k-1) \)-regular graph \( G' \).\\
\emph{Guarantee~1:} \( G \) is \( k \)-star colourable if and only if \( G' \) is \( k \)-star colourable.\\
\emph{Guarantee~2:} If \( G \) is triangle-free (resp.\ bipartite), then \( G' \) is triangle-free (resp.\ bipartite).\\
\emph{Steps:}\\
Introduce two copies of \( G \). 
For each vertex \( v \) of \( G \), introduce \( (k-1)-deg_G(v) \) filler gadgets (see Figure~\ref{fig:filler gadget d=k-1}) between the two copies of \( v \).

\begin{figure}[hbt]
\centering
\begin{tikzpicture}[scale=0.85]
\path (0,0) node(1st)[dot]{};

\path (1st) ++(1,1.5) node(x1)[dot]{} ++(0,-1) node(x2)[dot]{} ++(0,-1.5) node(xk-2)[dot]{};
\path (x2)--node[sloped,font=\large]{\( \dots \)} (xk-2);
\path (x1) ++(2,0) node(y1)[dot]{} ++(0,-1) node(y2)[dot]{} ++(0,-1.5) node(yk-2)[dot]{};
\path (y2)--node[sloped,font=\large]{\( \dots \)} (yk-2);
\path (y1) ++(1,-1.5) node(1stEnd)[dot]{};

\draw (1st)--(x1)  (1st)--(x2)  (1st)--(xk-2);
\draw (x1)--(y1)  (x1)--(y2)  (x1)--(yk-2);
\draw (x2)--(y1)  (x2)--(y2)  (x2)--(yk-2);
\draw (xk-2)--(y1)  (xk-2)--(y2)  (xk-2)--(yk-2);
\draw (1stEnd)--(y1)  (1stEnd)--(y2)  (1stEnd)--(yk-2);

\path (x1)--+(0,0.15) coordinate(brace1Start);
\path (xk-2)--+(0,-0.15) coordinate(brace1End);
\draw [opacity=0.4,decorate,decoration={brace,aspect=0.75,amplitude=16pt,raise=3pt,mirror}] (brace1Start)--  node[left=7mm,yshift=-6mm,opacity=1]{\( k-2 \)} (brace1End);

\draw 
(1st)--+(-1.5,0) node[dot]{} node[terminal][label=left:\( v \)]{}
(1stEnd)--+(1.5,0) node[dot]{} node[terminal][label=right:\( v \)]{};
\end{tikzpicture}
\caption{A filler gadget for \( v\in V(G) \).}
\label{fig:filler gadget d=k-1}
\end{figure}
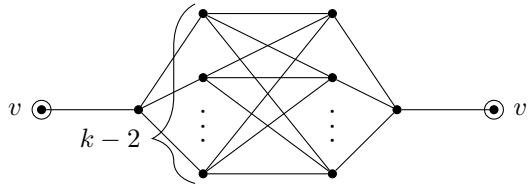
\end{construct}

\begin{proof}[Proof of Guarantee~1]
If \( G' \) is \( k \)-star colourable, then \( G \) is \( k \)-star colourable because \( G \) is a subgraph of \( G' \). 
Conversely, suppose that \( G \) admits a \( k \)-star colouring \( f\colon V(G)\to\{0,1,\dots,k-1\} \). 
We produce a \( k \)-colouring \( f' \) of \( G' \) as follows. 
Colour both copies of \( G \) by \( f \) (i.e., for each \( v\in V(G) \), assign the colour \( f(v) \) to both copies of \( v \)). 
For each vertex \( v \) of \( G \), consider each filler gadget for \( v \) one by one and do the following for each filler gadget under consideration: (i)~choose a colour \( c \) not yet used in the closed neighbourhood of \( v \) in \( G' \) (the scheme we employ ensures that colours already used in the closed neighbourhood are exactly the same for the first copy of \( v \) and the second copy of \( v \)), 
and (ii)~colour the filler gadget by the \( k \)-star colouring scheme obtained from Figure~\ref{fig:scheme for filler gadget d=k-1} by swapping colours in filler gadget suitably so that copies of \( v \) get colour~\( f(v) \) and their neighbours in the filler gadget get colour~\( c \). 
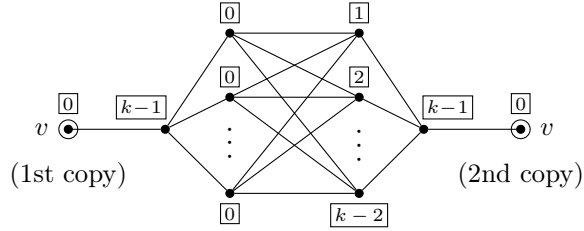
\begin{figure}[hbt]
\centering
\begin{tikzpicture}[scale=0.85]
\path (0,0) node(1st)[dot][label={[vcolour,xshift=-9pt]above:\( k\!-\!1 \)}]{}; 

\path (1st) ++(1,1.5) node(x1)[dot][label={[vcolour]above:\( 0 \)}]{} ++(0,-1) node(x2)[dot][label={[vcolour]above:\( 0 \)}]{} ++(0,-1.5) node(xk-2)[dot][label={[vcolour]below:\( 0 \)}]{};
\path (x2) -- node[sloped,font=\large]{\dots} (xk-2);
\path (x1) ++(2,0) node(y1)[dot][label={[vcolour]above:\( 1 \)}]{} ++(0,-1) node(y2)[dot][label={[vcolour]above:\( 2 \)}]{} ++(0,-1.5) node(yk-2)[dot][label={[vcolour]below:\( k-2 \)}]{};
\path (y2)--node[sloped,font=\large]{\( \dots \)} (yk-2);
\path (y1) ++(1,-1.5) node(1stEnd)[dot][label={[vcolour,xshift=9pt]above:\( k\!-\!1 \)}]{};

\draw (1st)--(x1)  (1st)--(x2)  (1st)--(xk-2);
\draw (x1)--(y1)  (x1)--(y2)  (x1)--(yk-2);
\draw (x2)--(y1)  (x2)--(y2)  (x2)--(yk-2);
\draw (xk-2)--(y1)  (xk-2)--(y2)  (xk-2)--(yk-2);
\draw (1stEnd)--(y1)  (1stEnd)--(y2)  (1stEnd)--(yk-2);

\draw 
(1st)--+(-1.5,0) node[dot]{} node[terminal][label=left:\( v \)][label={[vcolour,solid]above:\( 0 \)}][label={[label distance=5pt]below:(1st copy)}]{}
(1stEnd)--+(1.5,0) node[dot]{} node[terminal][label=right:\( v \)][label={[vcolour,solid]above:\( 0 \)}][label={[label distance=5pt]below:(2nd copy)}]{};
\end{tikzpicture}
\caption[A \( k \)-star colouring scheme for the filler gadget.]{A \( k \)-star colouring scheme for the filler gadget for \( v \) (also, swap colour~0 with \( f(v) \) and colour~\( k-1 \) with the chosen colour~\( c \)).}
\label{fig:scheme for filler gadget d=k-1}
\end{figure}
\begin{figure}[hbt]
\centering
\begin{subfigure}[b]{0.2\textwidth}
\centering
\begin{tikzpicture}[scale=0.85]
\draw (0,0) node(u1)[dot][label={[vcolour]above:\( 1 \)}]{} --++(1,-1) node(w)[dot][label={[vcolour]right:\( 2 \)}]{} --++(-1,-1) node(y)[dot][label={[vcolour,xshift=-2pt]left:\( 1 \)}]{} --++(-1,1) node(x)[dot][label={[vcolour]left:\( 0 \)}]{} -- (u1);
\draw (w) -- (x);
\draw (y) --+(0,-1) node(v1)[dot][label={[vcolour]below:\( 3 \)}]{};
\end{tikzpicture}
\caption{\( G \)}
\end{subfigure}\begin{subfigure}[b]{0.8\textwidth}
\centering
\begin{tikzpicture}[scale=0.85]
\draw (0,0) node(u1)[dot]{} node[terminal][label={[vcolour]above:\( 1 \)}]{} --++(1,-1) node(w)[dot][label={[vcolour]right:\( 2 \)}]{} --++(-1,-1) node(y)[dot][label={[vcolour,xshift=-2pt]left:\( 1 \)}]{} --++(-1,1) node(x)[dot][label={[vcolour]left:\( 0 \)}]{} -- (u1);
\draw (w) -- (x);
\draw (y) --+(0,-1) node(v1)[dot]{} node[terminal][label={[vcolour]below:\( 3 \)}]{};

\draw (6,0) node(u2)[dot]{} node[terminal][label={[vcolour]above:\( 1 \)}]{} --++(1,-1) node(w)[dot][label={[vcolour]right:\( 2 \)}]{} --++(-1,-1) node(y)[dot][label={[vcolour,xshift=2pt]right:\( 1 \)}]{} --++(-1,1) node(x)[dot][label={[vcolour]left:\( 0 \)}]{} -- (u2);
\draw (w) -- (x);
\draw (y) --+(0,-1) node(v2)[dot]{} node[terminal][label={[vcolour]below:\( 3 \)}]{};

\draw (u1) --++(1.5,0) node(1st)[dot][label={[vcolour]above:\( 3 \)}]{}; 

\path (1st) ++(1,0.5) node(x1)[dot][label={[vcolour]above:\( 1 \)}]{} ++(0,-1) node(x2)[dot][label={[vcolour]above:\( 1 \)}]{};
\path (x1) ++(1,0) node(y1)[dot][label={[vcolour]above:\( 0 \)}]{} ++(0,-1) node(y2)[dot][label={[vcolour]above:\( 2 \)}]{};
\path (y1) ++(1,-0.5) node(1stEnd)[dot][label={[vcolour]above:\( 3 \)}]{};

\draw (1st)--(x1)    (1st)--(x2);
\draw (x1)--(y1)     (x1)--(y2);
\draw (x2)--(y1)     (x2)--(y2);
\draw (1stEnd)--(y1) (1stEnd)--(y2);

\draw (1stEnd) -- (u2);

\draw (v1) --++(1.5,1) node(1st)[dot][label={[vcolour]above:\( 0 \)}]{}; 

\path (1st) ++(1,0.5) node(x1)[dot][label={[vcolour]above:\( 3 \)}]{} ++(0,-1) node(x2)[dot][label={[vcolour]above:\( 3 \)}]{};
\path (x1) ++(1,0) node(y1)[dot][label={[vcolour]above:\( 1 \)}]{} ++(0,-1) node(y2)[dot][label={[vcolour]above:\( 2 \)}]{};
\path (y1) ++(1,-0.5) node(1stEnd)[dot][label={[vcolour]above:\( 0 \)}]{};

\draw (1st)--(x1)    (1st)--(x2);
\draw (x1)--(y1)     (x1)--(y2);
\draw (x2)--(y1)     (x2)--(y2);
\draw (1stEnd)--(y1) (1stEnd)--(y2);

\draw (1stEnd)--(v2);

\draw (v1) --++(1.5,-1) node(1st)[dot][label={[vcolour]above:\( 2 \)}]{}; 

\path (1st) ++(1,0.5) node(x1)[dot][label={[vcolour]above:\( 3 \)}]{} ++(0,-1) node(x2)[dot][label={[vcolour]above:\( 3 \)}]{};
\path (x1) ++(1,0) node(y1)[dot][label={[vcolour]above:\( 0 \)}]{} ++(0,-1) node(y2)[dot][label={[vcolour]above:\( 1 \)}]{};
\path (y1) ++(1,-0.5) node(1stEnd)[dot][label={[vcolour]above:\( 2 \)}]{};

\draw (1st)--(x1)    (1st)--(x2);
\draw (x1)--(y1)     (x1)--(y2);
\draw (x2)--(y1)     (x2)--(y2);
\draw (1stEnd)--(y1) (1stEnd)--(y2);
  
\draw (1stEnd) -- (v2);
\end{tikzpicture}
\caption{\( G' \)}
\end{subfigure}\caption{(a)~A \( 4 \)-star colouring \( f \) of \( G \), and (b)~the corresponding \( 4 \)-star colouring \( f' \) of \( G' \).}
\label{fig:eg k-star colouring of G'}
\end{figure}
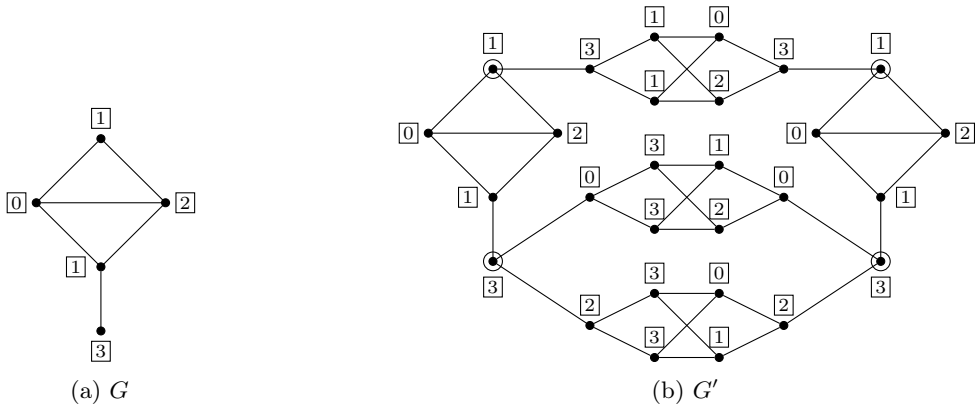

See Figure~\ref{fig:eg k-star colouring of G'} for an example. 
Clearly, \( f' \) is a \( k \)-colouring of \( G' \). 
\setcounter{claim}{0}
\begin{claim}\label{cl:f' is a star colouring}
\( f' \) is a \( k \)-star colouring of \( G' \).
\end{claim}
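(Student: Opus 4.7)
The plan is to establish the claim in two steps. First, I would check that \( f' \) is a proper \( k \)-colouring: both copies of \( G \) inherit the star colouring \( f \); the scheme displayed in Figure~\ref{fig:scheme for filler gadget d=k-1} is proper on the filler gadget by direct inspection; and a valid fresh colour \( c \) can always be chosen, because at the moment a new filler for \( v \) is processed, at most \( k-2 \) colours other than \( f(v) \) have already been used in the closed neighbourhood of the current copy of \( v \), leaving at least one colour free. In particular, after the construction, the \( k-1 \) neighbours of each copy of \( v \) in \( G' \) have pairwise distinct colours, all different from \( f(v) \).

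For the second step, I would record the colour pattern that \( f' \) imposes on a single filler gadget for vertex \( v \) with chosen fresh colour \( c \): both terminals and the vertices \( x_1,\dots,x_{k-2} \) get colour \( f(v) \); the two degree-\( (k-1) \) vertices adjacent to the terminals (call them the left and right \emph{neck} vertices) get colour \( c \); and the \( y_1,\dots,y_{k-2} \) receive the remaining \( k-2 \) colours injectively, so in particular none of them is coloured \( f(v) \) or \( c \). This lets me establish the structural observation at the heart of the argument: the only vertices of \( G' \) whose neighbours do not all receive pairwise distinct colours are the neck vertices and the \( y_j \) vertices of the various fillers. For any copy of \( v \) in \( G' \), the \( k-1 \) neighbours are its \( \deg_G(v) \) neighbours inside the corresponding copy of \( G \) together with one neck vertex per filler at \( v \), and by the propriety argument above all these colours are distinct; for any \( x_i \), the neighbours are one neck vertex (colour \( c \)) and the \( y_j \)'s, which carry distinct colours.

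Since a bicoloured \( P_4 \) \( p_1p_2p_3p_4 \) requires \( p_2 \) to possess two neighbours (\( p_1 \) and \( p_3 \)) of identical colour, \( p_2 \) must be either a neck vertex or some \( y_j \). I would dispatch the cases as follows. If \( p_2 \) is the left neck vertex of a filler \( F \) for \( v \) with fresh colour \( c \), then \( p_1 \) and \( p_3 \) are coloured \( f(v) \) and we would need \( f'(p_4)=c \); but \( p_3 \) is either a terminal (whose other neighbours carry colours \( f(u) \) for \( u\in N_G(v) \) or fresh colours \( c'\neq c \) of other fillers) or an \( x_i \) (whose other neighbours are \( y \)-vertices, none coloured \( c \)), so no such \( p_4 \) exists. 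The right neck case is symmetric. If \( p_2=y_j \), then \( p_1 \) and \( p_3 \) must both be \( x \)-vertices coloured \( f(v) \) (the only colour repeated in \( N_{G'}(y_j) \)), and \( f'(p_4) \) would have to match the unique colour \( f'(y_j) \); but the neighbours of \( p_3=x_{i'} \) other than a neck vertex are \( y_{j'} \)'s with \( j'\neq j \), each of a different colour, a contradiction. The expected main obstacle is bookkeeping, namely tracking how the colour-swap acts on each vertex across different fillers; fixing the colour classes of a filler explicitly (as above) in terms of the two labels \( f(v) \) and \( c \) reduces the case analysis to a comparison of fixed colour tags, avoiding re-doing the swap in each subcase.
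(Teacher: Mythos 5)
There is a genuine gap, and it sits exactly where you locate ``the heart of the argument''. You claim that the \( k-1 \) neighbours of each copy of \( v \) in \( G' \) receive pairwise distinct colours, and hence that the only vertices of \( G' \) with two same-coloured neighbours are the neck vertices and the \( y_j \)'s. The freshness argument only gives part of this: it guarantees that the neck colours at \( v \) are distinct from one another and from the colours already present on the neighbours of \( v \) inside the copy of \( G \). It says nothing about those \( G \)-neighbours being pairwise distinct among themselves, and a \( k \)-star colouring of a graph of maximum degree \( k-1 \) does not force rainbow neighbourhoods (in the \( 4 \)-star colouring \( 1,0,1,2,3 \) of \( C_5 \), one vertex has both neighbours coloured \( 1 \)). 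So your classification of the possible middle vertices \( p_2 \) of a bicoloured \( P_4 \) is incomplete: \( p_2 \) may be a copy of some \( u\in V(G) \) whose two same-coloured neighbours \( p_1,p_3 \) both lie inside that copy of \( G \).

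That missing case is precisely where the hypothesis that \( f \) is a \emph{star} colouring of \( G \), rather than merely a proper one, must be used: if \( p_4 \) also lies in the copy of \( G \), then \( p_1,p_2,p_3,p_4 \) is a bicoloured \( P_4 \) of \( G \) under \( f \), a contradiction; if instead \( p_4 \) is a neck vertex of a filler at \( p_3 \), its colour was chosen fresh with respect to \( N_{G'}[p_3]\ni p_2 \), so \( f'(p_4)\neq f'(p_2) \). As written, your case analysis never invokes the star property of \( f \) at all, which is a reliable sign that the reduction to ``\( p_2 \) is a neck or a \( y_j \)'' cannot be correct. Your treatment of those two cases is itself fine (and the paper takes a slightly different route, showing that any offending \( P_4 \) must have a filler terminal as a middle vertex and contradicting freshness there), so the proposal is repairable by adding the omitted case, but it is not complete as it stands.
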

\noindent Assume that there is a 4-vertex path \( u,v,w,x \) in \( G' \) bicoloured by \( f' \) (i.e., \( f'(u)=f'(w) \) and \( f'(v)=f'(x) \)). 
We know that \( f' \) employs a \( k \)-star colouring scheme on both copies of \( G \) and each filler gadget. 
So, path \( u,v,w,x \) must contain vertices from one filler gadget as well as vertices from a copy of \( G \) or another filler gadget. 
In both cases, one of the two middle vertices in path \( u,v,w,x \) must be a terminal of a filler gadget. 
Suppose that \( v \) is a terminal of a filler gadget \( G_1 \), \( w \) is a vertex in \( G_1 \), and \( u \) is either in a copy of \( G \) or in another filler gadget \( G_2 \). 
If \( u \) is in another filler gadget \( G_2 \), we may assume without loss of generality that the filler gadget \( G_1 \) is coloured after the filler gadget \( G_2 \) is coloured. 
When the filler gadget \( G_1 \) was coloured, a colour not yet used in the closed neighbourhood of \( v \) in \( G' \) was chosen as the colour of \( w \); this is a contradiction to \( f(w)=f(u) \) (because \( u \) was already coloured and thus \( f(u) \) was already present in \( N_G[v] \)). 
This proves the claim by contradiction. 
This completes the proof of the converse part.
\end{proof}
\begin{proof}[Proof of Guarantee~2]
Note that the filler gadget is a bipartite graph. 
Suppose that \( G \) is triangle-free (resp.\ bipartite). 
Then, the graph with two disjoint copies of \( G \) (i.e., \( 2G \)) is also triangle-free (resp.\ bipartite). 
Moreover, for each \( v\in V(G) \), the operation of adding a filler gadget between the two copies of \( v \) preserves triangle-free property (resp.\ bipartiteness). 
\end{proof}

For \( k\geq 3 \), Construction~\ref{make:star colouring degree k-1 vs k-1 regular} establishes a reduction from \textsc{\( k \)-Star Colourability}(\( \Delta=k-1 \)) to \textsc{\( k \)-Star Colourability}(\( (k-1) \)-regular). 
Hence, for \( k\geq 3 \), if \textsc{\( k \)\nobreakdash-Star Colourability} is NP-complete for graphs of maximum degree \( k-1 \), then \textsc{\( k \)-Star Colourability} is NP-complete for \( (k-1) \)-regular graphs. 
Clearly, if \textsc{\( k \)-Star Colourability} is NP-complete for \( (k-1) \)-regular graphs, then \textsc{\( k \)-Star Colourability} is NP-complete for graphs of maximum degree \( k-1 \). 
Thus, we have the following theorem. 
\begin{theorem}\label{thm:star colouring degree k-1 vs k-1 regular}
For all \( k\geq 3 \), \textsc{\( k \)-Star Colourability} is NP-complete for graphs of maximum degree \( k-1 \) if and only if \textsc{\( k \)-Star Colourability} is NP-complete for \( (k-1) \)-regular graphs. 
In addition, for \( k\geq 3 \), \textsc{\( k \)-Star Colourability} is NP-complete for triangle-free \( ( \)resp.\ bipartite\( ) \) graphs of maximum degree \( k-1 \) if and only if \textsc{\( k \)-Star Colourability} is NP-complete for triangle-free \( ( \)resp.\ bipartite\( ) \) \( (k-1) \)-regular graphs. 
\qed
\end{theorem}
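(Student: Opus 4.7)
The plan is to derive both biconditionals as direct consequences of Construction~\ref{make:star colouring degree k-1 vs k-1 regular} together with the trivial observation that \((k-1)\)-regular graphs form a subclass of graphs of maximum degree \(k-1\). In other words, all of the real work has already been done in the design and verification of the filler gadget; the theorem is essentially a packaging lemma for Guarantee~1 and Guarantee~2 of that construction.

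For the forward direction of the first biconditional, I would note that Construction~\ref{make:star colouring degree k-1 vs k-1 regular} converts any input \(G\) of \textsc{\(k\)-Star Colourability}\((\Delta=k-1)\) into a \((k-1)\)-regular graph \(G'\) with \(G\) being \(k\)-star colourable iff \(G'\) is (Guarantee~1). Since \(G'\) consists of two disjoint copies of \(G\) together with at most \((k-1)|V(G)|\) filler gadgets, each of constant size in \(k\), the output has size \(O(k\,|V(G)|)\) and can be produced in polynomial time. This gives a polynomial-time many-one reduction that transfers NP-completeness from the max-degree setting to the regular setting. The reverse direction is immediate: every \((k-1)\)-regular graph already has maximum degree \(k-1\), so a hardness result for the regular version is by definition a hardness result for the max-degree version.

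For the triangle-free and bipartite versions, I would invoke Guarantee~2 of Construction~\ref{make:star colouring degree k-1 vs k-1 regular}, which asserts that if \(G\) is triangle-free (resp.\ bipartite), then so is \(G'\). Consequently, the same reduction restricts correctly to each of these subclasses and gives the two additional biconditionals, with the reverse directions again obtained from class inclusion.

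There is no genuine obstacle at the level of the theorem itself; the main challenge was the design of the filler gadget displayed in Figure~\ref{fig:filler gadget d=k-1}, which must simultaneously (i)~raise every vertex of \(G\) to degree exactly \(k-1\), (ii)~preserve \(k\)-star colourability in both directions, and (iii)~preserve triangle-freeness and bipartiteness. While writing the proof, the only routine items to double-check are the polynomial size bound on \(G'\) and the fact that the filler gadget itself is bipartite (so that taking two copies of \(G\) and attaching such gadgets at matching terminals does not create any odd cycles or triangles outside those already present in \(G\)).
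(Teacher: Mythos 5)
Your proposal is correct and follows essentially the same route as the paper: the forward direction is obtained by reading Construction~\ref{make:star colouring degree k-1 vs k-1 regular} as a polynomial-time reduction from \textsc{\( k \)-Star Colourability}\( (\Delta=k-1) \) to \textsc{\( k \)-Star Colourability}\( ((k-1)\text{-regular}) \) via Guarantee~1 (with Guarantee~2 handling the triangle-free and bipartite variants), and the reverse direction is the trivial class inclusion. The paper presents exactly this two-line argument immediately after the proofs of the guarantees, so nothing is missing.
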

Therefore, we have the following by Theorem~\ref{thm:k-star colouring deg k-1 k>=7} and Theorem~\ref{thm:5-star colouring npc max-deg 4}. 
\begin{theorem}\label{thm:k-star colouring npc (k-1)-regular}
For \( k=5 \) and \( k\geq 7 \), \textsc{\( k \)-Star Colourability} is NP-complete for \( (k-1) \)-regular graphs. 
Moreover, \textsc{5-Star Colourability} is NP-complete for triangle-free 4-regular graphs. 
\qed 
\end{theorem}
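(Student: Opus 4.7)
The plan is to derive this statement as a direct corollary of the earlier results. Specifically, Theorem~\ref{thm:k-star colouring deg k-1} already establishes that for \( k=5 \) and \( k\geq 7 \), \textsc{\(k\)-Star Colourability} is NP-complete for graphs of maximum degree \( k-1 \), while Theorem~\ref{thm:5-star colouring npc max-deg 4} supplies the stronger fact that \textsc{5-Star Colourability} is NP-complete even when restricted to triangle-free graphs of maximum degree four. The missing step is to lift these NP-completeness results from the class of graphs of maximum degree \( k-1 \) to the class of \( (k-1) \)-regular graphs (with triangle-freeness preserved in the \( k=5 \) case), and this lifting is exactly what Theorem~\ref{thm:star colouring degree k-1 vs k-1 regular} provides.

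Concretely, I would argue as follows. For \( k=5 \) and \( k\geq 7 \), take the NP-completeness of \textsc{\(k\)-Star Colourability} on graphs of maximum degree \( k-1 \) from Theorem~\ref{thm:k-star colouring deg k-1}, and feed it into the NP-hardness direction of Theorem~\ref{thm:star colouring degree k-1 vs k-1 regular}; the resulting chain of polynomial reductions yields NP-completeness on \( (k-1) \)-regular graphs (membership in NP is immediate since a purported \( k \)-colouring can be verified in polynomial time and the forbidden bicoloured \( P_4 \) condition is checkable in polynomial time). For the triangle-free refinement, I would instead start from Theorem~\ref{thm:5-star colouring npc max-deg 4} and invoke the triangle-free clause of Theorem~\ref{thm:star colouring degree k-1 vs k-1 regular}, which guarantees that Construction~\ref{make:star colouring degree k-1 vs k-1 regular} preserves triangle-freeness (since the filler gadget is bipartite and two disjoint copies of a triangle-free graph remain triangle-free).

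There is no real obstacle to overcome: the combinatorial work has already been done in the preceding constructions, and the proof reduces to invoking the right theorems in sequence. The only point worth stating explicitly is that the reverse direction of Theorem~\ref{thm:star colouring degree k-1 vs k-1 regular} is not needed here; we only use that NP-hardness on graphs of maximum degree \( k-1 \) implies NP-hardness on \( (k-1) \)-regular graphs, the other direction being trivial because \( (k-1) \)-regular graphs form a subclass of graphs of maximum degree \( k-1 \).
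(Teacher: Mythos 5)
Your proposal is correct and matches the paper's own derivation: the paper obtains this theorem by combining Theorem~\ref{thm:k-star colouring deg k-1 k>=7} and Theorem~\ref{thm:5-star colouring npc max-deg 4} (together constituting Theorem~\ref{thm:k-star colouring deg k-1}) with the lifting provided by Theorem~\ref{thm:star colouring degree k-1 vs k-1 regular}, using its triangle-free clause for the \( k=5 \) case exactly as you describe. No gap.
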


\begin{construct}\label{make:star colouring bdd degree vs regular}
\emph{Parameters:} Integers \( k\geq 3 \) and \( d\leq k-1 \).\\
\emph{Input:} A graph \( G \) of maximum degree \( d \).\\ 
\emph{Output:} A \( d \)-regular graph \( G^* \).\\
\emph{Guarantee:} \( G\) is \( k \)-star colourable if and only if \( G^* \) is \( k \)-star colourable.\\
\emph{Steps:}\\
Introduce two copies of \( G \). 
For each vertex \( v \) of \( G \), introduce \( d-deg_G(v) \) filler gadgets (see Figure~\ref{fig:filler gadget d<k}) between the two copies of \( v \). 

\begin{figure}[hbt]
\centering
\begin{tikzpicture}[scale=0.85]
\path (0,0) node(1st)[dot]{};

\path (1st) ++(1,1.5) node(x1)[dot]{} ++(0,-1) node(x2)[dot]{} ++(0,-1.5) node(xk-2)[dot]{};
\path (x2)--node[sloped,font=\large]{\( \dots \)} (xk-2);
\path (x1) ++(2,0) node(y1)[dot]{} ++(0,-1) node(y2)[dot]{} ++(0,-1.5) node(yk-2)[dot]{};
\path (y2)--node[sloped,font=\large]{\( \dots \)} (yk-2);
\path (y1) ++(1,-1.5) node(1stEnd)[dot]{};

\draw (1st)--(x1)  (1st)--(x2)  (1st)--(xk-2);
\draw (x1)--(y1)  (x1)--(y2)  (x1)--(yk-2);
\draw (x2)--(y1)  (x2)--(y2)  (x2)--(yk-2);
\draw (xk-2)--(y1)  (xk-2)--(y2)  (xk-2)--(yk-2);
\draw (1stEnd)--(y1)  (1stEnd)--(y2)  (1stEnd)--(yk-2);

\path (x1)--+(0,0.15) coordinate(brace1Start);
\path (xk-2)--+(0,-0.15) coordinate(brace1End);
\draw [opacity=0.4,decorate,decoration={brace,aspect=0.75,amplitude=16pt,raise=3pt,mirror}] (brace1Start)--  node[left=7mm,yshift=-6mm,opacity=1]{\( d-1 \)} (brace1End);

\draw 
(1st)--+(-1,0) node[dot]{} node[terminal][label=left:\( v \)]{}
(1stEnd)--+(1,0) node[dot]{} node[terminal][label=right:\( v \)]{};
\end{tikzpicture}
\caption{A filler gadget for \( v\in V(G) \) in Construction~\ref{make:star colouring bdd degree vs regular}.}
\label{fig:filler gadget d<k}
\end{figure}
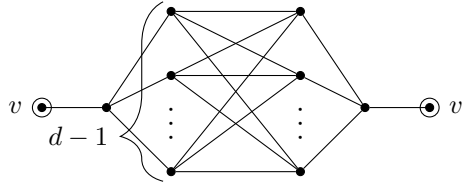
\end{construct}
To prove the guarantee, observe that \( G \) is a subgraph of \( G^* \) and \( G^* \) is a subgraph of \( G' \) (the output graph in Construction~\ref{make:star colouring degree k-1 vs k-1 regular}). \\
Thanks to Construction~\ref{make:star colouring bdd degree vs regular}, we have the following theorem. 
\begin{theorem}\label{thm:star colouring degree d vs d-regular}
For all \( k\geq 3 \) and \( d\leq k-1 \), \textsc{\( k \)-Star Colourability} is NP-complete for \( ( \)triangle-free/bipartite\( ) \) graphs of maximum degree \( d \) if and only if \textsc{\( k \)-Star Colourability} is NP-complete for \( ( \)triangle-free/bipartite\( ) \) \( d \)-regular graphs. 
\qed
\end{theorem}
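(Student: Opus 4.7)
The plan is to complete the proof of the guarantee of Construction~\ref{make:star colouring bdd degree vs regular} and then deduce Theorem~\ref{thm:star colouring degree d vs d-regular} from that guarantee together with Construction~\ref{make:star colouring degree k-1 vs k-1 regular} and its already-established guarantees. The decisive observation, already hinted at in the excerpt, is the chain of subgraph inclusions $G \subseteq G^* \subseteq G'$, where $G'$ denotes the output of Construction~\ref{make:star colouring degree k-1 vs k-1 regular} applied to the same input $G$. The first inclusion is immediate since $G^*$ contains two vertex-disjoint copies of $G$ plus some filler gadgets. The second inclusion holds because $d \leq k-1$: the filler gadget of Figure~\ref{fig:filler gadget d<k} has $d-1 \leq k-2$ vertices on each of its two internal ``layers'' and therefore embeds as an induced subgraph of the filler gadget of Figure~\ref{fig:filler gadget d=k-1} in the obvious way. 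Since Construction~\ref{make:star colouring bdd degree vs regular} inserts $d-\deg_G(v) \leq (k-1)-\deg_G(v)$ filler gadgets per vertex $v$, the whole graph $G^*$ sits inside $G'$.

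Given these inclusions, the guarantee is immediate in both directions. If $G^*$ is $k$-star colourable then its subgraph $G$ is, since $k$-star colourability is closed under taking subgraphs. Conversely, if $G$ is $k$-star colourable, then by Guarantee~1 of Construction~\ref{make:star colouring degree k-1 vs k-1 regular} so is $G'$, whence so is its subgraph $G^*$. I would also transfer Guarantee~2 directly: the filler gadget of Figure~\ref{fig:filler gadget d<k} is bipartite (one part is $\{1st, y_1,\dots,y_{d-1}\}$ together with the right copy of $v$, the other is $\{x_1,\dots,x_{d-1}, 1stEnd\}$ together with the left copy), so two disjoint copies of $G$ connected through filler gadgets admit a proper 2-colouring provided we keep the bipartition of the first copy of $G$ and flip the bipartition of the second. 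Triangle-freeness is inherited similarly, since the filler gadget itself is triangle-free.

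To finish the theorem, I would note that Construction~\ref{make:star colouring bdd degree vs regular} runs in time polynomial in $|V(G)|$: the number of filler gadgets is at most $d \cdot n$ and each has $O(d)$ vertices and $O(d^2)$ edges, so $|V(G^*)|, |E(G^*)| = O(d^2 n)$. Since $k$-star colourability is trivially in NP, this yields a polynomial-time reduction from \textsc{$k$-Star Colourability}$(\Delta = d)$ to \textsc{$k$-Star Colourability}$(d$-regular$)$ that preserves triangle-freeness and bipartiteness. The reverse direction of the biconditional is trivial: $d$-regular graphs form a subclass of graphs of maximum degree $d$, so NP-completeness on $d$-regular graphs (of a given type) implies NP-completeness on the superclass. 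Combining the two directions gives the stated ``if and only if''.

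The only mildly subtle point, and the one I would be most careful with, is the bipartite case: because the filler gadget contains a path of odd length~$5$ between its two terminal copies of $v$, one cannot retain both copies' bipartitions from $G$ unchanged; one must flip exactly one. Once this flip is specified, verification is routine, and everything else is a direct application of the already-proven Guarantee~1 of Construction~\ref{make:star colouring degree k-1 vs k-1 regular}.
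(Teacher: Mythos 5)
Your proposal is correct and follows essentially the same route as the paper, which proves the guarantee of Construction~\ref{make:star colouring bdd degree vs regular} precisely via the subgraph chain \( G\subseteq G^*\subseteq G' \) together with Guarantee~1 of Construction~\ref{make:star colouring degree k-1 vs k-1 regular}, and then observes that the construction is a polynomial-time reduction while the converse direction is trivial. Your explicit treatment of the bipartition flip across the odd-length filler-gadget paths is a welcome elaboration of a point the paper only asserts in Guarantee~2 of Construction~\ref{make:star colouring degree k-1 vs k-1 regular}.
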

We have the following corollary since \( L_s^{(k)} \) is the least integer \( d \) such that \textsc{\( k \)-Star Colourability} is NP-complete for graphs of maximum degree \( d \) (where \( k\geq 3 \)). 
\begin{corollary}
For \( k\geq 4 \) and \( d\leq k-1 \), \textsc{\( k \)\nobreakdash-Star Colourability} is NP-complete for \( d \)-regular graphs\\ if and only if \( d\geq L_s^{(k)} \). 
\qed
\end{corollary}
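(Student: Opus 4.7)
The plan is to derive the corollary directly from Theorem~\ref{thm:star colouring degree d vs d-regular} together with the definition of \( L_s^{(k)} \) and the monotonicity property of NP-completeness with respect to maximum degree that was established earlier in the excerpt (the observation that if \textsc{\( k \)-Star Colourability} is NP-complete for graphs of maximum degree \( d \), then it remains NP-complete for graphs of maximum degree \( d+1 \), by taking a disjoint union with a copy of \( K_{1,d+1} \)). I would split the biconditional into its two directions and argue each in one or two lines.

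For the easier (only-if) direction, I would assume that \textsc{\( k \)-Star Colourability} is NP-complete for \( d \)-regular graphs. Since every \( d \)-regular graph is a graph of maximum degree \( d \), the identity map is a trivial reduction, so \textsc{\( k \)-Star Colourability} is also NP-complete for graphs of maximum degree \( d \). By the definition of \( L_s^{(k)} \) as the least such \( d \), this forces \( d\geq L_s^{(k)} \).

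For the (if) direction, I would assume \( L_s^{(k)}\leq d\leq k-1 \). By the definition of \( L_s^{(k)} \), \textsc{\( k \)-Star Colourability} is NP-complete for graphs of maximum degree \( L_s^{(k)} \); applying the monotonicity observation recalled above \( d-L_s^{(k)} \) times shows that it is NP-complete for graphs of maximum degree \( d \). Since \( d\leq k-1 \), Theorem~\ref{thm:star colouring degree d vs d-regular} then transfers this NP-completeness to \( d \)-regular graphs, as required.

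There is no genuine obstacle here; the substantive work has already been invested in Construction~\ref{make:star colouring bdd degree vs regular} and Theorem~\ref{thm:star colouring degree d vs d-regular}, which assert that for \( d\leq k-1 \) the complexity of \textsc{\( k \)-Star Colourability} is the same on graphs of maximum degree \( d \) and on \( d \)-regular graphs. The only point that deserves a brief mention is why \( L_s^{(k)} \) is a finite, well-defined integer in the range under consideration: this follows from the earlier remark that the problem is known to be NP-complete at maximum degree \( k \), combined with the same monotonicity argument, which ensures that the set of \( d \) for which the problem is NP-complete is upward closed and nonempty for \( k\geq 4 \).
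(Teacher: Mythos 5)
Your proposal is correct and follows essentially the same route as the paper: the paper derives the corollary in one line from Theorem~\ref{thm:star colouring degree d vs d-regular} together with the definition of \( L_s^{(k)} \) and the upward-closure (monotonicity) of NP-completeness in the maximum degree established in Section~\ref{sec:intro hardness transitions}. Your more explicit split into the two directions of the biconditional is just a spelled-out version of the same argument.
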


\subsection[On Values of \( L_s^{(k)} \) and Two Similar Parameters]{\boldmath On Values of \( L_s^{(k)} \) and Two Similar Parameters}\label{sec:star colouring points of hardness transition}
Recall that for \( k\geq 3 \), \( L_s^{(k)} \) is the least integer \( d \) such that \textsc{\( k \)-Star Colourability} in graphs of maximum degree \( d \) is NP-complete. 
Bear in mind that we assume P \( \neq \) NP throughout this paper; thus, NP is partitioned into three classes: P, NPC and NPI~\cite{paschos}. 
If a problem in NP is not NP-complete (i.e., not in NPC), then it is either in P or in NPI. 
By the definition of \( L_s^{(k)} \), \textsc{\( k \)-Star Colourability}(\( \Delta=d \)) is not NP-complete for \( d<L_s^{(k)} \), which means that the problem is either in P or in NPI (we do not know which is the case).

Clearly, the star chromatic number of a graph of maximum degree \( d \) can be computed in polynomial time if \( d\leq 2 \). 
Hence, \( L_s^{(k)}\geq 3 \) for \( k\geq 3 \). 
For \( k\geq 3 \), \textsc{\( k \)-Star Colourability} is NP-complete for graphs of maximum degree \( k \)~\cite[Theorems~10 and 16]{shalu_cyriac3}, and thus \( L_s^{(k)}\leq k \). 
Next, we show that \( L_s^{(k)}=\Omega(k^{2/3}) \) for all \( k\geq 3 \). 
\begin{observation}\label{obs:Ls_k LB}
For \( d\leq 0.33\,k^{2/3} \), \textsc{\( k \)-Star Colourability} is polynomial-time solvable for graphs of maximum degree \( d \). 
Hence, \( L_s^{(k)}>0.33\,k^{2/3} \) for all \( k\geq 3 \). 
\end{observation}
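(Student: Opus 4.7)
The plan is to use the Fertin--Raspaud--Reed bound $\chi_s(G) = O(\Delta(G)^{3/2})$ recalled in Section~\ref{sec:star intro} to show that every graph of maximum degree at most $0.33\,k^{2/3}$ is automatically $k$-star colourable. Once that is established, \textsc{$k$-Star Colourability} restricted to such graphs is trivially in P: an algorithm that always outputs \emph{yes} is correct, so no colouring actually needs to be produced at run-time.

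First I would invoke the Fertin--Raspaud--Reed theorem in the quantitative form $\chi_s(G) \leq c\,\Delta(G)^{3/2}$, absorbing any lower-order corrections into the constant $c$ if needed. Next I would substitute $d = \Delta(G) \leq 0.33\,k^{2/3}$ to obtain $\chi_s(G) \leq c\,(0.33)^{3/2}\,k$; the constant $0.33$ is calibrated precisely so that $c\,(0.33)^{3/2} \leq 1$, i.e.\ $c \leq (0.33)^{-3/2} \approx 5.29$, which the strongest available form of the Fertin--Raspaud--Reed inequality satisfies. Hence $\chi_s(G) \leq k$ for every such $G$, so $G$ is $k$-star colourable, and the trivial \emph{always-yes} algorithm decides \textsc{$k$-Star Colourability} correctly in constant time on inputs with $\Delta(G) \leq 0.33\,k^{2/3}$. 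The second sentence of the observation is then immediate from the definition of $L_s^{(k)}$: under the standing assumption $\mathrm{P}\neq\mathrm{NP}$, the problem cannot be NP-complete at a value of $\Delta$ where it is in P, so $L_s^{(k)} > 0.33\,k^{2/3}$ for every $k\geq 3$.

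The main obstacle is arithmetic bookkeeping: one has to confirm that the explicit constant $c$ in the best available formulation of the Fertin--Raspaud--Reed bound is at most $(0.33)^{-3/2}$, rather than settling for a weaker threshold such as $0.1\,k^{2/3}$. Beyond that, it is worth emphasising a simplifying observation: polynomial-time solvability of the decision problem needs only the existential upper bound on $\chi_s(G)$, not a constructive polynomial-time star colouring algorithm, so even a purely probabilistic proof of the Fertin--Raspaud--Reed bound is sufficient for our purposes.
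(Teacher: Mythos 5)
Your proposal follows essentially the same route as the paper: invoke an upper bound of the form \( \chi_s(G)\leq C\,d^{3/2} \), check that \( C\cdot(0.33)^{3/2}\leq 1 \), conclude that every graph of maximum degree \( d\leq 0.33\,k^{2/3} \) is \( k \)-star colourable, and observe that the always-yes algorithm then decides the problem. However, the ``arithmetic bookkeeping'' you defer is precisely where your sketch would break if executed literally. The Fertin--Raspaud--Reed theorem itself does not come with a constant anywhere near \( (0.33)^{-3/2}\approx 5.27 \); their explicit constant is an order of magnitude larger. The bound that actually makes the threshold \( 0.33 \) work is the refinement of Ndreca, Procacci and Scoppola, \( \chi_s(G)<4.34\,d^{3/2}+1.5\,d \), which is what the paper uses. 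Moreover, that bound is not purely of the form \( c\,d^{3/2} \): the additive term \( 1.5\,d \) must be absorbed, and the paper does this by restricting to \( d\geq 3 \) (so that \( d<0.58\,d^{3/2} \), giving \( \chi_s(G)<5.21\,d^{3/2} \) with \( 5.21<(0.33)^{-3/2} \)) and disposing of \( d\leq 2 \) as a trivially polynomial case. So the skeleton of your argument is right, and your remark that only an existential (even non-constructive) bound is needed is correct, but you should cite the Ndreca et al.\ inequality rather than Fertin--Raspaud--Reed and carry out the absorption of the lower-order term with the small-degree case handled separately.
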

\begin{proof}
The observation is trivially true for \( d\leq 2 \).
It suffices to prove the observation for \( d\geq 3 \).
Suppose that \( d\geq 3 \).
Ndreca et al.~\cite{ndreca} proved that \( \chi_s(G)<4.34\,d^{\,3/2}+1.5\,d \) for every graph \( G \) of maximum degree \( d \).
Since \( d\geq 3 \), we have \( d^{\,1/2}\geq 3^{1/2}>1/0.58 \), and thus \( d<0.58\, d^{\,3/2} \).
Thus, \( \chi_s(G)<(4.34+1.5\times 0.58)d^{\,3/2}=5.21\,d^{\,3/2} \) for every graph \( G \) of maximum degree \( d \).
Hence, when \( k\geq 5.21\,d^{\,3/2} \), every graph of maximum degree \( d \) is \( k \)-star colourable.
In other words, if \( d\leq (5.21)^{-2/3}k^{\,2/3} \), then every graph of maximum degree \( d \) is \( k \)-star colourable.
Note that \( 0.33<(5.21)^{-2/3} \).
Hence, if \( d\leq 0.33\,k^{2/3} \), then \( d\leq (5.21)^{-2/3}k^{2/3} \).
Therefore, for \( d\leq 0.33\,k^{2/3} \), every graph of maximum degree \( d \) is \( k \)-star colourable, and thus \textsc{\( k \)-Star Colourability} is polynomial-time solvable for graphs of maximum degree \( d \).
As a result, \( L_s^{(k)}>0.33\,k^{2/3} \) for \( k\geq 3 \).
\end{proof}
Theorem~\ref{thm:k-star colouring deg k-1} proved that for \( k=5 \) and \( k\geq 7 \), \textsc{\( k \)-Star Colourability} is NP-complete for graphs of maximum degree \( k-1 \), and thus \( L_s^{(k)}\leq k-1 \).\\

Next, let us consider regular graphs. 
By Theorem~\ref{thm:k-star colouring npc (k-1)-regular}, \textsc{\( k \)-Star Colourability} is NP-complete for \( (k-1) \)-regular graphs for \( k=5 \) and \( k\geq 7 \). 
Also, \textsc{\( 4 \)-Star Colourability} is NP-complete for 4-regular graphs~\cite[Corollary~5.1]{cyriac}.

For \( d\geq 3 \), at least \mbox{\( \lceil (d+4)/2 \rceil \)} colours are required to star colour a \( d \)-regular graph \cite{shalu_cyriac3}. 
If \( k\geq 3 \) and \( d\geq 2k-3 \), then \mbox{\( \lceil (d+4)/2 \rceil>k \)}, and thus no \( d \)-regular graph is \( k \)-star colourable. 
Therefore, for \( k\geq 3 \), \textsc{\( k \)-Star Colourability} in \( d \)-regular graphs is polynomial-time solvable for each \( d\geq 2k-3 \) (because the answer is always `no'). By Observation~\ref{obs:Ls_k LB}, for \( k\geq 3 \), \textsc{\( k \)-Star Colourability} in \( d \)-regular graphs is polynomial-time solvable for \( d\leq \max \{2,0.33 k^{2/3}\} \). 
In particular, \textsc{3-Star Colourability} in \( d \)-regular graphs is polynomial-time solvable for all \( d\in \mathbb{N} \). 
In contrast, for \( k\in \{4,5,7,8,\dots \} \), there exists an integer \( d \) such that \textsc{\( k \)-Star Colourability} in \( d \)-regular graphs is NP-complete (see the last paragraph). 
Hence, for \( k\in \{4,5,7,8,\dots \} \), we are interested in the least (resp.\ highest) integer \( d \) such that \textsc{\( k \)-Star Colourability} in \( d \)-regular graphs is NP-complete, and we denote it by \( \widetilde{L}_s^{(k)} \) (resp.\ \( \widetilde{H}_s^{(k)} \)). 
By the definitions, \( L_s^{(k)}\leq \widetilde{L}_s^{(k)} \leq \widetilde{H}_s^{(k)} \) for \( k\in \{4,5,7,8,\dots \} \). 
We have \( \widetilde{L}_s^{(4)}\leq 4 \) since \textsc{\( 4 \)-Star Colourability} is NP-complete for \( 4 \)-regular graphs~\cite{cyriac}. 
Similarly, for \( k=5 \) and \( k\geq 7 \), \( \widetilde{L}_s^{(k)}\leq k-1 \) since \textsc{\( k \)-Star Colourability} is NP-complete for \( (k-1) \)-regular graphs (see Theorem~\ref{thm:k-star colouring npc (k-1)-regular}).

Theorem~\ref{thm:star colouring degree d vs d-regular} proved that for \( k\geq 3 \) and \( d\leq k-1 \), \textsc{\( k \)-Star Colourability} in graphs of maximum degree \( d \) is NP-complete if and only if \textsc{\( k \)-Star Colourability} in \( d \)-regular graphs is NP-complete. 
By the definition of \( L_s^{(k)} \), for \( k\geq 3 \), \textsc{\( k \)-Star Colourability} in graphs of maximum degree \( d \) is NP-complete for \( d=L_s^{(k)} \), and not NP-complete for \( d<L_s^{(k)} \). 
Hence, for \( k=5 \) and \( k\geq 7 \), \textsc{\( k \)-Star Colourability} in \( d \)-regular graphs is NP-complete for \( d=L_s^{(k)} \), and not NP-complete for \( d<L_s^{(k)} \) by Theorem~\ref{thm:star colouring degree d vs d-regular} (applicable because \( d\leq L_s^{(k)}\leq k-1 \)). 
This proves that for \( k=5 \) and \( k\geq 7 \), \( L_s^{(k)} \) is the least integer \( d \) such that \textsc{\( k \)-Star Colourability} in \( d \)-regular graphs is NP-complete; that is, \( \widetilde{L}_s^{(k)}=L_s^{(k)} \). 
\begin{theorem}
For \( k=5 \) and \( k\geq 7 \), we have \( \widetilde{L}_s^{(k)}=L_s^{(k)} \). 
\qed 
\end{theorem}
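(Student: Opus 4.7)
The plan is to leverage Theorem~\ref{thm:star colouring degree d vs d-regular}, which establishes that for every $k\geq 3$ and every $d\leq k-1$, the problems \textsc{$k$-Star Colourability}$(\Delta=d)$ and \textsc{$k$-Star Colourability}$(d$-regular$)$ sit in the same complexity class (either both are NP-complete or neither is). Combined with Theorem~\ref{thm:k-star colouring deg k-1}, which guarantees $L_s^{(k)}\leq k-1$ for $k=5$ and $k\geq 7$, this equivalence should transfer the already-known location of the hardness transition from the bounded-degree setting to the regular setting.

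First, I fix $k\in\{5,7,8,\dots\}$ and recall from the definition of $L_s^{(k)}$ that \textsc{$k$-Star Colourability}$(\Delta=d)$ is NP-complete for $d=L_s^{(k)}$ and is \emph{not} NP-complete (so lies in P or NPI under the standing assumption P$\neq$NP) for every $d<L_s^{(k)}$. Since Theorem~\ref{thm:k-star colouring deg k-1} yields $L_s^{(k)}\leq k-1$, every $d\in\{1,2,\dots,L_s^{(k)}\}$ satisfies $d\leq k-1$, placing us within the hypothesis range of Theorem~\ref{thm:star colouring degree d vs d-regular}.

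Next, I invoke Theorem~\ref{thm:star colouring degree d vs d-regular} at each relevant $d$: at $d=L_s^{(k)}$, the equivalence converts the NP-completeness of \textsc{$k$-Star Colourability}$(\Delta=d)$ into NP-completeness of \textsc{$k$-Star Colourability}$(d$-regular$)$; at each $d<L_s^{(k)}$ (which still satisfies $d\leq k-1$), the equivalence rules out NP-completeness of \textsc{$k$-Star Colourability}$(d$-regular$)$. Hence $L_s^{(k)}$ is the least integer $d$ such that \textsc{$k$-Star Colourability} is NP-complete for $d$-regular graphs; that is, $\widetilde{L}_s^{(k)}=L_s^{(k)}$.

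There is no real obstacle: all the non-trivial work is carried out by Construction~\ref{make:star colouring bdd degree vs regular} (underpinning Theorem~\ref{thm:star colouring degree d vs d-regular}) and by the NP-completeness result Theorem~\ref{thm:k-star colouring deg k-1}. The one point that must be checked explicitly in the write-up is that the bound $L_s^{(k)}\leq k-1$ keeps \emph{every} value $d\leq L_s^{(k)}$ inside the range $d\leq k-1$ required to apply the equivalence; this is immediate but essential, since without it we could not rule out NP-completeness for small $d$ in the regular setting.
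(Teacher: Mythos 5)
Your argument matches the paper's own proof exactly: both invoke Theorem~\ref{thm:star colouring degree d vs d-regular} at every $d\leq L_s^{(k)}$, using the bound $L_s^{(k)}\leq k-1$ from Theorem~\ref{thm:k-star colouring deg k-1} to ensure the equivalence applies throughout that range. The proposal is correct and requires no changes.
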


As mentioned above, for \( k\geq 3 \), \textsc{\( k \)-Star Colourability} in \( d \)-regular graphs is polynomial-time solvable for each \( d\geq 2k-3 \). 
Hence, for \( k\in \{4,5,7,8,\dots \} \), we have \( \widetilde{H}_s^{(k)}\leq 2k-4 \), and the same bound holds whenever \( \widetilde{H}_s^{(k)} \) can be defined (i.e, \( \exists d\in \mathbb{N} \), \textsc{\( k \)-Star Colourability}(\( d \)-regular) \( \in \) NPC). 
For \( k=5 \) and \( k\geq 7 \), \textsc{\( k \)-Star Colourability} is NP-complete for \( (k-1) \)-regular graphs by Theorem~\ref{thm:k-star colouring npc (k-1)-regular}, and thus \( L_s^{(k)}=\widetilde{L}_s^{(k)} \leq k-1\leq \widetilde{H}_s^{(k)}\leq 2k-4 \). 

See the concluding section (Section~\ref{sec:conclusion}) for a discussion of the open problems.

\section{Restricted Star Colouring}\label{sec:rs colouring}

\providetoggle{forThesis} 

\providetoggle{extended} 

\iftoggle{forThesis}
{ \iftoggle{extended}
  { \subsection{Reducing the Maximum Degree in the Hardness Result}
  } { \subsection{Hardness Transitions}\label{sec:rs colouring hardness transitions}
In this section, we discuss hardness transitions of rs colouring with respect to the maximum degree. 
To be specific, we deal with the values \( \widetilde{L}_{rs}^{(k)} \), \( L_{rs}^{(k)} \) and \( H_{rs}^{(k)} \). 
We show that \( \widetilde{L}_{rs}^{(3)}=3 \) whereas \( L_{rs}^{(3)} \) and \( H_{rs}^{(3)} \) are undefined (see Section~\ref{sec:rs colouring points of hardness transition} for details). 
For \( k\geq 4 \), we prove that \( \widetilde{L}_{rs}^{(k)}\leq k-1 \), \( L_{rs}^{(k)}=\widetilde{L}_{rs}^{(k)} \) and \( H_{rs}^{(k)}=k-1 \). 
To this end, we show that for \( k\geq 4 \), (i)~\textsc{\( k \)-RS Colourability} is NP-complete for graphs of maximum degree \( k-1 \), and (ii)~for each \( d<k \), \textsc{\( k \)-RS Colourability} in graphs of maximum degree \( d \) is NP-complete if and only if \textsc{\( k \)-RS Colourability} in \( d \)-regular graphs is NP-complete. 
Let us discuss results (i) and (ii). 
Consequences of these results to the values of \( \widetilde{L}_{rs}^{(k)} \), \( L_{rs}^{(k)} \) and \( H_{rs}^{(k)} \) are discussed later in Section~\ref{sec:rs colouring points of hardness transition}.

First, we prove that for all \( k\geq 4 \), \textsc{\( k \)-RS Colourability} is NP-complete for graphs of maximum degree \( k-1 \). 
}\iftoggle{extended}
{ We show that for all \( k\geq 4\), \textsc{\( k \)-RS Colourability} is NP-complete for triangle-free graphs of maximum degree \( k-1 \). 
} { }First, we deal with \( k=4 \). 
In fact, we show that \textsc{4-RS Colourability} is NP-complete for planar 3-regular graphs of girth~5. 
Construction~\ref{make:4-rs colouring planar cubic} below is employed to this end. 
Construction~\ref{make:4-rs colouring planar cubic} makes use of the following observation and Construction~\ref{make:3-rs colouring planar}. 
Construction~\ref{make:3-rs colouring planar} is the construction used in Theorem~\ref{thm:k-rsc planar bipartite girth g} to show that \textsc{3-RS Colourability} is NP-complete for planar graphs of maximum degree 3.

\begin{observation}\label{obs:no colour 0 at distance 2}
Let \( f \) be an rs colouring of a graph \( G \), and let \( u \) and \( v \) be two vertices in \( G \) at distance two in \( G \). 
Then, \( f(u)\neq 0 \) or \( f(v)\neq 0 \) \( ( \)or both\( ) \). 
\end{observation}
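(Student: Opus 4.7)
The plan is to argue by contradiction using the definition of rs colouring directly. Suppose instead that both $f(u)=0$ and $f(v)=0$. Since $u$ and $v$ are at distance two in $G$, there exists a common neighbour $w$ so that $u,w,v$ is a path in $G$.

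The key step is to identify the colour of $w$. Because $uw$ is an edge and $f$ is a proper colouring, $f(w)\neq f(u)=0$, hence $f(w)\geq 1$. But then on the path $u,w,v$ we have $f(u)=f(v)=0$ and $f(w)>0=f(u)=f(v)$, which is precisely a bicoloured $P_3$ whose middle vertex carries the strictly larger colour. This directly contradicts the definition of an rs colouring (no path $a,b,c$ with $f(b)>f(a)=f(c)$).

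The statement therefore follows immediately; no case analysis or further machinery is needed, and there is no real obstacle — the entire content of the observation is just an unpacking of the definition of rs colouring applied to the particular path through a common neighbour of $u$ and $v$.
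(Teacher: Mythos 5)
Your proof is correct and is essentially identical to the paper's: both assume $f(u)=f(v)=0$, take a common neighbour $w$, and observe that the path $u,w,v$ is then a bicoloured $P_3$ with the strictly higher colour on its middle vertex. Your explicit appeal to properness to get $f(w)\geq 1$ is just a slightly more spelled-out version of the paper's remark that the contradiction arises ``irrespective of the colour at $w$.''
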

\begin{proof}
To produce a contradiction, assume the contrary: \( f(u)=f(v)=0 \).
Let \( w \) be a common neighbour of \( u \) and \( v \).
Then, irrespective of the colour at \( w \), the path \( u,w,v \) is a bicoloured \( P_3 \) with a higher colour on its middle vertex; a contradiction.
\end{proof}

\iftoggle{extended}
{ \textcolor{red}{Write as ``Construction 1.1 (Re-stated)'' or write ``the construction used in Theorem~\ref{thm:3-rsc planar bipartite girth 6}. \{newline\} Construction 1.1''} 
} { } 

} { 

\iftoggle{forThesis}
{ } { \subsection{Introduction and Literature Survey}\label{sec:rs intro}} Restricted star colouring is a variant of star colouring as well as a generalisation of vertex ranking. 
Therefore, the restricted star chromatic number \( \chi_{rs}(G) \) of a graph \( G \) is bounded from below by the star chromatic number and bounded from above by the ranking number, better known as the treedepth~\cite{nesetril_mendez2012}. 
The treedepth is in turn bounded from above by vertex cover number plus one~\cite{gima}. 
For complete \( r \)-partite graphs and split graphs, the rs chromatic number is equal to vertex cover number plus one~\cite{fertin2004,shalu_cyriac2}.

It is easy to observe that for \( k\in \mathbb{N} \), a \( k \)-rs colourable graph is \( (k-1) \)-degenerate~\cite{almeter}, and hence no \( d \)-regular graph is \( d \)-rs colourable. 
Almeter et al.~\cite{almeter} proved that \( \chi_{rs}(G)\leq 7 \) for every subcubic graph \( G \). 
They also proved that the rs chromatic number of the hypercube \( Q_d \) is exactly \( d+1 \).
For every \( d \), there exists a graph \( G \) with maximum degree \( d \) such that \( \chi_{rs}(G)\geq \Omega(d^2/\log d) \)~\cite{almeter}. 
Karpas et al.~\cite{karpas} proved that (i)~\( \chi_{rs}(T)=O(\log n/\log \log n) \) for every tree \( T \), and this bound is tight, and (ii)~\( \chi_{rs}(G)=O(r\sqrt{n}) \) for every \( r \)-degenerate graph \( G \). 
For every \( n \), there exists a 2-degenerate 3-regular graph \( G \) with \( \chi_{rs}(G)>n^{1/3} \)~\cite{karpas}. 
Also, \( \chi_{rs}(G)=O(\log n) \) for every planar graph \( G \), and this result holds for every graph class excluding a fixed minor \cite{karpas}. 
Shalu and Sandhya \cite{shalu_sandhya} proved that \( \chi_{rs}(G)\leq 4\alpha(G) \) for every graph \( G \) of girth at least~5.

For \( k\geq 3 \), \textsc{\( k \)-RS Colourability} is NP-complete for (2-degenerate) planar bipartite graphs of maximum degree \( k \) and arbitrarily large girth~\cite{shalu_cyriac2}. 
In addition, it is NP-complete to test whether a 3-star colourable graph admits a 3-rs colouring~\cite{shalu_cyriac2}.
The optimization version of rs colouring is NP-hard to approximate within \( n^{\frac{1}{3}-\epsilon} \) for all \( \epsilon>0 \) in the class of 2-degenerate bipartite graphs~\cite{shalu_cyriac2}; in contrast, every 2-degenerate graph admits an rs colouring with \( n^{\frac{1}{2}} \) colours \cite[Theorem~6.2]{karpas}, and thus the optimization version of rs colouring is approximable within \( n^{\frac{1}{2}} \) for 2-degenerate graphs.

On the positive side, for \textsc{3-RS Colourability}, there is a linear-time algorithm for the class of trees and a polynomial-time algorithm for the class of chordal graphs in~\cite{shalu_cyriac2}. 
The complexity of \textsc{\( k \)-RS Colourability} in chordal graphs is open for \( k\geq 4 \). 
For each \( k\in \mathbb{N} \), \textsc{\( k \)-RS Colourability} can be expressed in  MSO\(_1 \) \cite{shalu_cyriac2},  and thus admits FPT algorithms with parameter either treewidth or cliquewidth by Courcelle's theorem \cite{borie,courcelle}. 
Thanks to Observation~\ref{obs:rs in terms of OIH}, \textsc{\( k \)-RS Colourability} can be expressed in the Locally Checkable Vertex Subset and Partitioning problems (LC-VSP) framework of Telle and Proskurowski~\cite{telle_proskurowski} (see supplement for details).  
This implies the existence of practically fast FPT algorithms for the problem~\cite{bui-xuan2010,bui-xuan2013}.

\iftoggle{forThesis}
{ } { \subsection{RS Colouring in Terms of Homomorphisms}\label{sec:rs in terms of OIH}
Let \( \vec{K_q} \) denote the tournament with vertex set \( \mathbb{Z}_q \) and edge set \( \{(i,j)\colon i,j\in \mathbb{Z}_q \text{ and } i<j\} \). 
Observe that a homomorphism \( \psi \) from an oriented graph \( \vec{H} \) to \( \vec{K_q} \) is in-neighbourhood injective if and only if no vertex \( v \) of \( \vec{H} \) has two in-neighbours \( u \) and \( w \) with \( \psi(v)>\psi(u)=\psi(w) \). 
Hence, an in-neighbourhood injective homomorphism from an orientation of a graph \( G \) to \( \vec{K_q} \) is a \( q \)-rs colouring of \( G \). 
Moreover, if \( f \) is a \( q \)-rs colouring of \( G \), then orienting each edge of \( G \) as an arc from the lower-coloured vertex to the higher-coloured vertex gives an (acyclic) orientation \( \vec{G} \) of \( G \) such that \( f \) is an in-neighbourhood injective homomorphism from \( \vec{G} \) to \( \vec{K_q} \). 
In short, a \( q \)-rs colouring of a graph \( G \) is precisely an in-neighbourhood injective homomorphism from an orientation of \( G \) to \( \vec{K_q} \). 
Thus, we have the following (since every transitive tournament on \( q \) vertices is isomorphic to \( \vec{K_q} \) as a digraph). 
\begin{observation}\label{obs:rs in terms of OIH}
A graph \( G \) admits a \( q \)-rs colouring if and only if \( G \) has an orientation that admits an in-neighbourhood injective homomorphism to a transitive tournament on \( q \) vertices. 
\qed
\end{observation}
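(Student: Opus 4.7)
The plan is to reduce the statement to the special case where the target tournament is the canonical transitive tournament $\vec{K_q}$ on vertex set $\mathbb{Z}_q$, and then to translate the definition of a $q$-rs colouring directly into the language of in-neighbourhood injective homomorphisms. First I would observe that any two transitive tournaments on $q$ vertices are isomorphic as digraphs: a transitive tournament admits a unique topological ordering of its vertices, and this ordering is a digraph isomorphism onto $\vec{K_q}$. Hence ``admits an in-neighbourhood injective homomorphism to \emph{some} transitive tournament on $q$ vertices'' is equivalent to ``admits an in-neighbourhood injective homomorphism to $\vec{K_q}$''. This lets me work exclusively with $\vec{K_q}$.

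Next I would prove the forward direction. Given a $q$-rs colouring $f\colon V(G)\to \mathbb{Z}_q$, define an orientation $\vec{G}$ of $G$ by orienting each edge $uv$ as the arc $(u,v)$ whenever $f(u)<f(v)$ (this is well defined since $f(u)\neq f(v)$ by properness). Then $f$, viewed as a map from $V(\vec{G})$ to $V(\vec{K_q})$, is a homomorphism from $\vec{G}$ to $\vec{K_q}$ by construction. To see that $f$ is in-neighbourhood injective, suppose a vertex $v$ has two in-neighbours $u\neq w$ in $\vec{G}$ with $f(u)=f(w)$; then $f(v)>f(u)=f(w)$, so $u,v,w$ is a path in $G$ with $f(v)>f(u)=f(w)$, contradicting the definition of an rs colouring.

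For the converse, suppose $\vec{G}$ is an orientation of $G$ together with an in-neighbourhood injective homomorphism $\psi\colon V(\vec{G})\to V(\vec{K_q})$. I claim $\psi$, regarded as a map $V(G)\to \mathbb{Z}_q$, is a $q$-rs colouring of $G$. Properness is immediate: for any edge $uv$ of $G$, the corresponding arc of $\vec{G}$ maps to an arc of $\vec{K_q}$, whose endpoints have different values in $\mathbb{Z}_q$. Suppose for contradiction that some path $u,v,w$ in $G$ has $\psi(v)>\psi(u)=\psi(w)$. In $\vec{G}$ each of the edges $uv$ and $vw$ is oriented one of two ways; since $\psi$ is a homomorphism and $\psi(u)<\psi(v)$ and $\psi(w)<\psi(v)$, both edges must be oriented towards $v$, making $u$ and $w$ two distinct in-neighbours of $v$ with $\psi(u)=\psi(w)$, violating in-neighbourhood injectivity.

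There is essentially no obstacle here: the observation is a direct translation between two formalisms, and the only mild subtlety is remembering that the orientation is not given in advance in either direction but has to be built from the colouring (forward direction) or stripped off to recover the colouring (converse direction). I would keep the proof short and mostly refer back to the definitions of rs colouring and of in-neighbourhood injectivity, noting at the start that the isomorphism of all transitive tournaments on $q$ vertices to $\vec{K_q}$ makes the choice of target tournament irrelevant.
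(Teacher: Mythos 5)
Your proposal is correct and follows essentially the same route as the paper: the paper also reduces to the canonical tournament \( \vec{K_q} \) (noting all transitive tournaments on \( q \) vertices are isomorphic), orients each edge from the lower-coloured to the higher-coloured endpoint in the forward direction, and observes that in-neighbourhood injectivity into \( \vec{K_q} \) is exactly the absence of a bicoloured \( P_3 \) with the higher colour in the middle. Your write-up just spells out the two directions a little more explicitly than the paper's terse prose.
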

To study minor-closed classes, Ne\v{s}et\v{r}il and Mendez~\cite{nesetril_mendez2006} introduced a generalisation of in-neighbourhood injective homomorphism, called folding. 
The complexity of in-neighbourhood injective homomorphisms to (reflexive) tournaments is studied by MacGillivray and Swarts~\cite{macgillivray}. 
Given an orientation \( \vec{G} \) of a graph \( G \), one can test in polynomial time whether \( \vec{G} \) admits an in-neighbourhood injective homomorphism to \( \vec{K_3} \)~\cite{macgillivray}. 
On the other hand, it is NP-complete to test whether an input graph \( G \) has an orientation that admits an in-neighbourhood injective homomorphism to \( \vec{K_3} \) (by Observation~\ref{obs:rs in terms of OIH} and \cite[Theorem~1]{shalu_cyriac2}).

\subsection{Hardness Transitions} \label{sec:rs colouring hardness transitions}
} For all \( k\geq 3 \), \( k \)-\textsc{RS Colourability} is NP-complete for graphs of maximum degree \( k \)~\cite[Theorem~3]{shalu_cyriac2}. 
In this section, we lower the maximum degree in this hardness result from \( k \) to \( k-1 \) except for \( k=3 \) (for \( k=3 \), the problem is polynomial-time solvable in graphs of maximum degree \( k-1 \)). 
We show that for all \( k\geq 4\), \textsc{\( k \)-RS Colourability} is NP-complete for triangle-free graphs of maximum degree~\( k-1 \). 
First, we prove this for \( k=4 \). 
In fact, we show that \textsc{4-RS Colourability} is NP-complete for planar 3-regular graphs of girth~5. 
Construction~\ref{make:4-rs colouring planar cubic} below is employed to this end. 
Construction~\ref{make:4-rs colouring planar cubic} makes use of the following observation and Construction~\ref{make:3-rs colouring planar}. 
Construction~\ref{make:3-rs colouring planar} was used in Theorem~1 of \cite{shalu_cyriac2} to show that \textsc{3-RS Colourability} is NP-complete for planar graphs of maximum degree 3.

\begin{observation}\label{obs:no colour 0 at distance 2}
Let \( f \) be an rs colouring of a graph \( G \). 
If \( u \) and \( v \) be two vertices in \( G \) that are within distance two in \( G \), then \( f(u)\neq 0 \) or \( f(v)\neq 0 \) \( ( \)or both\( ) \). 
\end{observation}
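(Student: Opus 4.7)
The plan is to argue by contradiction: suppose $f(u)=f(v)=0$ and derive a contradiction from the definition of an rs colouring (no path $x,y,z$ with $f(y)>f(x)=f(z)$) together with the fact that $f$ is in particular a proper colouring. Since the hypothesis says $u$ and $v$ are within distance two, I would split into two cases based on whether $\mathrm{dist}(u,v)=1$ or $\mathrm{dist}(u,v)=2$ (the case $u=v$ is trivial since the conclusion is about at least one of the colours being nonzero, and if $u=v$ the statement is vacuous under either reading; I would simply assume $u\neq v$).

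In the distance-$1$ case, $uv$ is an edge, so properness of $f$ (which is built into being a $k$-colouring, hence into being a $k$-rs colouring) forces $f(u)\neq f(v)$, contradicting $f(u)=f(v)=0$. In the distance-$2$ case, let $w$ be a common neighbour of $u$ and $v$. Because $f$ is proper and $f(u)=0$, we have $f(w)\neq 0$, i.e., $f(w)\geq 1$. Then the path $u,w,v$ satisfies $f(w)>0=f(u)=f(v)$, which is exactly a bicoloured $P_3$ with the higher colour on its middle vertex — forbidden by the definition of an rs colouring. This contradiction proves the observation.

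There is no real obstacle here; the statement is essentially an immediate unpacking of the definition, and the only thing to be careful about is to not overlook the adjacent (distance-$1$) subcase, which the earlier version of the observation excluded by insisting on distance exactly two. Once both subcases are handled, the proof is a one-line appeal to properness or to the forbidden $P_3$ pattern.
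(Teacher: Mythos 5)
Your proof is correct and follows essentially the same route as the paper: assume \( f(u)=f(v)=0 \), take a common neighbour \( w \), and observe that \( u,w,v \) is a bicoloured \( P_3 \) with the higher colour on its middle vertex. Your explicit handling of the adjacent case (via properness) and the degenerate case \( u=v \) is a minor but sensible addition for the ``within distance two'' phrasing; the substance of the argument is identical.
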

} 

\iftoggle{forThesis}
{  \begin{construct}\label{make:3-rs colouring planar}
} { \begin{construct}[\cite{shalu_cyriac2}]\label{make:3-rs colouring planar}
} \emph{Input:} A positive boolean formula \( B=(X,C) \) such that the graph of \( B \) is a planar 3-regular graph.\\
\emph{Output:} A planar graph \( G \) of maximum degree 3 and girth 6.\\
\emph{Guarantee~\cite{shalu_cyriac2}:} \( B \) has a 1-in-3 satisfying truth assignment if and only if \( G \) is 3-rs colourable.\\
\emph{Steps:}\\
Let \( X=\{x_1,x_2,\dots,x_m\} \) and \( C=\{c_1,c_2,\dots,c_m\} \) (note that \( |X|=|C| \) since the graph of \( B \) is 3-regular). 
Since \( B \) is a positive formula, each clause \( c_j \) is a 3-element subset of \( X \). 
Recall that the graph of \( B \), denoted by \( G_B \), is the graph with vertex set \( X\cup C \) and edge set \( \{ x_ic_j\ :\ x_i \in c_j\} \). 
To construct \( G \) from \( G_B \), first replace each vertex \( c_j \) of \( G_B \) by a triangle \( (c_{j1},c_{j2},c_{j3}) \), and then subdivide every edge of the resultant graph exactly once (see Figure~\ref{fig:3-rs colouring planar} for an example). 
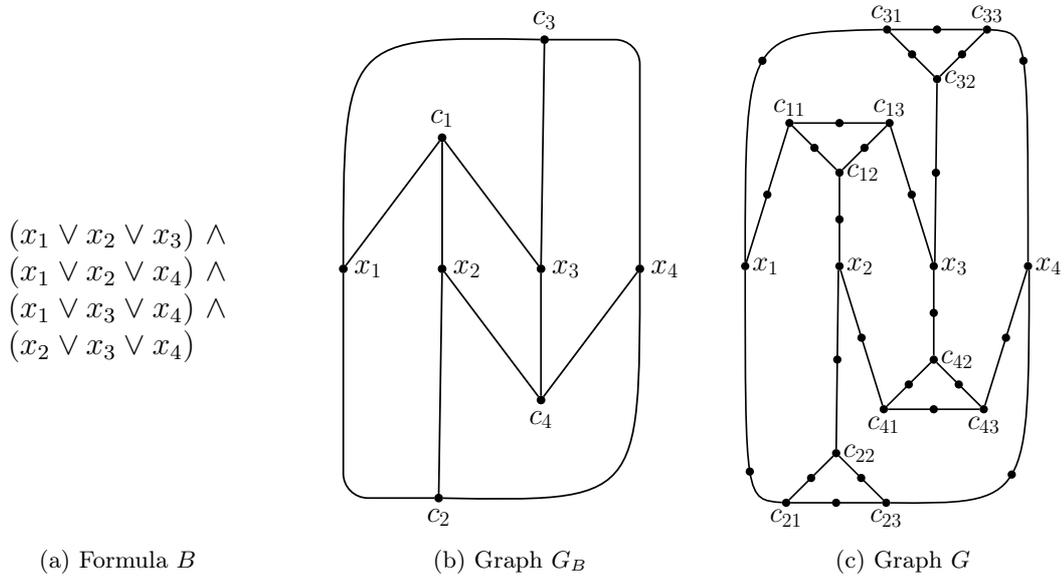
\begin{figure}[hbt]
\centering
\begin{subfigure}[b]{0.33\textwidth}
\centering
\large
\( (x_1\vee x_2\vee x_3) \) \( \wedge \)\\
\( (x_1\vee x_2\vee x_4) \) \( \wedge \)\\
\( (x_1\vee x_3 \vee x_4) \) \( \wedge \)\\
\( (x_2\vee x_3 \vee x_4) \)\phantom{ \( \wedge \)}\\[2.25cm]
\caption{Formula \( B \)}
\end{subfigure}\hfill
\begin{subfigure}[b]{0.33\textwidth}
\centering
\scalebox{0.65}{
  \begin{tikzpicture}[node distance=2cm,line width=1pt]
\tikzset{
dot/.style={draw,fill,circle,inner sep = 0pt,minimum size = 3pt},
vcolour/.style={draw,inner sep=1.5pt,font=\scriptsize,label distance=2pt},
subgraph/.style={draw,ellipse,minimum width=1.75cm,minimum height=2cm},
subgraphHoriz/.style={draw,ellipse,minimum width=1.5cm,minimum height=1.25cm},
}
\tikzstyle every label=[font=\LARGE]
  \tikzstyle bigDot=[dot,minimum size=4pt]

  \node [bigDot] (x1)[label=right:\( x_1 \)]{};
  \node [bigDot] (x2)[right of=x1,node distance=2cm,label=right:\( x_2 \)]{};
  \node [bigDot] (x3)[right of=x2,node distance=2cm,label=right:\( x_3 \)]{};
  \node [bigDot] (x4)[right of=x3,node distance=2cm,label=right:\( x_4 \)]{};

\node [bigDot] (C1) [above = 2.5 of x2][label={above:\( c_1 \)}]{};
  \node [bigDot] (C2) [below = 4.5 of x2,xshift=-2pt][label={below:\( c_2 \)}]{};
  \node [bigDot] (C3) [above = 4.5 of x3,xshift=2pt][label={above:\( c_3 \)}]{};
  \node [bigDot] (C4) [below = 2.5 of x3][label={below:\( c_4 \)}]{};
  \draw [rounded corners=0.5cm]
  (x1)--(C1)
(x2)--(C1)
  (x3)--(C1)
  (x3)--(C3)
  (x4)|-(C3)
  (x1)|-(C2)
  (x2)--(C2)
  (x2)--(C4)
  (x3)--(C4)
  (x4)--(C4);
\draw
  (x1) .. controls +(0,4.75).. (C3)
  (x4) .. controls +(0,-4.75).. (C2);
  \end{tikzpicture}
}
\caption{Graph \( G_B \)}
\end{subfigure}\hfill
\begin{subfigure}[b]{0.33\textwidth} \centering
\scalebox{0.62}{
  \begin{tikzpicture}[node distance=1.5cm,label distance=-0.7mm,line width=1pt]
\tikzset{
dot/.style={draw,fill,circle,inner sep = 0pt,minimum size = 3pt},
vcolour/.style={draw,inner sep=1.5pt,font=\scriptsize,label distance=2pt},
subgraph/.style={draw,ellipse,minimum width=1.75cm,minimum height=2cm},
subgraphHoriz/.style={draw,ellipse,minimum width=1.5cm,minimum height=1.25cm},
}
  \tikzstyle every label=[font=\LARGE]
  \tikzstyle bigDot=[dot,minimum size=4pt]

  \node [bigDot] (x1)[label=right:\( x_1 \)]{};
  \node [bigDot] (x2)[right of=x1,node distance=2cm,label=right:\( x_2 \)]{};
  \node [bigDot] (x3)[right of=x2,node distance=2cm,label=right:\( x_3 \)]{};
  \node [bigDot] (x4)[right of=x3,node distance=2cm,label=right:\( x_4 \)]{};
  
  \node [bigDot] (c12)[above of=x2,label=right:\( c_{12} \),node distance=2cm]{};
  \node [bigDot] (c11)[above left of=c12,label=above:\( c_{11} \)]{};
  \node [bigDot] (c13)[above right of=c12,label=above:\( c_{13} \)]{};
  
  \node [bigDot] (c32)[above of=x3,xshift=2pt,label=right:\( c_{32} \),node distance=4cm]{};
  \node [bigDot] (c31)[above left of=c32,label=above:\( c_{31} \)]{};
  \node [bigDot] (c33)[above right of=c32,label=above:\( c_{33} \)]{};
  
  \node [bigDot] (c42)[below of=x3,label=right:\( c_{42} \),node distance=2cm]{};
  \node [bigDot] (c41)[below left of=c42,label=below:\( c_{41} \)]{};
  \node [bigDot] (c43)[below right of=c42,label=below:\( c_{43} \)]{};
  
  \node [bigDot] (c22)[below of=x2,xshift=-2pt,label=right:\( c_{22} \),node distance=4cm]{};
  \node [bigDot] (c21)[below left of=c22,label=below:\( c_{21} \)]{};
  \node [bigDot] (c23)[below right of=c22,label=below:\( c_{23} \)]{};
  
  \draw [rounded corners=0.5cm]
  (x1) -- node[bigDot](y11){} (c11)
  (x1) .. controls +(0,5).. node[bigDot](y13){} (c31)
  (x2)--node[bigDot](y21){} (c12)
  (x3.105)--node[bigDot](y31){} (c13)
  (x3.75)--node[bigDot](y33){} (c32)
  (x4) .. controls +(0,5).. node[bigDot](y43){} (c33)
  (x1) .. controls +(0,-5).. node[bigDot](y12){} (c21)
  (x2.-105)--node[bigDot](y22){} (c22)
  (x2.-75)--node[bigDot](y24){} (c41)
  (x3)--node[bigDot](y34){} (c42)
  (x4.-105)--node[bigDot](y44){} (c43)
  (x4.-75) .. controls +(0,-5).. node[bigDot](y42){} (c23);
  
  \draw
  (c11)--node[bigDot](b11)[midway]{} (c12)--node[bigDot](b12)[midway]{} (c13)--node[bigDot](b13)[midway]{} (c11)
  (c21)--node[bigDot](b21)[midway]{} (c22)--node[bigDot](b22)[midway]{} (c23)--node[bigDot](b23)[midway]{} (c21)
  (c31)--node[bigDot](b31)[midway]{} (c32)--node[bigDot](b32)[midway]{} (c33)--node[bigDot](b33)[midway]{} (c31)
  (c41)--node[bigDot](b41)[midway]{} (c42)--node[bigDot](b42)[midway]{} (c43)--node[bigDot](b43)[midway]{} (c41);
  
  \end{tikzpicture}
}
\caption{Graph \( G \)}
\end{subfigure}
\caption{Example of Construction~\ref{make:3-rs colouring planar}.}
\label{fig:3-rs colouring planar}
\end{figure}
\end{construct}

We employ Construction~\ref{make:4-rs colouring planar cubic} below to prove that \textsc{4-RS Colourability} is NP-complete. 
A gadget called \emph{colour forcing gadget} is employed in the construction. 
The graph displayed in Figure~\ref{fig:gadget component 4-rs colouring planar} is the main component of the colour forcing gadget; let us call it the gadget component.
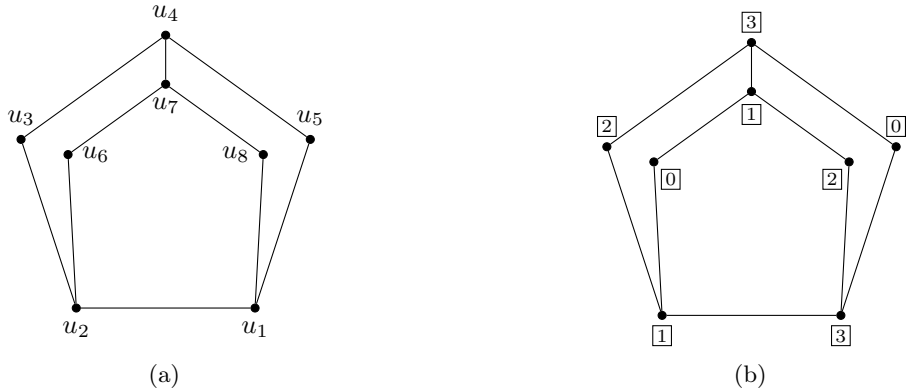
\begin{figure}[hbt] \centering
\begin{subfigure}[b]{0.5\textwidth}
\centering
\begin{tikzpicture}
\coordinate (centre){};

\path (centre) +(18:2) coordinate[dot,label=\( u_5 \)](u5)
      (centre) +(90:2) coordinate[dot,label=above:\( u_4 \)](u4)
      (centre) +(162:2) coordinate[dot,label=\( u_3 \)](u3)
      (centre) +(-54:2) coordinate[dot,label=below:\( u_1 \)](u1)
      (centre) +(-126:2) coordinate[dot,label=below:\( u_2 \)](u2);
\draw (u1)--(u2)--(u3)--(u4)--(u5)--(u1);

\path (centre) +(18:1.35) coordinate[dot,label=left:\( u_8 \)](u8)
      (centre) +(90:1.35) coordinate[dot,label=below:\( u_7 \)](u7)
      (centre) +(162:1.35) coordinate[dot,label=right:\( u_6 \)](u6);
\draw (u2)--(u6)--(u7)--(u8)--(u1);

\draw (u4)--(u7);

\end{tikzpicture}
\caption{}
\label{fig:gadget component 4-rs colouring planar}
\end{subfigure}\begin{subfigure}[b]{0.5\textwidth}
\centering
\begin{tikzpicture}
\coordinate (centre){};

\path (centre) +(18:2) coordinate[dot,label={[vcolour]\( 0 \)}](u5)
      (centre) +(90:2) coordinate[dot,label={[vcolour]above:\( 3 \)}](u4)
      (centre) +(162:2) coordinate[dot,label={[vcolour]\( 2 \)}](u3)
      (centre) +(-54:2) coordinate[dot,label={[vcolour]below:\( 3 \)}](u1)
      (centre) +(-126:2) coordinate[dot,label={[vcolour]below:\( 1 \)}](u2);
\draw (u1)--(u2)--(u3)--(u4)--(u5)--(u1);

\path (centre) +(18:1.35) coordinate[dot,label={[vcolour]below left:\( 2 \)}](u8)
      (centre) +(90:1.35) coordinate[dot,label={[vcolour,label distance=3pt]below:\( 1 \)}](u7)
      (centre) +(162:1.35) coordinate[dot,label={[vcolour]below right:\( 0 \)}](u6);
\draw (u2)--(u6)--(u7)--(u8)--(u1);

\draw (u4)--(u7);

\end{tikzpicture}
\caption{}
\label{fig:4-rs colouring of gadget component planar}
\end{subfigure}\caption{Gadget component in Construction~\ref{make:4-rs colouring planar cubic}, and a 4-rs colouring of it.}
\end{figure}
\begin{lemma}\label{lem:gadget component 4-rs colouring}
The gadget component has rs chromatic number 4. 
Besides, for every 4-rs colouring \( f \) of the gadget component, \( f(u_3)=0 \) or \( f(u_5)=0 \).
\end{lemma}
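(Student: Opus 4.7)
The plan is to prove both assertions together by exploiting the outer 5-cycle $C:=u_1u_2u_3u_4u_5$ of the gadget component. First I would show that $C_5$ is not 3-rs colourable, which immediately gives $\chi_{rs}(\text{gadget})\geq \chi_{rs}(C_5)\geq 4$; together with the 4-rs colouring displayed in Figure~\ref{fig:4-rs colouring of gadget component planar}, this proves the first claim. The non-3-rs-colourability of $C_5$ is easy: colour~$0$ cannot repeat on $C_5$ (else the vertex between the two $0$'s would receive a higher colour, violating the rs condition), and since $C_5$ is not $2$-colourable the colour $0$ must appear at least once; the remaining four vertices of $C_5$ then lie on a path and must be 2-coloured with $\{1,2\}$, forcing an alternation that produces a $P_3$ coloured $1,2,1$ or $2,1,2$---a forbidden bicoloured $P_3$ with the higher colour on its middle vertex.

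For the second claim, let $f$ be a 4-rs colouring of the gadget component. Applying the argument above to the restriction of $f$ to $C$, colour~$0$ appears on exactly one vertex of $C$, say at $u_j$ with $j\in\{1,2,3,4,5\}$. I would rule out $j\in\{1,2,4\}$. A direct inspection shows that each of $u_1$, $u_2$, $u_4$ has eccentricity exactly $2$ in the gadget: for instance, $N(u_1)=\{u_2,u_5,u_8\}$ and the second neighbourhood of $u_1$ is $\{u_3,u_4,u_6,u_7\}$, which together cover all remaining vertices, and the verifications for $u_2$ and $u_4$ are analogous. Consequently, if $f(u_j)=0$ for $j\in\{1,2,4\}$, then Observation~\ref{obs:no colour 0 at distance 2} forbids colour~$0$ on every other vertex, so $f$ restricted to $G-u_j$ uses only colours from $\{1,2,3\}$; shifting by $-1$ yields a 3-rs colouring of $G-u_j$.

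To derive the contradiction, it suffices to locate a $C_5$-subgraph in each $G-u_j$ with $j\in\{1,2,4\}$, since $\chi_{rs}(C_5)\geq 4$. Reading off the edges of the gadget one finds that $u_2u_3u_4u_7u_6$ is a 5-cycle in $G-u_1$, that $u_1u_5u_4u_7u_8$ is a 5-cycle in $G-u_2$, and that $u_1u_2u_6u_7u_8$ is a 5-cycle in $G-u_4$. Each contradicts the 3-rs colourability of $G-u_j$, so $j\in\{3,5\}$ and hence $f(u_3)=0$ or $f(u_5)=0$. The only step requiring a little care is spotting these three 5-cycles in $G-u_j$; everything else is immediate from $\chi_{rs}(C_5)\geq 4$ and Observation~\ref{obs:no colour 0 at distance 2}.
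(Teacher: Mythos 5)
Your proof is correct, and it reaches the second assertion by a genuinely different route from the paper's. Both arguments rest on the same two pillars---\( \chi_{rs}(C_5)\geq 4 \) (which you prove from scratch, whereas the paper cites the analogous star-colouring bound for \( C_5 \)) and Observation~\ref{obs:no colour 0 at distance 2}---but they deploy them differently. The paper argues globally: every \( 4 \)-rs colouring must use all four colours on each of the four \( 5 \)-cycles of the gadget, no single vertex lies on all four, so colour~\( 0 \) appears at least twice; since the only pairs of vertices at distance at least three are \( \{u_3,u_8\} \) and \( \{u_5,u_6\} \), the colour class \( f^{-1}(0) \) must be one of these two pairs. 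You instead localise colour~\( 0 \) to a single vertex \( u_j \) of the outer cycle and eliminate \( j\in\{1,2,4\} \) one at a time, using the eccentricity-\( 2 \) property of those three vertices to confine colour~\( 0 \) and then exhibiting a \( 5 \)-cycle of the gadget avoiding \( u_j \) that would have to be \( 3 \)-rs coloured. (Your three cycles \( u_2u_3u_4u_7u_6 \), \( u_1u_5u_4u_7u_8 \), \( u_1u_2u_6u_7u_8 \) and the eccentricity computations all check out against the edge set.) Your version avoids computing the full pairwise-distance structure of the gadget at the cost of three explicit cases; the paper's version is slightly slicker and yields the stronger conclusion that \( f^{-1}(0) \) equals \( \{u_3,u_8\} \) or \( \{u_5,u_6\} \), not merely that \( u_3 \) or \( u_5 \) receives colour~\( 0 \). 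One small wording point: ``applying the argument above'' to get that colour~\( 0 \) appears on \( C \) is loose, since the \( 2 \)-colourability step does not transfer verbatim to a \( 4 \)-colouring; what you actually need is that an rs colouring of \( C_5 \) omitting colour~\( 0 \) uses at most three colours and hence, after an order-preserving relabelling, would be a \( 3 \)-rs colouring of \( C_5 \)---the same relabelling you already invoke for \( G-u_j \), so the gap is purely cosmetic.
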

\begin{proof}
Observe that at least 4 colours are needed to rs colour a 5-vertex cycle (in fact, at least 4 colours are needed to star colour a 5-vertex cycle \cite{fertin2004}). 
Hence, an rs colouring of the gadget component requires at least four colours. 
In addition, each 4-rs colouring \( f \) of the gadget component must use all four colours on each 5-vertex cycle in it. 
Thus, colour~0 has to occur on all four 5-vertex cycles in the gadget component. 
Since no vertex of the gadget component is in all of those four 5-vertex cycles, colour~0 has to occur at least twice in the gadget component.

Thanks to Observation~\ref{obs:no colour 0 at distance 2}, no two vertices within distance two can both get colour~0 under~\( f \). 
Except for \( \{u_3,u_8\} \) and \( \{u_5,u_6\} \), all pairs of vertices from the gadget component are within distance two in the gadget component. 
Since colour~0 has to occur at least twice in the gadget component (see the last paragraph), the 0-th colour class \( f^{-1}(0) \) is either \( \{u_3,u_8\} \) or \( \{u_5,u_6\} \). 
Hence, \( f(u_3)=0 \) or \( f(u_5)=0 \). 
\end{proof}

\iftoggle{forThesis}
{ } { For every construction in this paper, only selected vertices within each gadget are allowed to have neighbours outside the gadget. 
We call such vertices as \emph{the terminals} of the gadget, and highlight them in diagrams by drawing a circle around them.
} 

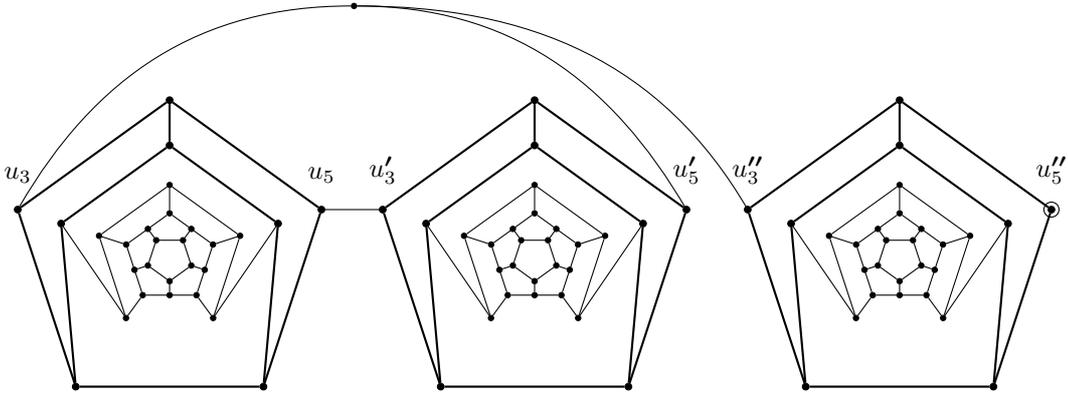
\begin{figure}[hbt] \centering
\begin{tikzpicture}[scale=0.3]
\tikzstyle tinyvcolour=[vcolour,font=\tiny,inner sep=0.8pt]
\tikzstyle smallDot=[dot,minimum size=2pt]
\tikzstyle medDot=[dot,minimum size=2.5pt]

\coordinate (centreinCopy1){};

\path (centreinCopy1) +(18:7) coordinate[medDot,label={[label distance=5pt]above:\( u_5 \)}](u5inCopy1)
      (centreinCopy1) +(90:7) coordinate[medDot](u4inCopy1)
      (centreinCopy1) +(162:7) coordinate[medDot,label={[label distance=5pt]above:\( u_3 \)}](u3inCopy1)
      (centreinCopy1) +(-54:7) coordinate[medDot](u1inCopy1)
      (centreinCopy1) +(-126:7) coordinate[medDot](u2inCopy1);
\draw[thick] (u1inCopy1)--(u2inCopy1)--(u3inCopy1)--(u4inCopy1)--(u5inCopy1)--(u1inCopy1);

\path (centreinCopy1) +(18:5) coordinate[medDot](u8inCopy1)
      (centreinCopy1) +(90:5) coordinate[medDot](u7inCopy1)
      (centreinCopy1) +(162:5) coordinate[medDot](u6inCopy1);
\draw[thick] (u2inCopy1)--(u6inCopy1)--(u7inCopy1)--(u8inCopy1)--(u1inCopy1);

\draw[thick] (u4inCopy1)--(u7inCopy1);

\path (centreinCopy1) +(18:3.25) coordinate[smallDot](35inCopy1)
      (centreinCopy1) +(90:3.25) coordinate[smallDot](34inCopy1)
      (centreinCopy1) +(162:3.25) coordinate[smallDot](33inCopy1)
      (centreinCopy1) +(-54:3.25) coordinate[smallDot](31inCopy1)
      (centreinCopy1) +(-126:3.25) coordinate[smallDot](32inCopy1);
\draw (32inCopy1)--(33inCopy1)--(34inCopy1)--(35inCopy1)--(31inCopy1);

\draw (31inCopy1)--(u8inCopy1)
      (32inCopy1)--(u6inCopy1);

\path (centreinCopy1) +(18:2) coordinate[smallDot](25inCopy1)
      (centreinCopy1) +(90:2) coordinate[smallDot](24inCopy1)
      (centreinCopy1) +(162:2) coordinate[smallDot](23inCopy1)
      (centreinCopy1) +(-54:2) coordinate[smallDot](21inCopy1)
      (centreinCopy1) +(-126:2) coordinate[smallDot](22inCopy1);
\draw (21inCopy1)--coordinate[smallDot](21pt5inCopy1) (22inCopy1)--coordinate[smallDot](22pt5inCopy1) (23inCopy1)--coordinate[smallDot](23pt5inCopy1) (24inCopy1)--coordinate[smallDot](24pt5inCopy1) (25inCopy1)--coordinate[smallDot](25pt5inCopy1) (21inCopy1);

\draw (31inCopy1)--(21inCopy1)
      (32inCopy1)--(22inCopy1)
      (33inCopy1)--(23inCopy1)
      (34inCopy1)--(24inCopy1)
      (35inCopy1)--(25inCopy1);

\path (centreinCopy1) +(-18:1) coordinate[smallDot](15inCopy1)
      (centreinCopy1) +(-90:1) coordinate[smallDot](14inCopy1)
      (centreinCopy1) +(-162:1) coordinate[smallDot](13inCopy1)
      (centreinCopy1) +(54:1) coordinate[smallDot](11inCopy1)
      (centreinCopy1) +(126:1) coordinate[smallDot](12inCopy1);
\draw (11inCopy1)--(12inCopy1)--(13inCopy1)--(14inCopy1)--(15inCopy1)--(11inCopy1);

\draw (14inCopy1)--(21pt5inCopy1)
      (13inCopy1)--(22pt5inCopy1)
      (12inCopy1)--(23pt5inCopy1)
      (11inCopy1)--(24pt5inCopy1)
      (15inCopy1)--(25pt5inCopy1);

\path (centreinCopy1) +(16,0) coordinate(centreinCopy2);

\path (centreinCopy2) +(18:7) coordinate[medDot,label={[label distance=5pt]above:\( u_5\bm{'} \)}](u5inCopy2) coordinate(u5'inCopy2)
      (centreinCopy2) +(90:7) coordinate[medDot](u4inCopy2)
      (centreinCopy2) +(162:7) coordinate[medDot,label={[label distance=5pt]above:\( u_3\bm{'} \)}](u3inCopy2) coordinate(u3'inCopy2)
      (centreinCopy2) +(-54:7) coordinate[medDot](u1inCopy2)
      (centreinCopy2) +(-126:7) coordinate[medDot](u2inCopy2);
\draw[thick] (u1inCopy2)--(u2inCopy2)--(u3inCopy2)--(u4inCopy2)--(u5inCopy2)--(u1inCopy2);

\path (centreinCopy2) +(18:5) coordinate[medDot](u8inCopy2)
      (centreinCopy2) +(90:5) coordinate[medDot](u7inCopy2)
      (centreinCopy2) +(162:5) coordinate[medDot](u6inCopy2);
\draw[thick] (u2inCopy2)--(u6inCopy2)--(u7inCopy2)--(u8inCopy2)--(u1inCopy2);

\draw[thick] (u4inCopy2)--(u7inCopy2);

\path (centreinCopy2) +(18:3.25) coordinate[smallDot](35inCopy2)
      (centreinCopy2) +(90:3.25) coordinate[smallDot](34inCopy2)
      (centreinCopy2) +(162:3.25) coordinate[smallDot](33inCopy2)
      (centreinCopy2) +(-54:3.25) coordinate[smallDot](31inCopy2)
      (centreinCopy2) +(-126:3.25) coordinate[smallDot](32inCopy2);
\draw (32inCopy2)--(33inCopy2)--(34inCopy2)--(35inCopy2)--(31inCopy2);

\draw (31inCopy2)--(u8inCopy2)
      (32inCopy2)--(u6inCopy2);

\path (centreinCopy2) +(18:2) coordinate[smallDot](25inCopy2)
      (centreinCopy2) +(90:2) coordinate[smallDot](24inCopy2)
      (centreinCopy2) +(162:2) coordinate[smallDot](23inCopy2)
      (centreinCopy2) +(-54:2) coordinate[smallDot](21inCopy2)
      (centreinCopy2) +(-126:2) coordinate[smallDot](22inCopy2);
\draw (21inCopy2)--coordinate[smallDot](21pt5inCopy2) (22inCopy2)--coordinate[smallDot](22pt5inCopy2) (23inCopy2)--coordinate[smallDot](23pt5inCopy2) (24inCopy2)--coordinate[smallDot](24pt5inCopy2) (25inCopy2)--coordinate[smallDot](25pt5inCopy2) (21inCopy2);

\draw (31inCopy2)--(21inCopy2)
      (32inCopy2)--(22inCopy2)
      (33inCopy2)--(23inCopy2)
      (34inCopy2)--(24inCopy2)
      (35inCopy2)--(25inCopy2);

\path (centreinCopy2) +(-18:1) coordinate[smallDot](15inCopy2)
      (centreinCopy2) +(-90:1) coordinate[smallDot](14inCopy2)
      (centreinCopy2) +(-162:1) coordinate[smallDot](13inCopy2)
      (centreinCopy2) +(54:1) coordinate[smallDot](11inCopy2)
      (centreinCopy2) +(126:1) coordinate[smallDot](12inCopy2);
\draw (11inCopy2)--(12inCopy2)--(13inCopy2)--(14inCopy2)--(15inCopy2)--(11inCopy2);

\draw (14inCopy2)--(21pt5inCopy2)
      (13inCopy2)--(22pt5inCopy2)
      (12inCopy2)--(23pt5inCopy2)
      (11inCopy2)--(24pt5inCopy2)
      (15inCopy2)--(25pt5inCopy2);

\draw (u5inCopy1)--(u3'inCopy2);

\path (u3inCopy1) --coordinate(midu3u5') (u5'inCopy2);
\path (midu3u5') +(0,9) coordinate[smallDot](joiner);
\draw (u3inCopy1) to[out=60,in=-180] (joiner);
\draw (u5'inCopy2) to[out=120,in=0] (joiner);

\path (centreinCopy2) +(16,0) coordinate(centreinCopy3);

\path (centreinCopy3) +(18:7) coordinate[medDot,label={[label distance=5pt]above:\( u_5\bm{''} \)}](u5inCopy3) node[draw,circle, inner sep=2.0pt]{}
      (centreinCopy3) +(90:7) coordinate[medDot](u4inCopy3)
      (centreinCopy3) +(162:7) coordinate[medDot,label={[label distance=5pt]above:\( u_3\bm{''} \)}](u3inCopy3) coordinate(u3''inCopy3)
      (centreinCopy3) +(-54:7) coordinate[medDot](u1inCopy3)
      (centreinCopy3) +(-126:7) coordinate[medDot](u2inCopy3);
\draw[thick] (u1inCopy3)--(u2inCopy3)--(u3inCopy3)--(u4inCopy3)--(u5inCopy3)--(u1inCopy3);

\path (centreinCopy3) +(18:5) coordinate[medDot](u8inCopy3)
      (centreinCopy3) +(90:5) coordinate[medDot](u7inCopy3)
      (centreinCopy3) +(162:5) coordinate[medDot](u6inCopy3);
\draw[thick] (u2inCopy3)--(u6inCopy3)--(u7inCopy3)--(u8inCopy3)--(u1inCopy3);

\draw[thick] (u4inCopy3)--(u7inCopy3);

\path (centreinCopy3) +(18:3.25) coordinate[smallDot](35inCopy3)
      (centreinCopy3) +(90:3.25) coordinate[smallDot](34inCopy3)
      (centreinCopy3) +(162:3.25) coordinate[smallDot](33inCopy3)
      (centreinCopy3) +(-54:3.25) coordinate[smallDot](31inCopy3)
      (centreinCopy3) +(-126:3.25) coordinate[smallDot](32inCopy3);
\draw (32inCopy3)--(33inCopy3)--(34inCopy3)--(35inCopy3)--(31inCopy3);

\draw (31inCopy3)--(u8inCopy3)
      (32inCopy3)--(u6inCopy3);

\path (centreinCopy3) +(18:2) coordinate[smallDot](25inCopy3)
      (centreinCopy3) +(90:2) coordinate[smallDot](24inCopy3)
      (centreinCopy3) +(162:2) coordinate[smallDot](23inCopy3)
      (centreinCopy3) +(-54:2) coordinate[smallDot](21inCopy3)
      (centreinCopy3) +(-126:2) coordinate[smallDot](22inCopy3);
\draw (21inCopy3)--coordinate[smallDot](21pt5inCopy3) (22inCopy3)--coordinate[smallDot](22pt5inCopy3) (23inCopy3)--coordinate[smallDot](23pt5inCopy3) (24inCopy3)--coordinate[smallDot](24pt5inCopy3) (25inCopy3)--coordinate[smallDot](25pt5inCopy3) (21inCopy3);

\draw (31inCopy3)--(21inCopy3)
      (32inCopy3)--(22inCopy3)
      (33inCopy3)--(23inCopy3)
      (34inCopy3)--(24inCopy3)
      (35inCopy3)--(25inCopy3);

\path (centreinCopy3) +(-18:1) coordinate[smallDot](15inCopy3)
      (centreinCopy3) +(-90:1) coordinate[smallDot](14inCopy3)
      (centreinCopy3) +(-162:1) coordinate[smallDot](13inCopy3)
      (centreinCopy3) +(54:1) coordinate[smallDot](11inCopy3)
      (centreinCopy3) +(126:1) coordinate[smallDot](12inCopy3);
\draw (11inCopy3)--(12inCopy3)--(13inCopy3)--(14inCopy3)--(15inCopy3)--(11inCopy3);

\draw (14inCopy3)--(21pt5inCopy3)
      (13inCopy3)--(22pt5inCopy3)
      (12inCopy3)--(23pt5inCopy3)
      (11inCopy3)--(24pt5inCopy3)
      (15inCopy3)--(25pt5inCopy3);

\draw (joiner) to[out=0,in=120] (u3''inCopy3);
\end{tikzpicture}
\caption{The colour forcing gadget.}
\label{fig:new colour forcing gadget}
\end{figure}

\begin{figure}[hbt] \centering
\begin{tikzpicture}[scale=0.3]
\tikzstyle tinyvcolour=[vcolour,font=\tiny,inner sep=0.8pt]
\tikzstyle smallDot=[dot,minimum size=2pt]
\tikzstyle medDot=[dot,minimum size=2.5pt]

\coordinate (centreinCopy1){};

\path (centreinCopy1) +(18:7) coordinate[medDot,label={[label distance=5pt]above:\( u_5 \)},label={[tinyvcolour]below right:0}](u5inCopy1)
      (centreinCopy1) +(90:7) coordinate[medDot,label={[tinyvcolour]above:3}](u4inCopy1)
      (centreinCopy1) +(162:7) coordinate[medDot,label={[label distance=5pt]above:\( u_3 \)},label={[tinyvcolour]left:2}](u3inCopy1)
      (centreinCopy1) +(-54:7) coordinate[medDot,label={[tinyvcolour]below:3}](u1inCopy1)
      (centreinCopy1) +(-126:7) coordinate[medDot,label={[tinyvcolour]below:1}](u2inCopy1);
\draw[thick] (u1inCopy1)--(u2inCopy1)--(u3inCopy1)--(u4inCopy1)--(u5inCopy1)--(u1inCopy1);

\path (centreinCopy1) +(18:5) coordinate[medDot,label={[tinyvcolour]right:2}](u8inCopy1)
      (centreinCopy1) +(90:5) coordinate[medDot,label={[tinyvcolour,yshift=1pt]right:1}](u7inCopy1)
      (centreinCopy1) +(162:5) coordinate[medDot,label={[tinyvcolour]left:0}](u6inCopy1);
\draw[thick] (u2inCopy1)--(u6inCopy1)--(u7inCopy1)--(u8inCopy1)--(u1inCopy1);

\draw[thick] (u4inCopy1)--(u7inCopy1);

\path (centreinCopy1) +(18:3.25) coordinate[smallDot,label={[tinyvcolour]right:2}](35inCopy1)
      (centreinCopy1) +(90:3.25) coordinate[smallDot,label={[tinyvcolour]above:1}](34inCopy1)
      (centreinCopy1) +(162:3.25) coordinate[smallDot,label={[tinyvcolour]left:3}](33inCopy1)
      (centreinCopy1) +(-54:3.25) coordinate[smallDot,label={[tinyvcolour]below:0}](31inCopy1)
      (centreinCopy1) +(-126:3.25) coordinate[smallDot,label={[tinyvcolour]below:2}](32inCopy1);
\draw (32inCopy1)--(33inCopy1)--(34inCopy1)--(35inCopy1)--(31inCopy1);

\draw (31inCopy1)--(u8inCopy1)
      (32inCopy1)--(u6inCopy1);

\path (centreinCopy1) +(18:2) coordinate[smallDot,label={[tinyvcolour,xshift=1pt,label distance=1pt]above:3}](25inCopy1)
      (centreinCopy1) +(90:2) coordinate[smallDot,label={[tinyvcolour,xshift=-1pt,yshift=2pt]right:2}](24inCopy1)
      (centreinCopy1) +(162:2) coordinate[smallDot,label={[tinyvcolour,xshift=-1pt,label distance=1pt]above:0}](23inCopy1)
      (centreinCopy1) +(-54:2) coordinate[smallDot,label={[tinyvcolour,label distance=1pt,xshift=2pt]below left:2}](21inCopy1)
      (centreinCopy1) +(-126:2) coordinate[smallDot,label={[tinyvcolour,label distance=1pt,xshift=-2pt]below right:1}](22inCopy1);
\draw (21inCopy1)--coordinate[smallDot,label={[tinyvcolour,label distance=0.5pt]below:3}](21pt5inCopy1) (22inCopy1)--coordinate[smallDot,label={[tinyvcolour,label distance=1pt]left:3}](22pt5inCopy1) (23inCopy1)--coordinate[smallDot,label={[tinyvcolour,label distance=1pt,xshift=2pt]above left:3}](23pt5inCopy1) (24inCopy1)--coordinate[smallDot,label={[tinyvcolour,label distance=1pt,xshift=-2pt]above right:0}](24pt5inCopy1) (25inCopy1)--coordinate[smallDot,label={[tinyvcolour,label distance=1pt]right:1}](25pt5inCopy1) (21inCopy1);

\draw (31inCopy1)--(21inCopy1)
      (32inCopy1)--(22inCopy1)
      (33inCopy1)--(23inCopy1)
      (34inCopy1)--(24inCopy1)
      (35inCopy1)--(25inCopy1);

\path (centreinCopy1) +(-18:1) coordinate[smallDot,label={[tinyvcolour,xshift=-1pt,label distance=1pt]above right:3}](15inCopy1)
      (centreinCopy1) +(-90:1) coordinate[smallDot,label={[tinyvcolour,label distance=1.5pt]above:0}](14inCopy1)
      (centreinCopy1) +(-162:1) coordinate[smallDot,label={[tinyvcolour,xshift=1pt,label distance=1pt]above left:2}](13inCopy1)
      (centreinCopy1) +(54:1) coordinate[smallDot,label={[tinyvcolour,xshift=-2pt,label distance=0.5pt]above:2}](11inCopy1)
      (centreinCopy1) +(126:1) coordinate[smallDot,label={[tinyvcolour,xshift=2pt,label distance=0.5pt]above:1}](12inCopy1);
\draw (11inCopy1)--(12inCopy1)--(13inCopy1)--(14inCopy1)--(15inCopy1)--(11inCopy1);

\draw (14inCopy1)--(21pt5inCopy1)
      (13inCopy1)--(22pt5inCopy1)
      (12inCopy1)--(23pt5inCopy1)
      (11inCopy1)--(24pt5inCopy1)
      (15inCopy1)--(25pt5inCopy1);

\path (centreinCopy1) +(16,0) coordinate(centreinCopy2);

\path (centreinCopy2) +(18:7) coordinate[medDot,label={[label distance=5pt]above:\( u_5\bm{'} \)},label={[tinyvcolour]right:0}](u5inCopy2) coordinate(u5'inCopy2)
      (centreinCopy2) +(90:7) coordinate[medDot,label={[tinyvcolour]above:3}](u4inCopy2)
      (centreinCopy2) +(162:7) coordinate[medDot,label={[label distance=5pt]above:\( u_3\bm{'} \)},label={[tinyvcolour]below left:1}](u3inCopy2) coordinate(u3'inCopy2)
      (centreinCopy2) +(-54:7) coordinate[medDot,label={[tinyvcolour]below:3}](u1inCopy2)
      (centreinCopy2) +(-126:7) coordinate[medDot,label={[tinyvcolour]below:2}](u2inCopy2);
\draw[thick] (u1inCopy2)--(u2inCopy2)--(u3inCopy2)--(u4inCopy2)--(u5inCopy2)--(u1inCopy2);

\path (centreinCopy2) +(18:5) coordinate[medDot,label={[tinyvcolour]right:1}](u8inCopy2)
      (centreinCopy2) +(90:5) coordinate[medDot,label={[tinyvcolour]above right:2}](u7inCopy2)
      (centreinCopy2) +(162:5) coordinate[medDot,label={[tinyvcolour]left:0}](u6inCopy2);
\draw[thick] (u2inCopy2)--(u6inCopy2)--(u7inCopy2)--(u8inCopy2)--(u1inCopy2);

\draw[thick] (u4inCopy2)--(u7inCopy2);

\path (centreinCopy2) +(18:3.25) coordinate[smallDot,label={[tinyvcolour]right:2}](35inCopy2)
      (centreinCopy2) +(90:3.25) coordinate[smallDot,label={[tinyvcolour]above:1}](34inCopy2)
      (centreinCopy2) +(162:3.25) coordinate[smallDot,label={[tinyvcolour]left:3}](33inCopy2)
      (centreinCopy2) +(-54:3.25) coordinate[smallDot,label={[tinyvcolour]below:0}](31inCopy2)
      (centreinCopy2) +(-126:3.25) coordinate[smallDot,label={[tinyvcolour]below:2}](32inCopy2);
\draw (32inCopy2)--(33inCopy2)--(34inCopy2)--(35inCopy2)--(31inCopy2);

\draw (31inCopy2)--(u8inCopy2)
      (32inCopy2)--(u6inCopy2);

\path (centreinCopy2) +(18:2) coordinate[smallDot,label={[tinyvcolour,xshift=1pt,label distance=1pt]above:3}](25inCopy2)
      (centreinCopy2) +(90:2) coordinate[smallDot,label={[tinyvcolour,xshift=-1pt,yshift=2pt]right:2}](24inCopy2)
      (centreinCopy2) +(162:2) coordinate[smallDot,label={[tinyvcolour,xshift=-1pt,label distance=1pt]above:0}](23inCopy2)
      (centreinCopy2) +(-54:2) coordinate[smallDot,label={[tinyvcolour,label distance=1pt,xshift=2pt]below left:2}](21inCopy2)
      (centreinCopy2) +(-126:2) coordinate[smallDot,label={[tinyvcolour,label distance=1pt,xshift=-2pt]below right:1}](22inCopy2);
\draw (21inCopy2)--coordinate[smallDot,label={[tinyvcolour,label distance=0.5pt]below:3}](21pt5inCopy2) (22inCopy2)--coordinate[smallDot,label={[tinyvcolour,label distance=1pt]left:3}](22pt5inCopy2) (23inCopy2)--coordinate[smallDot,label={[tinyvcolour,label distance=1pt,xshift=2pt]above left:3}](23pt5inCopy2) (24inCopy2)--coordinate[smallDot,label={[tinyvcolour,label distance=1pt,xshift=-2pt]above right:0}](24pt5inCopy2) (25inCopy2)--coordinate[smallDot,label={[tinyvcolour,label distance=1pt]right:1}](25pt5inCopy2) (21inCopy2);

\draw (31inCopy2)--(21inCopy2)
      (32inCopy2)--(22inCopy2)
      (33inCopy2)--(23inCopy2)
      (34inCopy2)--(24inCopy2)
      (35inCopy2)--(25inCopy2);

\path (centreinCopy2) +(-18:1) coordinate[smallDot,label={[tinyvcolour,xshift=-1pt,label distance=1pt]above right:3}](15inCopy2)
      (centreinCopy2) +(-90:1) coordinate[smallDot,label={[tinyvcolour,label distance=1.5pt]above:0}](14inCopy2)
      (centreinCopy2) +(-162:1) coordinate[smallDot,label={[tinyvcolour,xshift=1pt,label distance=1pt]above left:2}](13inCopy2)
      (centreinCopy2) +(54:1) coordinate[smallDot,label={[tinyvcolour,xshift=-2pt,label distance=0.5pt]above:2}](11inCopy2)
      (centreinCopy2) +(126:1) coordinate[smallDot,label={[tinyvcolour,xshift=2pt,label distance=0.5pt]above:1}](12inCopy2);
\draw (11inCopy2)--(12inCopy2)--(13inCopy2)--(14inCopy2)--(15inCopy2)--(11inCopy2);

\draw (14inCopy2)--(21pt5inCopy2)
      (13inCopy2)--(22pt5inCopy2)
      (12inCopy2)--(23pt5inCopy2)
      (11inCopy2)--(24pt5inCopy2)
      (15inCopy2)--(25pt5inCopy2);

\draw (u5inCopy1)--(u3'inCopy2);

\path (u3inCopy1) --coordinate(midu3u5') (u5'inCopy2);
\path (midu3u5') +(0,9) coordinate[smallDot,label={[tinyvcolour]below:3}](joiner);
\draw (u3inCopy1) to[out=60,in=-180] (joiner);
\draw (u5'inCopy2) to[out=120,in=0] (joiner);

\path (centreinCopy2) +(16,0) coordinate(centreinCopy3);

\path (centreinCopy3) +(18:7) coordinate[medDot,label={[label distance=5pt]above:\( u_5\bm{''} \)}](u5inCopy3) node[draw,circle, inner sep=2.0pt][label={[tinyvcolour]right:0}]{}
      (centreinCopy3) +(90:7) coordinate[medDot,label={[tinyvcolour]above:3}](u4inCopy3)
      (centreinCopy3) +(162:7) coordinate[medDot,label={[label distance=5pt]above:\( u_3\bm{''} \)},label={[tinyvcolour]left:1}](u3inCopy3) coordinate(u3''inCopy3)
      (centreinCopy3) +(-54:7) coordinate[medDot,label={[tinyvcolour]below:3}](u1inCopy3)
      (centreinCopy3) +(-126:7) coordinate[medDot,label={[tinyvcolour]below:2}](u2inCopy3);
\draw[thick] (u1inCopy3)--(u2inCopy3)--(u3inCopy3)--(u4inCopy3)--(u5inCopy3)--(u1inCopy3);

\path (centreinCopy3) +(18:5) coordinate[medDot,label={[tinyvcolour]right:1}](u8inCopy3)
      (centreinCopy3) +(90:5) coordinate[medDot,label={[tinyvcolour]above right:2}](u7inCopy3)
      (centreinCopy3) +(162:5) coordinate[medDot,label={[tinyvcolour]left:0}](u6inCopy3);
\draw[thick] (u2inCopy3)--(u6inCopy3)--(u7inCopy3)--(u8inCopy3)--(u1inCopy3);

\draw[thick] (u4inCopy3)--(u7inCopy3);

\path (centreinCopy3) +(18:3.25) coordinate[smallDot,label={[tinyvcolour]right:2}](35inCopy3)
      (centreinCopy3) +(90:3.25) coordinate[smallDot,label={[tinyvcolour]above:1}](34inCopy3)
      (centreinCopy3) +(162:3.25) coordinate[smallDot,label={[tinyvcolour]left:3}](33inCopy3)
      (centreinCopy3) +(-54:3.25) coordinate[smallDot,label={[tinyvcolour]below:0}](31inCopy3)
      (centreinCopy3) +(-126:3.25) coordinate[smallDot,label={[tinyvcolour]below:2}](32inCopy3);
\draw (32inCopy3)--(33inCopy3)--(34inCopy3)--(35inCopy3)--(31inCopy3);

\draw (31inCopy3)--(u8inCopy3)
      (32inCopy3)--(u6inCopy3);

\path (centreinCopy3) +(18:2) coordinate[smallDot,label={[tinyvcolour,xshift=1pt,label distance=1pt]above:3}](25inCopy3)
      (centreinCopy3) +(90:2) coordinate[smallDot,label={[tinyvcolour,xshift=-1pt,yshift=2pt]right:2}](24inCopy3)
      (centreinCopy3) +(162:2) coordinate[smallDot,label={[tinyvcolour,xshift=-1pt,label distance=1pt]above:0}](23inCopy3)
      (centreinCopy3) +(-54:2) coordinate[smallDot,label={[tinyvcolour,label distance=1pt,xshift=2pt]below left:2}](21inCopy3)
      (centreinCopy3) +(-126:2) coordinate[smallDot,label={[tinyvcolour,label distance=1pt,xshift=-2pt]below right:1}](22inCopy3);
\draw (21inCopy3)--coordinate[smallDot,label={[tinyvcolour,label distance=0.5pt]below:3}](21pt5inCopy3) (22inCopy3)--coordinate[smallDot,label={[tinyvcolour,label distance=1pt]left:3}](22pt5inCopy3) (23inCopy3)--coordinate[smallDot,label={[tinyvcolour,label distance=1pt,xshift=2pt]above left:3}](23pt5inCopy3) (24inCopy3)--coordinate[smallDot,label={[tinyvcolour,label distance=1pt,xshift=-2pt]above right:0}](24pt5inCopy3) (25inCopy3)--coordinate[smallDot,label={[tinyvcolour,label distance=1pt]right:1}](25pt5inCopy3) (21inCopy3);

\draw (31inCopy3)--(21inCopy3)
      (32inCopy3)--(22inCopy3)
      (33inCopy3)--(23inCopy3)
      (34inCopy3)--(24inCopy3)
      (35inCopy3)--(25inCopy3);

\path (centreinCopy3) +(-18:1) coordinate[smallDot,label={[tinyvcolour,xshift=-1pt,label distance=1pt]above right:3}](15inCopy3)
      (centreinCopy3) +(-90:1) coordinate[smallDot,label={[tinyvcolour,label distance=1.5pt]above:0}](14inCopy3)
      (centreinCopy3) +(-162:1) coordinate[smallDot,label={[tinyvcolour,xshift=1pt,label distance=1pt]above left:2}](13inCopy3)
      (centreinCopy3) +(54:1) coordinate[smallDot,label={[tinyvcolour,xshift=-2pt,label distance=0.5pt]above:2}](11inCopy3)
      (centreinCopy3) +(126:1) coordinate[smallDot,label={[tinyvcolour,xshift=2pt,label distance=0.5pt]above:1}](12inCopy3);
\draw (11inCopy3)--(12inCopy3)--(13inCopy3)--(14inCopy3)--(15inCopy3)--(11inCopy3);

\draw (14inCopy3)--(21pt5inCopy3)
      (13inCopy3)--(22pt5inCopy3)
      (12inCopy3)--(23pt5inCopy3)
      (11inCopy3)--(24pt5inCopy3)
      (15inCopy3)--(25pt5inCopy3);

\draw (joiner) to[out=0,in=120] (u3''inCopy3);
\end{tikzpicture}
\caption{A 4-rs colouring of the colour forcing gadget.}
\label{fig:4-rs colouring new colour forcing gadget}
\end{figure}
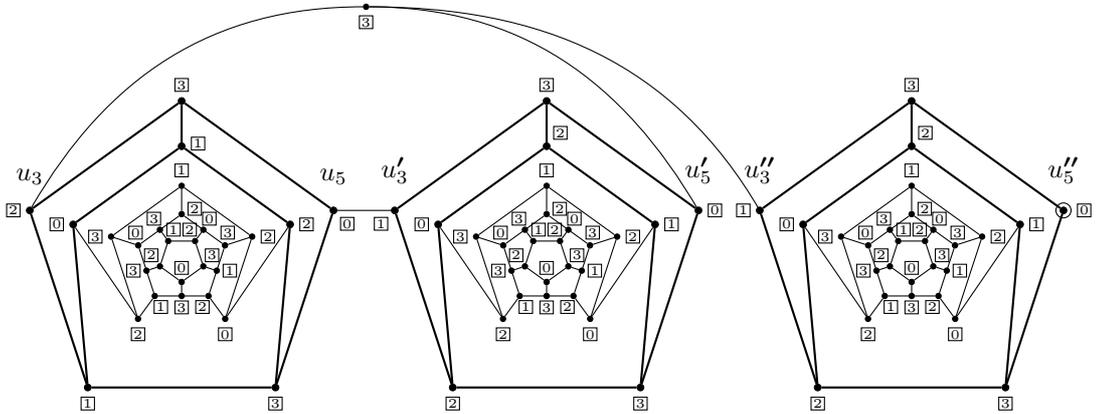

The colour forcing gadget is displayed in Figure~\ref{fig:new colour forcing gadget}. 
Consider the 4-colouring of the gadget displayed in Figure~\ref{fig:4-rs colouring new colour forcing gadget}. 
It is a 4-rs colouring of the gadget because (i)~no vertex coloured~1 has two neighbours coloured~0, (ii)~except for the vertex \( u_3 \) coloured~2 and its two neighbours coloured~3, no vertex coloured~2 has two neighbours of the same colour, and (iii)~no vertex coloured~3 has two neighbours of the same colour.

\begin{figure}[hbt]\centering
\begin{tikzpicture}[scale=0.3,label distance=5pt]

\tikzstyle smallDot=[dot,minimum size=2pt]
\tikzstyle medDot=[dot,minimum size=2.5pt]

\coordinate (centreinCopy1){};

\path (centreinCopy1) +(18:7) coordinate[dot,label=\( u_5 \)](u5inCopy1)
      (centreinCopy1) +(90:7) coordinate[dot](u4inCopy1)
      (centreinCopy1) +(162:7) coordinate[dot,label=\( u_3 \)](u3inCopy1) coordinate(u3FirstinCopy1)
      (centreinCopy1) +(-54:7) coordinate[dot](u1inCopy1)
      (centreinCopy1) +(-126:7) coordinate[dot](u2inCopy1);
\draw (u1inCopy1)--(u2inCopy1)--(u3inCopy1)--(u4inCopy1)--(u5inCopy1)--(u1inCopy1);

\path (centreinCopy1) +(18:5) coordinate[dot](u8inCopy1)
      (centreinCopy1) +(90:5) coordinate[dot](u7inCopy1)
      (centreinCopy1) +(162:5) coordinate[dot](u6inCopy1);
\draw (u2inCopy1)--(u6inCopy1)--(u7inCopy1)--(u8inCopy1)--(u1inCopy1);

\draw (u4inCopy1)--(u7inCopy1);

\path (centreinCopy1) +(17,0) coordinate(centreinCopy2);

\path (centreinCopy2) +(18:7) coordinate[dot,label=\( u_5\bm{'} \)](u5inCopy2)

      (centreinCopy2) +(90:7) coordinate[dot](u4inCopy2)
      (centreinCopy2) +(162:7) coordinate[dot,label=\( u_3\bm{'} \)](u3inCopy2)
      (centreinCopy2) +(-54:7) coordinate[dot](u1inCopy2)
      (centreinCopy2) +(-126:7) coordinate[dot](u2inCopy2);
\draw (u1inCopy2)--(u2inCopy2)--(u3inCopy2)--(u4inCopy2)--(u5inCopy2)--(u1inCopy2);

\path (centreinCopy2) +(18:5) coordinate[dot](u8inCopy2)
      (centreinCopy2) +(90:5) coordinate[dot](u7inCopy2)
      (centreinCopy2) +(162:5) coordinate[dot](u6inCopy2);
\draw (u2inCopy2)--(u6inCopy2)--(u7inCopy2)--(u8inCopy2)--(u1inCopy2);

\draw (u4inCopy2)--(u7inCopy2);

\draw (u5inCopy1)--(u3inCopy2);

\path (u3inCopy1) --coordinate(midu3u5') (u5inCopy2);
\path (midu3u5') +(0,9) coordinate[dot](joiner);
\draw (u3inCopy1) to[out=60,in=-180] (joiner);
\draw (u5inCopy2) to[out=120,in=0] (joiner);

\path (centreinCopy2) +(17,0) coordinate(centreinCopy3);

\path (centreinCopy3) +(18:7) coordinate[dot,label=\( u_5\bm{''} \)](u5inCopy3)
      (centreinCopy3) +(90:7) coordinate[dot](u4inCopy3)
      (centreinCopy3) +(162:7) coordinate[dot,label=\( u_3\bm{''} \)](u3inCopy3)
      (centreinCopy3) +(-54:7) coordinate[dot](u1inCopy3)
      (centreinCopy3) +(-126:7) coordinate[dot](u2inCopy3);
\draw (u1inCopy3)--(u2inCopy3)--(u3inCopy3)--(u4inCopy3)--(u5inCopy3)--(u1inCopy3);

\path (centreinCopy3) +(18:5) coordinate[dot](u8inCopy3)
      (centreinCopy3) +(90:5) coordinate[dot](u7inCopy3)
      (centreinCopy3) +(162:5) coordinate[dot](u6inCopy3);
\draw (u2inCopy3)--(u6inCopy3)--(u7inCopy3)--(u8inCopy3)--(u1inCopy3);

\draw (u4inCopy3)--(u7inCopy3);

\draw (joiner) to[out=0,in=120] (u3inCopy3);

\end{tikzpicture}
\caption{The subgraph \( H \) of the colour forcing gadget.}
\label{fig:colour forcing gadget 4-rs colouring}
\end{figure}
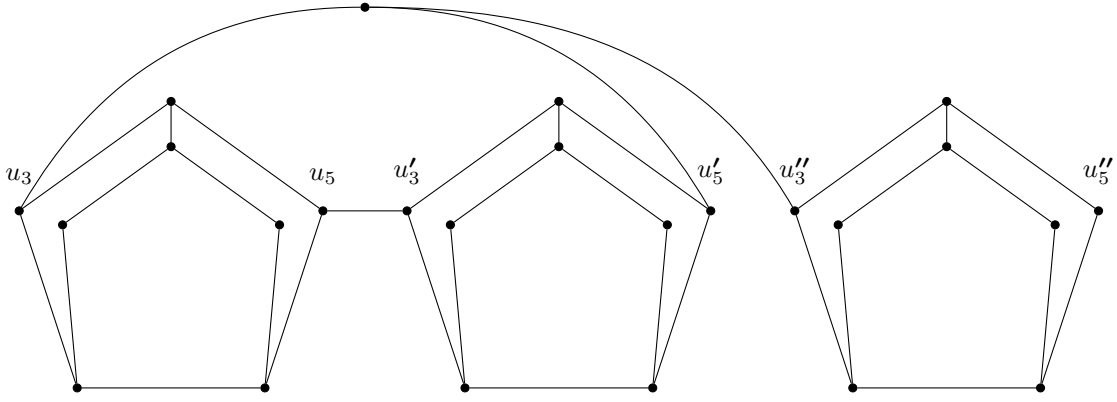

The colour forcing gadget is named so because the terminal of the gadget must be coloured~0 under each 4-rs colouring of the gadget (see Lemma~\ref{lem:colour forcing gadget 4-rs colouring} below). 
Since the colour forcing gadget is 4-rs colourable, its subgraph \( H \) shown in Figure~\ref{fig:colour forcing gadget 4-rs colouring} is 4-rs colourable as well.

\begin{lemma}\label{lem:old colour forcing gadget 4-rs colouring}
\( f(u\bm{''}_5)=0 \) for every 4-rs colouring \( f \) of the graph \( H \) in Figure~\ref{fig:colour forcing gadget 4-rs colouring}. 
\end{lemma}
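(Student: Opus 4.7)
The plan is to combine Lemma~\ref{lem:gadget component 4-rs colouring}, applied to each of the three copies of the gadget component embedded in $H$, with Observation~\ref{obs:no colour 0 at distance 2} applied around the joiner vertex that is adjacent to $u_3$, $u'_5$ and $u''_3$. Let $f$ be a $4$-rs colouring of $H$ and let $j$ denote this joiner vertex. The idea is to turn the problem into a short case analysis on which of the six vertices $u_3, u_5, u'_3, u'_5, u''_3, u''_5$ receives colour~$0$.

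First, I would record three independent consequences of Lemma~\ref{lem:gadget component 4-rs colouring}: in each gadget component copy, at least one of the two ``outer'' vertices must be coloured $0$. Thus $f(u_3)=0$ or $f(u_5)=0$; $f(u'_3)=0$ or $f(u'_5)=0$; and $f(u''_3)=0$ or $f(u''_5)=0$. Next I would record two further structural constraints. (i)~The vertex $j$ is a common neighbour of each of the pairs $\{u_3,u'_5\}$, $\{u_3,u''_3\}$ and $\{u'_5,u''_3\}$, so by Observation~\ref{obs:no colour 0 at distance 2} at most one of $f(u_3)$, $f(u'_5)$, $f(u''_3)$ can equal $0$. (ii)~Vertices $u_5$ and $u'_3$ are joined by an edge in $H$, so since $f$ is a proper colouring they cannot both receive colour $0$.

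With these five constraints in hand, the proof is a one-line contradiction. Suppose $f(u''_5)\neq 0$. Then the third copy forces $f(u''_3)=0$. Constraint (i) then forbids $f(u_3)=0$ and $f(u'_5)=0$, so the first copy forces $f(u_5)=0$ and the second copy forces $f(u'_3)=0$. This violates constraint (ii) on the edge $u_5u'_3$, a contradiction. Hence $f(u''_5)=0$.

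There is no real obstacle: the only subtlety is to correctly identify which vertices of $H$ share $j$ as a common neighbour, so that Observation~\ref{obs:no colour 0 at distance 2} gives exactly the right ``at most one of three'' bound. Once that is set up, the three binary choices from Lemma~\ref{lem:gadget component 4-rs colouring} combine with the two global constraints in a single deterministic chain that pins down $f(u''_5)=0$.
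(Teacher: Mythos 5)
Your proof is correct and follows essentially the same route as the paper: both arguments apply Lemma~\ref{lem:gadget component 4-rs colouring} to each of the three gadget-component copies, then show that \( f(u_3'')=0 \) is impossible because, via the joiner vertex and Observation~\ref{obs:no colour 0 at distance 2}, it would force \( f(u_5)=0=f(u_3') \) on the adjacent pair \( u_5,u_3' \). The only cosmetic difference is that you phrase the contradiction as starting from \( f(u_5'')\neq 0 \) rather than from \( f(u_3'')=0 \) directly, which is logically the same step.
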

\begin{proof}
Let \( f \) be a 4-rs colouring of \( H \). 
Note that there are three copies of the gadget component in \( H \). 
By Lemma~\ref{lem:gadget component 4-rs colouring}, \( f(u_3)=0 \) or \( f(u_5)=0 \). 
For the same reason, \( f(u_3\bm{'})=0 \) or \( f(u_5\bm{'})=0 \). 
Similarly, \( f(u_3\bm{''})=0 \) or \( f(u_5\bm{''})=0 \). 
We claim that \( f(u_3\bm{''})\neq 0 \). 
On the contrary, assume that \( f(u_3\bm{''})=0 \). 
This implies that \( f(u_3)\neq 0 \) and \( f(u_5\bm{'})\neq 0 \) by Observation~\ref{obs:no colour 0 at distance 2}. 
Since, \( u_3 \) or \( u_5 \) must be coloured 0, we have \( f(u_5)=0 \). 
Similarly, \( u_3\bm{'} \) or \( u_5\bm{'} \) must be coloured~0 and thus \( f(u_3\bm{'})=0 \). 
We have a contradiction since \( f(u_5)=0=f(u_3\bm{'}) \) and \( u_5u_3\bm{'} \) is an edge. 
Therefore, \( f(u_3\bm{''})\neq 0 \) by contradiction. 
Hence, \( f(u_5\bm{''})=0 \) because \( u_3\bm{''} \) or \( u_5\bm{''} \) must be coloured 0. 
\end{proof}

Since \( H \) is a subgraph of the colour forcing gadget, every 4-rs colouring \( f \) of the gadget is a 4-rs colouring of its subgraph \( H \), and thus \( f(u\bm{''}_5)=0 \) by Lemma~\ref{lem:old colour forcing gadget 4-rs colouring}. 
Hence, we have the following. 
\begin{lemma}\label{lem:colour forcing gadget 4-rs colouring}
Every 4-rs colouring \( f \) of the colour forcing gadget must assign colour~0 on its terminal \( ( \)that is, \( f(u\bm{''}_5)=0 \)\( ) \). 
\qed
\end{lemma}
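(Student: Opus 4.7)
The proof plan is essentially a one-line reduction to the previously established Lemma~\ref{lem:old colour forcing gadget 4-rs colouring}. The colour forcing gadget displayed in Figure~\ref{fig:new colour forcing gadget} contains as a subgraph the graph $H$ shown in Figure~\ref{fig:colour forcing gadget 4-rs colouring} (the three ``outer'' copies of the gadget component together with the joining vertex and its edges). The vertices $u_3$, $u_5$, $u_3'$, $u_5'$, $u_3''$, $u_5''$ appear in both figures and refer to the same vertices of the gadget.

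The key observation is that being rs-colourable is monotone under taking subgraphs: the restriction of any 4-rs colouring of a graph to an induced subgraph (indeed, to any subgraph) is again a 4-rs colouring, because any forbidden bicoloured $P_3$ with a higher colour on its middle vertex present in the subgraph would also be present in the original graph. Therefore, if $f$ is a 4-rs colouring of the colour forcing gadget, then the restriction $f\vert_{V(H)}$ is a 4-rs colouring of $H$.

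By Lemma~\ref{lem:old colour forcing gadget 4-rs colouring}, every 4-rs colouring of $H$ assigns colour $0$ to $u_5''$. Applying this to $f\vert_{V(H)}$ yields $f(u_5'')=0$, which is exactly the claim since $u_5''$ is the terminal of the colour forcing gadget. No further case analysis is needed, and indeed the author has essentially done the work in the paragraph preceding the lemma; this is a ``wrapper'' lemma that repackages Lemma~\ref{lem:old colour forcing gadget 4-rs colouring} in the form needed for subsequent use in Construction~\ref{make:4-rs colouring planar cubic}. There is no genuine obstacle here; the only thing to verify carefully is that $H$ is indeed a subgraph of the colour forcing gadget with matching vertex labels, which is clear by comparing Figures~\ref{fig:new colour forcing gadget} and~\ref{fig:colour forcing gadget 4-rs colouring}.
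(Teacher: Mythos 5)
Your proposal is correct and matches the paper's own argument exactly: the paper derives this lemma in the sentence immediately preceding its statement by observing that $H$ is a subgraph of the colour forcing gadget, so any 4-rs colouring of the gadget restricts to a 4-rs colouring of $H$, and then invoking Lemma~\ref{lem:old colour forcing gadget 4-rs colouring}. Nothing further is needed.
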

We employ the graph in Figure~\ref{fig:new colour forcing gadget} as the colour forcing gadget rather than the graph in Figure~\ref{fig:colour forcing gadget 4-rs colouring} to ensure that the output graph is 3-regular. 
With the help of the next construction, we prove that \textsc{\( 4 \)-RS Colourability} is NP-complete for planar 3-regular graphs. 
\begin{construct}\label{make:4-rs colouring planar cubic}
\emph{Input:} A positive boolean formula \( B=(X,C) \) such that the graph of \( B \) is a planar 3-regular graph.\\
\emph{Output:} A planar 3-regular graph \( G' \) of girth five.\\
\emph{Guarantee:} \( B \) has a 1-in-3 satisfying truth assignment if and only if \( G' \) is 4-rs colourable.\\
\emph{Steps:}\\
First, construct a graph \( G \) from formula \( B \) by Construction~\ref{make:3-rs colouring planar} (see page~\pageref{make:3-rs colouring planar}). 
Then, for every degree-2 vertex \( v \) of \( G \), introduce a colour forcing gadget (see Figure~\ref{fig:new colour forcing gadget}) and join the terminal of the gadget to \( v \) by an edge. 
\end{construct}
Clearly, \( G' \) is a planar 3-regular graph. 
Since \( G \) has girth~6 (see Construction~\ref{make:3-rs colouring planar}) and the colour forcing gadget has girth~5, the graph \( G' \) has girth~5. 
\begin{proof}[Proof of guarantee]
Suppose that the formula \( B \) has a 1-in-3 satisfying truth assignment. 
By the guarantee in Construction~\ref{make:3-rs colouring planar}, \( G \) admits a 3-rs colouring \( f\colon V(G)\to \{1,2,3\} \). 
Observe that \( f(v)>0 \) for all \( v\in V(G) \). 
Extend \( f \) into a 4-colouring \( f' \) of \( G' \) by applying the 4-rs colouring scheme in Figure~\ref{fig:4-rs colouring new colour forcing gadget} on each colour forcing gadget.  
\setcounter{claim}{0}
\begin{claim}\label{clm:4-rs colouring cubic planar}
\( f' \) is a 4-rs colouring of \( G' \). 
\end{claim}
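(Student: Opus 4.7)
The plan is to classify any hypothetical violation of the rs condition — that is, a 3-vertex path \( x,y,z \) in \( G' \) with \( f'(y)>f'(x)=f'(z) \) — according to where its vertices sit relative to the boundary between the subgraph \( G \) and the attached colour forcing gadgets. The only edges of \( G' \) across this boundary are the edges \( vt \) where \( v \) is a degree-2 vertex of \( G \) and \( t \) is the terminal of the gadget pinned to \( v \). So any \( P_3 \) in \( G' \) either lies entirely inside \( G \), entirely inside one colour forcing gadget, or straddles exactly one such boundary edge \( vt \). I will handle these three cases in turn.

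The first case is disposed of by noting that \( f' \) restricted to \( V(G) \) is exactly the 3-rs colouring \( f \) guaranteed by Construction~\ref{make:3-rs colouring planar}, so no bad \( P_3 \) can lie in \( G \). The second case is handled by the explicit 4-rs colouring scheme depicted in Figure~\ref{fig:4-rs colouring new colour forcing gadget}; as observed in the paragraph introducing that figure, one checks directly that within a gadget no vertex of colour \( c \) has two neighbours sharing a common colour smaller than \( c \), so the scheme really is an rs colouring of the gadget.

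For the straddling case, the crucial inputs are that \( f'(t)=0 \) for every gadget terminal \( t \) while \( f'(w)\in\{1,2,3\} \) for every \( w\in V(G) \), since the 3-rs colouring \( f \) of \( G \) uses only colours from \( \{1,2,3\} \). A straddling \( P_3 \) must use \( vt \) as one of its two edges, and its middle vertex is either \( t \), \( v \), or the gadget neighbour \( y \) of \( t \) that appears in the path. If the middle is \( t \), then \( f'(t)=0 \) cannot strictly exceed any endpoint colour. If the middle is \( v \), the endpoints are \( t \) (coloured \( 0 \)) and a \( G \)-neighbour of \( v \) (coloured in \( \{1,2,3\} \)), which differ. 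If the middle is \( y \), the endpoints are \( v \) (colour in \( \{1,2,3\} \)) and \( t \) (colour \( 0 \)), which again differ. Hence no straddling \( P_3 \) can be bad, and together with the first two cases this proves that \( f' \) is a 4-rs colouring of \( G' \). The only mild obstacle I anticipate is the bookkeeping needed to confirm that the terminal \( t \) is the unique vertex of its gadget adjacent to \( V(G) \), which is however immediate from Construction~\ref{make:4-rs colouring planar cubic}; no routine calculation is required beyond inspecting Figure~\ref{fig:4-rs colouring new colour forcing gadget}.
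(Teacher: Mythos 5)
Your proof is correct and follows essentially the same route as the paper's: both reduce the problem to an offending \( P_3 \) that must straddle a terminal--\( G \) boundary edge, and both conclude from the fact that such an edge joins a vertex coloured \( 0 \) to a vertex coloured at least \( 1 \) that no placement of the middle vertex works. (Your third subcase, with the gadget neighbour of the terminal as the middle vertex, is vacuous --- the middle of a \( P_3 \) containing the edge \( vt \) must be \( v \) or \( t \) --- but including it does no harm.)
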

Assume the contrary. 
That is, there is a path \( Q=x,y,z \) in \( G' \) with \( f'(y)>f'(x)=f'(z) \). 
Since the copy of \( G \) in \( G' \) and the colour forcing gadgets are coloured by rs-colouring schemes (namely, \( f \) and Figure~\ref{fig:4-rs colouring new colour forcing gadget}), \( Q \) contains an edge \( u_5\bm{''}v \), where \( u_5\bm{''} \) is the terminal of a colour forcing gadget and \( v\in V(G) \). 
By symmetry, we assume without loss of generality that \( xy \) is the edge \( u_5\bm{''}v \). 
Hence, either (i)~\( x=u_5\bm{''} \), \( y=v \) (and \( z\in V(G) \)); or (ii)~\( y=u_5\bm{''} \), \( x=v \) (and \( z \) is in a colour forcing gadget). 
Note that \( f'(u_5\bm{''})=0 \) (see Figure~\ref{fig:4-rs colouring new colour forcing gadget}) and \( f'(v)=f(v)>0 \). 
Since \( f'(y)>f'(x) \), we have \( x=u_5\bm{''} \) and \( y=v \) (i.e., Case~(i) occurs). 
Thus, \( x \) is the terminal of the colour forcing gadget attached at \( y \)~\( (=v) \) and \( f'(x)=f'(u_5\bm{''})=0 \). 
Since Case~(i) occurs, \( z\in V(G) \). 
Hence, \( f'(z)=f(z)>0 \). 
This is a contradiction since \( f'(z)=f'(x)=0 \). 
This proves Claim~\ref{clm:4-rs colouring cubic planar}. 
Therefore, \( G' \) is 4-rs colourable. 

Conversely, suppose that \( G' \) admits a 4-rs colouring \( f' \). 
By Lemma~\ref{lem:colour forcing gadget 4-rs colouring}, terminals of all colour forcing gadgets must be coloured~0 by \( f' \). 
Note that every vertex in \( G \) is either a degree-2 vertex or adjacent to a degree-2 vertex. 
Since a colour forcing gadget is attached to each degree-2 vertex of \( G \), every vertex \( v\in V(G) \) is within distance two from a terminal in \( G' \). 
Thanks to Observation~\ref{obs:no colour 0 at distance 2}, this means that no vertex \( v\in V(G) \) is coloured~0 by \( f' \). 
Since \( f' \) restricted to \( V(G) \) uses only colours 1,2 and 3, the restriction is indeed a 3-rs colouring of \( G \). 
By the guarantee in Construction~\ref{make:3-rs colouring planar}, this implies that \( B \) has a 1-in-3 satisfying truth assignment.
\end{proof}

We know that the construction of graph \( G \) (i.e., Construction~\ref{make:3-rs colouring planar}) requires only time polynomial in the input size. 
Construction~\ref{make:4-rs colouring planar cubic} requires only time polynomial in the input size because (i)~the colour forcing gadget is a fixed graph, and (ii)~at most \( |V(G)| \) colour forcing gadgets are introduced in Construction~\ref{make:4-rs colouring planar cubic}. 
Given a positive boolean formula \( B = (X,C) \) such that \( G_B \) is a planar 3-regular graph, it is NP-complete to test whether \( B \) has a 1-in-3 satisfying truth assignment \cite{moore_robson}. 
Thus, Construction~\ref{make:4-rs colouring planar cubic} gives the following result. 
\begin{theorem}
\textsc{4-RS Colourability} is NP-complete for planar 3-regular graphs of girth 5. 
\qed
\end{theorem}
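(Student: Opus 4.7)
The plan is to package the machinery already developed (Construction~\ref{make:4-rs colouring planar cubic} with its proved guarantee, together with Moore and Robson's NP-completeness result) into a clean polynomial-time many-one reduction, and then verify the three structural properties required by the theorem statement: planarity, 3-regularity, and girth five. Membership in NP is immediate since a candidate 4-colouring can be verified in polynomial time by scanning every 3-vertex path, so the bulk of the argument is NP-hardness.

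For hardness, I would reduce from the problem of deciding, given a positive boolean formula $B=(X,C)$ whose associated graph $G_B$ is planar and 3-regular, whether $B$ admits a 1-in-3 satisfying truth assignment; this problem is NP-complete by Moore and Robson~\cite{moore_robson}. Given such an instance $B$, apply Construction~\ref{make:4-rs colouring planar cubic} to obtain the graph $G'$. The guarantee of Construction~\ref{make:4-rs colouring planar cubic} (already established in the excerpt) gives exactly the biconditional: $B$ has a 1-in-3 satisfying assignment if and only if $G'$ is 4-rs colourable. Thus the biconditional required for the reduction is free.

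It remains to check that $G'$ lies in the claimed class and that the transformation is polynomial. The intermediate graph $G$ output by Construction~\ref{make:3-rs colouring planar} is planar, has maximum degree 3, and has girth 6; each vertex of $G$ has degree 2 or 3. The only vertices of degree 2 are the subdivision vertices; to each such vertex we attach a single pendant edge leading to the terminal $u_5''$ of a colour forcing gadget. Since that terminal also has degree 2 in the gadget (inspect Figure~\ref{fig:new colour forcing gadget}), every vertex in the resulting graph $G'$ acquires degree exactly 3, giving 3-regularity. Planarity is preserved because each colour forcing gadget is itself planar and is glued in along a single vertex that can be drawn inside any face incident to the attachment point. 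For the girth, $G$ already has girth 6 and the colour forcing gadget has girth 5 (the inner pentagons of the three Petersen-minus-vertex copies); any new cycle created by the attachment uses at least one pendant edge and is therefore longer than 5, so the girth of $G'$ equals 5.

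Finally, polynomial running time follows because Construction~\ref{make:3-rs colouring planar} is itself polynomial, the colour forcing gadget is a fixed graph of constant size, and at most $|V(G)|$ copies of it are added. Combining these observations with the guarantee yields the theorem. I do not anticipate a serious obstacle: the hard combinatorial work (the correctness of the guarantee and the structural analysis in Lemma~\ref{lem:colour forcing gadget 4-rs colouring}) has already been done in the excerpt; what remains is only to verify planarity, degree 3, and girth 5, each of which is a routine inspection.
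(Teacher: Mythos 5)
Your proposal is correct and follows essentially the same route as the paper: a polynomial-time reduction from the Moore--Robson 1-in-3-SAT variant via Construction~\ref{make:4-rs colouring planar cubic}, whose already-proved guarantee supplies the biconditional, followed by the routine checks of planarity, 3-regularity, girth five, and polynomial running time. The only cosmetic difference is that a pendant (cut) edge creates no new cycles at all, so the girth argument is even more immediate than your phrasing suggests.
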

\begin{corollary}\label{cor:4-rs colouring planar}
\textsc{4-RS Colourability} is NP-complete for triangle-free graphs \( G \) of maximum degree~3. 
\qed
\end{corollary}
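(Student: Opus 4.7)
The plan is to derive the corollary as an immediate consequence of the preceding theorem. Observe that the class of planar 3-regular graphs of girth 5 is contained in the class of triangle-free graphs of maximum degree 3: every 3-regular graph has maximum degree exactly 3, and any graph of girth at least 4 is triangle-free. Hence the hardness for the smaller class transfers upward to the larger class.

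Concretely, I would argue as follows. The problem \textsc{4-RS Colourability} restricted to triangle-free graphs of maximum degree 3 is trivially in NP, since given a purported colouring one can verify in polynomial time that no edge is monochromatic and that no path $u,v,w$ satisfies $f(v)>f(u)=f(w)$. For NP-hardness, the identity map serves as a polynomial-time reduction from \textsc{4-RS Colourability} on planar 3-regular graphs of girth 5 to \textsc{4-RS Colourability} on triangle-free graphs of maximum degree 3: any input $G'$ for the former problem is already a valid input for the latter, and the yes/no answer is the same since 4-rs colourability depends only on $G'$, not on the class membership. By the theorem just established, the source problem is NP-complete, so the target problem is NP-complete as well.

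There is no real obstacle here; the statement is a straightforward weakening of the hypothesis in the preceding theorem. The only thing to double-check is that ``girth 5'' indeed implies ``triangle-free'', which is immediate from the definition of girth as the length of the shortest cycle (so girth $\geq 4$ already suffices). Membership in NP is standard for colouring-type problems with a polynomial-time verifiable constraint.
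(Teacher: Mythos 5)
Your proposal is correct and matches the paper's approach: the paper states this corollary with no written proof (just a \qed), treating it as an immediate consequence of the preceding theorem since every planar 3-regular graph of girth 5 is a triangle-free graph of maximum degree 3. Your spelled-out justification (class containment, identity reduction, NP membership) is exactly the implicit argument.
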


Next, we generalise Corollary~\ref{cor:4-rs colouring planar} as follows: for \( k\geq 4 \), \textsc{\( k \)-RS Colourability} is NP-complete for triangle-free graphs of maximum degree \( k-1 \).
For \( k\geq 5 \), we employ Construction~\ref{make:k-rs colouring} below to establish a reduction from \textsc{\( (k-2) \)-RS Colourability} of graphs of maximum degree \( k-2 \) to \textsc{\( k \)-RS Colourability} of graphs of maximum degree \( k-1 \). 

\begin{figure}[hbtp]
\centering
\begin{tikzpicture}
\path (0,0) node(x1)[dot][label=left:\( x_1 \)]{} ++(0,-1) node(x2)[dot][label=left:\( x_2 \)]{} ++(0,-2) node(xk-2)[dot][label={[xshift=3pt]left:\( x_{k-2} \)}]{} ++(0,-1) node(xk-1)[dot][label={[xshift=3pt,yshift=-3pt]left:\( x_{k-1} \)}]{};
\path (x2)--node[sloped,font=\large]{\( \dots \)} (xk-2);
\path (x1) ++(3,0) node(y1)[dot][label=right:\( y_1 \)]{} ++(0,-1) node(y2)[dot][label=right:\( y_2 \)]{} ++(0,-2) node(yk-2)[dot][label=right:\( y_{k-2} \)]{} ++(0,-1) node(yk-1)[dot][label=right:\( y_{k-1} \)]{};
\path (y2)--node[sloped,font=\large]{\( \dots \)} (yk-2);

\path (x1)--node(u1)[dot][yshift=10pt][label=above:\( u_1 \)]{} (y1);
\draw (x1)--(u1)--(y1)
      (x1)--(y2)
      (x1)--(yk-2)
      (x1)--(yk-1);
\draw (x2)--(y1)
      (x2)--(yk-2)
      (x2)--(yk-1);
\draw (xk-2)--(y1)
      (xk-2)--(y2)
      (xk-2)--(yk-1);
\path (xk-1)--node(v1)[dot][yshift=-10pt][label=below:\( v_1 \)]{} (yk-1);
\draw (xk-1)--(y1)
      (xk-1)--(y2)
      (xk-1)--(yk-2)
      (xk-1)--(v1)--(yk-1);

\draw (u1)--++(3.5,1) node(u2)[dot][label=above:\( u_2 \)]{};
\draw (y2)--(u2)
      (yk-2)--(u2);
\draw (u2)--++(1.5,0) node(u3)[dot]{} node[terminal][label=above:\( u_3 \)]{};

\draw (v1)--++(-3.5,-1) node(v2)[dot][label=below:\( v_2 \)]{};
\draw (x2)--(v2)
      (xk-2)--(v2);
\draw (v2)--++(-1.5,0) node(v3)[dot][label=below:\( v_3 \)]{}; \end{tikzpicture}
\caption{The colour blocking gadget}
\label{fig:colour blocking gadget}
\end{figure}
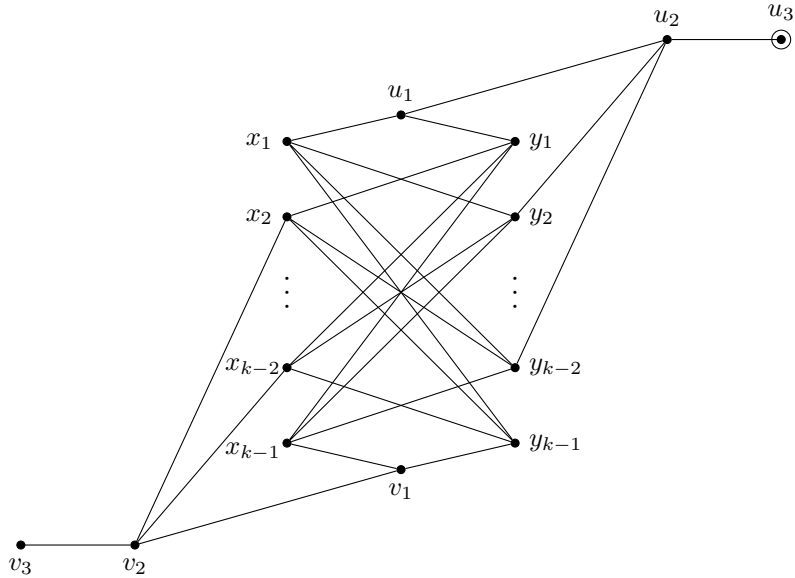
\begin{figure}[hbtp]
\centering
\begin{tikzpicture}
\path (0,0) node(x1)[dot][label=\( x_1 \)][label={[vcolour]left:\( k\text{-1} \)}]{} ++(0,-1) node(x2)[dot][label=\( x_2 \)][label={[vcolour]left:\( 2 \)}]{} ++(0,-1) node(x3)[dot][label={[xshift=2pt]below:\( x_3 \)}][label={[vcolour]above:\( 3 \)}]{} ++(0,-2) node(xk-2)[dot][label={[xshift=-7pt]\( x_{k-2} \)}][label={[vcolour]left:\( k\text{-2} \)}]{} ++(0,-1) node(xk-1)[dot][label=below:\( x_{k-1} \)][label={[vcolour]left:\( 1 \)}]{};
\path (x3)--node[sloped,font=\large]{\( \dots \)} (xk-2);
\path (x1) ++(3,0) node(y1)[dot][label=\( y_1 \)][label={[vcolour]right:\( k\text{-1} \)}]{} ++(0,-1) node(y2)[dot][label=\( y_2 \)][label={[vcolour]right:\( 2 \)}]{} ++(0,-1) node(y3)[dot][label=below:\( y_3 \)][label={[vcolour,xshift=-2pt,label distance=3pt]above:\( 3 \)}]{} ++(0,-2) node(yk-2)[dot][label={[xshift=2pt]below:\( y_{k-2} \)}][label={[vcolour]right:\( k\text{-2} \)}]{} ++(0,-1) node(yk-1)[dot][label=below:\( y_{k-1} \)][label={[vcolour]right:\( 1 \)}]{};
\path (y3)--node[sloped,font=\large]{\( \dots \)} (yk-2);

\path (x1)--node(u1)[dot][yshift=10pt][label=below:\( u_1 \)][label={[vcolour]above:\( 0 \)}]{} (y1);
\draw (x1)--(u1)--(y1)
      (x1)--(y2)
      (x1)--(y3)
      (x1)--(yk-2)
      (x1)--(yk-1);
\draw (x2)--(y1)
      (x2)--(y3)
      (x2)--(yk-2)
      (x2)--(yk-1);
\draw (x3)--(y1)
      (x3)--(y2)
      (x3)--(yk-2)
      (x3)--(yk-1);
\draw (xk-2)--(y1)
      (xk-2)--(y2)
      (xk-2)--(y3)
      (xk-2)--(yk-1);
\path (xk-1)--node(v1)[dot][yshift=-10pt][label=\( v_1 \)][label={[vcolour]below:\( 0 \)}]{} (yk-1);
\draw (xk-1)--(y1)
      (xk-1)--(y2)
      (xk-1)--(y3)
      (xk-1)--(yk-2)
      (xk-1)--(v1)--(yk-1);

\draw (u1)--++(3.5,1) node(u2)[dot][label=below right:\( u_2 \)][label={[vcolour]above:\( k\text{-1} \)}]{};
\draw (y2)--(u2)
      (y3)--(u2)
      (yk-2)--(u2);
\draw (u2)--++(1.5,0) node(u3)[dot][label=below:\( u_3 \)]{} node[terminal][label={[vcolour]above:\( 1 \)}]{};

\draw (v1)--++(-3.5,-1) node(v2)[dot][label=above left:\( v_2 \)][label={[vcolour]below:\( k\text{-1} \)}]{};
\draw (x2)--(v2)
      (x3)--(v2)
      (xk-2)--(v2);
\draw (v2)--++(-1.5,0) node(v3)[dot][label=\( v_3 \)][label={[vcolour]below:\( 1 \)}]{}; \end{tikzpicture}
\caption{A \( k \)-rs colouring of the colour blocking gadget.}
\label{fig:eg rs colouring colour blocking gadget}
\end{figure}
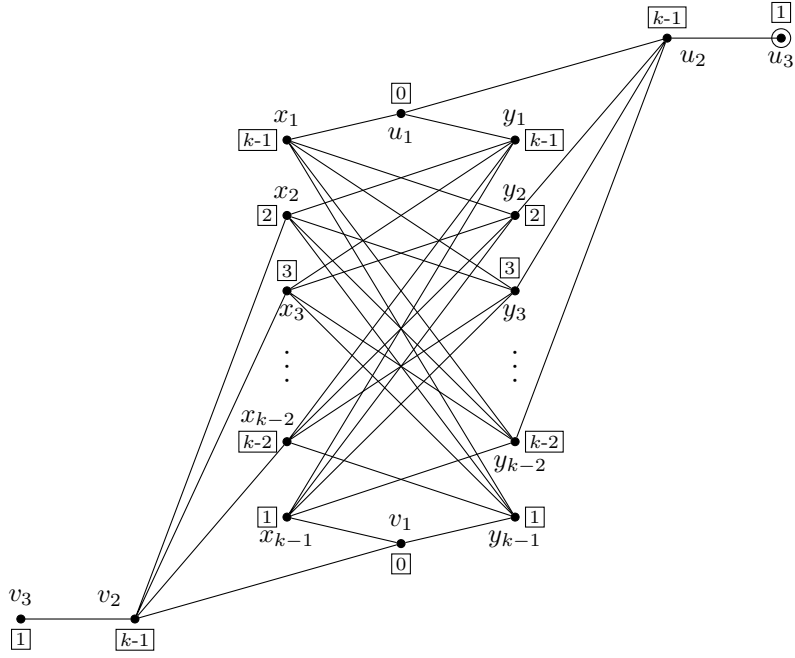

\iftoggle{forThesis}
{ The important gadget in Construction~\ref{make:k-rs colouring} is the \emph{colour blocking gadget} shown in Figure~\ref{fig:colour blocking gadget}. Let \( X=\{x_1,x_2,\dots,x_{k-1}\} \) and \( Y=\{y_1,y_2,\dots,y_{k-1}\} \). 
For \( i,j\in\{1,2,\dots,k-1\} \), \( x_i \) is adjacent to \( y_j \) except for \( j=i \) (i.e., the subgraph of the gadget induced by \( X\cup Y \) is a complete bipartite graph minus a perfect matching). 
The vertex \( u_2 \) is adjacent to vertices \( u_1,u_3,y_2,y_3,\dots,y_{k-2} \), and vertex \( v_2 \) is adjacent to vertices \( v_1,v_3,x_2,x_3,\dots,x_{k-2} \). 
Other edges in the gadget are \( u_1x_1, u_1y_1, v_1x_{k-1} \) and \( v_1y_{k-1} \). 
Consider the \( k \)-colouring of the gadget shown in Figure~\ref{fig:eg rs colouring colour blocking gadget}. 
Observe that under this colouring, the bicoloured 3-vertex paths in the gadget are \pagetarget{lnk:list of bicol P3}{\( x_1,u_1,y_1 \); \( x_1,u_1,u_2 \); \( y_1,u_1,u_2 \); \( v_2,x_2,y_1 \); \( v_2,x_3,y_1 \); \dots,\, \( v_2,x_{k-2},y_1 \); \( u_2,y_2,x_1 \); \( u_2,y_3,x_1 \); \dots,\, \( u_2,y_{k-2},x_1 \) and \( x_{k-1},v_1,y_{k-1} \).} 
Among these, the path \( x_{k-1},v_1,y_{k-1} \) have the lower colour (i.e., colour~0) in the middle, not the higher colour. 
Besides, all other bicoloured 3-vertex paths have the highest colour (i.e., colour \( k-1 \)) on their endvertices. 
Thus, none of the paths listed above is a bicoloured \( P_3 \) with the higher colour on its middle vertex. 
Hence, this \( k \)-colouring is a \( k \)-rs colouring of the colour blocking gadget. 
} { The important gadget in this construction is the \emph{colour blocking gadget} shown in Figure~\ref{fig:colour blocking gadget} (note that vertex \( u_2 \) is adjacent to vertices \( u_1,u_3,y_2,y_3,\dots,y_{k-2} \), vertex \( v_2 \) is adjacent to vertices \( v_1,v_3,x_2,x_3,\dots,x_{k-2} \), and for each \( i,j\in\{1,2,\dots,k-1\} \), \( x_i \) is adjacent to \( y_j \) except for \( j=i \)). 
} Observe that the colour blocking gadget has maximum degree~\( k-1 \). 
Lemma~\ref{lem:colour blocking gadget} attests that the name of the gadget is meaningful.
\begin{lemma}\label{lem:colour blocking gadget}
Let \( k\geq 5 \). 
Let \( f \) be a \( k \)-rs colouring of the colour blocking gadget (displayed in Figure~\ref{fig:colour blocking gadget}). 
Then, \( f(u_2)\neq 0 \), \( f(u_3)\neq 0 \), \( f(v_2)\neq 0 \), and \( f(v_3)\neq 0 \). 
In particular, \( f \) must assign a non-zero colour on the terminal of the gadget and the neighbour of the terminal in the gadget. 
\end{lemma}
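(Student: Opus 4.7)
The plan is to suppose, for contradiction, that one of $f(u_2), f(u_3), f(v_2), f(v_3)$ equals $0$. Note that the gadget admits an automorphism $\sigma$ sending $u_i\mapsto v_i$ and $v_i\mapsto u_i$ for $i\in\{1,2,3\}$, together with $x_i\mapsto y_{k-i}$ and $y_i\mapsto x_{k-i}$; one checks that $\sigma$ preserves adjacency (the near-complete bipartite structure $x_i\sim y_j\iff i\neq j$ is invariant under $i\mapsto k-i$, and the attachments of $u_1,u_2,u_3$ to $x_1,y_1,\{y_2,\dots,y_{k-2}\}$ are mapped exactly to those of $v_1,v_2,v_3$ to $x_{k-1},y_{k-1},\{x_2,\dots,x_{k-2}\}$). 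Since $f\circ\sigma$ is then an rs colouring whenever $f$ is, the cases $f(v_2)=0$ and $f(v_3)=0$ reduce to $f(u_2)=0$ and $f(u_3)=0$ respectively, so it suffices to rule out the latter two.

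First, suppose $f(u_2)=0$. The plan is to force a large forbidden zone for colour $0$ and then extract a contradiction from a bicoloured $P_3$. For $k\geq 5$, every vertex in $\{u_1,u_3,y_1,y_2,\ldots,y_{k-2},x_1,\ldots,x_{k-1}\}$ is within distance two of $u_2$: the neighbours are immediate; $y_1$ is reached via $u_1$; for $x_i$ with $i\in\{2,\ldots,k-2\}$ the path $u_2\text{-}y_j\text{-}x_i$ works for any $j\in\{2,\ldots,k-2\}\setminus\{i\}$, which is non-empty because $|\{2,\ldots,k-2\}|=k-3\geq 2$; and for $i\in\{1,k-1\}$ any $j\in\{2,\ldots,k-2\}$ suffices. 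Observation~\ref{obs:no colour 0 at distance 2} therefore forces all these vertices to have non-zero colour.

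Next I would analyse the colouring on $\{y_2,\ldots,y_{k-2}\}$. If $f(y_j)=f(y_{j'})=c$ for distinct $j,j'\in\{2,\ldots,k-2\}$, then every common $x$-neighbour $x_i$ (with $i\in\{1,\ldots,k-1\}\setminus\{j,j'\}$) must satisfy $f(x_i)\leq c$ lest $y_j,x_i,y_{j'}$ form a bad $P_3$; and the rs condition at $y_j$ requires these $k-3$ non-zero values to be pairwise distinct in $\{1,\ldots,c-1\}$, forcing $c\geq k-2$. Combined with the rs condition at $v_2$ (whose $(k-1)$ neighbours include $x_2,\ldots,x_{k-2}$, all non-zero, and for which the bicoloured $P_3$ paths $v_2\text{-}x_i\text{-}y_j$ produce inequalities $f(x_i)\leq\max\{f(v_2),f(y_j)\}$), a case analysis on $f(v_2)$ yields either an unavoidable bad $P_3$ among the $x_i$'s and $y_j$'s or a pigeonhole failure on the $k-1$ available non-zero colours. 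The main obstacle is managing this case analysis carefully, since each of the $(k-1)$-degree vertices $u_2,v_2,y_j,x_i$ contributes a rigid colour-distribution constraint that interacts non-trivially with the others.

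For the case $f(u_3)=0$, Observation~\ref{obs:no colour 0 at distance 2} applied to $u_3$ forces $u_1,u_2,y_2,\ldots,y_{k-2}$ to be non-zero. The rs condition at $u_2$ (which now has $u_3=0$ as one of its $(k-1)$ neighbours) then compels the remaining $k-2$ neighbours of $u_2$ to use pairwise distinct colours in $\{1,\ldots,f(u_2)-1\}\cup\{f(u_2)+1,\ldots,k-1\}$, with every colour below $f(u_2)$ appearing at most once. Propagating this rigidity through the near-complete bipartite structure of $X\cup Y$, as in the first case, down to $v_2$ yields a contradiction of the same shape. The crux is that in both cases the pigeonhole principle applied to a $(k-1)$-degree vertex against the $k$-element colour palette, after colour $0$ has been excluded from almost every vertex, leaves no room for a consistent rs colouring.
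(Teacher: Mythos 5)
Your reduction of the four cases to two via the automorphism $\sigma$ ($u_i\mapsto v_i$, $x_i\mapsto y_{k-i}$, $y_i\mapsto x_{k-i}$) is correct and matches the symmetry the paper also exploits, and your distance-two computations (so that $f(u_2)=0$ excludes colour $0$ from all of $X\cup(Y\setminus\{y_{k-1}\})\cup\{u_1,u_3\}$ by Observation~\ref{obs:no colour 0 at distance 2}) are accurate. But from that point on the proposal is a plan rather than a proof, and the part you defer is precisely where all the work lies. You write that ``a case analysis on $f(v_2)$ yields either an unavoidable bad $P_3$ \dots or a pigeonhole failure'' and that ``the main obstacle is managing this case analysis carefully'' --- that obstacle is the lemma. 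The paper's proof spends several pages on exactly this: it splits on whether some colour repeats on side $X$ (with subcases depending on whether the repeated indices include $1$ or $k-1$ or lie strictly between) versus no colour repeating on either side, and in each configuration it pins down where colour $0$ must land (e.g.\ in $\{u_1\}\cup Y\setminus\{y_1,y_j,y_{k-1}\}$, or in $\{v_1\}\cup X\setminus\{x_1,x_{k-1}\}$), or derives a contradiction such as no colour being available for $v_1$. Your sketch treats only one configuration (a repeat among $y_2,\dots,y_{k-2}$, where your deduction $c\geq k-2$ is correct and mirrors the paper's Subcase~1.2) and says nothing concrete about the configurations where no colour repeats, or where the repeat involves $y_1$, $y_{k-1}$, or occurs only on side $X$; in several of those the contradiction does not come from $v_2$ at all.

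There is also no saving in your contrapositive framing: assuming $f(u_2)=0$ and showing colour $0$ is then forced into the excluded zone requires exactly the same exhaustive analysis of how colours distribute over the near-complete bipartite core $X\cup Y$ that the paper carries out directly. So the approach is viable in principle, but as written the proposal omits the argument that makes the lemma true; it cannot be accepted as a proof without the full case analysis being executed.
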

\begin{proof}
To prove the lemma, let us discuss an observation. 
By the definition of rs colouring, two vertices coloured~\( c \) cannot have a common neighbour of higher colour. 
Besides, two vertices \( w_1 \) and \( w_2 \) both coloured~\( c \) cannot have two common neighbours \( w_1' \) and \( w_2' \) both coloured~\( c' \) (otherwise, path \( w_1,w_1',w_2,w_2' \) is a bicoloured \( P_4 \), and hence \( f \) is not even a star colouring, let alone a restricted star colouring). 
Therefore, we have the following. 
\setcounter{claim}{0}
\begin{claim}\label{clm:rs colouring pairwise distinct}
If two vertices \( x_i \) and \( x_j \) have colour~\( c \), then their common neighbours get pairwise distinct colours less than \( c \). 
\end{claim}

Recall that \( X=\{x_1,x_2,\dots,x_{k-1}\} \) and \( Y=\{y_1,y_2,\dots,y_{k-1}\} \). 
For convenience, we call sets \( X \) and \( Y \) as two sides. 
Observe that the sets \( X \) and \( Y \) are symmetric because `rotating' the gadget by \( 180^\circ \) gives an automorphism \( \psi \) of the gadget that maps \( X \) to \( Y \) and vice versa (define \( \psi \) as \( \psi(x_p)=y_{k-p} \) and \( \psi(y_p)=x_{k-p} \) for \( 1\leq p\leq k-1 \), and \( \psi(u_q)=v_q \) and \( \psi(v_q)=u_q \) for \( 1\leq q\leq 3 \)). 
We consider two cases: 
Case~1 when a colour repeats on side \( X \), and Case~2 when not (i.e, no colour repeats on side \( X \)); see page~\pageref{lnk:case 2 in lemma k rs col} for Case~2.\\[5pt] 
\textit{Case~1:} A colour repeats on side \( X \) (i,e., \( f(x_i)=f(x_j)=c \), where \( 1\leq i<j\leq k-1 \)).\\[5pt]
Each \( y\in Y \) is adjacent to \( x_i \) or \( x_j \) (or both), and thus \( f(y)\neq c \). 
We consider various subcases depending on the values of \( i \) and \( j \).\\[5pt] 
Subcase~1.1 (\( i=1 \))\,: \( f(x_1)=f(x_j)=c \) where \( j>1 \).\\[5pt]
Since \( u_1 \) is adjacent to \( x_1 \), we have \( f(u_1)\neq c \). 
Hence, no vertex in \( \{u_1\}\cup Y \) is coloured~\( c \) (i.e., \( c\notin f(\{u_1\}\cup Y) \)). 
Vertices in \( Y\setminus \{y_1,y_j\} \) are common neighbours of \( x_1 \) and \( x_j \). 
Thus, by Claim~\ref{clm:rs colouring pairwise distinct}, vertices in \( Y\setminus \{y_1,y_j\} \) have pairwise distinct colours less than \( c \). 
Hence, for each vertex \( y_p\in Y\setminus \{y_1,y_j\} \), we have \( f(y_p)<c \) and thus \( f(y_1)\neq f(y_p) \) (if not, the bicoloured path \( y_1,x_j,y_p \) has a higher colour on its middle vertex). 
That is, \( y_1 \) cannot get a colour used in \( Y\setminus \{y_1,y_j\} \) (i.e., \( f(y_1)\notin f(Y\setminus \{y_1,y_j\}) \)). 
Similarly, if \( f(u_1)=f(y_p) \) (resp.\ \( f(y_j)=f(y_p) \)) for some \( y_p\in Y\setminus \{y_1,y_j\} \), then the bicoloured path \( u_1,x_1,y_p \) (resp.\ \( y_j,x_1,y_p \)) has the higher colour on its middle vertex; a contradiction. 
Hence, \( u_1 \) (resp.\ \( y_j \)) cannot get a colour used in \( Y\setminus \{y_1,y_j\} \). 
That is,  \( f(u_1)\notin f(Y\setminus \{y_1,y_j\}) \) and \( f(y_j)\notin f(Y\setminus \{y_1,y_j\}) \). 
Hence, we have the following. 
\begin{claim}[of Subcase~1.1]\label{clm:rs colouring k-1 u1 y1 yj}
\( f(y_1)\notin f(Y\setminus \{y_1,y_j\}) \), \( f(u_1)\notin f(Y\setminus \{y_1,y_j\}) \) and \( f(y_j)\notin f(Y\setminus \{y_1,y_j\}) \). 
\end{claim}
Since \( u_1y_1 \) is an edge, \( f(u_1)\neq f(y_1) \). 
We also know that \( f(y_1)\notin f(Y\setminus \{y_1,y_j\}) \), \( f(u_1)\notin f(Y\setminus \{y_1,y_j\}) \), and vertices in \( Y\setminus \{y_1,y_j\} \) have pairwise distinct colours. 
Hence, vertices in \( \{u_1\}\cup Y\setminus \{y_j\} \) have pairwise distinct colours. 
Besides, we know that \( |\{u_1\}\cup Y\setminus \{y_j\}|=k-1 \) and \( c\notin f(\{u_1\}\cup Y) \). 
Thus, we have the following claim. 
\begin{claim}[of Subcase~1.1]\label{clm:rs colouring k-1 u1 Y no yj}
Vertices in \( \{u_1\}\cup Y\setminus \{y_j\} \) get a permutation of the \( k-1 \) colours \( 0,\dots,c-1,c+1,\dots,k-1 \). 
\end{claim}
Since \( f(y_j)\neq c \),\; \( f(y_j)\in \{0,\dots,c-1,c+1,\dots,k-1\}=f(\{u_1\}\cup Y\setminus \{y_j\}) \) (see Claim~\ref{clm:rs colouring k-1 u1 Y no yj}). 
Since \( f(y_j)\notin f(Y\setminus \{y_1,y_j\}) \) (see Claim~\ref{clm:rs colouring k-1 u1 y1 yj}) and \( f(y_j)\in f(\{u_1\}\cup Y\setminus \{y_j\}) \), we have the following claim. 
\begin{claim}[of Subcase~1.1]\label{clm:rs colouring yj from u1 or y1}
\( f(y_j)\in \{f(u_1),f(y_1)\} \). 
\end{claim}
Let us consider the colour of an arbitrary vertex \( x_p\in X\setminus \{x_1,x_j\} \). 
By Claim~\ref{clm:rs colouring k-1 u1 Y no yj}, all \( k \) colours except \( c \) are used in \( \{u_1\}\cup Y\setminus \{y_j\} \). 
In particular, all \( k \) colours except \( c \) are used in \( \{u_1\}\cup Y \), and thus \( f(x_p)\in \{c,f(u_1)\}\cup f(Y) \). 
Since \( x_p \) is adjacent to every vertex in \( Y\setminus \{y_p\} \), we have \( f(x_p)\notin f(Y\setminus \{y_p\}) \). Therefore, \( f(x_p)\in \{c,f(u_1),f(y_p)\} \).

We show that \( f(x_p)=f(u_1) \) leads to a contradiction. 
Suppose that \( f(x_p)=f(u_1) \). 
Then, \( f(u_1)=f(x_p)\neq f(y_j) \) (because \( x_py_j \) is an edge). 
Since \( f(y_j)\neq f(u_1) \), \( f(y_j)=f(y_1) \) by Claim~\ref{clm:rs colouring yj from u1 or y1}. 
Hence, path \( u_1,y_1,x_p,y_j \) is a bicoloured \( P_4 \), a contradiction. 
Thus, by contradiction, \( f(x_p)\neq f(u_1) \).

Next, we show that \( f(x_p)=c \) leads to a contradiction. 
Suppose that \( f(x_p)=c \). 
That is, \( f(x_p)=c=f(x_1)=f(x_j) \); in particular, \( f(x_p)=f(x_1) \) and \( f(x_p)=f(x_j) \). 
By Claim~\ref{clm:rs colouring yj from u1 or y1}, either \( f(y_j)=f(u_1) \) or \( f(y_j)=f(y_1) \). 
As a result, either path \( u_1,x_1,y_j,x_p \) or path \( x_j,y_1,x_p,y_j \) is a bicoloured \( P_4 \).  
Thus, by contradiction, \( f(x_p)\neq c \).

Therefore, the only possibility is \( f(x_p)=f(y_p) \). 
Since \( x_p \) is arbitrary, we have the following claim. 
\begin{claim}[of Subcase~1.1]\label{clm:rs colouring xp same as yp}
\( f(x_p)=f(y_p) \) for each \( p\in\{1,2,\dots,k-1\}\setminus\{1,j\} \). 
\end{claim} 
We consider two subcases based on the value of \( j \).\\ 
Subcase~1.1.1: \( j\neq k-1 \).\\[5pt]
By Claim~\ref{clm:rs colouring xp same as yp}, \( f(x_{k-1})=f(y_{k-1}) \) (note that \( j\neq k-1 \)). 

Consider the colour at \( v_1 \). 
Since all \( k \) colours except \( c \) are used in \( \{u_1\}\cup Y \) (see Claim~\ref{clm:rs colouring k-1 u1 Y no yj}), \( f(v_1)\in \{c,f(u_1)\}\cup f(Y) \). 
Since \( v_1y_{k-1} \) is an edge, \( f(v_1)\neq f(y_{k-1}) \). 
For \( 1\leq p\leq k-2 \),\; \( f(v_1)\neq f(y_p) \) (otherwise, path \( y_p,x_{k-1},v_1,y_{k-1} \) is a bicoloured \( P_4 \)). 
Thus, \( f(v_1)\notin f(Y) \). 
Hence, \( f(v_1)\in \{c,f(u_1)\} \).

We show by contradiction that \( f(v_1)\neq c \). 
Suppose that \( f(v_1)=c \). 
As a result, \( f(v_1)=f(x_1) \). 
Since \( f(x_{k-1})=f(y_{k-1}) \), path \( x_{k-1},v_1,y_{k-1},x_1 \) is a bicoloured \( P_4 \), a contradiction. 
Thus, by contradiction, \( f(v_1)\neq c \). 
Since \( f(v_1)\in \{c,f(u_1)\} \), it follows that \( f(v_1)=f(u_1) \).

Observe that \( f(y_j)\neq f(v_1) \) (otherwise, path \( y_j,x_{k-1},v_1,y_{k-1} \) is a bicoloured \( P_4 \)). 
Since \( f(v_1)=f(u_1) \), this implies that \( f(y_j)\neq f(u_1) \). 
Since \( f(y_j)\in \{f(u_1),f(y_1)\} \) (see Claim~\ref{clm:rs colouring yj from u1 or y1}) and \( f(y_j)\neq f(u_1) \), we have \( f(y_j)=f(y_1) \). 
Since vertices \( y_1 \) and \( y_j \) have the same colour and they are at distance two from each other, \( f(y_1)=f(y_j)\neq 0 \) by Observation~\ref{obs:no colour 0 at distance 2}. 
Similarly, \( f(x_{k-1})=f(y_{k-1})\neq 0 \). 
By Claim~\ref{clm:rs colouring k-1 u1 Y no yj}, vertices in \( \{u_1\}\cup Y\setminus \{y_j\} \) get a permutation of colours \( 0,\dots,c-1,c+1,\dots,k-1 \). 
In particular, one of the vertices in \( \{u_1\}\cup Y\setminus \{y_j\} \) is coloured~0. 
Since \( f(y_1)\neq 0 \) and \( f(y_{k-1})\neq 0 \), one of the vertices in \( \{u_1\}\cup Y\setminus \{y_1,y_j,y_{k-1}\}=\{u_1,y_2,\dots,y_{j-1},y_{j+1},\dots,y_{k-2}\} \) is coloured~0. 
Since \( u_2 \) and \( u_3 \) are within distance two from each of these vertices, \( f(u_2)\neq 0 \) and \( f(u_3)\neq 0 \) (by Observation~\ref{obs:no colour 0 at distance 2}). 
Since \( f(v_1)=f(u_1) \) (see previous paragraph) and \( f(x_p)=f(y_p) \) for each \( p\in\{1,2,\dots,k-1\}\setminus\{1,j\} \) (see Claim~\ref{clm:rs colouring xp same as yp}), one of the vertices \( v_1,x_2,\dots,x_{j-1},x_{j+1},\dots,x_{k-2} \) is coloured~0. 
Since \( v_2 \) and \( v_3 \) are within distance two from each of these vertices, \( f(v_2)\neq 0 \) and \( f(v_3)\neq 0 \) (by Observation~\ref{obs:no colour 0 at distance 2}). 
Therefore, \( f(u_2)\neq 0 \), \( f(u_3)\neq 0 \), \( f(v_2)\neq 0 \) and \( f(v_3)\neq 0 \).\\

\noindent Subcase~1.1.2: \( j=k-1 \).\\[5pt]
Due to Claim~\ref{clm:rs colouring k-1 u1 Y no yj}, all \( k \) colours except \( c \) are used in \( \{u_1\}\cup Y \).
Consider the colour at~\( v_1 \). 
Clearly, \( f(v_1)\in f(\{u_1\}\cup Y) \) and \( f(v_1)\neq f(y_{k-1}) \). 
For each \( y_p\in Y\setminus \{y_1,y_{k-1}\} \), we know that \( f(y_p)<c \) (due to Claim~\ref{clm:rs colouring pairwise distinct}), and hence \( f(v_1)\neq f(y_p) \) (otherwise, the bicoloured path \( v_1,x_{k-1},y_p \) has a higher colour on its middle vertex). 
Hence, \( f(v_1)\in\{f(u_1),f(y_1)\} \). 
By Claim~\ref{clm:rs colouring yj from u1 or y1}, \( f(y_{k-1})=f(y_j)\in\{f(u_1),f(y_1)\} \). 
Thus, \( f(v_1), f(y_{k-1}) \in \{f(u_1),f(y_1)\} \). 
Since \( v_1y_{k-1} \) is an edge, we have the following. 
\begin{claim}[of Subcase~1.1.2]\label{clm:v1 yk-1 same as u1 y1}
\( \{f(v_1),f(y_{k-1})\}=\{f(u_1),f(y_1)\} \). 
\end{claim}
Since \( f(y_{k-1})\in\{f(u_1),f(y_1)\} \) and \( u_1 \) (resp.\ \( y_1 \)) is at distance two from \( y_{k-1} \), we have \( f(y_{k-1})\neq 0 \) by Observation~\ref{obs:no colour 0 at distance 2}. 
Similarly, \( y_1 \) is at distance two from  both \( v_1 \) and \( y_{k-1} \), and \( f(y_1)\in\{f(v_1),f(y_{k-1})\} \); hence, \( f(y_1)\neq 0 \). 
Since \( f(y_1)\neq 0 \) and \( f(y_{k-1})\neq 0 \), one of the vertices \( u_1,y_2,y_3,\dots,y_{k-2} \) is coloured~0 (thanks to Claim~\ref{clm:rs colouring k-1 u1 Y no yj}). 
Since \( u_2 \) and \( u_3 \) are within distance two from each of these vertices, \( f(u_2)\neq 0 \) and \( f(u_3)\neq 0 \) (by Observation~\ref{obs:no colour 0 at distance 2}). 
We need to prove that \( f(v_2)\neq 0 \) and \( f(v_3)\neq 0 \) as well.

Since \( f(x_p)=f(y_p) \) for each \( p\notin\{1,k-1\} \) (see Claim~\ref{clm:rs colouring xp same as yp}), one of the vertices \( u_1,x_2,x_3,\dots,x_{k-2} \) is coloured~0. 
Clearly, \( v_2 \) and \( v_3 \) are within distance two from each of the vertices \( x_2,x_3,\dots,x_{k-2} \). 
If \( f(u_1)\neq 0 \), then one of the vertices \( x_2,x_3,\dots,x_{k-2} \) is coloured~0, and thus \( f(v_2)\neq 0 \) and \( f(v_3)\neq 0 \) by Observation~\ref{obs:no colour 0 at distance 2}. 
Hence, it suffices to prove that \( f(v_2)\neq 0 \) and \( f(v_3)\neq 0 \) when \( f(u_1)=0 \).

Suppose that \( f(u_1)=0 \). 
By Claim~\ref{clm:v1 yk-1 same as u1 y1}, \( \{f(v_1),f(y_{k-1})\}=\{f(u_1),f(y_1)\} \) and in particular, \( f(y_{k-1})\in \{f(u_1),f(y_1)\} \). 
Since \( f(u_1)=0 \), \( f(y_{k-1})\neq f(u_1) \) by Observation~\ref{obs:no colour 0 at distance 2}, and thus \( f(y_{k-1})=f(y_1) \). 
Since \( \{f(v_1),f(y_{k-1})\}=\{f(u_1),f(y_1)\} \) and \( f(y_{k-1})=f(y_1) \), we have \( f(v_1)=f(u_1) \). 
Therefore, \( f(v_1)=f(u_1)=0 \).  
Since \( v_2 \) and \( v_3 \) are within distance two from \( v_1 \),  we have \( f(v_2)\neq 0 \) and \( f(v_3)\neq 0 \) by Observation~\ref{obs:no colour 0 at distance 2}.
This proves that \( f(v_2)\neq 0 \) and \( f(v_3)\neq 0 \) when \( f(u_1)=0 \). 
Therefore, \( f(u_2)\neq 0 \), \( f(u_3)\neq 0 \), \( f(v_2)\neq 0 \) and \( f(v_3)\neq 0 \).\\
 
\noindent Subcase~1.2: \( f(x_i)=f(x_j)=c \) where \( 2\leq i<j\leq k-2 \).\\[5pt]
Vertices in \( \{v_2\}\cup Y\setminus \{y_i,y_j\} \) are common neighbours of \( x_i \) and \( x_j \). 
Thus, by Claim~\ref{clm:rs colouring pairwise distinct}, vertices in \( \{v_2\}\cup Y\setminus \{y_i,y_j\} \) get pairwise distinct colours less than~\( c \). 
For each \( w\in\{v_2\}\cup Y\setminus \{y_i,y_j\} \), we have \( f(y_i)\neq f(w) \) (if not, path \( x_i,w,x_j,y_i \) is a bicoloured \( P_4 \)). 
We also know that \( f(y_i)\neq c \). 
Therefore, vertices in \( \{v_2\}\cup Y\setminus \{y_j\} \) get a permutation of the \( k-1 \) colours \( 0,\dots,c-1,c+1,\dots,k-1 \). 
Similarly, \( f(y_j)\notin \{c,f(w)\} \) for each \( w\in\{v_2\}\cup Y\setminus \{y_i,y_j\} \) and hence vertices in \( \{v_2\}\cup Y\setminus \{y_i\} \) get a permutation of colours \( 0,\dots,c-1,c+1,\dots,k-1 \). 
Since \( f(\{v_2\}\cup Y\setminus \{y_j\}) \) and \( f(\{v_2\}\cup Y\setminus \{y_i\}) \) are both permutations of \( \{0,\dots,c-1,c+1,\dots,k-1\} \), we have \( f(y_i)=f(y_j) \). 
That is, \( f(x_i)=f(x_j)=c \) and \( f(y_i)=f(y_j)\neq c \).

By symmetry of sides \( X \) and \( Y \), we assume without loss of generality that \( c<f(y_i) \). 
Since \( c<f(y_i)\leq k-1 \), we have \( c\leq k-2 \). 
Since \( x_i \) and \( x_j \) have \( k-2 \) common neighbours (namely, vertices in \( \{v_2\}\cup Y\setminus \{y_i,y_j\} \)) and they require pairwise disjoint colours less than \( c \) (see Claim~\ref{clm:rs colouring pairwise distinct}), \( c=k-2 \) and vertices in \( \{v_2\}\cup Y\setminus \{y_i,y_j\} \) get a permutation of colours \( 0,1,\dots,k-3 \). 
Since \( f(y_i)=f(y_j)>c=k-2 \), we have \( f(y_i)=f(y_j)=k-1 \). 
\begin{claim}[of Subcase~1.2]\label{clm:xi xj yi yj colours}
\( f(x_i)=f(x_j)=c=k-2 \) and \( f(y_i)=f(y_j)=k-1 \). 
\end{claim}
Since vertices in \( \{v_2\}\cup Y\setminus \{y_i,y_j\} \) get a permutation of colours \( 0,1,\dots,k-3 \) (see previous paragraph) and \( f(y_i)=f(y_j)=k-1 \), we have the following claim. 
\begin{claim}[of Subcase~1.2]\label{clm:subcase 1.2 permutation of col}
Colours \( 0,1,\dots,k-3,k-1 \) are used in \( \{v_2\}\cup Y \).
\end{claim}
Consider the colour at \( x_1 \). 
Due to Claim~\ref{clm:subcase 1.2 permutation of col}, \( f(x_1)\in \{k-2\}\cup f(\{v_2\}\cup Y) \). 
Since \( f(x_1)\in \{k-2\}\cup f(\{v_2\}\cup Y) \) and \( x_1 \) is adjacent to every vertex in \( Y \) except \( y_1 \), we have \( f(x_1)\in \{k-2,f(v_2),f(y_1)\} \). 
Observe that \( f(x_1)\neq k-2 \) (if not, the bicoloured path \( x_1,y_j,x_i \) has the higher colour on its middle vertex). 
Hence, \( f(x_1)\in\{f(v_2),f(y_1)\} \). 
Similarly, \( f(x_{k-1})\in\{f(v_2),f(y_{k-1})\} \). 
\begin{claim}[of Subcase~1.2]\label{clm:subcase 1.2 rs col of x1 xk-1}
\( f(x_1)\in\{f(v_2),f(y_1)\} \) and \( f(x_{k-1})\in\{f(v_2),f(y_{k-1})\} \). 
\end{claim}
We show that \( f(x_{k-1})\neq f(v_2) \). 
We prove this by considering two scenarios: (i)~\( f(x_1)\neq f(y_1) \), and (ii)~\( f(x_1)=f(y_1) \).

Suppose that \( f(x_1)\neq f(y_1) \). 
Since \( f(x_1)\in\{f(v_2),f(y_1)\} \), this implies that \( f(x_1)=f(v_2) \). 
Hence, \( f(x_{k-1})\neq f(v_2) \) (if \( f(x_{k-1})=f(v_2)=f(x_1) \), then path \( x_1,y_i,x_{k-1},y_j \) is a bicoloured~\( P_4 \), and thus \( f \) is not even a star colouring). 
Thus, by contradiction, \( f(x_1)\neq f(y_1) \) implies that \( f(x_{k-1})\neq f(v_2) \).

Suppose that \( f(x_1)=f(y_1) \). 
Consider the colour at \( u_1 \). 
By Claim~\ref{clm:subcase 1.2 permutation of col}, colours \( 0,1,\dots,k-3,k-1 \) are used in \( \{v_2\}\cup Y \). 
Hence, \( f(u_1)\in \{k-2\}\cup f(\{v_2\}\cup Y) \). 
Observe that \( f(u_1)\neq k-2 \) (otherwise, \( x_1,u_1,y_1,x_i \) is a bicoloured \( P_4 \)), and thus \( f(u_1)\in f(\{v_2\}\cup Y) \). 
Since \( u_1y_1 \) is an edge, \( f(u_1)\neq f(y_1) \). 
For \( 2\leq p\leq k-1 \),\; \( f(u_1)\neq f(y_p) \) (otherwise, \( y_p,x_1,u_1,y_1 \) is a bicoloured \( P_4 \)). 
Thus, \( f(u_1)=f(v_2) \). 
As a result, \( f(x_{k-1})\neq f(v_2) \) (if \( f(x_{k-1})=f(v_2)=f(u_1) \), then path \( x_1,u_1,y_1,x_{k-1} \) is a bicoloured \( P_4 \)). 
Thus, by contradiction, \( f(x_1)=f(y_1) \) implies that \( f(x_{k-1})\neq f(v_2) \).

Hence, whether \( f(x_1)=f(y_1) \) or not, \( f(x_{k-1})\neq f(v_2) \). 
Since \( f(x_{k-1})\neq f(v_2) \) and \( f(x_{k-1})\in\{f(v_2),f(y_{k-1})\} \) (see Claim~\ref{clm:subcase 1.2 rs col of x1 xk-1}), we have \( f(x_{k-1})=f(y_{k-1}) \). 
Consider the colour at \( v_1 \). 
Due to Claim~\ref{clm:subcase 1.2 permutation of col}, \( f(v_1)\in \{k-2\}\cup f(\{v_2\}\cup Y) \). 
Observe that \( f(v_1)\neq k-2 \) (otherwise, \( x_{k-1},v_1,y_{k-1},x_i \) is a bicoloured \( P_4 \)), and thus \( f(v_1)\in f(\{v_2\}\cup Y) \). 
Clearly, \( f(v_1)\neq f(v_2) \) and \( f(v_1)\neq f(y_{k-1}) \). 
For \( 1\leq p\leq k-2 \),\; \( f(v_1)\neq f(y_p) \) (otherwise, \( y_p,x_{k-1},v_1,y_{k-1} \) is a bicoloured \( P_4 \)). 
Therefore, no colour is available for vertex \( v_1 \), and thus Subcase~1.2 leads to a contradiction.\\

\noindent Subcase~1.3: \( f(x_i)=f(x_{k-1}) \) where \( 2\leq i\leq k-2 \).\\[5pt]
We know that rotating the gadget by \( 180^\circ \) gives an automorphism \( \psi \) of the gadget such that \( \psi(x_p)=y_{k-p} \) and \( \psi(y_p)=x_{k-p} \) for \( 1\leq p\leq k-1 \). 
Hence, it suffices to consider the case \( f(y_1)=f(y_{k-i})=c \) where \( 2\leq i\leq k-2 \). 
In other words, it suffices to consider the case \( f(y_1)=f(y_j)=c \) where \( 2\leq j\leq k-2 \). 
We can use arguments similar to that in Subcase~1.1 to prove that \( f(u_2)\neq 0 \), \( f(u_3)\neq 0 \), \( f(v_2)\neq 0 \) and \( f(v_3)\neq 0 \); an alternate argument is given below for completeness.

Suppose that \( f(y_1)=f(y_j)=c \) where \( 2\leq j\leq k-2 \). 
Clearly, no vertex in \( X \) is coloured \( c \). 
By Claim~\ref{clm:rs colouring pairwise distinct}, the common neighbours of \( y_1 \) and \( y_j \), namely vertices in \( X\setminus \{x_1,x_j\} \), get pairwise distinct colours less than \( c \). 
Moreover, for each \mbox{\( x_p\in X\setminus \{x_1,x_j\} \)}, we have \( f(x_p)\neq f(x_1) \), \( f(x_p)\neq f(x_j) \) and \( f(x_p)\neq f(u_1) \) (otherwise, path \( y_1,x_p,y_j,x_1 \), path \( x_j,y_1,x_p,y_j \), or path \( u_1,y_1,x_p,y_j \) respectively is a bicoloured~\( P_4 \)). 
If \( f(x_1)=f(x_j) \), then \( f(u_2)\neq 0 \), \( f(u_3)\neq 0 \), \( f(v_2)\neq 0 \) and \( f(v_3)\neq 0 \) by Subcase~1.1. 
So, it suffices to consider the scenario \( f(x_1)\neq f(x_j) \). 
Since \( f(x_1)\neq f(x_j) \) and vertices in \( X\setminus \{x_1,x_j\} \) have pairwise disjoint colours different from \( f(x_1) \) and~\( f(x_j) \), vertices \( x_1,x_2,\dots,x_{k-1} \) get a permutation of colours \( 0,\dots,c-1,c+1,\dots,k-1 \) (i.e., \( f(X)=\{0,\dots,c-1,c+1,\dots,k-1\} \)). 
Hence, \( f(u_1)\in \{c\}\cup f(X) \). 
Since \( f(u_1)\neq f(y_1)=c \) and \( f(u_1)\neq f(x_1) \),\; \( f(u_1)\in f(X\setminus \{x_1\}) \). 
Since \( f(u_1)\neq f(x_p) \) for each \( x_p\in X\setminus \{x_1,x_j\} \), we have \( f(u_1)=f(x_j) \). 
, 
For each \( y_p\in Y\setminus \{y_1,y_j\} \), we have \( f(y_p)\in \{c\}\cup f(X) \) (because \( f(X)=\{0,\dots,c-1,c+1,\dots,k-1\} \)) and \( f(y_p)\neq c \) (if not, path \( u_1,y_1,x_j,y_p \) is a bicoloured \( P_4 \)); that is, \( f(y_p)\in f(X) \). 
Since \( y_p \) is adjacent all vertices in \( X\setminus \{x_p\} \), we have \( f(y_p)=f(x_p) \) for all \( p\notin\{1,j\} \). 
In particular, since \( j\neq k-1 \), we have \( f(x_{k-1})=f(y_{k-1}) \). 
Clearly, \( f(v_1)\in \{c\}\cup f(X) \) and \( f(v_1)\neq f(x_{k-1}) \). 
If \( f(v_1)=f(x_p) \) for \( p<k-1 \) (resp.\ \( f(v_1)=c \)), then path \( x_{k-1},v_1,y_{k-1},x_p \) (resp.\ path \( y_1,x_{k-1},v_1,y_{k-1} \)) is a bicoloured \( P_4 \). 
Consequently, no colour is available for vertex \( v_1 \), a contradiction. 
~\\

\phantomsection
\noindent \textit{Case 2:} No colour repeats on side \( X \). \label{lnk:case 2 in lemma k rs col}\\[5pt]
By Symmetry of sides \( X \) and \( Y \), we may assume that no colour repeats on side \( Y \) either. 
Clearly, there exists a colour \( c \) such that vertices \( x_1,x_2,\dots,x_{k-1} \) get a permutation of colours \( 0,\dots,c-1,c+1,\dots,k-1 \) (i.e., \( f(X)=\{0,\dots,c-1,c+1,\dots,k-1\} \)). 
For \( 1\leq p\leq k-1 \), since \( f(y_p)\in \{c\}\cup f(X) \) and \( y_p \) is adjacent to every vertex in \( X\setminus \{x_p\} \), we have \( f(y_p)\in\{c,f(x_p)\} \). 
\begin{claim}[of Case~2]\label{clm:case 2 rs col y1 yk-1}
\( f(y_p)\in\{c,f(x_p)\} \) for \( 1\leq p\leq k-1 \). 
In particular, we have \( f(y_1)\in\{c,f(x_1)\} \) and \( f(y_{k-1})\in\{c,f(x_{k-1})\} \). 
\end{claim}
We show that if \( f(y_1)=f(x_1) \), then \( f(u_1)=c \). 
Suppose that \( f(y_1)=f(x_1) \). 
For \( 2\leq p\leq k-1 \), we have \( f(u_1)\neq f(x_p) \) (if not, \( x_1,u_1,y_1,x_p \) is a bicoloured \( P_4 \)). 
Hence, \( f(u_1)\notin f(X)=\{0,\dots,c-1,c+1,\dots,k-1\} \); i.e., \( f(u_1)=c \). 
This proves that if \( f(y_1)=f(x_1) \), then \( f(u_1)=c \). 
Similarly, if \( f(y_{k-1})=f(x_{k-1}) \), then \( f(v_1)=c \). 
\begin{claim}[of Case~2]\label{clm:case 2 rs col imlication chain 1}
If \( f(y_1)=f(x_1) \), then \( f(u_1)=c \). 
If \( f(y_{k-1})=f(x_{k-1}) \), then \( f(v_1)=c \). 
\end{claim}

Next, we show that \( f(y_1)=c \) leads to a contradiction. 
Suppose that \( f(y_1)=c \). 
Since no colour repeats on side \( Y \), \( f(y_{k-1})\neq c \), and thus \( f(y_{k-1})=f(x_{k-1}) \) by Claim~\ref{clm:case 2 rs col y1 yk-1}.
By Claim~\ref{clm:case 2 rs col imlication chain 1}, this implies that \( f(v_1)=c \). 
Thus, the path \( y_1,x_{k-1},v_1,y_{k-1} \) is a bicoloured \( P_4 \), a contradiction.

Since \( f(y_1)=c \) leads to a contradiction, \( f(y_1)=f(x_1) \) by Claim~\ref{clm:case 2 rs col y1 yk-1}. 
Similarly, \( f(y_{k-1})=c \) leads to contradiction, and thus \( f(y_{k-1})=f(x_{k-1}) \). 
Since \( f(y_1)=f(x_1) \) and \( f(y_{k-1})=f(x_{k-1}) \), we have \( f(u_1)=f(v_1)=c \) by Claim~\ref{clm:case 2 rs col imlication chain 1}. 
Let \( c_1=f(y_1) \) and \( c_2=f(y_{k-1}) \). 
Clearly, \( f(x_1)=f(y_1)=c_1 \) and \( f(x_{k-1})=f(y_{k-1})=c_2 \). 
We also know that \( f(X)=\{0,\dots,c-1,c+1,\dots,k-1\} \) and \( f(u_1)=f(v_1)=c \). 
Hence, vertices in \( \{v_1\}\cup X\setminus \{x_1,x_{k-1}\} \) get a permutation of colours \( \{0,1,\dots,k-1\}\setminus \{c_1,c_2\} \). 
For \( 2\leq p\leq k-2 \), we have \( f(y_p)\neq c \) (if not, path \( y_p,x_1,u_1,y_1 \) is a bicoloured \( P_4 \)) and thus \( f(y_p)=f(x_p) \) by Claim~\ref{clm:case 2 rs col y1 yk-1}. 
Hence, vertices in \( \{u_1\}\cup Y\setminus \{y_1,y_{k-1}\} \) get a permutation of colours \( \{0,1,\dots,k-1\}\setminus \{c_1,c_2\} \). 
Applying Observation~\ref{obs:no colour 0 at distance 2} on the path \( x_1,u_1,y_1 \) reveals that \( c_1\neq 0 \). 
Similarly, \( c_2\neq 0 \) (consider path \( x_{k-1},v_1,y_{k-1} \)). 
As a result, colour~0 is assigned to some vertex in \( \{v_1\}\cup X\setminus \{x_1,x_{k-1}\} \). 
Since \( v_2 \) and \( v_3 \) are within distance two from each vertex in \( \{v_1\}\cup X\setminus \{x_1,x_{k-1}\} \), \( f(v_2)\neq 0 \) and \( f(v_3)\neq 0 \) by Observation~\ref{obs:no colour 0 at distance 2}. 
Similarly, colour~0 is assigned to some vertex in \( \{u_1\}\cup Y\setminus \{y_1,y_{k-1}\} \), and thus \( f(u_2)\neq 0 \) and \( f(u_3)\neq 0 \) by Observation~\ref{obs:no colour 0 at distance 2}.
Thus, \( f(u_2)\neq 0 \), \( f(u_3)\neq 0 \), \( f(v_2)\neq 0 \) and \( f(v_3)\neq 0 \) in Case~2 as well.

Therefore, in both cases, \( f(u_2)\neq 0 \), \( f(u_3)\neq 0 \), \( f(v_2)\neq 0 \) and \( f(v_3)\neq 0 \). 
\end{proof}

\begin{construct}\label{make:k-rs colouring}
\emph{Parameter:} An integer \( k\geq 5 \).\\
\emph{Input:} A triangle-free graph \( G \) of maximum degree \( k-2 \).\\
\emph{Output:} A triangle-free graph \( G' \) of maximum degree \( k-1 \).\\
\emph{Guarantee:} \( G \) is \( (k-2) \)-rs colourable if and only if \( G' \) is \( k \)-rs colourable.\\
\emph{Steps:}\\
Introduce a copy of \( G \). 
For each vertex \( w \) in the copy of \( G \), attach \( k-1-\deg_G(w) \) colour blocking gadgets one by one at \( w \) (a colour blocking gadget is attached at \( w \) by identifying the terminal of the gadget with~\( w \); see Section~\ref{sec:def} for the definition of vertex identification). 
\end{construct}

Observe that \( \deg_{G'}(w)=k-1 \) for each \( w\in V(G) \) because \( w \) has exactly \( \deg_G(w) \) neighbours within the copy of \( G \), and \( w \) has exactly one neighbour in each of the \( k-1-\deg_G(w) \) colour blocking gadgets attached at \( w \) in \( G' \). 
Moreover, each non-terminal vertex in a colour blocking gadget has degree at most \( k-1 \) (in \( G' \)). 
Hence, \( G' \) has maximum degree \( k-1 \). 
It is easy to observe that the colour blocking gadget is triangle-free. 
Since \( G \) is triangle-free and \( G' \) is obtained from \( G \) by attaching copies of colour blocking gadgets, \( G' \) is triangle-free as well. 
\begin{figure}[hbt]
\centering
\begin{tikzpicture}
\path (0,0) node(x1)[dot][label=\( x_1 \)][label={[vcolour]left:\( k\text{-1} \)}]{} ++(0,-1) node(x2)[dot][label=\( x_2 \)][label={[vcolour,xshift=1pt,label distance=9pt]below:1}]{} ++(0,-2) node(xmid-1)[dot][label={[vcolour,font=\scriptsize,xshift=-2pt,label distance=9pt]above:\( c\,\text{-1} \)}]{} ++(0,-1) node(xmid+1)[dot][label={[vcolour,font=\scriptsize,xshift=3pt,label distance=10pt]below:\( c\text{+1} \)}]{} ++(0,-2.25) node(xk-2)[dot][label={[label distance=-2pt]left:\( x_{k-2} \)}][label={[vcolour,font=\scriptsize,xshift=-2pt,label distance=12pt]above:\( k\,\text{-2} \)}]{} ++(0,-1) node(xk-1)[dot][label=below:\( x_{k-1} \)][label={[vcolour]left:\( c \)}]{};
\path (x2)--node[sloped,font=\large]{\( \dots \)} (xmid-1);
\path (xmid+1)--node[sloped,font=\large]{\( \dots \)} (xk-2);
\path (x1) ++(3,0) node(y1)[dot][label=\( y_1 \)][label={[vcolour]right:\( k\text{-1} \)}]{} ++(0,-1) node(y2)[dot][label=\( y_2 \)][label={[vcolour,label distance=7pt]below:1}]{} ++(0,-2)  node(ymid-1)[dot][label={[vcolour,font=\scriptsize,xshift=-2pt,label distance=9pt]above:\( c\,\text{-1} \)}]{} ++(0,-1) node(ymid+1)[dot][label={[vcolour,font=\scriptsize,label distance=6pt]below:\( c\text{+1} \)}]{} ++(0,-2.25) node(yk-2)[dot][label={[yshift=-2pt]right:\( y_{\mystrut k-2} \)}][label={[vcolour,font=\scriptsize,label distance=12pt]above:\( k\text{-2} \)}]{} ++(0,-1) node(yk-1)[dot][label=below:\( y_{k-1} \)][label={[vcolour]right:\( c \)}]{};
\path (y2)--node[sloped,font=\large]{\( \dots \)} (ymid-1);
\path (ymid+1)--node[sloped,font=\large]{\( \dots \)} (yk-2);

\path (x1)--node(u1)[dot][yshift=10pt][label=above:\( u_1 \)][label={[vcolour]below:0}]{} (y1);
\draw (x1)--(u1)--(y1)
      (x1)--(y2)
      (x1)--(ymid-1)
      (x1)--(ymid+1)
      (x1)--(yk-2)
      (x1)--(yk-1);
\draw (x2)--(y1)
      (x2)--(ymid-1)
      (x2)--(ymid+1)
      (x2)--(yk-2)
      (x2)--(yk-1);
\draw (xmid-1)--(y1)
      (xmid-1)--(y2)
      (xmid-1)--(ymid+1)
      (xmid-1)--(yk-2)
      (xmid-1)--(yk-1);
\draw (xmid+1)--(y1)
      (xmid+1)--(y2)
      (xmid+1)--(ymid-1)
      (xmid+1)--(yk-2)
      (xmid+1)--(yk-1);
\draw (xk-2)--(y1)
      (xk-2)--(y2)
      (xk-2)--(ymid-1)
      (xk-2)--(ymid+1)
      (xk-2)--(yk-1);
\path (xk-1)--node(v1)[dot][yshift=-10pt][label=below:\( v_1 \)][label={[vcolour]above:0}]{} (yk-1);
\draw (xk-1)--(y1)
      (xk-1)--(y2)
      (xk-1)--(ymid-1)
      (xk-1)--(ymid+1)
      (xk-1)--(yk-2)
      (xk-1)--(v1)--(yk-1);

\draw (u1)--++(3.5,1) node(u2)[dot][label=below right:\( u_2 \)][label={[vcolour]above:\( k\text{-1} \)}]{};
\draw (y2)--(u2)
      (ymid-1)--(u2)
      (ymid+1)--(u2)
      (yk-2) to[bend right=10] (u2);
\draw (u2)--++(2.5,0) node(u3)[dot]{} node[terminal][label={below:\( u_3=w \)}][label={[vcolour]above:\( c \)}]{};

\draw (v1)--++(-3.5,-1) node(v2)[dot][label=above left:\( v_2 \)][label={[vcolour]below:\( k\text{-1} \)}]{};
\draw (x2)--(v2)
      (xmid-1)--(v2)
      (xmid+1)--(v2)
      (xk-2)--(v2);
\draw (v2)--++(-2.5,0) node(v3)[dot][label=above:\( v_3 \)][label={[vcolour]below:\( c \)}]{}; \end{tikzpicture}
\caption[A \( k \)-rs colouring scheme for the colour blocking gadget.]{A \( k \)-rs colouring scheme for the colour blocking gadget, where \( c=f(w) \). 
Note that \( 0<c<k-1 \) because \( f \) uses only colours \( 1,2,\dots,k-2 \). 
To be clear, if \( c=1 \), then \( f(x_i)=f(y_i)=i \) for \( 2\leq i\leq k-2 \) (shown in Figure~\ref{fig:eg rs colouring colour blocking gadget}). 
Similarly, if \( c=k-2 \), then \( f(x_i)=f(y_i)=i-1 \) for \( 2\leq i\leq k-2 \).}
\label{fig:rs colouring of colour blocking gadget}
\end{figure}
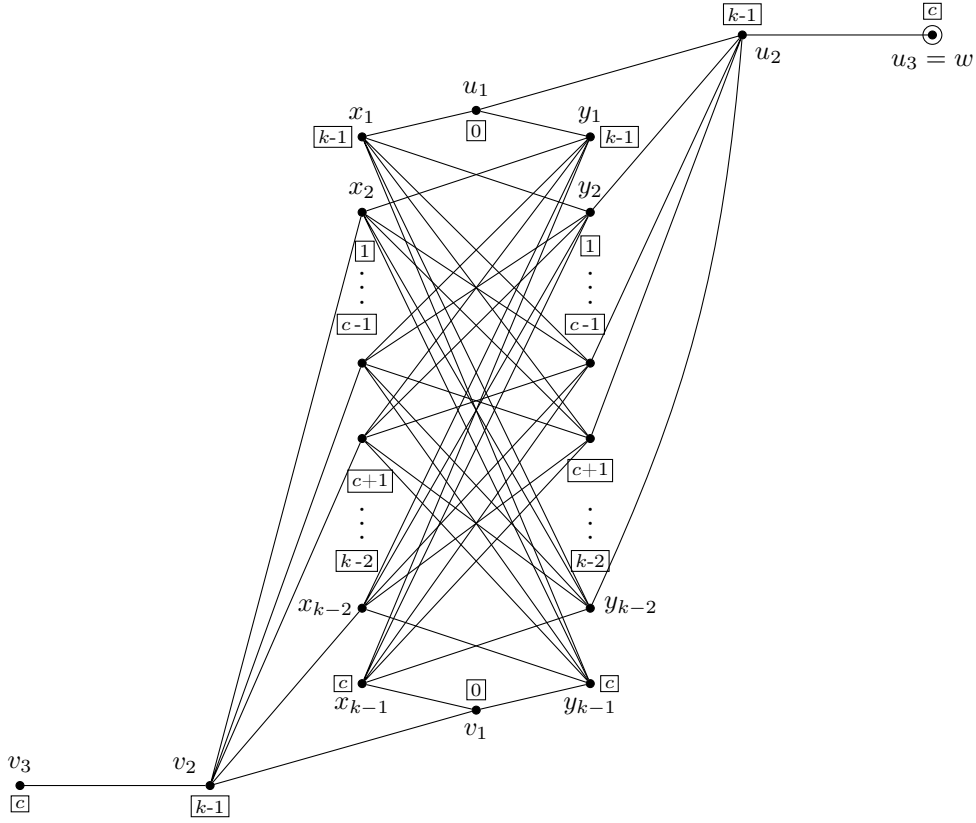
\begin{proof}[Proof of guarantee]
First, we prove that if \( G \) is \( (k-2) \)-rs colourable, then \( G' \) is \( k \)-rs colourable. 
Suppose that \( G \) admits a \( (k-2) \)-rs colouring \( f\colon V(G)\to\{1,2,\dots,k-2\} \). 
Extend \( f \) into a \( k \)-colouring \( f' \) of \( G' \) by using the scheme in Figure~\ref{fig:rs colouring of colour blocking gadget} on each colour blocking gadget. 
Observe that each bicoloured \( P_3 \) in Figure~\ref{fig:rs colouring of colour blocking gadget} has colour~0 on its middle vertex or colour~\( k-1 \) on its endvertices. 
Thus, in Figure~\ref{fig:rs colouring of colour blocking gadget}, there is no bicoloured \( P_3 \) with the higher colour on its middle vertex; i.e., the colouring scheme in Figure~\ref{fig:rs colouring of colour blocking gadget} is a \( k \)-rs colouring of the gadget. \\[3pt]
\noindent \textbf{Claim~1:} \( f' \) is a \( k \)-rs colouring of \( G' \).\\[5pt]
On the contrary, assume that there is a bicoloured 3-vertex path \( Q=x,y,z \) in \( G' \) with the higher colour on its middle vertex (i.e., \( f'(y)>f'(x)=f'(z) \)). 
We know that \( f' \) employs a \( k \)-rs colouring scheme (namely Figure~\ref{fig:rs colouring of colour blocking gadget}) on the colour blocking gadget. 
We also know that the restriction of \( f' \) to \( V(G) \) is an rs colouring of \( G \) (namely \( f \)). 
Hence, either (i)~\( Q \) contains edges from two colour blocking gadgets, or (ii)~\( Q \) contains an edge from a colour blocking gadget and an edge from the copy of \( G \) in \( G' \). 
Since \( Q \) is a 3-vertex path, in both cases, the middle vertex \( y \) of \( Q \) is a terminal of some colour blocking gadget, and \( Q \) contains an edge of the form \( u_2u_3 \) from that colour blocking gadget. 
Without loss of generality, assume that \( xy \) is the edge of the form \( u_2u_3 \). 
That is, \( y \) is the terminal (i.e., vertex \( u_3 \)) of some colour blocking gadget, and \( x \) is the neighbour of the terminal (i.e., vertex \( u_2 \)) in that colour blocking gadget. 
Due to the colouring scheme used on colour blocking gadgets, \( f'(y)=f(y)<k-1 \) and \( f'(x)=k-1 \). 
Thus, \( f'(x)>f'(y) \), which is a contradiction to the assumption that \( f'(y)>f'(x)=f'(z) \). 
This proves Claim~1, and thus \( G' \) is \( k \)-rs colourable.

Conversely, suppose that \( G' \) admits a \( k \)-rs colouring \( f':V(G')\to\{0,1,\dots,k-1\} \). 
Consider an arbitrary vertex \( w\in V(G) \).

Note that \( w \) is the terminal of at least one colour blocking gadget attached at \( w \) in \( G' \) (because \( \Delta(G)=k-2 \)). 
By Lemma~\ref{lem:colour blocking gadget}, terminals of colour blocking gadgets cannot get colour~0. 
Hence, \( f'(w)\neq 0 \). 
Since \( w\in V(G) \) is arbitrary, no vertex in \( V(G) \) is coloured~0 by \( f' \).

We claim that \( f'(w)\neq k-1 \). 
On the contrary, assume that \( f'(w)=k-1 \). 
We know that \( \deg_{G'}(w)=k-1 \). 
Owing to the definition of rs colouring, if a vertex \( v \) of degree \( k-1 \) in a graph \( H \) is coloured \( k-1 \) under a \( k \)-rs colouring of \( H \), then \( v \) has a neighbour coloured~0, a neighbour coloured~1, \dots, a neighbour coloured~\( k-2 \) in~\( H \). 
Since \( f'(w)=k-1 \) and \( \deg_{G'}(w)=k-1 \), the vertex \( w \) has a neighbour coloured~0, a neighbour coloured~1, \dots, a neighbour coloured~\( k-2 \) in \( G' \). 
In particular, \( w \) has a neighbour \( w' \) in \( G' \) coloured~0 under \( f' \). 
Since \( w' \) is a neighbour of \( w \) in \( G' \), \( w' \) is either from the copy of \( G \) (i.e., \( w'\in V(G) \)) or from a colour blocking gadget. 
But, \( w'\notin V(G) \) since \( f'(w')=0 \) and no vertex in \( V(G) \) is coloured~0 by \( f' \). 
Since \( w'\notin V(G) \), the vertex \( w' \) is in some colour blocking gadget. 
Moreover, \( w \) is the terminal of a colour blocking gadget and \( w' \) is the neighbour of the terminal in that colour blocking gadget. 
By Lemma~\ref{lem:colour blocking gadget}, the neighbour of the terminal is not coloured~0 by \( f' \) contradicting the assumption that \( f'(w')=0 \). 
Thus, \( f'(w)\neq k-1 \) by contradiction.

Since \( w\in V(G) \) is arbitrary, \( f' \) uses only colours \( 1,2,\dots,k-2 \) in \( V(G) \). 
Therefore, the restriction of \( f' \) to \( V(G) \) is a \( (k-2) \)-rs colouring of \( G \). 
Hence, \( G \) is \( (k-2) \)-rs colourable. 
\end{proof}

Note that a colour blocking gadget has only \( 2k+3 \) non-terminal vertices and \( (k-1)(k-2)+2(k-3)+8\leq k^2 \) edges. 
Hence, \( G' \) has at most \( \left((k-1)(2k+3)+1\right)n=O(n) \) vertices and at most \( m+(k-1)k^2 n=O(m+n) \) edges, where \( n=|V(G)| \) and \( m=|E(G)| \).
Hence, Construction~\ref{make:k-rs colouring} requires only time polynomial in the input size. 

For all \( k\geq 5 \), Construction~\ref{make:k-rs colouring} establishes a reduction from \textsc{\( (k-2) \)-RS Colourability} of triangle-free graphs of maximum degree \( k-2 \) to \textsc{\( k \)-RS Colourability} of triangle-free graphs of maximum degree \( k-1 \). 
Since \textsc{\( k \)-RS Colourability} of triangle-free graphs of maximum degree \( k \) is NP-complete for \( k\geq 3 \) \cite[Theorem~3]{shalu_cyriac2}, \textsc{\( k \)-RS Colourability} is NP-complete for triangle-free graphs of maximum degree \( k-1 \) for \( k\geq 5 \). 
\begin{theorem}\label{thm:k-rs npc max deg k-1}
For \( k\geq 5 \), \textsc{\( k \)-RS Colourability} is NP-complete for triangle-free graphs of maximum degree~\( k-1 \). 
\qed
\end{theorem}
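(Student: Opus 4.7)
The plan is essentially to apply Construction~\ref{make:k-rs colouring} as a polynomial-time many-one reduction, using as source the NP-complete problem \textsc{$(k-2)$-RS Colourability} restricted to triangle-free graphs of maximum degree $k-2$. Membership in NP is immediate, since for any candidate function $f\colon V(G')\to\{0,1,\dots,k-1\}$ one can in polynomial time check all 3-vertex paths for a bicoloured $P_3$ with the higher colour at the middle vertex.

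For hardness, I would fix $k\geq 5$ and invoke Theorem~3 of~\cite{shalu_cyriac2}, which states that \textsc{$k'$-RS Colourability} is NP-complete on triangle-free graphs of maximum degree $k'$ for every $k'\geq 3$; applying this with $k'=k-2$ (which is at least $3$ precisely because $k\geq 5$) gives the source problem we need. Given an instance $G$, feed it to Construction~\ref{make:k-rs colouring} to obtain a graph $G'$. The construction attaches $k-1-\deg_G(w)$ colour blocking gadgets at each $w\in V(G)$, so the output is triangle-free (as both $G$ and the colour blocking gadget are triangle-free and gluing happens only at terminals) and has maximum degree exactly $k-1$. Since each colour blocking gadget has $O(k)$ vertices and $O(k^2)$ edges, the construction runs in time polynomial in the input size for every fixed $k$. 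The correctness statement is precisely the guarantee of Construction~\ref{make:k-rs colouring}: $G$ is $(k-2)$-rs colourable if and only if $G'$ is $k$-rs colourable.

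I do not expect any real obstacle, because every piece is already in place: the reduction is Construction~\ref{make:k-rs colouring} (whose guarantee was proved using Lemma~\ref{lem:colour blocking gadget}), the source problem's hardness comes from~\cite{shalu_cyriac2}, and the polynomial bounds on $|V(G')|$ and $|E(G')|$ have been computed just before the theorem. The only item worth double-checking in the write-up is the boundary condition $k-2\geq 3$, which is exactly the reason the statement is restricted to $k\geq 5$; the cases $k=3,4$ must be handled elsewhere (and are, via Corollary~\ref{cor:4-rs colouring planar} for $k=4$, with $k=3$ being polynomial-time solvable at $\Delta=k-1=2$).
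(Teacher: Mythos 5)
Your proposal is correct and is essentially identical to the paper's own argument: the paper also obtains this theorem by applying Construction~\ref{make:k-rs colouring} as a reduction from \textsc{\( (k-2) \)-RS Colourability} on triangle-free graphs of maximum degree \( k-2 \) (NP-complete by Theorem~3 of \cite{shalu_cyriac2} since \( k-2\geq 3 \)), with the same observations about triangle-freeness, maximum degree \( k-1 \), and polynomial size of \( G' \). Your remarks on the boundary condition \( k\geq 5 \) and on the separate handling of \( k=4 \) via Corollary~\ref{cor:4-rs colouring planar} also match the paper's organisation.
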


By Corollary~\ref{cor:4-rs colouring planar} and Theorem~\ref{thm:k-rs npc max deg k-1}, we have the following. 
\begin{theorem}\label{thm:k-rs npc max deg k-1 gen}
For \( k\geq 4 \), \textsc{\( k \)-RS Colourability} is NP-complete for triangle-free graphs of maximum degree~\( k-1 \). 
\qed
\end{theorem}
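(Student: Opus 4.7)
The plan is to simply combine the two NP-completeness results already established in the preceding part of the section. Theorem~\ref{thm:k-rs npc max deg k-1 gen} asserts NP-completeness for all \( k\geq 4 \), and the range \( k\geq 4 \) is partitioned into the base case \( k=4 \) and the inductive range \( k\geq 5 \), both of which have already been treated.

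For the base case \( k=4 \), I would invoke Corollary~\ref{cor:4-rs colouring planar}, which states that \textsc{4-RS Colourability} is NP-complete for triangle-free graphs of maximum degree~3 (this follows from the reduction in Construction~\ref{make:4-rs colouring planar cubic} from positive 1-in-3 SAT on planar 3-regular clause-variable graphs). For the range \( k\geq 5 \), I would invoke Theorem~\ref{thm:k-rs npc max deg k-1}, whose proof uses Construction~\ref{make:k-rs colouring} to reduce \textsc{\( (k-2) \)-RS Colourability} on triangle-free graphs of maximum degree \( k-2 \) to \textsc{\( k \)-RS Colourability} on triangle-free graphs of maximum degree \( k-1 \), starting from the existing NP-completeness result on graphs of maximum degree \( k \) from \cite{shalu_cyriac2}.

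Since these two results together cover every integer \( k\geq 4 \), the theorem follows immediately without any further argument. There is no obstacle here; the statement is essentially a repackaging of Corollary~\ref{cor:4-rs colouring planar} and Theorem~\ref{thm:k-rs npc max deg k-1} into a single uniform statement that will be convenient for the discussion of hardness transitions in the subsequent subsection. The proof can therefore be concluded with a single sentence citing these two results and a \qed.
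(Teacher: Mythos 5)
Your proposal matches the paper's own argument exactly: the paper derives Theorem~\ref{thm:k-rs npc max deg k-1 gen} in one line by combining Corollary~\ref{cor:4-rs colouring planar} (the case \( k=4 \)) with Theorem~\ref{thm:k-rs npc max deg k-1} (the cases \( k\geq 5 \)). Nothing further is needed.
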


\iftoggle{extended}
{ \subsection{Hardness Transitions}\label{sec:rs colouring hardness transitions}
\textcolor{red}{TODO. To be written}

\section{Regular Graphs}
We prove that for all \( k\geq 4 \) and \( d<k \), the complexity of \textsc{\( k \)-RS Colourability} is the same for graphs of maximum degree \( d \) and \( d \)-regular graphs.
} { } Next, we prove that for all \( k\geq 4 \) and \( d<k \), the complexity of \textsc{\( k \)-RS Colourability} is the same for graphs of maximum degree \( d \) and \( d \)-regular graphs.
First, we show this for \( d=k-1 \).

\begin{construct}\label{make:rs colouring degre k-1 vs k-1 regular}
\emph{Parameter:} An integer \( k\geq 4 \).\\ \emph{Input:} A graph \( G \) of maximum degree \( k-1 \).\\ 
\emph{Output:} A \( (k-1) \)-regular graph \( G' \).\\
\emph{Guarantee:} \( G \) is \( k \)-rs colourable if and only if \( G' \) is \( k \)-rs colourable.\\
\emph{Steps:}\\
Introduce two copies of \( G \). 
For each vertex \( v \) of \( G \), introduce \( (k-1)-\deg_G(v) \) filler gadgets (see Figure~\ref{fig:rs filler gadget d=k-1}) between the two copies of \( v \); see Figure~\ref{fig:eg filler gadget d=k-1} for an example. 

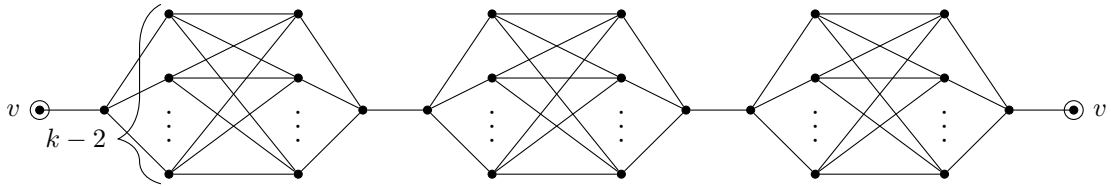
\begin{figure}[hbt]
\centering
\begin{tikzpicture}[scale=0.85]
\path (0,0) node(1st)[dot]{} ++(5,0) node(2nd)[dot]{} ++(5,0) node(3rd)[dot]{};

\path (1st) ++(1,1.5) node(x1)[dot]{} ++(0,-1) node(x2)[dot]{} ++(0,-1.5) node(xk-2)[dot]{};
\path (x2)--node[sloped,font=\large]{\( \dots \)} (xk-2);
\path (x1) ++(2,0) node(y1)[dot]{} ++(0,-1) node(y2)[dot]{} ++(0,-1.5) node(yk-2)[dot]{};
\path (y2)--node[sloped,font=\large]{\( \dots \)} (yk-2);
\path (y1) ++(1,-1.5) node(1stEnd)[dot]{};

\draw (1st)--(x1)  (1st)--(x2)  (1st)--(xk-2);
\draw (x1)--(y1)  (x1)--(y2)  (x1)--(yk-2);
\draw (x2)--(y1)  (x2)--(y2)  (x2)--(yk-2);
\draw (xk-2)--(y1)  (xk-2)--(y2)  (xk-2)--(yk-2);
\draw (1stEnd)--(y1)  (1stEnd)--(y2)  (1stEnd)--(yk-2);

\path (x1)--+(0,0.15) coordinate(brace1Start);
\path (xk-2)--+(0,-0.15) coordinate(brace1End);
\draw [opacity=0.4,decorate,decoration={brace,aspect=0.75,amplitude=16pt,raise=3pt,mirror}] (brace1Start)--  node[left=7mm,yshift=-6mm,opacity=1]{\( k-2 \)} (brace1End);

\path (2nd) ++(1,1.5) node(x1)[dot]{} ++(0,-1) node(x2)[dot]{} ++(0,-1.5) node(xk-2)[dot]{};
\path (x2)--node[sloped,font=\large]{\( \dots \)} (xk-2);
\path (x1) ++(2,0) node(y1)[dot]{} ++(0,-1) node(y2)[dot]{} ++(0,-1.5) node(yk-2)[dot]{};
\path (y2)--node[sloped,font=\large]{\( \dots \)} (yk-2);
\path (y1) ++(1,-1.5) node(2ndEnd)[dot]{};

\draw (2nd)--(x1)  (2nd)--(x2)  (2nd)--(xk-2);
\draw (x1)--(y1)  (x1)--(y2)  (x1)--(yk-2);
\draw (x2)--(y1)  (x2)--(y2)  (x2)--(yk-2);
\draw (xk-2)--(y1)  (xk-2)--(y2)  (xk-2)--(yk-2);
\draw (2ndEnd)--(y1)  (2ndEnd)--(y2)  (2ndEnd)--(yk-2);

\path (3rd) ++(1,1.5) node(x1)[dot]{} ++(0,-1) node(x2)[dot]{} ++(0,-1.5) node(xk-2)[dot]{};
\path (x2)--node[sloped,font=\large]{\( \dots \)} (xk-2);
\path (x1) ++(2,0) node(y1)[dot]{} ++(0,-1) node(y2)[dot]{} ++(0,-1.5) node(yk-2)[dot]{};
\path (y2)--node[sloped,font=\large]{\( \dots \)} (yk-2);
\path (y1) ++(1,-1.5) node(3rdEnd)[dot]{};

\draw (3rd)--(x1)  (3rd)--(x2)  (3rd)--(xk-2);
\draw (x1)--(y1)  (x1)--(y2)  (x1)--(yk-2);
\draw (x2)--(y1)  (x2)--(y2)  (x2)--(yk-2);
\draw (xk-2)--(y1)  (xk-2)--(y2)  (xk-2)--(yk-2);
\draw (3rdEnd)--(y1)  (3rdEnd)--(y2)  (3rdEnd)--(yk-2);

\draw (1stEnd)--(2nd)  (2ndEnd)--(3rd);
\draw (1st)--+(-1,0) node[dot]{} node[terminal][label=left:\( v \)]{}
(3rdEnd)--+(1,0) node[dot]{} node[terminal][label=right:\( v \)]{};
\end{tikzpicture}
\caption{Filler gadget for \( v\in V(G) \).}
\label{fig:rs filler gadget d=k-1}
\end{figure}
\end{construct}
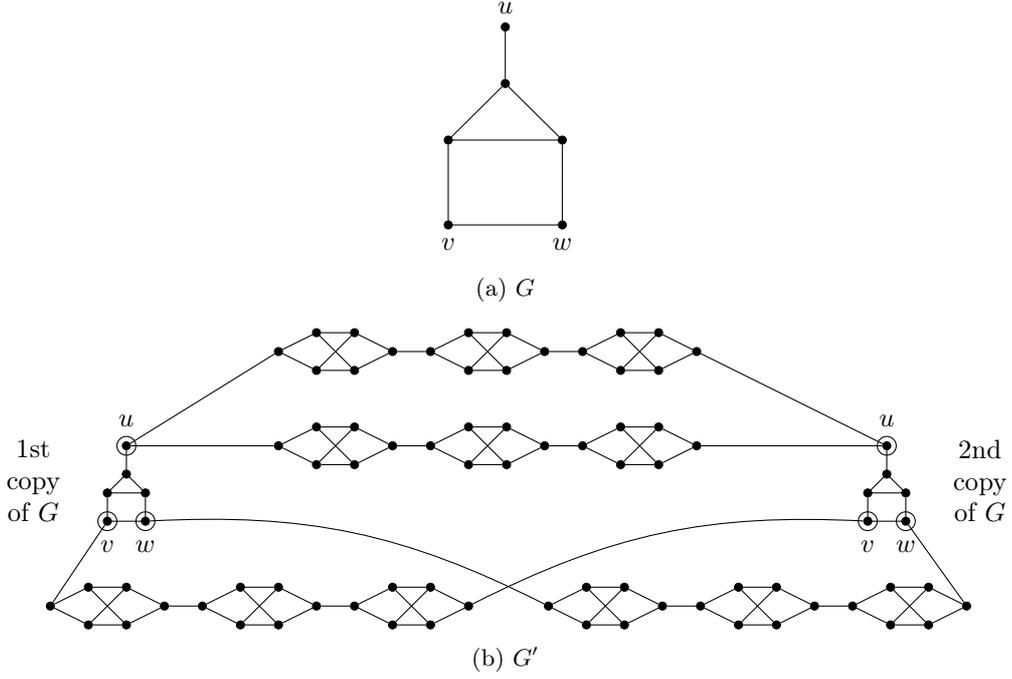
\begin{figure}[hbt]
\centering
\begin{subfigure}[c]{\textwidth}
\centering
\begin{tikzpicture}[scale=1.5]
\draw (0,0) node(u)[dot][label=\( u \)]{} --++(0,-0.5) node(x)[dot]{} --++(0.5,-0.5) node(y)[dot]{} --++(-1,0) node(z)[dot]{}--(x);
\draw (z) --++(0,-0.75) node(v)[dot][label=below:\( v \)]{} --++(1,0) node(w)[dot][label=below:\( w \)]{} --(y);
\end{tikzpicture}
\caption{\( G \)}
\end{subfigure}
\vspace*{0.25cm}

\begin{subfigure}[c]{\textwidth}
\centering
\begin{tikzpicture}[scale=0.5]
\path (0,0) coordinate(origin);

\draw (origin) ++(-10,0) node(u1)[dot]{} node[terminal][label=\( u \)]{} --++(0,-0.75) node(x1)[dot]{} --++(0.5,-0.5) node(y1)[dot]{} --++(-1,0) node(z1)[dot]{}--(x1);
\draw (z1) --++(0,-0.75) node(v1)[dot]{} node[terminal][label=below:\( v \)]{} --++(1,0) node(w1)[dot]{} node[terminal][label=below:\( w \)]{} --(y1);

\path (y1)--node[left=1.0cm,yshift=10pt,align=center]{1st\\copy\\of \( G \)} (w1);

\draw (origin) ++(10,0) node(u2)[dot]{} node[terminal][label=\( u \)]{} --++(0,-0.75) node(x2)[dot]{} --++(0.5,-0.5) node(y2)[dot]{} --++(-1,0) node(z2)[dot]{}--(x2);
\draw (z2) --++(0,-0.75) node(v2)[dot]{} node[terminal][label=below:\( v \)]{} --++(1,0) node(w2)[dot]{} node[terminal][label=below:\( w \)]{} --(y2);

\path (z2)--node[right=1.0cm,yshift=10pt,align=center]{2nd\\copy\\of \( G \)} (v2);

\path (origin) ++(-6,2.5) node(1st)[dot]{} ++(4,0) node(2nd)[dot]{} ++(4,0) node(3rd)[dot]{};

\path (1st) ++(1,0.5) node(x1)[dot]{} ++(0,-1) node(x2)[dot]{};
\path (x1) ++(1,0) node(y1)[dot]{} ++(0,-1) node(y2)[dot]{};
\path (y1) ++(1,-0.5) node(1stEnd)[dot]{};

\draw (1st)--(x1)  (1st)--(x2);
\draw (x1)--(y1)  (x1)--(y2)  ;
\draw (x2)--(y1)  (x2)--(y2)  ;
\draw (1stEnd)--(y1)  (1stEnd)--(y2);

\path (2nd) ++(1,0.5) node(x1)[dot]{} ++(0,-1) node(x2)[dot]{};
\path (x1) ++(1,0) node(y1)[dot]{} ++(0,-1) node(y2)[dot]{};
\path (y1) ++(1,-0.5) node(2ndEnd)[dot]{};

\draw (2nd)--(x1)  (2nd)--(x2);
\draw (x1)--(y1)  (x1)--(y2)  ;
\draw (x2)--(y1)  (x2)--(y2)  ;
\draw (2ndEnd)--(y1)  (2ndEnd)--(y2);

\path (3rd) ++(1,0.5) node(x1)[dot]{} ++(0,-1) node(x2)[dot]{};
\path (x1) ++(1,0) node(y1)[dot]{} ++(0,-1) node(y2)[dot]{};
\path (y1) ++(1,-0.5) node(3rdEnd)[dot]{};

\draw (3rd)--(x1)  (3rd)--(x2);
\draw (x1)--(y1)  (x1)--(y2)  ;
\draw (x2)--(y1)  (x2)--(y2)  ;
\draw (3rdEnd)--(y1)  (3rdEnd)--(y2);

\draw (1stEnd)--(2nd)  (2ndEnd)--(3rd);

\draw (1st)--(u1) (3rdEnd)--(u2);

\path (origin) ++(-6,0) node(1st)[dot]{} ++(4,0) node(2nd)[dot]{} ++(4,0) node(3rd)[dot]{};

\path (1st) ++(1,0.5) node(x1)[dot]{} ++(0,-1) node(x2)[dot]{};
\path (x1) ++(1,0) node(y1)[dot]{} ++(0,-1) node(y2)[dot]{};
\path (y1) ++(1,-0.5) node(1stEnd)[dot]{};

\draw (1st)--(x1)  (1st)--(x2);
\draw (x1)--(y1)  (x1)--(y2)  ;
\draw (x2)--(y1)  (x2)--(y2)  ;
\draw (1stEnd)--(y1)  (1stEnd)--(y2);

\path (2nd) ++(1,0.5) node(x1)[dot]{} ++(0,-1) node(x2)[dot]{};
\path (x1) ++(1,0) node(y1)[dot]{} ++(0,-1) node(y2)[dot]{};
\path (y1) ++(1,-0.5) node(2ndEnd)[dot]{};

\draw (2nd)--(x1)  (2nd)--(x2);
\draw (x1)--(y1)  (x1)--(y2)  ;
\draw (x2)--(y1)  (x2)--(y2)  ;
\draw (2ndEnd)--(y1)  (2ndEnd)--(y2);

\path (3rd) ++(1,0.5) node(x1)[dot]{} ++(0,-1) node(x2)[dot]{};
\path (x1) ++(1,0) node(y1)[dot]{} ++(0,-1) node(y2)[dot]{};
\path (y1) ++(1,-0.5) node(3rdEnd)[dot]{};

\draw (3rd)--(x1)  (3rd)--(x2);
\draw (x1)--(y1)  (x1)--(y2)  ;
\draw (x2)--(y1)  (x2)--(y2)  ;
\draw (3rdEnd)--(y1)  (3rdEnd)--(y2);

\draw (1stEnd)--(2nd)  (2ndEnd)--(3rd);

\draw (1st)--(u1) (3rdEnd)--(u2);

\path (origin) ++(-12,-4.25) node(1st)[dot]{} ++(4,0) node(2nd)[dot]{} ++(4,0) node(3rd)[dot]{};

\path (1st) ++(1,0.5) node(x1)[dot]{} ++(0,-1) node(x2)[dot]{};
\path (x1) ++(1,0) node(y1)[dot]{} ++(0,-1) node(y2)[dot]{};
\path (y1) ++(1,-0.5) node(1stEnd)[dot]{};

\draw (1st)--(x1)  (1st)--(x2);
\draw (x1)--(y1)  (x1)--(y2)  ;
\draw (x2)--(y1)  (x2)--(y2)  ;
\draw (1stEnd)--(y1)  (1stEnd)--(y2);

\path (2nd) ++(1,0.5) node(x1)[dot]{} ++(0,-1) node(x2)[dot]{};
\path (x1) ++(1,0) node(y1)[dot]{} ++(0,-1) node(y2)[dot]{};
\path (y1) ++(1,-0.5) node(2ndEnd)[dot]{};

\draw (2nd)--(x1)  (2nd)--(x2);
\draw (x1)--(y1)  (x1)--(y2)  ;
\draw (x2)--(y1)  (x2)--(y2)  ;
\draw (2ndEnd)--(y1)  (2ndEnd)--(y2);

\path (3rd) ++(1,0.5) node(x1)[dot]{} ++(0,-1) node(x2)[dot]{};
\path (x1) ++(1,0) node(y1)[dot]{} ++(0,-1) node(y2)[dot]{};
\path (y1) ++(1,-0.5) node(3rdEnd)[dot]{};

\draw (3rd)--(x1)  (3rd)--(x2);
\draw (x1)--(y1)  (x1)--(y2)  ;
\draw (x2)--(y1)  (x2)--(y2)  ;
\draw (3rdEnd)--(y1)  (3rdEnd)--(y2);

\draw (1stEnd)--(2nd)  (2ndEnd)--(3rd);

\draw (1st)--(v1) (3rdEnd) to[bend left=15] (v2);

\path (origin) ++(1.1,-4.25) node(1st)[dot]{} ++(4,0) node(2nd)[dot]{} ++(4,0) node(3rd)[dot]{};

\path (1st) ++(1,0.5) node(x1)[dot]{} ++(0,-1) node(x2)[dot]{};
\path (x1) ++(1,0) node(y1)[dot]{} ++(0,-1) node(y2)[dot]{};
\path (y1) ++(1,-0.5) node(1stEnd)[dot]{};

\draw (1st)--(x1)  (1st)--(x2);
\draw (x1)--(y1)  (x1)--(y2)  ;
\draw (x2)--(y1)  (x2)--(y2)  ;
\draw (1stEnd)--(y1)  (1stEnd)--(y2);

\path (2nd) ++(1,0.5) node(x1)[dot]{} ++(0,-1) node(x2)[dot]{};
\path (x1) ++(1,0) node(y1)[dot]{} ++(0,-1) node(y2)[dot]{};
\path (y1) ++(1,-0.5) node(2ndEnd)[dot]{};

\draw (2nd)--(x1)  (2nd)--(x2);
\draw (x1)--(y1)  (x1)--(y2)  ;
\draw (x2)--(y1)  (x2)--(y2)  ;
\draw (2ndEnd)--(y1)  (2ndEnd)--(y2);

\path (3rd) ++(1,0.5) node(x1)[dot]{} ++(0,-1) node(x2)[dot]{};
\path (x1) ++(1,0) node(y1)[dot]{} ++(0,-1) node(y2)[dot]{};
\path (y1) ++(1,-0.5) node(3rdEnd)[dot]{};

\draw (3rd)--(x1)  (3rd)--(x2);
\draw (x1)--(y1)  (x1)--(y2)  ;
\draw (x2)--(y1)  (x2)--(y2)  ;
\draw (3rdEnd)--(y1)  (3rdEnd)--(y2);

\draw (1stEnd)--(2nd)  (2ndEnd)--(3rd);

\draw (1st) to[bend right=15] (w1) (3rdEnd)--(w2);
\end{tikzpicture}
\caption{\( G' \)}
\end{subfigure}
\caption[Example of Construction~\ref{make:rs colouring degre k-1 vs k-1 regular}.]{Example of Construction~\ref{make:rs colouring degre k-1 vs k-1 regular} (here, \( k=4 \)).}
\label{fig:eg filler gadget d=k-1}
\end{figure}

Each non-terminal vertex of a filler gadget has degree \( k-1 \). 
For each \( v\in V(G) \), both copies of \( v \) in \( G' \) have degree \( k-1 \) (because there are exactly \( (k-1)-\deg_G(v) \) filler gadgets between the two copies of \( v \)). 
Therefore, \( G' \) is \( (k-1) \)-regular. 

\begin{proof}[Proof of guarantee]
If \( G' \) is \( k \)-rs colourable, then \( G \) is \( k \)-rs colourable (because \( G \) is a subgraph of~\( G' \)). 
Conversely, suppose that \( G \) admits a \( k \)-rs colouring \( f:V(G)\to\{0,1,\dots,k-1\} \). 
We produce a \( k \)-colouring \( f' \) of \( G' \) as follows. 
The copies of \( G \) are coloured first, followed by the filler gadgets. 
Colour both copies of \( G \) using \( f \). 
For each vertex \( v \) of \( G \), the filler gadgets for \( v \) are coloured by various \( k \)-rs colouring schemes depending on the colour of \( v \) under \( f \). 
If \( f(v)<k-1 \), we employ the following \( k \)-rs colouring scheme on each filler gadget for \( v \) which ensures that the neighbour of the terminal in the gadget has a higher colour compared to the terminal \( v \):  
(i)~if \( f(v)=0 \), colour the filler gadgets for \( v \) by the \( k \)-rs colouring scheme in Figure~\ref{fig:scheme 1 filler gadget d=k-1}, 
(ii)~if \( 0<f(v)<k-1 \), colour the filler gadgets for \( v \) by the \( k \)-rs colouring scheme in Figure~\ref{fig:scheme 2 filler gadget d=k-1}. 
If \( f(v)=k-1 \), colour each filer gadget for~\( v \), one by one, as follows: choose a colour \( j \) not yet used in the neighbourhood of (copy of) \( v \) in \( G' \), and colour the filler gadget by the \( k \)-rs colouring scheme in Figure~\ref{fig:scheme 3 filler gadget d=k-1} (note that by the colouring scheme used on the filler gadgets, the colours present on the neighbourhood of the fist copy of \( v \) in \( G' \) are the same as the colours present on the neighbourhood of the second copy of \( v \) in \( G' \)). 
See Figure~\ref{fig:eg colouring filler gadget d=k-1} for an example. 

Clearly, \( f' \) is a \( k \)-colouring of \( G' \).\\[3pt]
\noindent \textbf{Claim 1:} \( f' \) is a \( k \)-rs colouring of \( G' \).\\[3pt]
We know that the copies of \( G \) and the filler gadgets in \( G' \) are coloured by \( k \)-rs colouring schemes. 
Hence, to prove Claim~1, it suffices to show that no terminal \( y \) in \( G' \) has two neighbours \( x \) and \( z \) such that \( f'(y)>f'(x)=f'(z) \). 
On the contrary, assume that there exists a terminal \( y \) with neighbours \( x \) and \( z \) in \( G' \) such that \( \bm{f'(y)>f'(x)=f'(z)} \). 

Obviously, \( y \) is a vertex in a copy of \( G \) (in \( G' \)). 
Since \( f' \) restricted to this copy of \( G \) is a \( k \)-rs colouring (namely \( f \)), \( x \) and/or \( z \) must be in a filler gadget. 
Without loss of generality, assume that \( z \) is in a filler gadget \( F_z \). 
Clearly, \( y \) is the terminal of the filler gadget \( F_z \) and \( z \) is the neighbour of the terminal in the filler gadget \( F_z \). 
Recall that unless \( f'(y)=k-1 \), the colouring scheme used on the filler gadget \( F_z \) ensures that \( f'(z)>f'(y) \) (i.e., the neighbour of the terminal in the gadget has a higher colour compared to the terminal). 
Since \( f'(y)>f'(z) \), we have \( f'(y)=k-1 \). 
As a result, the colouring scheme in Figure~\ref{fig:scheme 3 filler gadget d=k-1} is used on the filler gadgets attached at \( y \) and in particular on \( F_z \). 
When the filler gadget \( F_z \) was coloured, a colour \( j \) not yet present in the neighbourhood of \( y \) in \( G' \) was chosen, and then the colouring scheme in Figure~\ref{fig:scheme 3 filler gadget d=k-1} was applied on \( F_z \). 
This means that \( j=f'(z) \). 
We have two cases. 

\noindent \textit{Case 1:} \( x \) is in a copy of \( G \) in \( G' \).\\[5pt]
Clearly, \( x \) was coloured before the filler gadget \( F_z \) was coloured. 
Hence, the colour \( f'(x) \) was present in the neighbourhood of \( y \) in \( G' \) before \( F_z \) was coloured. 
As a result, \( j\neq f'(x) \) by the choice of colour~\( j \). 
This is a contradiction since \( j=f'(z)=f'(x) \). 

\noindent \textit{Case 2:} \( x \) is in a filler gadget, say \( F_x \).\\[5pt]
Without loss of generality, assume that the gadget \( F_x \) was coloured first and the gadget \( F_z \) was coloured later. 
Consequently, \( x \) was coloured before the filler gadget \( F_z \) was coloured. 
Thus, the colour \( f'(x) \) was present in the neighbourhood of \( y \) in \( G' \) before \( F_z \) was coloured, and thus \( j\neq f'(x) \) by the choice of colour~\( j \). 
This is a contradiction since \( j=f'(z)=f'(x) \). 

Since we have a contradiction in both cases, Claim~1 is proved. 
Therefore, \( G' \) is \( k \)-rs colourable. 
\end{proof}

\begin{figure}[hbtp]
\centering
\begin{subfigure}[b]{\textwidth}
\centering
\begin{tikzpicture}[scale=0.85]
\path (0,0) node(1st)[dot][label={[vcolour,xshift=-5pt,yshift=2pt]\( k\text{-1} \)}]{} ++(5,0) node(2nd)[dot][label={[vcolour,xshift=-2pt,yshift=1pt]1}]{} ++(5,0) node(3rd)[dot][label={[vcolour,xshift=-4pt,yshift=1pt]\( k\text{-1} \)}]{};

\path (1st) ++(1,1.5) node(x1)[dot][label={[vcolour]1}]{} ++(0,-1) node(x2)[dot][label={[vcolour]2}]{} ++(0,-1.5) node(xk-2)[dot][label={[vcolour]below:\( k\text{-2} \)}]{};
\path (x2)--node[sloped,font=\large]{\( \dots \)} (xk-2);
\path (x1) ++(2,0) node(y1)[dot][label={[vcolour]\( k\text{-1} \)}]{} ++(0,-1) node(y2)[dot][label={[vcolour]\( k\text{-1} \)}]{} ++(0,-1.5) node(yk-2)[dot][label={[vcolour]below:\( k\text{-1} \)}]{};
\path (y2)--node[sloped,font=\large]{\( \dots \)} (yk-2);
\path (y1) ++(1,-1.5) node(1stEnd)[dot][label={[vcolour,xshift=2pt,yshift=1pt]0}]{};

\draw (1st)--(x1)  (1st)--(x2)  (1st)--(xk-2);
\draw (x1)--(y1)  (x1)--(y2)  (x1)--(yk-2);
\draw (x2)--(y1)  (x2)--(y2)  (x2)--(yk-2);
\draw (xk-2)--(y1)  (xk-2)--(y2)  (xk-2)--(yk-2);
\draw (1stEnd)--(y1)  (1stEnd)--(y2)  (1stEnd)--(yk-2);

\path (2nd) ++(1,1.5) node(x1)[dot][label={[vcolour]\( k\text{-1} \)}]{} ++(0,-1) node(x2)[dot][label={[vcolour]\( k\text{-1} \)}]{} ++(0,-1.5) node(xk-2)[dot][label={[vcolour]below:\( k\text{-1} \)}]{};
\path (x2)--node[sloped,font=\large]{\( \dots \)} (xk-2);
\path (x1) ++(2,0) node(y1)[dot][label={[vcolour]0}]{} ++(0,-1) node(y2)[dot][label={[vcolour]2}]{} ++(0,-1.5) node(yk-2)[dot][label={[vcolour]below:\( k\text{-2} \)}]{};
\path (y2)--node[sloped,font=\large]{\( \dots \)} (yk-2);
\path (y1) ++(1,-1.5) node(2ndEnd)[dot][label={[vcolour,xshift=2pt,yshift=1pt]1}]{};

\draw (2nd)--(x1)  (2nd)--(x2)  (2nd)--(xk-2);
\draw (x1)--(y1)  (x1)--(y2)  (x1)--(yk-2);
\draw (x2)--(y1)  (x2)--(y2)  (x2)--(yk-2);
\draw (xk-2)--(y1)  (xk-2)--(y2)  (xk-2)--(yk-2);
\draw (2ndEnd)--(y1)  (2ndEnd)--(y2)  (2ndEnd)--(yk-2);

\path (3rd) ++(1,1.5) node(x1)[dot][label={[vcolour]0}]{} ++(0,-1) node(x2)[dot][label={[vcolour]2}]{} ++(0,-1.5) node(xk-2)[dot][label={[vcolour]below:\( k\text{-2} \)}]{};
\path (x2)--node[sloped,font=\large]{\( \dots \)} (xk-2);
\path (x1) ++(2,0) node(y1)[dot][label={[vcolour]\( k\text{-1} \)}]{} ++(0,-1) node(y2)[dot][label={[vcolour]\( k\text{-1} \)}]{} ++(0,-1.5) node(yk-2)[dot][label={[vcolour]below:\( k\text{-1} \)}]{};
\path (y2)--node[sloped,font=\large]{\( \dots \)} (yk-2);
\path (y1) ++(1,-1.5) node(3rdEnd)[dot][label={[vcolour,xshift=2pt,yshift=1pt]1}]{};

\draw (3rd)--(x1)  (3rd)--(x2)  (3rd)--(xk-2);
\draw (x1)--(y1)  (x1)--(y2)  (x1)--(yk-2);
\draw (x2)--(y1)  (x2)--(y2)  (x2)--(yk-2);
\draw (xk-2)--(y1)  (xk-2)--(y2)  (xk-2)--(yk-2);
\draw (3rdEnd)--(y1)  (3rdEnd)--(y2)  (3rdEnd)--(yk-2);

\draw (1stEnd)--(2nd)  (2ndEnd)--(3rd);
\draw (1st)--+(-1,0) node[dot]{} node[terminal][label=left:\( v \)][label={[vcolour]0}]{}
(3rdEnd)--+(1,0) node[dot]{} node[terminal][label=right:\( v \)][label={[vcolour]0}]{};
\end{tikzpicture}
\caption{When \( f(v)=0 \).}
\label{fig:scheme 1 filler gadget d=k-1}
\end{subfigure}
\vspace*{5pt}

\begin{subfigure}[b]{\textwidth}
\centering
\begin{tikzpicture}[scale=0.85]
\path (0,0) node(1st)[dot][label={[vcolour,xshift=-5pt,yshift=1pt]\( k\text{-1} \)}]{} ++(5,0) node(2nd)[dot][label={[vcolour,xshift=-2pt,yshift=1pt]0}]{} ++(5,0) node(3rd)[dot][label={[vcolour,xshift=-2pt,yshift=1pt]\( i \)}]{};

\path (1st) ++(1,1.5) node(x1)[dot][label={[vcolour]1}]{} ++(0,-1) node(x2)[dot][label={[vcolour]2}]{} ++(0,-1.5) node(xk-2)[dot][label={[vcolour]below:\( k\text{-2} \)}]{};
\path (x2)--(xk-2) node(i-1label)[pos=0.35][vcolour]{\( i\text{-1} \)} node(i+1label)[pos=0.65][vcolour]{\( i\text{\smaller{+}1} \)};
\path [sloped,font=\tiny]
(x2)--node{...} (i-1label)
(xk-2)--node{...} (i+1label);
\path (x1) ++(2,0) node(y1)[dot][label={[vcolour]\( k\text{-1} \)}]{} ++(0,-1) node(y2)[dot][label={[vcolour]\( k\text{-1} \)}]{} ++(0,-1.5) node(yk-2)[dot][label={[vcolour]below:\( k\text{-1} \)}]{};
\path (y2)--node[sloped,font=\large]{\( \dots \)} (yk-2);
\path (y1) ++(1,-1.5) node(1stEnd)[dot][label={[vcolour,xshift=2pt,yshift=1pt]\( i \)}]{};

\draw (1st)--(x1)  (1st)--(x2)  (1st)--(xk-2);
\draw (x1)--(y1)  (x1)--(y2)  (x1)--(yk-2);
\draw (x2)--(y1)  (x2)--(y2)  (x2)--(yk-2);
\draw (xk-2)--(y1)  (xk-2)--(y2)  (xk-2)--(yk-2);
\draw (1stEnd)--(y1)  (1stEnd)--(y2)  (1stEnd)--(yk-2);

\path (2nd) ++(1,1.5) node(x1)[dot][label={[vcolour]\( k\text{-1} \)}]{} ++(0,-1) node(x2)[dot][label={[vcolour]\( k\text{-1} \)}]{} ++(0,-1.5) node(xk-2)[dot][label={[vcolour]below:\( k\text{-1} \)}]{};
\path (x2)--node[sloped,font=\large]{\( \dots \)} (xk-2);
\path (x1) ++(2,0) node(y1)[dot][label={[vcolour]1}]{} ++(0,-1) node(y2)[dot][label={[vcolour]2}]{} ++(0,-1.5) node(yk-2)[dot][label={[vcolour]below:\( k\text{-2} \)}]{};
\path (y2)--node(ilabel)[vcolour]{\( i \)} (yk-2);
\path[sloped,font=\scriptsize] (y2) --node[pos=0.20]{...} node[pos=0.80]{...} (yk-2);
\path (y1) ++(1,-1.5) node(2ndEnd)[dot][label={[vcolour,xshift=2pt,yshift=1pt]0}]{};

\draw (2nd)--(x1)  (2nd)--(x2)  (2nd)--(xk-2);
\draw (x1)--(y1)  (x1)--(y2)  (x1)--(yk-2);
\draw (x2)--(y1)  (x2)--(y2)  (x2)--(yk-2);
\draw (xk-2)--(y1)  (xk-2)--(y2)  (xk-2)--(yk-2);
\draw (2ndEnd)--(y1)  (2ndEnd)--(y2)  (2ndEnd)--(yk-2);
\draw [line width=0.35mm] (2ndEnd)--(ilabel); 

\path (3rd) ++(1,1.5) node(x1)[dot][label={[vcolour]\( k\text{-1} \)}]{} ++(0,-1) node(x2)[dot][label={[vcolour]\( k\text{-1} \)}]{} ++(0,-1.5) node(xk-2)[dot][label={[vcolour]below:\( k\text{-1} \)}]{};
\path (x2)--node[sloped,font=\large]{\( \dots \)} (xk-2);
\path (x1) ++(2,0) node(y1)[dot][label={[vcolour]0}]{} ++(0,-1) node(y2)[dot][label={[vcolour]1}]{} ++(0,-1.5) node(yk-2)[dot][label={[vcolour]below:\( k\text{-2} \)}]{};
\path (y2)--(yk-2) node(i-1label)[pos=0.35][vcolour]{\( i\text{-1} \)} node(i+1label)[pos=0.65][vcolour]{\( i\text{\smaller{+}1} \)};
\path [sloped,font=\tiny]
(y2)--node{...} (i-1label)
(yk-2)--node{...} (i+1label);
\path (y1) ++(1,-1.5) node(3rdEnd)[dot][label={[vcolour,xshift=5pt,yshift=2pt]\( k\text{-1} \)}]{};
\draw (3rd)--(x1)  (3rd)--(x2)  (3rd)--(xk-2);
\draw (x1)--(y1)  (x1)--(y2)  (x1)--(yk-2);
\draw (x2)--(y1)  (x2)--(y2)  (x2)--(yk-2);
\draw (xk-2)--(y1)  (xk-2)--(y2)  (xk-2)--(yk-2);
\draw (3rdEnd)--(y1)  (3rdEnd)--(y2)  (3rdEnd)--(yk-2);

\draw (1stEnd)--(2nd);
\draw [line width=0.35mm]  (2ndEnd)--(3rd);
\draw (1st)--+(-1,0) node[dot]{} node[terminal][label=left:\( v \)][label={[vcolour]\( i \)}]{}
(3rdEnd)--+(1,0) node[dot]{} node[terminal][label=right:\( v \)][label={[vcolour]\( i \)}]{};
\end{tikzpicture}
\caption{When \( f(v)=i \) and \( 0<i<k-1 \).}
\label{fig:scheme 2 filler gadget d=k-1}
\end{subfigure}

\caption[Colouring scheme for filler gadget for \( v \) when \( f(v)<k-1 \).]
{Colouring scheme for filler gadget for \( v\in V(G) \), provided \( f(v)<k-1 \). 
The colourings displayed are \( k \)-rs colourings (the highlighted bicoloured \( P_3 \) has colour~0 on its middle vertex, and every other bicoloured \( P_3 \) has colour~\( k-1 \) on its endvertices).}
\label{fig:schemes 1N2 filler gadget d=k-1}
\end{figure}
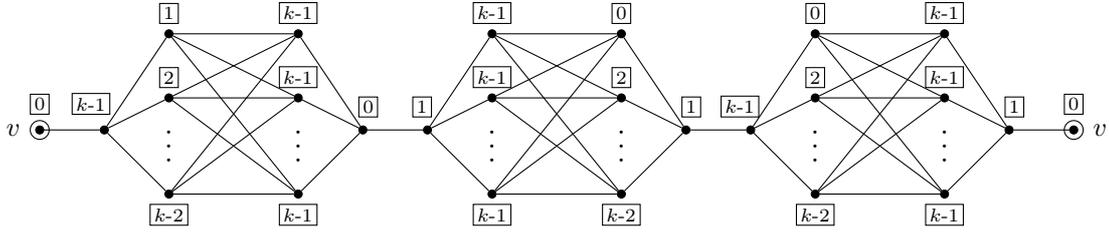
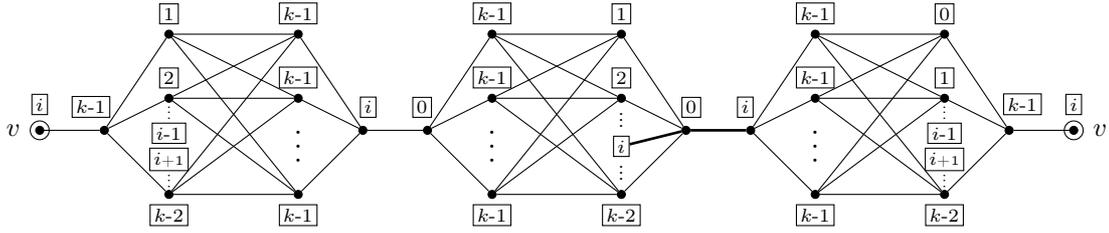

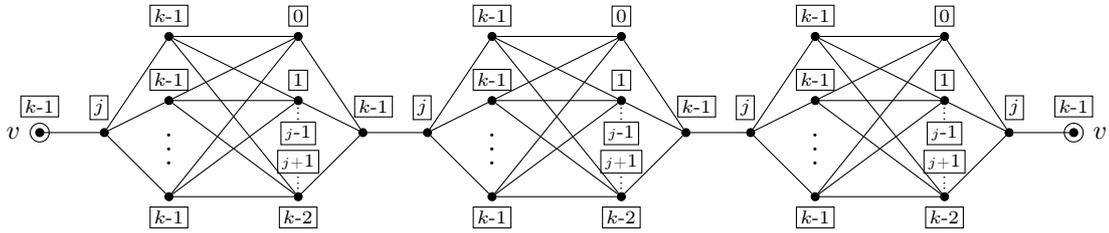
\begin{figure}[hbtp]
\centering
\begin{tikzpicture}[scale=0.85]
\path (0,0) node(1st)[dot][label={[vcolour,xshift=-2pt,yshift=1pt]\( j \)}]{} ++(5,0) node(2nd)[dot][label={[vcolour,xshift=-2pt,yshift=1pt]\( j \)}]{} ++(5,0) node(3rd)[dot][label={[vcolour,xshift=-2pt,yshift=1pt]\( j \)}]{};

\path (1st) ++(1,1.5) node(x1)[dot][label={[vcolour]\( k\text{-1} \)}]{} ++(0,-1) node(x2)[dot][label={[vcolour]\( k\text{-1} \)}]{} ++(0,-1.5) node(xk-2)[dot][label={[vcolour]below:\( k\text{-1} \)}]{};
\path (x2)--node[sloped,font=\large]{\( \dots \)} (xk-2);
\path (x1) ++(2,0) node(y1)[dot][label={[vcolour]0}]{} ++(0,-1) node(y2)[dot][label={[vcolour]1}]{} ++(0,-1.5) node(yk-2)[dot][label={[vcolour]below:\( k\text{-2} \)}]{};
\path (y2)--(yk-2) node(j-1label)[pos=0.325][vcolour]{\( \mathsmaller{j}\text{-1} \)} node(j+1label)[pos=0.65][vcolour]{\( \mathsmaller{j}\text{\smaller{+}}\text{1} \)};
\path [sloped,font=\tiny]
(y2)--node{...} (j-1label)
(yk-2)--node{...} (j+1label);
\path (y1) ++(1,-1.5) node(3rdEnd)[dot][label={[vcolour,xshift=4pt,yshift=2pt]\( k\text{-1} \)}]{};

\draw (1st)--(x1)  (1st)--(x2)  (1st)--(xk-2);
\draw (x1)--(y1)  (x1)--(y2)  (x1)--(yk-2);
\draw (x2)--(y1)  (x2)--(y2)  (x2)--(yk-2);
\draw (xk-2)--(y1)  (xk-2)--(y2)  (xk-2)--(yk-2);
\draw (1stEnd)--(y1)  (1stEnd)--(y2)  (1stEnd)--(yk-2);

\path (2nd) ++(1,1.5) node(x1)[dot][label={[vcolour]\( k\text{-1} \)}]{} ++(0,-1) node(x2)[dot][label={[vcolour]\( k\text{-1} \)}]{} ++(0,-1.5) node(xk-2)[dot][label={[vcolour]below:\( k\text{-1} \)}]{};
\path (x2)--node[sloped,font=\large]{\( \dots \)} (xk-2);
\path (x1) ++(2,0) node(y1)[dot][label={[vcolour]0}]{} ++(0,-1) node(y2)[dot][label={[vcolour]1}]{} ++(0,-1.5) node(yk-2)[dot][label={[vcolour]below:\( k\text{-2} \)}]{};
\path (y2)--(yk-2) node(j-1label)[pos=0.325][vcolour]{\( \mathsmaller{j}\text{-1} \)} node(j+1label)[pos=0.65][vcolour]{\( \mathsmaller{j}\text{\smaller{+}}\text{1} \)};
\path [sloped,font=\tiny]
(y2)--node{...} (j-1label)
(yk-2)--node{...} (j+1label);
\path (y1) ++(1,-1.5) node(2ndEnd)[dot][label={[vcolour,xshift=4pt,yshift=2pt]\( k\text{-1} \)}]{};

\draw (2nd)--(x1)  (2nd)--(x2)  (2nd)--(xk-2);
\draw (x1)--(y1)  (x1)--(y2)  (x1)--(yk-2);
\draw (x2)--(y1)  (x2)--(y2)  (x2)--(yk-2);
\draw (xk-2)--(y1)  (xk-2)--(y2)  (xk-2)--(yk-2);
\draw (2ndEnd)--(y1)  (2ndEnd)--(y2)  (2ndEnd)--(yk-2);

\path (3rd) ++(1,1.5) node(x1)[dot][label={[vcolour]\( k\text{-1} \)}]{} ++(0,-1) node(x2)[dot][label={[vcolour]\( k\text{-1} \)}]{} ++(0,-1.5) node(xk-2)[dot][label={[vcolour]below:\( k\text{-1} \)}]{};
\path (x2)--node[sloped,font=\large]{\( \dots \)} (xk-2);
\path (x1) ++(2,0) node(y1)[dot][label={[vcolour]0}]{} ++(0,-1) node(y2)[dot][label={[vcolour]1}]{} ++(0,-1.5) node(yk-2)[dot][label={[vcolour]below:\( k\text{-2} \)}]{};
\path (y2)--(yk-2) node(j-1label)[pos=0.325][vcolour]{\( \mathsmaller{j}\text{-1} \)} node(j+1label)[pos=0.65][vcolour]{\( \mathsmaller{j}\text{\smaller{+}}\text{1} \)};
\path [sloped,font=\tiny]
(y2)--node{...} (j-1label)
(yk-2)--node{...} (j+1label);
\path (y1) ++(1,-1.5) node(3rdEnd)[dot][label={[vcolour,xshift=2pt,yshift=1pt]\( j \)}]{};

\draw (3rd)--(x1)  (3rd)--(x2)  (3rd)--(xk-2);
\draw (x1)--(y1)  (x1)--(y2)  (x1)--(yk-2);
\draw (x2)--(y1)  (x2)--(y2)  (x2)--(yk-2);
\draw (xk-2)--(y1)  (xk-2)--(y2)  (xk-2)--(yk-2);
\draw (3rdEnd)--(y1)  (3rdEnd)--(y2)  (3rdEnd)--(yk-2);

\draw (1stEnd)--(2nd)  (2ndEnd)--(3rd);
\draw (1st)--+(-1,0) node[dot]{} node[terminal][label=left:\( v \)][label={[vcolour]\( k\text{-1} \)}]{}
(3rdEnd)--+(1,0) node[dot]{} node[terminal][label=right:\( v \)][label={[vcolour]\( k\text{-1} \)}]{};
\end{tikzpicture}
\caption[A \( k \)-rs colouring scheme for filler gadget for \( v \) when \( f(v)=k-1 \).]{A \( k \)-rs colouring scheme for filler gadget for \( v\in V(G) \) when \( f(v)=k-1 \) (observe that every bicoloured \( P_3 \) has colour~\( k-1 \) on its endvertices).}
\label{fig:scheme 3 filler gadget d=k-1}
\end{figure}

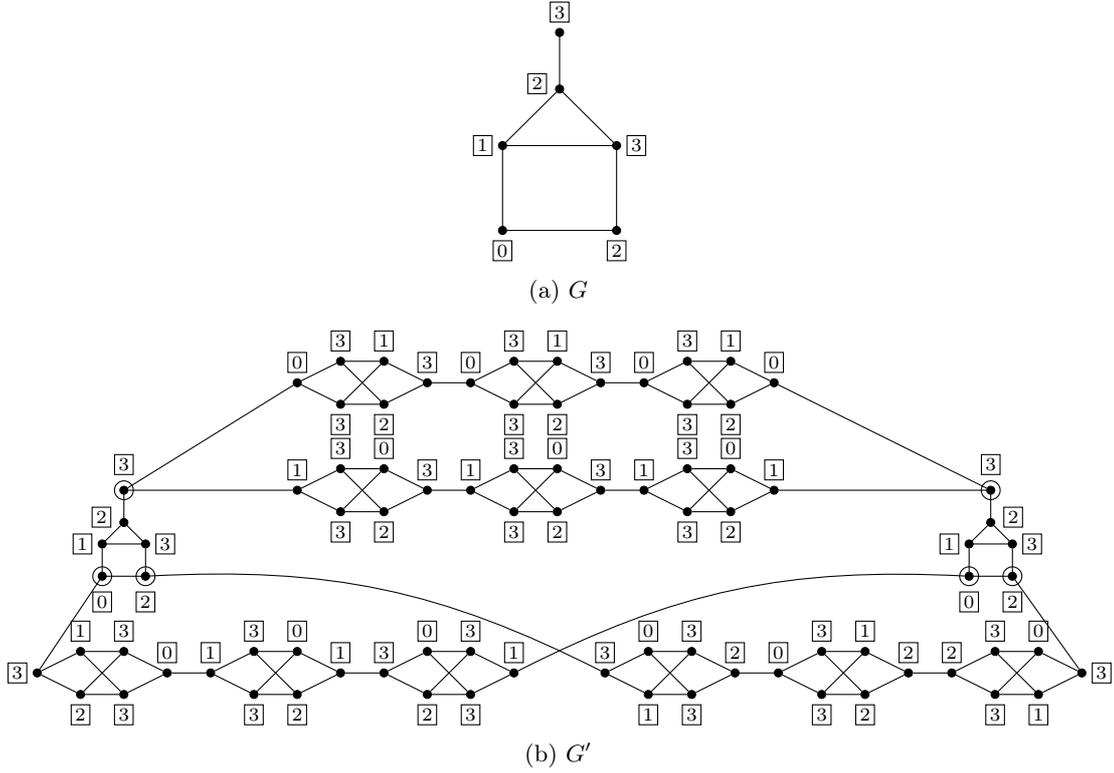
\begin{figure}[hbtp]
\centering
\begin{subfigure}[c]{\textwidth}
\centering
\begin{tikzpicture}[scale=1.5]
\draw (0,0) node(u)[dot][label={[vcolour]\( 3 \)}]{} --++(0,-0.5) node(x)[dot][label={[vcolour,label distance=3pt,yshift=2pt]left:\( 2 \)}]{} --++(0.5,-0.5) node(y)[dot][label={[vcolour]right:\( 3 \)}]{} --++(-1,0) node(z)[dot][label={[vcolour]left:\( 1 \)}]{}--(x);
\draw (z) --++(0,-0.75) node(v)[dot][label={[vcolour]below:\( 0 \)}]{} --++(1,0) node(w)[dot][label={[vcolour]below:\( 2 \)}]{} --(y);
\end{tikzpicture}
\caption{\( G \)}
\end{subfigure}
\vspace*{0.25cm}

\begin{subfigure}[c]{\textwidth}
\centering
\begin{tikzpicture}[scale=0.57]
\path (0,0) coordinate(origin);

\draw (origin) ++(-10,0) node(u1)[dot]{} node[terminal][label={[vcolour]\( 3 \)}]{} --++(0,-0.75) node(x1)[dot][label={[vcolour,label distance=3pt,yshift=2pt]left:\( 2 \)}]{} --++(0.5,-0.5) node(y1)[dot][label={[vcolour]right:\( 3 \)}]{} --++(-1,0) node(z1)[dot][label={[vcolour]left:\( 1 \)}]{}--(x1);
\draw (z1) --++(0,-0.75) node(v1)[dot]{} node[terminal][label={[vcolour]below:\( 0 \)}]{} --++(1,0) node(w1)[dot]{} node[terminal][label={[vcolour]below:\( 2 \)}]{} --(y1);

\draw (origin) ++(10,0) node(u2)[dot]{} node[terminal][label={[vcolour]\( 3 \)}]{} --++(0,-0.75) node(x2)[dot][label={[vcolour,label distance=3pt,yshift=2pt]right:\( 2 \)}]{} --++(0.5,-0.5) node(y2)[dot][label={[vcolour]right:\( 3 \)}]{} --++(-1,0) node(z2)[dot][label={[vcolour]left:\( 1 \)}]{}--(x2);
\draw (z2) --++(0,-0.75) node(v2)[dot]{} node[terminal][label={[vcolour]below:\( 0 \)}]{} --++(1,0) node(w2)[dot]{} node[terminal][label={[vcolour]below:\( 2 \)}]{} --(y2);

\path (origin) ++(-6,2.5) node(1st)[dot][label={[vcolour]above:0}]{} ++(4,0) node(2nd)[dot][label={[vcolour]above:0}]{} ++(4,0) node(3rd)[dot][label={[vcolour]above:0}]{};

\path (1st) ++(1,0.5) node(x1)[dot][label={[vcolour]above:3}]{} ++(0,-1) node(x2)[dot][label={[vcolour]below:3}]{};
\path (x1) ++(1,0) node(y1)[dot][label={[vcolour]above:1}]{} ++(0,-1) node(y2)[dot][label={[vcolour]below:2}]{};
\path (y1) ++(1,-0.5) node(1stEnd)[dot][label={[vcolour]above:3}]{};

\draw (1st)--(x1)  (1st)--(x2);
\draw (x1)--(y1)  (x1)--(y2)  ;
\draw (x2)--(y1)  (x2)--(y2)  ;
\draw (1stEnd)--(y1)  (1stEnd)--(y2);

\path (2nd) ++(1,0.5) node(x1)[dot][label={[vcolour]above:3}]{} ++(0,-1) node(x2)[dot][label={[vcolour]below:3}]{};
\path (x1) ++(1,0) node(y1)[dot][label={[vcolour]above:1}]{} ++(0,-1) node(y2)[dot][label={[vcolour]below:2}]{};
\path (y1) ++(1,-0.5) node(2ndEnd)[dot][label={[vcolour]above:3}]{};

\draw (2nd)--(x1)  (2nd)--(x2);
\draw (x1)--(y1)  (x1)--(y2)  ;
\draw (x2)--(y1)  (x2)--(y2)  ;
\draw (2ndEnd)--(y1)  (2ndEnd)--(y2);

\path (3rd) ++(1,0.5) node(x1)[dot][label={[vcolour]above:3}]{} ++(0,-1) node(x2)[dot][label={[vcolour]below:3}]{};
\path (x1) ++(1,0) node(y1)[dot][label={[vcolour]above:1}]{} ++(0,-1) node(y2)[dot][label={[vcolour]below:2}]{};
\path (y1) ++(1,-0.5) node(3rdEnd)[dot][label={[vcolour]above:0}]{};

\draw (3rd)--(x1)  (3rd)--(x2);
\draw (x1)--(y1)  (x1)--(y2)  ;
\draw (x2)--(y1)  (x2)--(y2)  ;
\draw (3rdEnd)--(y1)  (3rdEnd)--(y2);

\draw (1stEnd)--(2nd)  (2ndEnd)--(3rd);

\draw (1st)--(u1) (3rdEnd)--(u2);

\path (origin) ++(-6,0) node(1st)[dot][label={[vcolour]above:1}]{} ++(4,0) node(2nd)[dot][label={[vcolour]above:1}]{} ++(4,0) node(3rd)[dot][label={[vcolour]above:1}]{};

\path (1st) ++(1,0.5) node(x1)[dot][label={[vcolour]above:3}]{} ++(0,-1) node(x2)[dot][label={[vcolour]below:3}]{};
\path (x1) ++(1,0) node(y1)[dot][label={[vcolour]above:0}]{} ++(0,-1) node(y2)[dot][label={[vcolour]below:2}]{};
\path (y1) ++(1,-0.5) node(1stEnd)[dot][label={[vcolour]above:3}]{};

\draw (1st)--(x1)  (1st)--(x2);
\draw (x1)--(y1)  (x1)--(y2)  ;
\draw (x2)--(y1)  (x2)--(y2)  ;
\draw (1stEnd)--(y1)  (1stEnd)--(y2);

\path (2nd) ++(1,0.5) node(x1)[dot][label={[vcolour]above:3}]{} ++(0,-1) node(x2)[dot][label={[vcolour]below:3}]{};
\path (x1) ++(1,0) node(y1)[dot][label={[vcolour]above:0}]{} ++(0,-1) node(y2)[dot][label={[vcolour]below:2}]{};
\path (y1) ++(1,-0.5) node(2ndEnd)[dot][label={[vcolour]above:3}]{};

\draw (2nd)--(x1)  (2nd)--(x2);
\draw (x1)--(y1)  (x1)--(y2)  ;
\draw (x2)--(y1)  (x2)--(y2)  ;
\draw (2ndEnd)--(y1)  (2ndEnd)--(y2);

\path (3rd) ++(1,0.5) node(x1)[dot][label={[vcolour]above:3}]{} ++(0,-1) node(x2)[dot][label={[vcolour]below:3}]{};
\path (x1) ++(1,0) node(y1)[dot][label={[vcolour]above:0}]{} ++(0,-1) node(y2)[dot][label={[vcolour]below:2}]{};
\path (y1) ++(1,-0.5) node(3rdEnd)[dot][label={[vcolour]above:1}]{};

\draw (3rd)--(x1)  (3rd)--(x2);
\draw (x1)--(y1)  (x1)--(y2)  ;
\draw (x2)--(y1)  (x2)--(y2)  ;
\draw (3rdEnd)--(y1)  (3rdEnd)--(y2);

\draw (1stEnd)--(2nd)  (2ndEnd)--(3rd);

\draw (1st)--(u1) (3rdEnd)--(u2);

\path (origin) ++(-12,-4.25) node(1st)[dot][label={[vcolour]left:3}]{} ++(4,0) node(2nd)[dot][label={[vcolour]above:1}]{} ++(4,0) node(3rd)[dot][label={[vcolour]above:3}]{};

\path (1st) ++(1,0.5) node(x1)[dot][label={[vcolour]above:1}]{} ++(0,-1) node(x2)[dot][label={[vcolour]below:2}]{};
\path (x1) ++(1,0) node(y1)[dot][label={[vcolour]above:3}]{} ++(0,-1) node(y2)[dot][label={[vcolour]below:3}]{};
\path (y1) ++(1,-0.5) node(1stEnd)[dot][label={[vcolour]above:0}]{};

\draw (1st)--(x1)  (1st)--(x2);
\draw (x1)--(y1)  (x1)--(y2)  ;
\draw (x2)--(y1)  (x2)--(y2)  ;
\draw (1stEnd)--(y1)  (1stEnd)--(y2);

\path (2nd) ++(1,0.5) node(x1)[dot][label={[vcolour]above:3}]{} ++(0,-1) node(x2)[dot][label={[vcolour]below:3}]{};
\path (x1) ++(1,0) node(y1)[dot][label={[vcolour]above:0}]{} ++(0,-1) node(y2)[dot][label={[vcolour]below:2}]{};
\path (y1) ++(1,-0.5) node(2ndEnd)[dot][label={[vcolour]above:1}]{};

\draw (2nd)--(x1)  (2nd)--(x2);
\draw (x1)--(y1)  (x1)--(y2)  ;
\draw (x2)--(y1)  (x2)--(y2)  ;
\draw (2ndEnd)--(y1)  (2ndEnd)--(y2);

\path (3rd) ++(1,0.5) node(x1)[dot][label={[vcolour]above:0}]{} ++(0,-1) node(x2)[dot][label={[vcolour]below:2}]{};
\path (x1) ++(1,0) node(y1)[dot][label={[vcolour]above:3}]{} ++(0,-1) node(y2)[dot][label={[vcolour]below:3}]{};
\path (y1) ++(1,-0.5) node(3rdEnd)[dot][label={[vcolour]above:1}]{};

\draw (3rd)--(x1)  (3rd)--(x2);
\draw (x1)--(y1)  (x1)--(y2)  ;
\draw (x2)--(y1)  (x2)--(y2)  ;
\draw (3rdEnd)--(y1)  (3rdEnd)--(y2);

\draw (1stEnd)--(2nd)  (2ndEnd)--(3rd);

\draw (1st)--(v1) (3rdEnd) to[bend left=15] (v2);

\path (origin) ++(1.1,-4.25) node(1st)[dot][label={[vcolour]above:3}]{} ++(4,0) node(2nd)[dot][label={[vcolour]above:0}]{} ++(4,0) node(3rd)[dot][label={[vcolour]above:2}]{};

\path (1st) ++(1,0.5) node(x1)[dot][label={[vcolour]above:0}]{} ++(0,-1) node(x2)[dot][label={[vcolour]below:1}]{};
\path (x1) ++(1,0) node(y1)[dot][label={[vcolour]above:3}]{} ++(0,-1) node(y2)[dot][label={[vcolour]below:3}]{};
\path (y1) ++(1,-0.5) node(1stEnd)[dot][label={[vcolour]above:2}]{};

\draw (1st)--(x1)  (1st)--(x2);
\draw (x1)--(y1)  (x1)--(y2)  ;
\draw (x2)--(y1)  (x2)--(y2)  ;
\draw (1stEnd)--(y1)  (1stEnd)--(y2);

\path (2nd) ++(1,0.5) node(x1)[dot][label={[vcolour]above:3}]{} ++(0,-1) node(x2)[dot][label={[vcolour]below:3}]{};
\path (x1) ++(1,0) node(y1)[dot][label={[vcolour]above:1}]{} ++(0,-1) node(y2)[dot][label={[vcolour]below:2}]{};
\path (y1) ++(1,-0.5) node(2ndEnd)[dot][label={[vcolour]above:2}]{};

\draw (2nd)--(x1)  (2nd)--(x2);
\draw (x1)--(y1)  (x1)--(y2)  ;
\draw (x2)--(y1)  (x2)--(y2)  ;
\draw (2ndEnd)--(y1)  (2ndEnd)--(y2);

\path (3rd) ++(1,0.5) node(x1)[dot][label={[vcolour]above:3}]{} ++(0,-1) node(x2)[dot][label={[vcolour]below:3}]{};
\path (x1) ++(1,0) node(y1)[dot][label={[vcolour]above:0}]{} ++(0,-1) node(y2)[dot][label={[vcolour]below:1}]{};
\path (y1) ++(1,-0.5) node(3rdEnd)[dot][label={[vcolour]right:3}]{};

\draw (3rd)--(x1)  (3rd)--(x2);
\draw (x1)--(y1)  (x1)--(y2)  ;
\draw (x2)--(y1)  (x2)--(y2)  ;
\draw (3rdEnd)--(y1)  (3rdEnd)--(y2);

\draw (1stEnd)--(2nd)  (2ndEnd)--(3rd);

\draw (1st) to[bend right=15] (w1) (3rdEnd)--(w2);
\end{tikzpicture}
\caption{\( G' \)}
\end{subfigure}
\caption[Example of producing \( f' \) from \( f \) in Construction~\ref{make:rs colouring degre k-1 vs k-1 regular}.]{Example of producing \( f' \) from \( f \) in Construction~\ref{make:rs colouring degre k-1 vs k-1 regular}. (a) a graph \( G \) with a 4-rs colouring \( f \), and (b) graph \( G' \) with the corresponding 4-rs colouring \( f' \).}
\label{fig:eg colouring filler gadget d=k-1}
\end{figure}

\FloatBarrier 

Next, we generalise Construction~\ref{make:rs colouring degre k-1 vs k-1 regular}. 

\begin{construct}\label{make:rs colouring bdd degre vs regular}
\emph{Parameters:} Integers \( k\geq 4 \) and \( d\leq k-1 \).\\
\emph{Input:} A graph \( G \) of maximum degree \( d \).\\ 
\emph{Output:} A \( d \)-regular graph \( G^* \).\\
\emph{Guarantee:} \( G\) is \( k \)-rs colourable if and only if \( G^* \) is \( k \)-rs colourable.\\
\emph{Steps:}\\
Introduce two copies of \( G \). 
For each vertex \( v \) of \( G \), introduce \( d-\deg_G(v) \) filler gadgets (see Figure~\ref{fig:rs filler gadget d<k}) between the two copies of \( v \).
\begin{figure}[hbt]
\centering
\begin{tikzpicture}[scale=0.85]
\path (0,0) node(1st)[dot]{} ++(5,0) node(2nd)[dot]{} ++(5,0) node(3rd)[dot]{};

\path (1st) ++(1,1.5) node(x1)[dot]{} ++(0,-1) node(x2)[dot]{} ++(0,-1.5) node(xk-2)[dot]{};
\path (x2)--node[sloped,font=\large]{\( \dots \)} (xk-2);
\path (x1) ++(2,0) node(y1)[dot]{} ++(0,-1) node(y2)[dot]{} ++(0,-1.5) node(yk-2)[dot]{};
\path (y2)--node[sloped,font=\large]{\( \dots \)} (yk-2);
\path (y1) ++(1,-1.5) node(1stEnd)[dot]{};

\draw (1st)--(x1)  (1st)--(x2)  (1st)--(xk-2);
\draw (x1)--(y1)  (x1)--(y2)  (x1)--(yk-2);
\draw (x2)--(y1)  (x2)--(y2)  (x2)--(yk-2);
\draw (xk-2)--(y1)  (xk-2)--(y2)  (xk-2)--(yk-2);
\draw (1stEnd)--(y1)  (1stEnd)--(y2)  (1stEnd)--(yk-2);

\path (x1)--+(0,0.15) coordinate(brace1Start);
\path (xk-2)--+(0,-0.15) coordinate(brace1End);
\draw [opacity=0.4,decorate,decoration={brace,aspect=0.75,amplitude=16pt,raise=3pt,mirror}] (brace1Start)--  node[left=7mm,yshift=-6mm,opacity=1]{\( d-1 \)} (brace1End);

\path (2nd) ++(1,1.5) node(x1)[dot]{} ++(0,-1) node(x2)[dot]{} ++(0,-1.5) node(xk-2)[dot]{};
\path (x2)--node[sloped,font=\large]{\( \dots \)} (xk-2);
\path (x1) ++(2,0) node(y1)[dot]{} ++(0,-1) node(y2)[dot]{} ++(0,-1.5) node(yk-2)[dot]{};
\path (y2)--node[sloped,font=\large]{\( \dots \)} (yk-2);
\path (y1) ++(1,-1.5) node(2ndEnd)[dot]{};

\draw (2nd)--(x1)  (2nd)--(x2)  (2nd)--(xk-2);
\draw (x1)--(y1)  (x1)--(y2)  (x1)--(yk-2);
\draw (x2)--(y1)  (x2)--(y2)  (x2)--(yk-2);
\draw (xk-2)--(y1)  (xk-2)--(y2)  (xk-2)--(yk-2);
\draw (2ndEnd)--(y1)  (2ndEnd)--(y2)  (2ndEnd)--(yk-2);

\path (3rd) ++(1,1.5) node(x1)[dot]{} ++(0,-1) node(x2)[dot]{} ++(0,-1.5) node(xk-2)[dot]{};
\path (x2)--node[sloped,font=\large]{\( \dots \)} (xk-2);
\path (x1) ++(2,0) node(y1)[dot]{} ++(0,-1) node(y2)[dot]{} ++(0,-1.5) node(yk-2)[dot]{};
\path (y2)--node[sloped,font=\large]{\( \dots \)} (yk-2);
\path (y1) ++(1,-1.5) node(3rdEnd)[dot]{};

\draw (3rd)--(x1)  (3rd)--(x2)  (3rd)--(xk-2);
\draw (x1)--(y1)  (x1)--(y2)  (x1)--(yk-2);
\draw (x2)--(y1)  (x2)--(y2)  (x2)--(yk-2);
\draw (xk-2)--(y1)  (xk-2)--(y2)  (xk-2)--(yk-2);
\draw (3rdEnd)--(y1)  (3rdEnd)--(y2)  (3rdEnd)--(yk-2);

\draw (1stEnd)--(2nd)  (2ndEnd)--(3rd);
\draw (1st)--+(-1,0) node[dot]{} node[terminal][label=left:\( v \)]{}
(3rdEnd)--+(1,0) node[dot]{} node[terminal][label=right:\( v \)]{};
\end{tikzpicture}
\caption{Filler gadget for \( v\in V(G) \) in Construction~\ref{make:rs colouring bdd degre vs regular}.}
\label{fig:rs filler gadget d<k}
\end{figure}
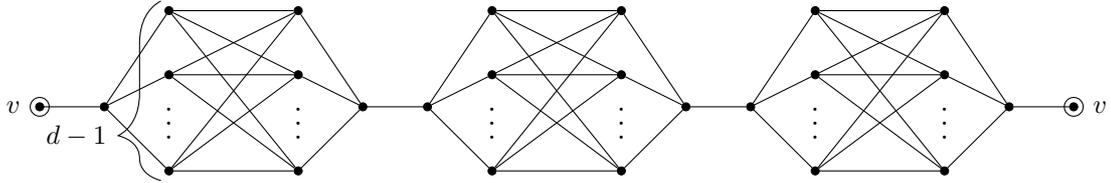
\end{construct}
\begin{proof}[Proof of guarantee]
Observe that since \( d\leq k-1 \), the filler gadget in Construction~\ref{make:rs colouring bdd degre vs regular} (i.e., Figure~\ref{fig:rs filler gadget d<k}) is as subgraph of the filler gadget in Construction~\ref{make:rs colouring degre k-1 vs k-1 regular} (i.e., Figure~\ref{fig:rs filler gadget d=k-1}). 
Hence, \( G^* \) is a subgraph of the output graph \( G' \) of Construction~\ref{make:rs colouring degre k-1 vs k-1 regular}. 
Since \( G \) is a subgraph of \( G^* \), one direction is obvious. 
To prove the other direction, assume that \( G \) admits a \( k \)-rs colouring \( f\colon V(G)\to \{0,1,\dots,k-1\} \). 
By the guarantee in Construction~\ref{make:rs colouring degre k-1 vs k-1 regular}, \( G' \) is \( k \)-rs colourable. 
Since \( G^* \) is a subgraph of~\( G' \),\; \( G^* \) is \( k \)-rs colourable as well. 
This completes the proof of the other direction. 
\end{proof}

Note that the filler gadget in Construction~\ref{make:rs colouring bdd degre vs regular} has \( 6d \) non-terminal vertices and \( 3d(d-1)+4=O(d^2) \) edges. 
Hence, \( G^* \) has only \( (2+6d)n=O(n) \) vertices and \( 2m+O(d^2)n=O(m+n) \) edges. 
Thus, Construction~\ref{make:rs colouring bdd degre vs regular} requires only time polynomial in the input size. 

By Theorem~\ref{thm:k-rs npc max deg k-1}, for all \( k\geq 4 \), \textsc{\( k \)-RS Colourability} is NP-complete for graphs of maximum degree~\( k-1 \). 
For \( k\geq 4 \) and \( d\leq k-1 \), Construction~\ref{make:rs colouring bdd degre vs regular} establishes a reduction from \textsc{\( k \)-RS Colourability}(\( \Delta=d \)) to \textsc{\( k \)-RS Colourability}(\( d \)-regular). 
Hence, for \( k\geq 4 \) and \( d\leq k-1 \), if \textsc{\( k \)-RS Colourability} is NP-complete for graphs of maximum degree \( d \), then \textsc{\( k \)-RS Colourability} is NP-complete for \( d \)-regular graphs. 
Clearly, if \textsc{\( k \)-RS Colourability} is NP-complete for \( d \)-regular graphs, then \textsc{\( k \)-RS Colourability} is NP-complete for graphs of maximum degree \( d \). 
Thus, we have the following theorem. 
\begin{theorem}\label{thm:rs colouring bdd degree to regular}
For all \( k\geq 4 \) and \( d\leq k-1 \), \textsc{\( k \)-RS Colourability} is NP-complete for graphs of maximum degree \( d \) if and only if \textsc{\( k \)-RS Colourability} is NP-complete for \( d \)-regular graphs. 
In particular, for all \( k\geq 4 \), \textsc{\( k \)-RS Colourability} is NP-complete for \( (k-1) \)-regular graphs. 
\qed
\end{theorem}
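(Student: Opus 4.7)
The plan is to assemble the theorem from two ingredients that are already in place: Construction~\ref{make:rs colouring bdd degre vs regular} (and its verified guarantee) for the non-trivial direction, and the trivial containment of graph classes for the other direction. Throughout, we fix $k\geq 4$ and $d\leq k-1$, noting that the problem is in NP for every such $k$ and $d$.

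First I would handle the easy direction: if \textsc{$k$-RS Colourability} is NP-complete for $d$-regular graphs, then it is NP-complete for graphs of maximum degree $d$. Here the identity map on instances is a polynomial-time reduction, because every $d$-regular graph is in particular a graph of maximum degree $d$, and the yes/no answer to $k$-rs colourability is preserved. So NP-hardness transfers immediately, while membership in NP is clear.

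For the converse direction, I would invoke Construction~\ref{make:rs colouring bdd degre vs regular}. Given an instance $G$ of \textsc{$k$-RS Colourability} restricted to graphs of maximum degree $d$, the construction produces a $d$-regular graph $G^{*}$ with $|V(G^{*})|=(2+6d)|V(G)|$ and $O(|E(G)|+d^{2}|V(G)|)$ edges, so the transformation runs in polynomial time (with $k$ and hence $d$ regarded as fixed constants). The guarantee of Construction~\ref{make:rs colouring bdd degre vs regular} asserts that $G$ is $k$-rs colourable if and only if $G^{*}$ is $k$-rs colourable. This is exactly a polynomial-time many-one reduction from \textsc{$k$-RS Colourability}$(\Delta=d)$ to \textsc{$k$-RS Colourability}($d$-regular), so NP-hardness propagates in this direction as well. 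Combining the two directions yields the claimed equivalence.

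For the ``in particular'' statement, I would set $d=k-1$ and appeal to Theorem~\ref{thm:k-rs npc max deg k-1 gen}, which states that for every $k\geq 4$, \textsc{$k$-RS Colourability} is NP-complete for (triangle-free) graphs of maximum degree $k-1$. Applying the equivalence just established at $d=k-1$ immediately gives NP-completeness on $(k-1)$-regular graphs. No step in this proof requires new combinatorial work; the only potential obstacle is a bookkeeping one, namely checking that the construction, which is guaranteed to be polynomial-time and correctness-preserving for every $d\leq k-1$ (not just $d=k-1$), is genuinely applicable for the full range claimed in the theorem. This is evident from the description of the gadget in Figure~\ref{fig:rs filler gadget d<k}, which accommodates arbitrary $d\leq k-1$, so no separate argument is needed.
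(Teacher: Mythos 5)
Your proposal is correct and follows essentially the same route as the paper: the trivial class-containment for one direction, Construction~\ref{make:rs colouring bdd degre vs regular} as a polynomial-time many-one reduction for the other, and the maximum-degree hardness result instantiated at \( d=k-1 \) for the ``in particular'' claim. If anything, your citation of Theorem~\ref{thm:k-rs npc max deg k-1 gen} (which covers all \( k\geq 4 \), including \( k=4 \)) is slightly more careful than the paper's reference to Theorem~\ref{thm:k-rs npc max deg k-1} (stated only for \( k\geq 5 \)).
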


On the other hand, for all \( k\geq 4 \) and \( d\geq k \), \textsc{\( k \)-RS Colourability} is NP-complete for graphs of maximum degree \( d \) whereas it is trivially in P for \( d \)-regular graphs (because the answer is always no \cite{almeter}).\\

\iftoggle{extended}
{ \subsection{Hardness Transitions}
} { \subsection[Results on \( L_{rs}^{(k)} \) and RS Colouring of Regular Graphs]{\boldmath Results on \( L_{rs}^{(k)} \) and RS Colouring of Regular Graphs}\label{sec:rs colouring points of hardness transition}
} Recall that for \( k\geq 3 \), \( L_{rs}^{(k)} \) is the least integer \( d \) such that \textsc{\( k \)-RS Colourability} in graphs of maximum degree \( d \) is NP-complete. 
Bear in mind that we assume P \( \neq \) NP throughout this paper; thus, NP is partitioned into three classes: P, NPC and NPI~\cite{paschos}. 
If a problem in NP is not NP-complete (i.e., not in NPC), then it is either in P or in NPI. 
By the definition of \( L_{rs}^{(k)} \), \textsc{\( k \)-RS Colourability}(\( \Delta=d \)) is not NP-complete for \( d<L_{rs}^{(k)} \), which means that the problem is either in P or in NPI (we do not know which is the case).

Let \( G \) be a graph of maximum degree \( d \). 
If \( d\leq 2 \), then \( G \) is a disjoint union of paths and cycles, and thus the rs chromatic number of \( G \) can be computed in polynomial time. 
Since \textsc{3-RS Colourability} is NP-complete for graphs of maximum degree~3~\cite[Theorem~1]{shalu_cyriac2}, we have \( L_{rs}^{(3)}=3 \).  
Theorem~\ref{thm:k-rs npc max deg k-1 gen} proved that for \( k\geq 4 \), \textsc{\( k \)-RS Colourability} is NP-complete for graphs of maximum degree \( k-1 \), and thus \( L_{rs}^{(k)}\leq k-1 \).

Next, we show that \( L_{rs}^{(k)}>\sqrt{k} \). 
Let \( G \) be a graph of maximum degree \( d \). 
Each distance-two colouring of \( G \) is an rs colouring of \( G \)~\cite{almeter}. 
Moreover, \( G \) admits a distance-two colouring  (i.e., a colouring of the square graph \( G^2 \)) with \( \Delta(G^2)+1=d^2+1 \) colours. 
That is, \( \chi(G^2)\leq d^2+1 \). 
Furthermore, \( \chi(G^2)\leq d^2 \) unless \( G^2\cong K_{d^2+1} \), which is true only if \( G \) is a Moore graph of diameter~2~\cite{cranston2023}. 
Using properties of Moore graphs, one can easily show that \( G \) is \( d^2 \)-rs colourable (that is, \( \chi_{rs}(G)\leq d^2 \)). 
\begin{observation}\label{obs:G is d_square rs col}
\( \chi_{rs}(G)\leq d^2 \) for every graph \( G \) of maximum degree \( d \).
\qed
\end{observation}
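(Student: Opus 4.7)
The plan is to split on whether the extremal case $G^2\cong K_{d^2+1}$ holds. In the generic case, we use the already-noted facts that every distance-two colouring of $G$ is an rs colouring, and that $\chi(G^2)\le d^2$ whenever $G^2$ is not complete; this immediately gives $\chi_{rs}(G)\le\chi(G^2)\le d^2$. So the real work is in the exceptional case, which I will address by exploiting the rigid combinatorial structure of Moore graphs of diameter~$2$.

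Assume $G^2\cong K_{d^2+1}$. Then $G$ is $d$-regular on $d^2+1$ vertices with diameter~$2$ and (by the Moore bound argument or by the equivalent $\chi(G^2)=d^2+1$) girth~$5$. The key structural observation I will use is that any two non-adjacent vertices $x,y$ of $G$ lie at distance exactly~$2$ (diameter~$2$) and share exactly one common neighbour (at least one, from distance~$2$; at most one, since a second common neighbour would give a $4$-cycle, violating girth~$5$). Fix any non-adjacent pair $x,y$ and let $w$ be their unique common neighbour.

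Now I build the colouring. In $G^2$, deleting the edge $xy$ gives $K_{d^2+1}$ minus an edge, which has chromatic number exactly $d^2$: assign both $x$ and $y$ the colour $d^2-1$, and assign the remaining $d^2-1$ vertices a bijection onto the colour set $\{0,1,\dots,d^2-2\}$. I claim this $f\colon V(G)\to\{0,1,\dots,d^2-1\}$ is an rs colouring of $G$. Since $f$ is a proper colouring of $G^2-\{xy\}$, it is in particular a proper colouring of $G$, and the only monochromatic pair in the whole graph is $\{x,y\}$ (both coloured $d^2-1$). Thus the only potential obstruction to the rs property is a path $x,v,y$ with $f(v)>d^2-1$, which is impossible since $d^2-1$ is the highest colour used. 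Hence $f$ is a $d^2$-rs colouring, and combined with the first case we conclude $\chi_{rs}(G)\le d^2$.

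The main (and only mildly tricky) step is the structural claim that non-adjacent vertices in a Moore graph of diameter~$2$ have exactly one common neighbour; everything else is bookkeeping. Note that no enumeration of the known Moore graphs is needed — the argument works uniformly in $d$, which is convenient because the existence of a Moore graph for $d=57$ is a famous open problem. (I am silently assuming $d\ge 2$, so that $d^2-1\ge 0$ and a colour can actually be assigned to $\{x,y\}$; the cases $d\le 1$ are degenerate and handled by direct inspection.)
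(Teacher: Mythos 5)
Your proposal is correct and follows the same route as the paper: every distance-two colouring is an rs colouring, $\chi(G^2)\le d^2$ unless $G^2\cong K_{d^2+1}$ (i.e.\ unless $G$ is a Moore graph of diameter~2), and the Moore case is then settled using its structural rigidity --- the paper relegates this last step to a supplement, and your concrete argument (give one non-adjacent pair the top colour $d^2-1$, all other vertices distinct colours, and note that their unique common neighbour cannot carry a strictly higher colour) is a valid way to carry it out. No gaps.
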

\noindent See the supplement for a proof of Observation~\ref{obs:G is d_square rs col}. \\

Consider the problem \textsc{\( k \)-RS Colourability} in graphs of maximum degree \( d \). 
When \( k\geq d^2 \), we have \( \chi_{rs}(G)\leq d^2\leq k \) by Observation~\ref{obs:G is d_square rs col}; that is, \( G \) is \( k \)-rs colourable. 
In other words, for \( k\in \mathbb{N} \) and \( d\leq \sqrt{k} \), every graph of maximum degree \( d \) is \( k \)-rs colourable, and thus \textsc{\( k \)-RS Colourability}(\( \Delta=d \)) is polynomial-time solvable. 
Therefore, \( L_{rs}^{(k)}>\sqrt{k} \). 
\begin{observation}\label{obs:bounds on L_rs_k}
For \( k\geq 4 \), \( \sqrt{k}<L_{rs}^{(k)}\leq k-1 \). 
\qed
\end{observation}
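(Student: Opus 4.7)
The plan is to derive both bounds by combining results already established in the excerpt, so the proof will be essentially a short synthesis rather than a new argument.

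For the upper bound $L_{rs}^{(k)} \leq k-1$, I would simply invoke Theorem~\ref{thm:k-rs npc max deg k-1 gen}, which guarantees that \textsc{$k$-RS Colourability} is NP-complete for (triangle-free) graphs of maximum degree $k-1$ for every $k \geq 4$. By the definition of $L_{rs}^{(k)}$ as the least integer $d$ for which \textsc{$k$-RS Colourability}$(\Delta = d)$ is NP-complete, this NP-completeness immediately forces $L_{rs}^{(k)} \leq k-1$.

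For the lower bound $L_{rs}^{(k)} > \sqrt{k}$, I would invoke Observation~\ref{obs:G is d_square rs col}, which states $\chi_{rs}(G) \leq d^2$ for every graph $G$ of maximum degree $d$. Fix an integer $d \leq \sqrt{k}$; then $d^2 \leq k$, so every graph $G$ of maximum degree $d$ satisfies $\chi_{rs}(G) \leq d^2 \leq k$ and is therefore $k$-rs colourable. Consequently, \textsc{$k$-RS Colourability}$(\Delta = d)$ is the trivial problem whose answer is always ``yes'' and is thus in P. Hence no $d \leq \sqrt{k}$ can be a value for which the problem is NP-complete, so $L_{rs}^{(k)} > \sqrt{k}$.

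There is no real obstacle here: the substantive work has been done in Theorem~\ref{thm:k-rs npc max deg k-1 gen} (via the colour forcing / colour blocking gadget constructions) and in Observation~\ref{obs:G is d_square rs col} (via the distance-two colouring bound and the Moore graph argument in the supplement). The only thing to check is that $\sqrt{k}$ is treated as a real number, so ``$d \leq \sqrt{k}$'' means $d \leq \lfloor \sqrt{k} \rfloor$, which is how the strict inequality $L_{rs}^{(k)} > \sqrt{k}$ should be read. No further case analysis is needed, and the entire proof fits in a few lines.
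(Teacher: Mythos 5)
Your proof is correct and follows exactly the paper's own argument: the upper bound comes from Theorem~\ref{thm:k-rs npc max deg k-1 gen} together with the definition of \( L_{rs}^{(k)} \), and the lower bound comes from Observation~\ref{obs:G is d_square rs col} via the fact that every graph of maximum degree \( d\leq\sqrt{k} \) is \( k \)-rs colourable, making the restricted problem trivially polynomial. Nothing is missing.
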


Next, let us consider regular graphs. 
It is known that \( \chi_{rs}(G)\geq d+1 \) for every \( d \)-regular graph \( G \)~\cite{almeter}. 
Hence, for a fixed \( k\geq 3 \), \textsc{\( k \)-RS Colourability} in \( d \)\nobreakdash-regular graphs is polynomial-time solvable for each \( d\geq  k \) (because the answer is always `no'). 
In particular, \textsc{3-RS Colourability} in \( d \)-regular graphs is polynomial-time solvable for all \( d\in \mathbb{N} \).

Theorem~\ref{thm:rs colouring bdd degree to regular} proved that for \( k\geq 4 \) and \( d\leq k-1 \), \textsc{\( k \)-RS Colourability} in graphs of maximum degree \( d \) is NP-complete if and only if \textsc{\( k \)-RS Colourability} in \( d \)-regular graphs is NP-complete. 
For \( k\geq 4 \), by the definition of \( L_{rs}^{(k)} \), \textsc{\( k \)-RS Colourability} in graphs of maximum degree \( d \) is NP-complete for \( d=L_{rs}^{(k)} \), and not NP-complete for \( d<L_{rs}^{(k)} \). 
Hence, for \( d<L_{rs}^{(k)} \), we have \( d<L_{rs}^{(k)}\leq k-1 \) by Observation~\ref{obs:bounds on L_rs_k}, and thus \textsc{\( k \)-RS Colourability} in d-regular graphs is not NP-complete by Theorem~\ref{thm:rs colouring bdd degree to regular}.   
We know that \textsc{\( k \)-RS Colourability} in graphs of maximum degree \( d \) is NP-complete for \( d\geq L_{rs}^{(k)} \). 
 As a result, for \( d \) in the range \( L_{rs}^{(k)}\leq d\leq k-1 \), \textsc{\( k \)-RS Colourability} in d-regular graphs is also NP-complete by Theorem~\ref{thm:rs colouring bdd degree to regular}. 
Moreover, for \( d\geq  k \), \textsc{\( k \)-RS Colourability} in \( d \)\nobreakdash-regular graphs is polynomial-time solvable (see the previous paragraph). 
Thus, we have the following theorem. 
\begin{theorem}
For \( k\geq 4 \), \textsc{\( k \)\nobreakdash-RS Colourability} is NP-complete for \( d \)-regular graphs if and only if \( L_{rs}^{(k)}\leq d\leq k-1 \). 
\qed 
\end{theorem}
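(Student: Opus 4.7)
The plan is to prove both directions by leveraging Theorem~\ref{thm:rs colouring bdd degree to regular} (which equates NP-completeness on graphs of maximum degree \( d \) and on \( d \)-regular graphs whenever \( d\leq k-1 \)) together with the trivial polynomial solvability outside the range \( d\leq k-1 \).

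For the ``if'' direction, suppose \( L_{rs}^{(k)}\leq d\leq k-1 \). By the definition of \( L_{rs}^{(k)} \), \textsc{\( k \)-RS Colourability} in graphs of maximum degree \( d \) is NP-complete (the definition guarantees NP-completeness at \( d=L_{rs}^{(k)} \), and by the standard trick of padding with a disjoint star \( K_{1,d+1} \), NP-completeness transfers upward in \( d \); this monotonicity was already used in the introduction). Since \( d\leq k-1 \), Theorem~\ref{thm:rs colouring bdd degree to regular} applies and transfers NP-completeness from maximum-degree-\( d \) graphs to \( d \)-regular graphs.

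For the ``only if'' direction, I need to show that NP-completeness on \( d \)-regular graphs forces \( L_{rs}^{(k)}\leq d\leq k-1 \). The upper bound \( d\leq k-1 \) comes from the elementary fact that no \( d \)-regular graph is \( d \)-rs colourable~\cite{almeter}: for \( d\geq k \), every instance is a trivial no-instance, hence \textsc{\( k \)-RS Colourability} restricted to \( d \)-regular graphs lies in \( \text{P} \) (under our working assumption \( \text{P}\neq \text{NP} \)), ruling out NP-completeness. For the lower bound \( d\geq L_{rs}^{(k)} \), assume for contradiction that \( d<L_{rs}^{(k)} \). By Observation~\ref{obs:bounds on L_rs_k}, \( L_{rs}^{(k)}\leq k-1 \), so in particular \( d<k-1 \), which puts \( d \) in the range where Theorem~\ref{thm:rs colouring bdd degree to regular} is applicable. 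Then NP-completeness on \( d \)-regular graphs would imply NP-completeness on graphs of maximum degree \( d \), contradicting the definition of \( L_{rs}^{(k)} \) as the \emph{least} such \( d \).

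There is no real obstacle here beyond invoking the two prerequisites correctly; all of the substantial work has already been done in Theorem~\ref{thm:k-rs npc max deg k-1 gen} (hardness at \( d=k-1 \)), Theorem~\ref{thm:rs colouring bdd degree to regular} (bounded-degree versus regular equivalence), and the observation \( \chi_{rs}(G)\geq d+1 \) for \( d \)-regular \( G \). The only care needed is to note that the bounded-degree-to-regular equivalence has the hypothesis \( d\leq k-1 \), which is why the upper bound \( d\leq k-1 \) in the conclusion must be established \emph{before} invoking it in the lower-bound argument.
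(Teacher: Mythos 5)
Your proposal is correct and follows essentially the same route as the paper: both directions reduce to Theorem~\ref{thm:rs colouring bdd degree to regular} for $d\leq k-1$, combined with the bound $L_{rs}^{(k)}\leq k-1$ from Observation~\ref{obs:bounds on L_rs_k} and the fact that $\chi_{rs}(G)\geq d+1$ for $d$-regular $G$ to dispose of the range $d\geq k$. The paper presents the argument as a three-case analysis rather than an if-and-only-if with a contrapositive, but the content is identical.
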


\section{Conclusion and Open Problems}\label{sec:conclusion}
We conclude the paper with this final section (see Sections~\ref{sec:intro hardness transitions}, \ref{sec:star colouring points of hardness transition} and \ref{sec:rs colouring points of hardness transition} for details). 
For \( k\geq 3 \), there exists an integer \( d \) such that \textsc{\( k \)-Colourability} is NP-complete for graphs of maximum degree \( d \). 
In fact, for \( k\geq 3 \), there exists a unique integer \( L^{(k)} \) such that \textsc{\( k \)-Colourability} is NP-complete for graphs of maximum degree \( d \) if and only if \( d\geq L^{(k)} \). 
Similarly, for \( k\geq 3 \), there exists a unique integer \( L_s^{(k)} \) (resp.\ \( L_{rs}^{(k)} \)) such that \textsc{\( k \)-Star Colourability} (resp.\ \textsc{\( k \)-RS Colourability}) is NP-complete for graphs of maximum degree \( d \) if and only if \( d\geq L_s^{(k)} \) (resp.\ \( d\geq L_{rs}^{(k)} \)). 
\begin{problem}For \( k\geq 3 \), determine \( L^{(k)} \), \( L_s^{(k)} \) and \( L_{rs}^{(k)} \). 
\end{problem}

For each \( k\geq 3 \), we have \( k+1\leq L^{(k)}\leq k-1+\raisebox{1.5pt}{\big\lceil}\sqrt{k}\raisebox{1.5pt}{\big\rceil} \)~\cite{emden-weinert} and for sufficiently large \( k \), we have \( L^{(k)}=k-1+\raisebox{1.5pt}{\big\lceil}\sqrt{k}\raisebox{1.5pt}{\big\rceil} \)~\cite{molloy_reed}. 
In particular, \( L^{(3)}=4 \), \( L^{(4)}=5 \) and \( 6\leq L^{(5)}\leq 7 \). 
Yet, the following is open. 
\begin{problem}[Paulusma~\cite{paulusma}]
Is \textsc{5\nobreakdash-Colourability} NP-complete for graphs of maximum degree~6?\\
\hspace*{2cm} In other words, is \( L^{(5)}=6 \)?
\end{problem}

Regarding star colouring and rs colouring, we have (i)~\( L_s^{(3)}=L_{rs}^{(3)}=3 \), (ii)~for \( k\geq 4 \), we have \( 0.33\, k^{\,2/3}<L_s^{(k)}\leq k \) and \( \sqrt{k}<L_{rs}^{(k)}\leq k-1 \), and (iii)~\( L_s^{(k)}\leq k-1 \) for \( k=5 \) and \( k\geq 7 \). 
\begin{problem}\label{prob:L_s_k at most k-1 for 4 N 6}
Is \( L_s^{(k)}\leq k-1 \) for \( k\in \{4,6\} \)?
\end{problem}

Next, let us consider the class of regular graphs. 
For \( k\geq 4 \), \textsc{\( k \)\nobreakdash-RS Colourability} is NP-complete for \( d \)-regular graphs if and only if \( L_{rs}^{(k)}\leq d\leq k-1 \). 
It is unknown whether this result has a star colouring analogue. 
Hence, for values of \( k \) such that \textsc{\( k \)-Star Colourability} is NP-complete for \( d^* \)-regular graphs for some \( d^*\in \mathbb{N} \), we define \( \widetilde{L}_s^{(k)} \) (resp.\ \( \widetilde{H}_s^{(k)} \)) as the least (resp.\ highest) integer \( d \) such that \textsc{\( k \)-Star Colourability} is NP-complete for \( d \)-regular graphs. 
For other values of \( k \), let us say that \( \widetilde{L}_s^{(k)} \) and \( \widetilde{H}_s^{(k)} \) are undefined (e.g., \( \widetilde{L}_s^{(3)} \) and \( \widetilde{H}_s^{(3)} \) are undefined).

For \( k\in \{4,5,7,8,\dots \} \), \( \widetilde{L}_s^{(k)} \) and \( \widetilde{H}_s^{(k)} \) are defined, and \( \widetilde{H}_s^{(k)}\leq 2k-4 \). 
Moreover, \( L_s^{(k)}=\widetilde{L}_s^{(k)} \leq k-1\leq \widetilde{H}_s^{(k)}\leq 2k-4 \) for \( k=5 \) and \( k\geq 7 \). 
If the answer to Problem~\ref{prob:L_s_k at most k-1 for 4 N 6} is `yes' for some \( k\in \{4,6\} \), then \( L_s^{(k)}=\widetilde{L}_s^{(k)} \) (by Theorem~\ref{thm:star colouring degree d vs d-regular}). 
Since \textsc{4-Star Colourability} is NP-complete for 4-regular graphs~\cite{cyriac}, we have \( L_s^{(4)}\leq \widetilde{L}_s^{(4)} \leq 4=\widetilde{H}_s^{(k)} \). 
\begin{problem}[\cite{shalu_cyriac3}]Is \textsc{4-Star Colourability} NP-complete for 3-regular graphs? 
\end{problem}
\noindent Depending on the answer to this problem, we have either (i)~\( L_s^{(4)}=\widetilde{L}_s^{(4)}=3 \) or (ii)~\textsc{4-Star Colourability}(\( d \)-regular)\( \,\in\, \)NPC if and only if \( d=4 \).

Consider the complexity of \textsc{\( k \)-Star Colourability} in 3-regular graphs. 
Since \( 4\leq \chi_s(G)\leq 6 \) for every 3-regular graph \( G \)~\cite{chen2013,xie}, \textsc{\( k \)-Star Colourability} in 3-regular graphs is polynomial-time solvable for all \( k \) except possibly \( k\in \{4,5\} \). 
According to Conjecture~12 of Almeter et al.~\cite{almeter}, \textsc{5-Star Colourability} in 3-regular graphs is polynomial-time solvable. 
If this conjecture is true, then either (i)~\textsc{4-Star Colourability}(3-regular)\( \,\in\, \)NPC (i.e., \( L_s^{(4)}=\widetilde{L}_s^{(4)}=3 \)), or (ii)~\textsc{Star Colourability}(3\nobreakdash-regular)\( \,\notin\, \)NPC.

Whenever \( \widetilde{H}_s^{(k)} \) is defined, we have \( \widetilde{H}_s^{(k)}\leq 2k-4 \), and equality holds for \( k=4 \). 
\begin{conjecture}\label{conj:2p-regular (p+2)-star colouring NPC}
For \( k\geq 4 \), \( \widetilde{H}_s^{(k)} \) is defined, and \( \widetilde{H}_s^{(k)}=2k-4 \);\\
\hspace*{2.5cm}that is, \textsc{\( k \)-Star Colourability} is NP-complete for \( (2k-4) \)-regular graphs.
\end{conjecture}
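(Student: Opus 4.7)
The plan is to prove Conjecture~\ref{conj:2p-regular (p+2)-star colouring NPC} by a reduction from a known NP-complete problem to \textsc{$k$-Star Colourability} restricted to $(2k-4)$-regular graphs. The upper bound $\widetilde{H}_s^{(k)}\leq 2k-4$ is already in hand: for $d\geq 2k-3$ the bound $\chi_s(G)\geq\lceil(\Delta(G)+4)/2\rceil$ forces $\chi_s(G)>k$ and renders the decision trivial. The task is therefore to supply the matching NP-hardness at $d=2k-4$. The distinguishing feature of this regime is that $d=2k-4$ is the \emph{saturating} degree of the counting bound, so every $k$-star colouring of a $(2k-4)$-regular graph sits at the boundary of feasibility, and I would try to exploit the resulting rigidity.

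The first step would be a structural analysis of the local colour profile $(m_{c'}(v))_{c'\neq f(v)}$ around each vertex in a $k$-star colouring of a $(2k-4)$-regular graph, where $m_{c'}(v)=|N(v)\cap f^{-1}(c')|$. The constraints $\sum_{c'}m_{c'}(v)=2k-4$, together with the bicoloured-$P_4$-free property and the fact that only $k-1$ colours are available in $N(v)$, should yield a short list of admissible multiplicity vectors. My hope is that this list is restrictive enough to play the role that Lemma~\ref{lem:gadget component 4-rs colouring} and Lemma~\ref{lem:miecelski-1} played for rs colouring and 5-star colouring. Using this list, I would design a gadget component $\Gamma_k$ that is $(2k-4)$-regular, is $k$-star colourable, and forces the colours at two designated \emph{terminal} vertices to lie in a prescribed two-element subset of $\{0,1,\dots,k-1\}$ under every $k$-star colouring.

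Given such a $\Gamma_k$, the reduction would proceed in the style of Constructions~\ref{make:k-star colouring deg k-1} and~\ref{make:5-star max-deg 4}: chain copies of $\Gamma_k$ to carry a global binary signal, encode the variables of an NP-complete source instance (for example \textsc{Edge $(k-2)$-Colourability} on $(k-2)$-regular graphs~\cite{leven_galil}, or \textsc{3-Colourability} on 4-regular graphs) by these signals, and insert ``constraint'' vertices each of degree $2k-4$ whose neighbourhoods hit exactly the terminals corresponding to a source edge or clause. Soundness would follow from the forced binary signal at each terminal; completeness would be witnessed by an explicit $k$-star colouring built from a satisfying assignment of the source instance.

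The main obstacle is that the saturating degree leaves essentially no slack: every edge spent enforcing rigidity at a terminal must be paid for elsewhere without breaking $(2k-4)$-regularity or $k$-star colourability of $\Gamma_k$. This is substantially harder than the analogous designs at $d=k-1$ in Section~\ref{sec:star colouring}, where considerable room remained in the colour palette. A second, arguably more serious obstacle is the case $k=6$: proving $\widetilde{H}_s^{(6)}=8$ requires, in particular, establishing that $\widetilde{H}_s^{(6)}$ is defined at all, which is unknown today because \textsc{6-Star Colourability} has not been proved NP-complete on any regular graph class (cf.\ Problem~\ref{prob:L_s_k at most k-1 for 4 N 6}). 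A uniform proof must therefore either resolve Problem~\ref{prob:L_s_k at most k-1 for 4 N 6} as a by-product or provide a dedicated construction at $d=8$ for $k=6$ that bypasses the small-regularity hardness route used for other $k$.
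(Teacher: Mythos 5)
This statement is not proved in the paper at all: it is stated as an open conjecture in Section~\ref{sec:conclusion}, and the only case the paper settles is \( k=4 \) (where \( \widetilde{H}_s^{(4)}=4=2k-4 \) follows from the NP-completeness of \textsc{4-Star Colourability} on 4-regular graphs \cite{cyriac}, together with the upper bound \( \widetilde{H}_s^{(k)}\leq 2k-4 \) derived from \( \chi_s(G)\geq \lceil (d+4)/2\rceil \) for \( d \)-regular graphs). So there is no proof in the paper to compare yours against, and your proposal does not close the conjecture either: it is a research plan whose central object, the \( (2k-4) \)-regular rigidity gadget \( \Gamma_k \), is never constructed. The structural analysis of local colour multiplicity vectors that is supposed to justify \( \Gamma_k \)'s existence is only hypothesized ("my hope is that this list is restrictive enough"), and the reduction built on top of it inherits that hypothesis. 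Everything that is actually established in your write-up (the upper bound, the triviality for \( d\geq 2k-3 \), the tightness of the counting bound at \( d=2k-4 \)) is already in the paper; the new content is conditional on an unproved lemma.

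You do correctly identify the two genuine obstructions: the lack of slack at the saturating degree, and the fact that for \( k=6 \) it is not even known that \( \widetilde{H}_s^{(6)} \) is defined (no NP-completeness result for \textsc{6-Star Colourability} on any regular class is known, cf.\ Problem~\ref{prob:L_s_k at most k-1 for 4 N 6}). But naming the obstacles is not the same as overcoming them. As it stands, the proposal proves nothing beyond what the paper already records, and the conjecture remains open for every \( k\geq 5 \).
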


\section*{Acknowledgement}
We thank Sounaka Mishra for suggesting \textsc{4-RS Colourability} of cubic graphs as a problem to study.

 \bibliographystyle{unsrtnat}

\end{document}